\documentclass[11pt,letterpaper]{amsart}
\usepackage[usenames,dvipsnames]{color}
\usepackage{amsthm,amsfonts,amssymb,amsmath,amsxtra}
\usepackage[all]{xy}
\SelectTips{cm}{}
\usepackage{xr-hyper}
\usepackage[pdfpagelabels,pdftex,hidelinks]{hyperref}
\hypersetup{
  colorlinks   = true, 
  urlcolor     = blue, 
  linkcolor    = blue, 
  citecolor   = blue, 
pdftitle={},
  pdfauthor={},
  pdfsubject={},
  pdfkeywords={},
  breaklinks=true,
  bookmarksopen=true,
  bookmarksnumbered=true,
  pdfpagemode=UseOutlines,
  plainpages=false}
\usepackage{cleveref}

\usepackage{verbatim}
\usepackage{caption}

\usepackage{lineno}

\usepackage{mathrsfs}

\RequirePackage{xspace}
\RequirePackage{etoolbox}
\RequirePackage{varwidth}
\RequirePackage{enumitem}
\RequirePackage{tensor}
\RequirePackage{mathtools}
\RequirePackage{longtable}
\RequirePackage{multirow}

\setcounter{tocdepth}{1}

\def\ge{\geqslant}
\def\le{\leqslant}
\def\a{\alpha}
\def\b{\beta}
\def\g{\gamma}
\def\G{\Gamma}
\def\d{\delta}

\def\e{\epsilon}

\def\s{\sigma}

\def\th{\theta}
\def\k{\kappa}
\def\l{\lambda}
\def\z{\zeta}

\def\i{^{-1}}

\def\<{\langle}
\def\>{\rangle}

\newcommand{\fkg}{\ensuremath{\mathfrak{g}}\xspace}

\newcommand{\fkl}{\ensuremath{\mathfrak{l}}\xspace}

\newcommand{\fkp}{\ensuremath{\mathfrak{p}}\xspace}

\newcommand{\fkt}{\ensuremath{\mathfrak{t}}\xspace}
\newcommand{\fku}{\ensuremath{\mathfrak{u}}\xspace}

\newcommand{\fkF}{\ensuremath{\mathfrak{F}}\xspace}

\newcommand{\fkL}{\ensuremath{\mathfrak{L}}\xspace}

\newcommand{\BA}{\ensuremath{\mathbb {A}}\xspace}

\newcommand{\BF}{\ensuremath{\mathbb {F}}\xspace}
\newcommand{{\BG}}{\ensuremath{\mathbb {G}}\xspace}

\newcommand{{\BK}}{\ensuremath{\mathbb {K}}\xspace}

\newcommand{\BQ}{\ensuremath{\mathbb {Q}}\xspace}
\newcommand{\BR}{\ensuremath{\mathbb {R}}\xspace}

\newcommand{\BZ}{\ensuremath{\mathbb {Z}}\xspace}

\newcommand{\CE}{\ensuremath{\mathcal {E}}\xspace}
\newcommand{\CF}{\ensuremath{\mathcal {F}}\xspace}
\newcommand{\CG}{\ensuremath{\mathcal {G}}\xspace}
\newcommand{\CH}{\ensuremath{\mathcal {H}}\xspace}
\newcommand{\CI}{\ensuremath{\mathcal {I}}\xspace}

\newcommand{\CK}{\ensuremath{\mathcal {K}}\xspace}
\newcommand{\CL}{\ensuremath{\mathcal {L}}\xspace}
\newcommand{\CM}{\ensuremath{\mathcal {M}}\xspace}

\newcommand{\CO}{\ensuremath{\mathcal {O}}\xspace}
\newcommand{\CP}{\ensuremath{\mathcal {P}}\xspace}
\newcommand{\CQ}{\ensuremath{\mathcal {Q}}\xspace}
\newcommand{\CR}{\ensuremath{\mathcal {R}}\xspace}

\newcommand{\CY}{\ensuremath{\mathcal {Y}}\xspace}

\newcommand{\ad}{{\mathrm{ad}}}

\DeclareMathOperator{\Hom}{Hom}

\newcommand{\id}{\ensuremath{\mathrm{id}}\xspace}
\let\Im\relax
\DeclareMathOperator{\Im}{Im}

\newcommand{\reg}{{\mathrm{reg}}}

\DeclareMathOperator{\tr}{tr}

\newcommand{\wt}{\widetilde}
\newcommand{\wh}{\widehat}

\newcommand{\ov}{\overline}


\def\brk{{\breve k}}

\def\COk{{\CO_{\brk}}}

\def\pr{{\rm pr}}
\def\tPhi{\widetilde \Phi}

\def\aff{{\rm aff}}
\def\bx{{\mathbf x}}

\def\der{{\rm der}}
\def\ov{\overline}

\def\FT{{\rm FT}}
\def\pInd{{\rm pInd}}
\def\pRes{{\rm pRes}}
\def\Av{{\rm Av}}
\def\Infl{{\rm Infl}}
\def\For{{\rm For}}
\def\wt{\widetilde}

\def\sc{{\rm sc}}
\def\ud{\underline}
\def\unip{{\rm unip}}
\def\nilp{{\rm nilp}}
\def\log{{\rm log}}
\def\bfg{{\mathbf g}}
\def\bfp{{\mathbf p}}
\def\bfl{{\mathbf l}}
\def\bfn{{\mathbf n}}
\def\bft{{\mathbf t}}
\def\bfi{{\mathbf i}}

%
\newtheorem{theorem}{Theorem}
\newtheorem{proposition}[theorem]{Proposition}
\newtheorem{lemma}[theorem]{Lemma}

\newtheorem{corollary}[theorem]{Corollary}

\theoremstyle{definition}
\newtheorem{definition}[theorem]{Definition}

\numberwithin{equation}{section}
\numberwithin{theorem}{section}


\setitemize[0]{leftmargin=*,itemsep=\the\smallskipamount}
\setenumerate[0]{leftmargin=*,itemsep=\the\smallskipamount}

\renewcommand{\to}{%
   \ifbool{@display}{\longrightarrow}{\rightarrow}%
   }
\let\shortmapsto\mapsto
\renewcommand{\mapsto}{%
   \ifbool{@display}{\longmapsto}{\shortmapsto}%
   }
\newlength{\olen}
\newlength{\ulen}
\newlength{\xlen}
\newcommand{\xra}[2][]{%
   \ifbool{@display}%
      {\settowidth{\olen}{$\overset{#2}{\longrightarrow}$}%
       \settowidth{\ulen}{$\underset{#1}{\longrightarrow}$}%
       \settowidth{\xlen}{$\xrightarrow[#1]{#2}$}%
       \ifdimgreater{\olen}{\xlen}%
          {\underset{#1}{\overset{#2}{\longrightarrow}}}%
          {\ifdimgreater{\ulen}{\xlen}%
             {\underset{#1}{\overset{#2}{\longrightarrow}}}
             {\xrightarrow[#1]{#2}}}}%
      {\xrightarrow[#1]{#2}}
   }
\makeatother
\newcommand{\xyra}[2][]{%
   \settowidth{\xlen}{$\xrightarrow[#1]{#2}$}%
   \ifbool{@display}%
      {\settowidth{\olen}{$\overset{#2}{\longrightarrow}$}%
       \settowidth{\ulen}{$\underset{#1}{\longrightarrow}$}%
       \ifdimgreater{\olen}{\xlen}%
          {\mathrel{\xymatrix@M=.12ex@C=3.2ex{\ar[r]^-{#2}_-{#1} &}}}%
          {\ifdimgreater{\ulen}{\xlen}%
             {\mathrel{\xymatrix@M=.12ex@C=3.2ex{\ar[r]^-{#2}_-{#1} &}}}
             {\mathrel{\xymatrix@M=.12ex@C=\the\xlen{\ar[r]^-{#2}_-{#1} &}}}}}%
      {\mathrel{\xymatrix@M=.12ex@C=\the\xlen{\ar[r]^-{#2}_-{#1} &}}}%
   }
\makeatletter
\newcommand{\xla}[2][]{%
   \ifbool{@display}%
      {\settowidth{\olen}{$\overset{#2}{\longleftarrow}$}%
       \settowidth{\ulen}{$\underset{#1}{\longleftarrow}$}%
       \settowidth{\xlen}{$\xleftarrow[#1]{#2}$}%
       \ifdimgreater{\olen}{\xlen}%
          {\underset{#1}{\overset{#2}{\longleftarrow}}}%
          {\ifdimgreater{\ulen}{\xlen}%
             {\underset{#1}{\overset{#2}{\longleftarrow}}}
             {\xleftarrow[#1]{#2}}}}%
      {\xleftarrow[#1]{#2}}
   }
\newcommand{\isoarrow}{%
   \ifbool{@display}{\overset{\sim}{\longrightarrow}}{\xrightarrow\sim}%
   }

\newcommand{\sm}{{\,\smallsetminus\,}}

\newcommand{\bfx}{{\mathbf x}}

\usepackage{upgreek}
\makeatletter
\newcommand{\colim@}[2]{%
  \vtop{\m@th\ialign{##\cr
    \hfil$#1\operator@font lim$\hfil\cr
    \noalign{\nointerlineskip\kern1.5\ex@}#2\cr
    \noalign{\nointerlineskip\kern-\ex@}\cr}}%
}
\newcommand{\colim}{%
  \mathop{\mathpalette\colim@{\rightarrowfill@\textstyle}}\nmlimits@
}
\newcommand{\prolim@}[2]{%
  \vtop{\m@th\ialign{##\cr
    \hfil$#1\operator@font lim$\hfil\cr
    \noalign{\nointerlineskip\kern1.5\ex@}#2\cr
    \noalign{\nointerlineskip\kern-\ex@}\cr}}%
}
\newcommand{\prolim}{%
  \mathop{\mathpalette\colim@{\leftarrowfill@\textstyle}}\nmlimits@
}
\makeatother

\begin{document}

\title[Alternative construction of character sheaves on parahorics]{An alternative construction of character sheaves on parahoric subgroups}

\author{Alexander B. Ivanov}
\address{Fakult\"at f\"ur Mathematik, Ruhr-Universit\"at Bochum, D-44780 Bochum, Germany.}
\email{a.ivanov@rub.de}

\author{Sian Nie}
\address{Academy of Mathematics and Systems Science, Chinese Academy of Sciences, Beijing 100190, China}

\address{School of Mathematical Sciences, University of Chinese Academy of Sciences, Chinese Academy of Sciences, Beijing 100049, China}
\email{niesian@amss.ac.cn}

\author{Zhihang Yu}
\address{The Second Affiliated Hospital of Chongqing Medical Univesity, Chongqing 400000, China}
\address{Chongqing Medical University, Chongqing 400016, China}
\email{yuzhihang@amss.ac.cn}

\begin{abstract}
Inspired by the foundational work of Bezrukavnikov and Chan \cite{BC24} on character sheaves for parahoric subgroups and an alternative interpretation of deep level Deligne-Lusztig characters in \cite{Nie_24},  we present a parallel but closed (non-iterated) construction of character sheaves within the framework of J.--K. Yu's types. We show that our construction yields perverse sheaves, which coincide with those produced in \cite{BC24} in an iterated way. In the regular case we establish the compatibility of their Frobenius traces with deep level Deligne-Lusztig characters. As an application, we prove the positive-depth Springer's hypothesis for arbitrary characters, thereby generalizing the  generic case result of Chan and Oi \cite{CO25}. The proofs of our results make  critical use of the strategies and results from \cite{BC24} and \cite{Nie_24}. 
\end{abstract}

\maketitle

\section{Introduction}

Character sheaves form a very important tool in the representation theory of finite reductive groups of Lie type. They were introduced and studied by Lusztig in \cite{Lusztig_85I} and a couple of subsequent works. Later, Lusztig conjectured in \cite{Lusztig_17} that a similar theory should exist for reductive groups over finite local rings; he proved his conjecture for small values of $r$ and $p$ large enough. Recently, in \cite{BC24}, Bezrukavnikov and Chan confirmed this conjecture by introducing a theory of character sheaves on Moy--Prasad quotients of parahoric group schemes (a class of groups including those considered by Lusztig). In the generic case, they showed that the Frobenius traces of these character sheaves are compatible with deep level Deligne-Lusztig characters (for which we refer to \cite{Lusztig_79, CI_MPDL, Chan_siDL, CI_loopGLn, ChenS_17, ChanO_21, ChenS_23, Chan24, Nie_24, IvanovNie_25, CO25} and references therein). This compatibility is used by Chan and Oi in \cite[Theorem 9.3]{CO25} to establish the positive depth analogue of Springer's hypothesis in the same case. Other works on character sheaves in a related setting include \cite{Kim_18, Chen_21}. We also refer to \cite{NY25} for a construction of character sheaves on loop Lie algebras.

In this work, we present a parallel but closed (non-inductive) construction of character sheaves within the framework of J.--K. Yu's types \cite{Yu_01}. Our approach is inspired by two key sources: the foundational work of \cite{BC24} and an alternative interpretation of deep level Deligne--Lusztig characters proposed in \cite{Nie_24}. In the generic case, the same construction was initially introduced by Lusztig \cite{Lusztig_17}. We demonstrate that our construction can also be realized through iterated generic Iwahori-like inductions, which slightly diverge from the generic parabolic inductions developed in \cite{BC24}. Following the strategies and techniques in \cite{BC24}, we establish the $t$-exactness of the generic Iwahori-like induction and hence the perversity of our sheaves. As a consequence we establish the equivalence between our sheaves with those of \cite{BC24}. Furthermore, we prove the compatibility of their Frobenius traces with deep level Deligne--Lusztig characters in the regular case. As an application, we extend the positive-depth Springer's hypothesis to arbitrary characters, thereby generalizing the generic case result of \cite[Theorem 9.3]{CO25}. Now we explain our results in more detail.


\subsection{Notation}

Let $k$ be a non-archimedean local field with residue field $\BF_q$ of characteristic $p$. Let $\brk$ be the completion of a maximal unramified extension of $k$ and let $\CO_{\brk}$ denote the ring of integers in $\brk$. Let $F$ be the Frobenius automorphism of $\brk$ over $k$. Let $G$ be a $k$-rational reductive group which splits over $\brk$. Let $T \subseteq B \subseteq G$ be a $k$-rational maximal torus of $G$ which splits over $\brk$.
Let $\CG = \CG_{{\bf x},0}$ be a parahoric model of $G$ attached to a point $\bfx$ in the apartment of $T$ over $k$ in the Bruhat--Tits building of $G$. For $0 \le r \in \widetilde \BR := \BR \sqcup \{s+; s \in \BR\}$, let $\CG_{\bx, r}$ be the $r$th Moy-Prasad subgroup of $\CG_{\bx, 0}$. For $0 \le s \le r \in \widetilde \BR$  we regard \[G_{s:r} = \CG_{\bx, s}(\CO_{\brk}) / \CG_{\bx, r+}(\CO_{\brk}),\] as an $\BF_q$-rational perfectly smooth affine group scheme. See \S\ref{sec:one-step} for more details.

\subsection{Character sheaves}
Let $\ell \neq p$ be a prime. Let $\phi \colon T(k) \to \ov \BQ_\ell^\times$ be a smooth character of depth $r\geq 0$. Suppose that $p$ is large enough, so that by \cite[Proposition 3.6.7]{Kaletha_19} $\phi$ admits a Howe factorization $(G^i,\phi_i,r_i)_{-1\leq i\leq d}$, where $G^i$ form an ascending sequence of ($\breve k$-rational) Levi subgroups of $G$ with $G^{-1} = T$ and $G^d = G$, $0 = r_{-1}<r_0<\dots<r_{d-1}\leq r_d = r$, and $\phi_i \colon G^i(k) \to \ov\BQ_\ell^\times$ is a $(G^i,G^{i+1})$-generic character of depth $r_i$, such that $\phi = \prod_{i=-1}^d\phi_i|_{T(k)}$.
Put $s_i = r_i/2$ and
\[
\CK_{\phi,r} = G_{0:r}^r G_{s_0:r}^1 \dots G_{s_{d-1}:r}^d \quad \text{ and }\quad \CK_{\phi,r}^+ = G_{0+:r}^r G_{s_0+:r}^1 \dots G_{s_{d-1}+:r}^d.
\] We fix a $\brk$-rational Borel subgroup $T \subseteq B$ such that each $G^i \subseteq G$ is a standard Levi with respect to $B$. let $U \subseteq B$ be the unipotent radical and let $\ov U$ be the opposite of $U$. Consider the Iwahori-like subgroup \[ \CI_{\phi, U, r} = (\CK_{\phi, r} \cap U_r) T_r (\CK_{\phi, r}^+ \cap \ov U_r)\]
which also appears naturally in the Yu's construction of irreducible supercuspidal representations \cite{Yu_01} and in the study of deep level Deligne--Lusztig induction \cite{Nie_24}. The group $T_r$ admits a natural quotient $\bar T_r$ (such that $\phi$ factors through $\bar T_r^F$) and there are inclusion/projection maps $G_r \stackrel{h}{\hookleftarrow} \CI_{\phi, U, r} \stackrel{\delta}{\twoheadrightarrow} \bar T_r$. In \S\ref{sec:I-induction} we shall define a functor
\begin{equation}\label{eq:induction_intro} \pInd_{\CI_{\phi, U, r}}^{G_r} \colon D_{\bar T_r}(\bar T_r) \to D_{G_r}(G_r), 
\end{equation}
which is (essentially) $h_! \delta^\ast$ composed with an averaging functor. Let $\CL_\phi$ denote the local system on $\bar T_r$ attached to $\phi$. Then we obtain a perverse sheaf on $G_r$, attached to $\phi$:

\begin{theorem}\label{thm:intro_perverse}
There exists some (explicit) $N_\phi \in \BZ$ such that $\pInd_{\CI_{\phi, U, r}}^{G_r} \CL_\phi[N_\phi]$ is perverse. If $\phi$ is regular (i.e.,  $\CL_\phi$ has trivial stabilizer in the Weyl group of $G_r$), then it is simple perverse. 
\end{theorem}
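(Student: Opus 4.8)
The plan is to deduce Theorem~\ref{thm:intro_perverse} from the $t$-exactness, up to an explicit cohomological shift, of the generic Iwahori-like induction $\pInd_{\CI_{\phi,U,r}}^{G_r}$, together with an endomorphism computation in the regular case. Since $\bar T_r$ is (perfectly) smooth and $\CL_\phi$ is a rank-one Kummer-type local system, $\CL_\phi[\dim\bar T_r]$ is (simple) perverse on $\bar T_r$; hence it suffices to show that $\pInd_{\CI_{\phi,U,r}}^{G_r}[a]$ is $t$-exact for a suitable explicit $a\in\BZ$ and then put $N_\phi = a + \dim\bar T_r$. So the whole problem is concentrated in the $t$-exactness statement, and then in simplicity.

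First I would factor the induction along the Howe factorization. The group $\CI_{\phi,U,r}=(\CK_{\phi,r}\cap U_r)T_r(\CK_{\phi,r}^+\cap\ov U_r)$ carries a natural filtration indexed by the depths $0=s_{-1}<s_0<\dots<s_{d-1}$, and I expect $\pInd_{\CI_{\phi,U,r}}^{G_r}$ to decompose as a composite $\pInd^{(d)}\circ\cdots\circ\pInd^{(0)}$ of $d+1$ ``one-step'' generic Iwahori-like inductions, the $i$-th one passing from the data attached to the Levi $G^i$ to that attached to $G^{i+1}$. This parallels the iterated generic parabolic induction of \cite{BC24}, but uses Iwahori-like inductions instead; a genuine technical point here is to verify that each one-step Iwahori-like induction agrees (after the appropriate averaging) with a variant of the generic parabolic induction of \cite{BC24}, so that their $t$-exactness results transfer. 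I would do this via the clean-extension and Mackey formalism of \cite{BC24} together with the explicit description of $\CI_{\phi,U,r}$ and its layers from \cite{Nie_24}.

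Second, and this is the heart of the argument, I would establish the $t$-exactness of each one-step induction $\pInd^{(i)}$ up to an explicit shift $N^{(i)}$. Writing $\pInd^{(i)}$ as $h_!\delta^\ast$ post-composed with the averaging functor: the map $\delta$ is a (perfectly) smooth affine-space fibration, so $\delta^\ast$ shifted by $\dim$ of the fibres is $t$-exact; the content is in the averaged $h_!$. Incorporating equivariance, the averaged $h_!$ identifies with a proper pushforward along a morphism whose fibres are governed by deep level Deligne--Lusztig varieties for the one-step Levi pair $(G^i,G^{i+1})$. This is exactly where the $(G^i,G^{i+1})$-genericity of $\phi_i$ is used decisively: it forces the relevant complexes to be clean and the cohomology of these fibres (twisted by the pullback of $\CL_\phi$) to be concentrated in a single degree, i.e.\ an equidimensionality/parity phenomenon, precisely as in the key cohomological estimates of \cite{BC24} and the vanishing results of \cite{Nie_24}. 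Properness gives right $t$-exactness, Verdier duality (under which $\pInd$ intertwines $\CL_\phi$ with $\CL_\phi^{-1}$) converts this into left $t$-exactness, and single-degree concentration matches the two amplitude bounds, yielding $t$-exactness of $\pInd^{(i)}[N^{(i)}]$. Composing over $i$ gives $t$-exactness of $\pInd_{\CI_{\phi,U,r}}^{G_r}[\sum_i N^{(i)}]$ with an explicit total shift, hence the perversity claim.

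Finally, for the regular case I would show $\End\bigl(\pInd_{\CI_{\phi,U,r}}^{G_r}\CL_\phi[N_\phi]\bigr)=\ov\BQ_\ell$; since the object is already perverse this forces simplicity. By adjunction and proper base change, $\Hom$ between two such induced sheaves is the cohomology of a correspondence variety fibred over a quotient of $\bar T_r\times\bar T_r$, and its contributions are indexed by the $W$-translates of $\CL_\phi$, where $W$ is the Weyl group of $G_r$; regularity (trivial stabilizer of $\CL_\phi$ in $W$) annihilates all off-diagonal terms, leaving a one-dimensional space. This is the usual Deligne--Lusztig-type endomorphism-algebra argument, carried out here following \cite{BC24, Nie_24}. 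The main obstacle I anticipate is the one-step $t$-exactness, namely proving the single-degree concentration of the cohomology of the deep level Deligne--Lusztig fibres from genericity, together with the identification of the closed (non-iterated) Iwahori-like induction with the iterated generic parabolic induction of \cite{BC24} step by step; the rest is formal.
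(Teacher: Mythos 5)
Your overall architecture matches the paper's: factor the closed Yu-type induction into an iteration of one-step generic inductions along the Howe factorization (this is the paper's Propositions \ref{iteration} and \ref{multi-step}), prove a one-step $t$-exactness statement in the style of \cite{BC24} (Theorem \ref{one-step}), and conclude perversity of the composite applied to $\CL_\phi$. (The paper's one-step proof goes through Fourier transform, Mackey and descent arguments on the generic subcategories rather than your proposed single-degree concentration of deep level Deligne--Lusztig fibres, but that is a difference of mechanism, not of structure; note also that $t$-exactness holds only on the generic subcategories $D^{\psi_i}$, membership in which is arranged at each stage by the twist with $\CL_i$.) However, there is a genuine gap: you treat all $d+1$ steps uniformly as generic and $t$-exact, but the first step, from $G^{-1}=T$ to $G^0$, carries no genericity at all --- $\phi_{-1}$ is merely a depth-zero character, the corresponding induction is the classical depth-zero parabolic induction of character sheaves, it is not $t$-exact, and your genericity-based concentration argument has nothing to bite on there. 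The paper handles this step separately by quoting Lusztig \cite[\S 4.3]{Lusztig_85I}: $\Psi_0^\dag(\CL_{-1})$ is semisimple perverse, and is simple exactly when $\CL_{-1}$ has trivial stabilizer in $W_{G^0_0}$; the generic $t$-exact equivalences are applied only for the steps $i\geq 1$.

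A second gap concerns the regular case: the inference ``perverse with $\End=\ov\BQ_\ell$, hence simple'' is not valid --- a perverse sheaf can have scalar endomorphisms without being simple (e.g.\ $j_!\ov\BQ_\ell[1]$ on $\BA^1$ for $j$ the inclusion of $\BG_m$). One needs semisimplicity first, and in the paper semisimplicity is precisely what the depth-zero step supplies before the fully faithful $t$-exact equivalences $\Psi_i^\dag$, $i\geq 1$, carry simple summands to simple summands; regularity of $\phi$ enters through the triviality of the stabilizer of $\CL_{-1}$ in $W_{G^0_0}$ at that step, not through an endomorphism computation of the full induction. Your $W$-translate/endomorphism count is correct in substance (the paper uses it later, in the comparison over the very regular locus), but as the sole route to simplicity here it is incomplete. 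Repairing your outline thus requires exactly the missing ingredient: treat the depth-zero step by the classical character-sheaf result, and reserve the genericity-based $t$-exactness for the higher steps.
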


For a precise statement, see \Cref{multi-step-perverse}.

\subsection{Iterated generic induction}
To prove \Cref{thm:intro_perverse}, we shall express $\pInd_{\CI_{\phi, U, r}}^{G_r} \CL_\phi$ through an iterated  induction procedure, following an idea of \cite{BC24}. Let $G \supseteq P = LN \supseteq T$ be a parabolic subgroup, with Levi factor $L \supseteq T$ and unipotent radical $N$. If $\ov N$ denotes the opposite of $N$, and $s =r/2$, we have the closed subgroup
\[
P_{s,r} = L_r N_{s:r} \ov N_{s+:r},
\]
of $G$. (In \cite{BC24} instead of $P_{s,r}$ the group $P_r$ was used.) There are natural maps $G_r \stackrel{i}{\hookleftarrow} P_{s,r} \stackrel{p}{\twoheadrightarrow} L_r$. 
In \S\ref{sec:one-step} we define a functor
\[\pInd_{P_{s,r}!}^{G_r} \colon D_{L_r}(L_r) \to D_{G_r}(G_r), \]
given by $i_! \circ p^\ast$ combined with an averaging functor, where $D_{-}(-)$ denote the equivariant bounded derived category of $\ell$-adic constructible sheaves. As in \cite[\S4]{BC24}, for an $(L,G)$-generic element $X_\psi \in (L_{r:r})^* \subseteq (G_{r:r})^*$, we have the generic subcategories $D_{L_r}^{\psi}(L_r) \subseteq D_{L_r}(L_r)$ and $D_{G_r}^{\psi}(G_r) \subseteq D_{G_r}(G_r)$. We then prove the following analogue of \cite[Theorem 5.16]{BC24}. (See \Cref{one-step} for a more precise statement).

\begin{theorem}\label{thm:generic_induction}
The functor $\pInd_{P_{s,r}!}^{G_r}$ restricts to a $t$-exact equivalence
\[
\pInd_{P_{s,r}!}^{G_r} \colon D_{L_r}^{\psi}(L_r) \stackrel{\sim}{\to} D_{G_r}^{\psi}(G_r).
\]
\end{theorem}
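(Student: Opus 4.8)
The plan is to adapt the proof of \cite[Theorem~5.16]{BC24}, replacing the parahoric-like subgroup $P_r = L_r N_r \ov N_r$ used there by the asymmetric group $P_{s,r} = L_r N_{s:r} \ov N_{s+:r}$, and to point out the places where the argument must be modified. First I would introduce the parabolic restriction functor $\pRes_{P_{s,r}}^{G_r} \colon D_{G_r}(G_r) \to D_{L_r}(L_r)$ --- the left adjoint, up to the normalizing shift, of $\pInd_{P_{s,r}!}^{G_r}$ --- defined as in \cite[\S5]{BC24} via $p_! \circ i^*$ together with an averaging functor along $P_{s,r} \stackrel{p}{\twoheadrightarrow} L_r$. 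A direct computation, tracing the genericity condition through $i_!, p^*$ (resp. through $i^*, p_!$), shows that $\pInd_{P_{s,r}!}^{G_r}$ carries $D_{L_r}^{\psi}(L_r)$ into $D_{G_r}^{\psi}(G_r)$ and that $\pRes_{P_{s,r}}^{G_r}$ carries $D_{G_r}^{\psi}(G_r)$ into $D_{L_r}^{\psi}(L_r)$; here one uses that the two genericity conditions are cut out by the single $(L,G)$-generic element via $X_\psi \in (L_{r:r})^* \subseteq (G_{r:r})^*$.

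The heart of the proof is to show that the unit $\id \to \pRes_{P_{s,r}}^{G_r} \circ \pInd_{P_{s,r}!}^{G_r}$ and the counit $\pInd_{P_{s,r}!}^{G_r} \circ \pRes_{P_{s,r}}^{G_r} \to \id$ become isomorphisms after restriction to the generic subcategories. Via smooth and proper base change one reduces this to a Mackey-type computation: the composite $\pRes_{P_{s,r}}^{G_r} \circ \pInd_{P_{s,r}!}^{G_r}$ is given by a correspondence over $G_r$ that is stratified according to the relative position of two copies of $P_{s,r}$. The open (diagonal) stratum contributes the identity functor, while for every other stratum the contribution is the cohomology along some nonzero affine-space direction of the restriction of a rank-one Artin--Schreier-type local system; the $(L,G)$-genericity of $X_\psi$ makes that restriction nontrivial, so the stratum contributes $0$. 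This is exactly the vanishing mechanism of \cite[\S5]{BC24} (which in turn builds on \cite{Nie_24} and on Lusztig's arguments for finite reductive groups); the only genuine change is the bookkeeping in the dimension counts and in the description of the strata caused by the asymmetry between $N_{s:r}$ and $\ov N_{s+:r}$ in $P_{s,r}$. Granting this, $\pInd_{P_{s,r}!}^{G_r}$ and $\pRes_{P_{s,r}}^{G_r}$ are mutually quasi-inverse equivalences between $D_{L_r}^{\psi}(L_r)$ and $D_{G_r}^{\psi}(G_r)$.

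It remains to establish $t$-exactness. The map $i \colon P_{s,r} \hookrightarrow G_r$ is a closed immersion, so $i_! = i_*$ is $t$-exact; the map $p \colon P_{s,r} \twoheadrightarrow L_r$ is an affine-space fibration, so $p^*$ is $t$-exact after the normalizing shift and $p_!$ is right $t$-exact; and, as in \cite{BC24}, the averaging functors along $P_{s,r} \subseteq G_r$ are right $t$-exact. Hence both $\pInd_{P_{s,r}!}^{G_r}$ and $\pRes_{P_{s,r}}^{G_r}$ are right $t$-exact. Now, for $M$ in perverse degrees $\geq 0$ and $N$ in perverse degrees $\leq -1$, the adjunction gives $\Hom(N, \pInd_{P_{s,r}!}^{G_r} M) = \Hom(\pRes_{P_{s,r}}^{G_r} N, M)$, which vanishes because $\pRes_{P_{s,r}}^{G_r} N$ lies in perverse degrees $\leq -1$; therefore $\pInd_{P_{s,r}!}^{G_r}$ is also left $t$-exact, hence $t$-exact, and then so is its quasi-inverse $\pRes_{P_{s,r}}^{G_r}$. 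This proves the theorem.

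The main obstacle is the stratified vanishing in the second step: one has to make the stratification explicit in terms of the mixed Moy--Prasad levels $s$ and $s+$, and then check, stratum by stratum, that genericity of $X_\psi$ really forces the relevant Artin--Schreier local system to be nontrivial in the fiber direction. The preservation of the generic subcategories, the adjunctions, and the exactness inputs are essentially formal consequences of the corresponding facts in \cite{BC24}; what genuinely has to be verified is that passing from $P_r$ to the asymmetric $P_{s,r}$ does not disrupt this mechanism.
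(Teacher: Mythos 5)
Your overall plan (follow \cite[Theorem 5.16]{BC24} with $P_r$ replaced by $P_{s,r}$) is the right one, but the central step as you describe it is not correct. You take the quasi-inverse to be the plain restriction functor and assert that in the Mackey-type stratification of $\pRes_{P_{s,r}!}^{G_r}\circ\pInd_{P_{s,r}!}^{G_r}$ every non-diagonal stratum is killed by genericity. Genericity does not do that much: the Artin--Schreier vanishing coming from condition $(\mathfrak{ge1})$ (this is \Cref{vanish} and \Cref{tech-1}) only kills the strata lying outside $P_{s,r}N_{G_r}(T_r)P_{s,r}$. The strata indexed by nontrivial double cosets $w\in W_{L_r}\backslash W_{G_r}/W_{L_r}$ survive and contribute $\pInd_{L_r\cap{}^wP_{s,r}!}^{L_r}\pRes_{P_{s,r}\cap{}^wL_r!}^{{}^wL_r}({}^wM)$, as in the Mackey formula \Cref{Mackey}; these summands are generic for ${}^w\psi$ rather than for $\psi$ and are nonzero in general (already in the case $L=T$ one gets a sum over the Weyl group). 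They disappear only after convolving with $\CL_{\psi,r}$, and it is condition $(\mathfrak{ge2})$ that makes this convolution kill the $w\neq e$ terms. Consequently plain $\pRes_{P_{s,r}!}^{G_r}$ does not preserve the generic subcategory, the unit/counit of your adjunction are not isomorphisms, and the quasi-inverse asserted in \Cref{one-step} is $\CL_{\psi,r}\star_!\pRes_{P_{s,r}!}^{G_r}$. The paper's proof is organized around exactly this point: one first computes $\pInd_{P_{s,r}}^{G_r}(\CL_{\psi,r}[\dim\fkl])\cong\CF_{\psi,r}[\dim\fkg]$ (\Cref{correspond}), then uses \Cref{descent}, \Cref{resmonoidal} and \Cref{Mackey2} to show that convolution with $\CL_{\psi,r}$ cuts out precisely the $w=e$ summand. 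Your sketch conflates these two distinct vanishing mechanisms, so the equivalence is not established.

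The $t$-exactness step also has problems. First, the adjunction you invoke, $\Hom(N,\pInd_{P_{s,r}!}^{G_r}M)=\Hom(\pRes_{P_{s,r}}^{G_r}N,M)$, is not available: $\pInd_{P_{s,r}!}^{G_r}$ is a \emph{left} adjoint (of $\pRes_{P_{s,r}*}^{G_r}$), and $\pRes_{P_{s,r}!}^{G_r}$ is left adjoint to $\pInd_{P_{s,r}*}^{G_r}$, so your identity presupposes $\pInd_!\cong\pInd_*$ (or the already-proved equivalence, plus stability of the generic subcategory under perverse truncation), which is part of what has to be shown. Second, right $t$-exactness of $\Av_{P_{s,r}!}^{G_r}$ is not automatic, since it involves a pushforward along the positive-dimensional $G_r/P_{s,r}$. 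The paper (following \cite[Proposition 5.15]{BC24}) instead proves in \Cref{one-step-detail} that $\pInd_{P_{s,r}!}^{G_r}\cong\pInd_{P_{s,r}*}^{G_r}$ and $\CL_{\psi,r}\star_!\pRes_{P_{s,r}!}^{G_r}\cong\CL_{\psi,r}\star_*\pRes_{P_{s,r}*}^{G_r}$ on the generic subcategories, and deduces $t$-exactness by combining the right $t$-exactness of the $!$-version with the left $t$-exactness of the $*$-version (via Artin's theorem). Your proposal never brings in the $*$-functors, so this mechanism is missing; to repair the argument you should introduce them and prove the $!$-$*$ comparison on the generic subcategories.
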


This construction can be iterated. For $0\leq i \leq d$ we may put
\[
\Psi_i \colon D_{G^{i-1}_{r_{i-1}}}(G^{i-1}_{r_{i-1}}) \to G_{G^i_{r_i}}(G^i_{r_i}), \quad \CF \mapsto  \CL_i \otimes \varepsilon_i^* \pInd_{P^i_{s_{i-1}, r_{i-1}}}^{G^i_{r_{i-1}}} \CF,
\]
where $\varepsilon \colon G_{r_i}^i \to G_{r_{i-1}}^i$ is the natural projection and $\CL_i$ is the local system on $G^i_{r_i}$ attached to $\phi_i$.

\begin{theorem}[see Theorem \ref{multi-step-perverse}]
Up to explicit shifts, $\pInd_{\CI_{\phi, U, r}}^{G_r} \CL_\phi$ is isomorphic to $\bigoplus_\CF m_\CF \Psi_d \dots \Psi_1(\CF)$,
where $\CF$ ranges over irreducible summands of $\Psi_0(\CL_{-1})$ with multiplicity $m_{\CF}$.
\end{theorem}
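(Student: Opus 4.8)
The plan is to unwind the definition of $\pInd_{\CI_{\phi,U,r}}^{G_r}$ and to match it step-by-step with the composite $\Psi_d\circ\dots\circ\Psi_1$, using the factorization of the Iwahori-like group $\CI_{\phi,U,r}$ along the Howe factorization. The key observation is that the group $\CK_{\phi,r} = G_{0:r}^r G_{s_0:r}^1\cdots G_{s_{d-1}:r}^d$ has a nested structure: truncating the last factor gives the analogous group $\CK_{\phi_{<d},r_{d-1}}$ built from the Howe datum of $G^{d-1}$, and similarly $\CK_{\phi,r}^+\cap\ov U_r$ decomposes compatibly. Correspondingly, $\CI_{\phi,U,r}$ is an extension built from $P^d_{s_{d-1},r_{d-1}}$ (at the top level, inside $G^d = G$, with Levi $G^{d-1}$) and the lower Iwahori-like group $\CI_{\phi_{<d},U\cap G^{d-1},r_{d-1}}$ sitting inside $G^{d-1}_{r_{d-1}}$. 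First I would make this factorization precise as a lemma: there is a group-scheme identification realizing $\CI_{\phi,U,r}$ as a ``fiber product'' of $P^d_{s_{d-1},r_{d-1}}$ and $\CI_{\phi_{<d},U_{d-1},r_{d-1}}$ over $G^{d-1}_{r_{d-1}}$, compatible with the maps $h$ and $\delta$ to $G_r$ and $\bar T_r$ respectively, and with the twist by $\CL_d$ accounting for the character $\phi_d$ (which is trivial on the truncated part but contributes on $G^d_{r}$).

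Granting that, the main computation is to show that pushforward-pullback along $\CI_{\phi,U,r}$ factors as a composite of pushforward-pullbacks: $h_!\delta^*\CL_\phi$ decomposes, via base change along the factorization lemma and the projection formula, as $i^d_!\,(p^d)^*$ applied to $\varepsilon_d^*$ of the lower induction, tensored with $\CL_d$ — which is precisely $\Psi_d$ of $\pInd_{\CI_{\phi_{<d},U_{d-1},r_{d-1}}}^{G^{d-1}_{r_{d-1}}}\CL_{\phi_{<d}}$. Here I would be careful about two points: (i) the averaging functors on both sides must be shown to be compatible — the averaging over the unipotent directions at the top level commutes with the lower-level averaging because they act on disjoint ``coordinate blocks'' of the Moy--Prasad filtration, and (ii) the $\bar T_r$-equivariance structures match, so that $\CL_\phi = \prod\phi_i$ on $\bar T_r$ breaks up correctly as $\CL_d$ (pulled to $G^d_r$) together with $\CL_{\phi_{<d}}$ on $\bar T_{r_{d-1}}$. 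Iterating this identity $d$ times reduces $\pInd_{\CI_{\phi,U,r}}^{G_r}\CL_\phi$ to $\Psi_d\circ\dots\circ\Psi_1$ applied to $\pInd_{\CI_{\phi_{-1},\dots,r_{-1}}}^{G^0_{r_{-1}}}$ of the bottom-level datum — but $G^{-1} = T$, $r_{-1}=0$, and the bottom Iwahori-like group is the Borel-type group $B_{r_{-1}}\cap$, so this bottom induction is exactly $\Psi_0(\CL_{-1})$; applying $\Psi_d\circ\dots\circ\Psi_1$ to its decomposition into irreducibles $\bigoplus_\CF m_\CF\,\CF$ and noting that each $\Psi_i$ is additive yields $\bigoplus_\CF m_\CF\,\Psi_d\cdots\Psi_1(\CF)$, up to the explicit shifts coming from the various $[N]$'s (which one tracks by bookkeeping the dimensions of the unipotent fibers $N_{s:r}$, $\ov N_{s+:r}$ at each stage). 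The role of \Cref{thm:generic_induction} here is to guarantee that everything in sight lives in the relevant generic subcategories, so that the intermediate objects behave well; perversity of the final object (needed to make ``irreducible summands'' meaningful) then follows from the $t$-exactness in that theorem, which is why the decomposition into the $\CF$'s is legitimate.

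The hard part, I expect, will be the group-theoretic factorization lemma and especially verifying that the \emph{averaging} functors at different levels of the iteration are mutually compatible: the functor $\pInd$ is not literally $h_!\delta^*$ but $h_!\delta^*$ followed by an averaging (anti-invariance/Mackey-type) operation, and composing averagings coming from nested Iwahori-like subgroups requires a Mackey-style computation showing no ``cross terms'' survive — this is the analogue in our closed setting of the subtle compatibility arguments in \cite[\S4--5]{BC24}, and it is where the $(G^i,G^{i+1})$-genericity of the $\phi_i$ (hence the vanishing of the relevant generic cohomology along non-trivial cosets) is used decisively. A secondary bookkeeping nuisance is keeping the cohomological shifts $N_\phi$ and the shifts implicit in each $\Psi_i$ consistent; I would compute these once and for all in terms of $\sum_i \dim(N^i_{s_{i-1}:r_{i-1}}) $ and the analogous opposite-unipotent contributions, and state the comparison ``up to explicit shifts'' precisely in \Cref{multi-step-perverse} rather than carrying the shifts through every line.
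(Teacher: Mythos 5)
Your overall architecture is the same as the paper's: peel off one step of the Howe factorization at a time, identify the one-step piece with $\Psi_{i}$ up to the level-change and the twist by $\CL_i$, iterate, and finish by additivity of the $\Psi_i$ applied to the decomposition of the depth-zero object $\Psi_0(\CL_{-1})$ into irreducible (perverse) summands. The bookkeeping you gesture at (passing from level $r$ to level $r_{i}$, and splitting off the character $\phi_{i}$) is exactly what the paper handles with the quotients $\bar G^i_r = G^i_r/\CQ^i_{\phi,r}$ and the projection-formula/base-change statements \Cref{proj} and \Cref{q-ind}, feeding into the induction of \Cref{multi-step}.

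The substantive divergence is at the step you single out as the hard part. The paper does \emph{not} prove the factorization $\pInd_{\bar\CI^{i+1}_r}^{\bar G^{i+1}_r} \cong \pInd_{\bar P^{i+1}_{s_i,r}}^{\bar G^{i+1}_r}\circ\pInd_{\bar\CI^{i}_r}^{\bar G^{i}_r}$ by a direct computation on the induction side with a Mackey-style analysis of the averaging functors; it proves the corresponding identity for the \emph{restriction} functors (\Cref{iteration}), which is a short formal argument: forgetful and inflation functors compose on the nose, and the only geometric input is proper base change for one Cartesian square relating $\bar\CI^{i+1}_r$, $\bar\CI^i_r$ and $\bar P^{i+1}_{s_i,r}$. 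The induction-side statement then follows by adjunction (\Cref{adjoint}). In particular, no cancellation of cross terms and no use of the $(G^i,G^{i+1})$-genericity of the $\phi_i$ occurs at this step --- your claim that genericity is ``used decisively'' there misplaces where genericity enters. It is needed only through \Cref{one-step} (to get $t$-exactness/full faithfulness of the one-step functors on the generic subcategories, hence the perversity and simplicity assertions of \Cref{multi-step-perverse}) and, for the depth-zero layer, through Lusztig's classical result that $\Psi_0^\dag(\CL_{-1})$ is semisimple perverse --- note that the legitimacy of the decomposition into the $\CF$'s comes from this depth-zero semisimplicity, not from the $t$-exactness theorem. If you insist on the direct induction-side route with an explicit averaging-compatibility/Mackey computation, you are taking on a significantly harder (and, as sketched, not clearly convergent) argument for something that is formal; I would restructure your proof around the restriction functors and adjunction as in \Cref{iteration}, and keep your genericity input where the paper keeps it.
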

Note that Theorem \ref{thm:intro_perverse} follows from this combined with $\Psi_0(\CL_{-1})$ being perverse (up to a shift) by Lusztig's original work \cite{Lusztig_85I}, and the fact that a shift of $\Psi_i$ preserves perversity, which is a consequence of Theorem \ref{thm:generic_induction}.

\subsection{Trace of Frobenius}
In the case of finite groups of Lie type the Frobenius trace of a character sheaf attached to character $\th$ of a torus is related to the Deligne--Lusztig character $R_T^G(\th)$. Recall from \cite{CI_MPDL} that in our setting with Moy--Prasad quotients $G_r$ with $r\geq 0$, there are analoga of the classical Deligne--Lusztig varieties, the so called deep level Deligne--Lusztig varieties $X_r = X_{T,U,r}$ (which are essentially the preimage of $U_r$ under the Lang map $G_r \to G_r$). Attached to $\phi$ there is thus
the virtual $G_r^F$-module
\[
R_{T_r}^{G_r}(\phi) = \sum_i (-1)^i H_c^i(X_r, \ov \BQ_\ell)[\phi]
\]
which we may also regard as a (virtual) character of $G_r^F$. On the other hand, as $\CL_\phi$ comes equipped with an isomorphism $F^\ast \CL_\phi \stackrel{\sim}{\to} \CL_\phi$, the character sheaf $\pInd^{G_r}_{\CI_{\phi, U, r}} \CL_\phi[N_\phi]$ from \Cref{thm:intro_perverse} has an associated trace-of-Frobenius function $\chi_{\pInd^{G_r}_{\CI_{\phi, U, r}} (\CL_\phi)} \colon G_r^F \to \ov\BQ_\ell$. Our next main result shows that this compares nicely to the Deligne--Lusztig character:

\begin{theorem}[see \Cref{mainthm}]
Assume that $q$ is sufficiently large. Assume that $T$ is elliptic and $\phi$ is regular. Then  
\[\chi_{\pInd^{G_r}_{\CI_{\phi, U, r}} \CL_\phi[N_\phi]} = (-1)^{\dim G_r} R^{G_r}_{T_r}(\phi).\]
\end{theorem}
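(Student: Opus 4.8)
The plan is to write both sides as explicit (virtual) class functions on $G_r^F$ and to identify them. For the left-hand side I would unwind the construction: $\pInd^{G_r}_{\CI_{\phi,U,r}}$ is $h_!\,\delta^\ast$ followed by the averaging functor for the adjoint $G_r$-action. Since $\delta\colon\CI_{\phi,U,r}\twoheadrightarrow\bar T_r$ is a fibration whose fibres are iterated extensions of additive groups (built from the Moy--Prasad layers $\CK_{\phi,r}\cap U_r$ and $\CK_{\phi,r}^+\cap\ov U_r$) and $h$ is a closed immersion, the Grothendieck--Lefschetz trace formula gives that the trace of Frobenius of $h_!\delta^\ast\CL_\phi$ is, up to a normalizing shift, the function supported on $\CI_{\phi,U,r}^F$ with value $\phi(\delta(x))$ at $x$. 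Passing through the averaging functor then yields, up to the shift $N_\phi$ and a sign, the normalized Iwahori-like Deligne--Lusztig induction
\[
g\ \longmapsto\ \frac{1}{|\CI_{\phi,U,r}^F|}\sum_{\substack{x\in G_r^F\\ x^{-1}gx\in\CI_{\phi,U,r}^F}}\phi\bigl(\delta(x^{-1}gx)\bigr).
\]
Regularity of $\phi$ ensures, via \Cref{multi-step-perverse}, that $\pInd^{G_r}_{\CI_{\phi,U,r}}\CL_\phi[N_\phi]$ is \emph{simple} perverse, so this class function is $\pm$ a single irreducible character; the value of $N_\phi$ is then pinned down by a count of dimensions of the relevant Moy--Prasad layers.

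\textbf{The Deligne--Lusztig side.} The essential input here is the alternative description of deep-level Deligne--Lusztig characters from \cite{Nie_24}. For $T$ elliptic and $\phi$ regular, that work governs $R^{G_r}_{T_r}(\phi)$ through the very same Iwahori-like subgroup $\CI_{\phi,U,r}$: the variety $X_r=X_{T,U,r}$ (the Lang preimage of $U_r$) carries a $G_r^F$-equivariant fibration over the appropriate flag-type space, and evaluating $H^\ast_c(X_r)[\phi]$ along it---using the Howe factorization $(G^i,\phi_i,r_i)$---identifies $R^{G_r}_{T_r}(\phi)$, up to an explicit sign, with exactly the function displayed above. I would then match normalizations and signs, which amounts to an equality of the form $(-1)^{N_\phi}q^{a}=(-1)^{\dim G_r}q^{b}$ with $a=b$; this is checked by expressing $\dim G_r$, $\dim X_r$ and the combinatorial shift $N_\phi$ in terms of the root datum and the exponents $r_i$.

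\textbf{An inductive variant.} A second route, closer to the architecture of the paper, is to argue stage by stage along the Howe factorization. By \Cref{multi-step-perverse}, $\pInd^{G_r}_{\CI_{\phi,U,r}}\CL_\phi$ is, up to shift, $\bigoplus_\CF m_\CF\,\Psi_d\cdots\Psi_1(\CF)$ with $\CF$ running over the summands of $\Psi_0(\CL_{-1})$. Trace of Frobenius being additive and compatible with $\otimes$, with pullback along the projections $\varepsilon$, and---by \Cref{thm:generic_induction} together with a Lefschetz computation for $\pInd_{P_{s,r}!}^{G_r}$ modelled on \cite{BC24}---with each generic induction, one shows that $\chi_{\Psi_i(-)}$ realizes on functions the $i$-th stage of deep-level Deligne--Lusztig induction, the base term $\chi_{\Psi_0(\CL_{-1})}$ being a classical Deligne--Lusztig character (twisted by $\phi_0$) by \cite{Lusztig_85I}. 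Iterating, and using the matching transitivity/product formulas for $R^{G_r}_{T_r}$ from \cite{CI_MPDL, Nie_24}, gives the claim, with the cumulative sign assembling to $(-1)^{\dim G_r}$.

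\textbf{Main obstacle.} The hard part will be the trace-of-Frobenius computation for the generic induction $\pInd_{P_{s,r}!}^{G_r}$: running the Lang-map and averaging analysis of \cite{BC24} with $P_{s,r}=L_rN_{s:r}\ov N_{s+:r}$ in place of $P_r$, controlling the asymmetric contributions of $N_{s:r}$ and $\ov N_{s+:r}$, and recognizing the outcome as a twist of deep-level Harish-Chandra/Deligne--Lusztig induction. Comparably delicate is verifying that the sheaf-theoretic tower $\Psi_d\cdots\Psi_0$ and the cohomological description of $R^{G_r}_{T_r}(\phi)$ from \cite{Nie_24} agree stagewise---that is, that $\CI_{\phi,U,r}$ governs both sides---which is where the regularity of $\phi$, the ellipticity of $T$, and the full strength of \cite{Nie_24} enter. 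With these in hand, the exact sign $(-1)^{\dim G_r}$ and shift $N_\phi$ drop out of lengthy but routine dimension bookkeeping.
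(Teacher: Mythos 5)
There is a genuine gap at the very first step of your strategy. For an elliptic torus $T$ the Borel $B=TU$, and hence the subgroup $\CI_{\phi,U,r}$, is \emph{not} $F$-stable, so $\pInd^{G_r}_{\CI_{\phi,U,r}}\CL_\phi$ carries no naive Weil structure and the Grothendieck--Lefschetz computation you sketch does not produce the ordinary induced class function $g\mapsto \frac{1}{|\CI^F_{\phi,U,r}|}\sum_{x\in G_r^F,\ x^{-1}gx\in\CI^F_{\phi,U,r}}\phi(\delta(x^{-1}gx))$. The paper has to first establish $F$-equivariance and independence of $B$ via the intermediate-extension description on the very regular locus (\Cref{ext}), and then compute the trace through the auxiliary construction with the sequence $\ud B=(B,FB,\dots,F^nB)$ and the twisted Frobenius $F_Y$ (\Cref{copy}, \Cref{Frob-trace}); the outcome is a \emph{Lang-twisted} sum over $Z_g$, with the condition $h^{-1}F(h)\in F\CI_r^\dag$, not ordinary induction from $\CI^F_{\phi,U,r}$. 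For the same reason your claim that \cite{Nie_24} identifies $R^{G_r}_{T_r}(\phi)$ with ``exactly the function displayed above'' is incorrect: what \cite{Nie_24} (via \Cref{niecompare}) gives is the identification of $R^{G_r}_{T_r}(\phi)$ with $\CR^{G_r}_{T_r}(\phi)$, whose trace is again the twisted sum over $Z_g$; ordinary induction from $\CI^F_{\phi,U,r}$ does not compute $\pm R^{G_r}_{T_r}(\phi)$.

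The second gap is the determination of the constant. Both sides are only pinned down up to nonzero scalars: on the sheaf side because the simple perverse sheaf's Frobenius acts by an unknown scalar (\Cref{sheafscalar}), and on the representation side because $\pm R^{G_r}_{T_r}(\phi)$ is irreducible with $F^n$ acting by a scalar (\cite[Theorem 10.6]{BC24}, \Cref{algscalar}). These scalars are not explicit, so no amount of ``dimension bookkeeping'' with $N_\phi$ and $\dim G_r$ can extract the sign. The paper instead evaluates both sides at a \emph{very regular} element $\g$: \Cref{ext} gives $\chi(\g)=(-1)^{\dim G_r}\sum_{w}\phi^w(\g)$, \Cref{veryregu} gives the same sum for $R^{G_r}_{T_r}(\phi)(\g)$, and the hypothesis $q\gg 0$ is used precisely (via \cite[Theorem 10.9]{BC24}) to guarantee a very regular $\g$ with nonvanishing value --- an input your proposal never invokes, even though the statement assumes it. Your alternative stagewise route (matching $\chi_{\Psi_i}$ with a step of deep-level Deligne--Lusztig induction along the Howe factorization) is not what the paper does and would require new transitivity/compatibility results between the generic induction $\pInd^{G_r}_{P_{s,r}!}$ on trace functions and deep-level Deligne--Lusztig induction, which you acknowledge as the ``main obstacle'' but do not supply.
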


Note that this is similar, but more general than \cite[Theorem 10.9]{BC24}, in that we do not assume $\CL_\phi$ to be $(T,G)$-generic (i.e., $\phi$ has only one step in its Howe factorization). In the proof we again follow ideas from \cite{Lusztig_85I,BC24}.

\subsection{Application: positive depth Springer hypothesis}

In the situation of finite groups of Lie type, Springer's hypothesis expresses the restriction of a (classical) Deligne--Lusztig character $R_T^G(\theta)$ to unipotent elements (i.e., the so called Green function) as the Fourier transform of the characteristic function of the coadjoint orbit of a semisimple element of the dual Lie algebra. This was shown by Kazhdan \cite{Kaz77} and later a proof via character sheaves was given by Kazhdan--Varshavsky \cite{KV06}. A similar statement was formulated, and proven in the case of $(T,G)$-generic characters $\phi$, by Chan--Oi in \cite[Theorem 10.9]{CO25}. The proof made use of the character sheaves from \cite{BC24}. We generalize this result, removing the genericity assumption on $\phi$. The proof uses our version of character sheaves. To state the result we need some notation (see \S\ref{sec:Springer} for details). Let $\mathbf{g}_r$ denote the Lie algebra of $G_r$, and let $\mathbf{g}^\ast_{-r}$ denote its dual. Attached to a fixed character $\psi \colon k \to \ov\BQ_\ell^\times$ there is a well-behaved Wittvector-valued Fourier transform functor $T_{\psi}^{\bfg_{-r}^*} : D_c^b(\bfg^*_{-r}) \to D_c^b(\bfg_r)$, see \cite[\S9.1.3]{CO25} and \S\ref{sec:Springer}. Let $C((\bfg^*_{-r})^F)$ and $C(\bfg_{-r}^F)$ be the spaces of functions on $(\bfg^*_{-r})^F$ and $\bfg_r^F$ respectively. Let ${\rm T}_\psi^{\bfg_{-r}^*}: C((\bfg^*_{-r})^F)) \to C(\bfg_r^F)$ denotes the classical Fourier transformation of functions given by \[{\rm T}_\psi^{\bfg_{-r}^*}(f)(Y) = \sum_{X \in (\bfg^*_{-r})^F} f(X) \psi(X(Y)).\] If $\CF \in D_c^b(\bfg^*_{-r})$ is a Weil sheaf, then $T_{\psi}^{\bfg_{-r}^*} \CF$ inherits a natural Weil structure. We denote by $\chi_\CF \in C((\bfg^*_{-r})^F)$ and $\chi_{T_{\psi}^{\bfg_{-r}^*} \CF} \in C(\bfg_r^F)$ the associated functions respectively under the sheaf-function dictionary. Then we have $\chi_{T_{\psi}^{\bfg_{-r}^*} \CF}(Y) = {\rm T}_\psi^{\bfg_{-r}^*}(\chi_{\CF})(Y)$. 

Let $(\bfg_r)_{\nilp}$ denote the the set of nilpotent elements in $\bfg_r$.


\begin{theorem}[see Corollary \ref{cor:Springer}]
Assume that $p, q$ are sufficiently large and $T$ is elliptic. Then there exists a regular element $X \in (\bft_{-r}^\ast)^F \subseteq (\bfg_{-r}^\ast)^F$, depending on $\phi$, such that for any $u\in (\bfg_r)^F_{\nilp}$ we have
    \[q^{\frac{1}{2} M_\phi} \cdot R^{G_r}_{T_r}(\phi)(\exp(u)) = {\rm T}_{\psi}^{\bfg^*_{-r}}(1_{G_r^F \cdot X})(u),\] where $1_{G_r^F \cdot X}$ is the characteristic function for the coadjoint $G_r^F$-orbit of $X$, $M_\phi = \sum_{i=0}^d (\dim G^i_{r_{i-1}} - \dim G^{i-1}_{r_{i-1}})$, $\exp \colon (\bfg_r)_{\nilp} \stackrel{\sim}{\to} (G_r)_{\unip}$ is the exponential map, and $\delta_{G_r \cdot X}$ is the extension-by-zero of the constant sheaf on the coadjoint orbit of $X$.
\end{theorem}

Using this theorem we establish in \Cref{cor:orbit} a relation, conjectured in \cite[Conjecture 8.4]{IvanovNie_24}, between deep level Deligne--Lusztig characters and Kirillov's orbit method (which is developed in \cite{BoyarchenkoS_08} in the relevant setting). 

\subsection{Outline} After some preliminaries in \S\ref{sec:pre}, we study the generic Iwahori-like induction and state \Cref{thm:generic_induction} in \S\ref{sec:one-step}; its proof is given in \S\ref{sec:one-step-proof}. In \S\ref{sec:I-induction} we define the functor \eqref{eq:induction_intro} and reformulate it by iterated generic Iwahori-like inductions. In \S\ref{sec:comparison} we express \eqref{eq:induction_intro} as an intermediate extension of perverse sheaf on the very regular locus. Then in \S\ref{sec:copy} and \S\ref{sec:Frob-trace} we compute the Frobenius trace of \eqref{eq:induction_intro} and compare it with the deep level Deligne--Lusztig character. Finally, in \S\ref{sec:Springer} we prove the positive level Springer hypothesis.

\subsection*{Acknowledgments} It is clear from the context that our proofs crucially follow the strategies and methods from \cite{BC24} and \cite{CO25}. The third named author would like to thank Charlotte Chan for helpful discussions. We are indebted to George Lusztig for explaining the existence of regular characters/local systems when $q$ is sufficiently large. 

The first named author gratefully acknowledges the support of the German Research Foundation (DFG) via the Heisenberg program (grant nr. 462505253).

\section{Preliminary} \label{sec:pre}
Let $p \neq \ell$ be two different prime numbers and let $\BF_q$ a finite field of cardinality $q$ and of characteristic $p$.

\subsection{}
Let $X$ be an $\BF_q$-variety with Frobenius automorphism $\s$. We denote by $D(X)$ the bounded derived category of constructible $\ell$-adic sheaves. For a subset $Y \subseteq X$ we denote by $\d_Y \in D(X)$ the extension-by-zero sheaf of the constant sheaf on $Y$. We put $\d_y = \d_Y$ if $Y = \{y\}$ for some $y \in X$. 

Let $\CF \in D(X)$ be a complex endowed with an isomorphism $\varphi: \s^*\CF \overset \sim \to \CF$. Then we have the trace-of-Frobenius function \[\chi_{\CF, \varphi}: X(\BF_q) \to \ov\BQ_\ell, \quad x \mapsto \sum_i (-1)^i \tr(\varphi, \CH^i(\CF)_x),\] where $\CH^i(\CF)_x$ denotes the stalk at $x$ of the $i$th cohomology sheaf of $\CF$.

\subsection{}
Let $H$ be an $\BF_q$-rational algebraic group and let $X$ be an $\BF_q$-rational $H$-variety. We denote by $D_H(X)$ the $H$-equivariant derived category of constructible $\ell$-adic sheaves. Let $\mu: H \times X \to X$ denote the associated action map. In view of the diagram \[\xymatrix{ 
& {H \times X} \ar[dl]_{\pr_1} \ar[dr]^{\pr_2} \ar[r]^\mu & X \\
H & & X
}\] we define the following convolution functors \begin{align*}
    D(H) \times D(X) &\to D(X) \\ 
    (\CF, \CY) &\mapsto \CF \star_!\CY := \mu_!(\pr_1^* \CF \otimes \pr_2^* \CY); \\
    (\CF, \CY) &\mapsto \CF \star_*\CY := \mu_*(\pr_1^* \CF \otimes \pr_2^* \CY).
\end{align*}

Let $M \subseteq H$ be a closed subgroup. Let $\For_M^H: D_H(X) \to D_M(X)$ be the forgetful functor. Consider the morphism \[\iota_X: X \to H/M \times X, \quad x \mapsto (e, x).\] We have the following natural equivalences of categories \[\iota_X^! \circ \For_M^H, \ \iota_X^* \circ \For_M^H: D_H(H/M \times X) \overset \sim \to D_M(X).\] The averaging functors are defined as \begin{align*} \Av_{M!}^H: D_M(X) \overset {(\iota_X^! \circ \For_M^H)\i} \longrightarrow D_H(H/M \times X) \overset {\pr_{2 !}} \longrightarrow D_G(X); \\ \Av_{M*}^H: D_M(X) \overset {(\iota_X^* \circ \For_M^H)\i} \longrightarrow D_H(H/M \times X) \overset {\pr_{2 *}} \longrightarrow D_H(X). \end{align*} Note that $\Av_{M!}^H$ and $\Av_{M*}^H$ are left and right adjoint to $\For_M^H$ respectively.

\subsection{}
Let $X \to S$ be a vector bundle of constant rank $n \ge 1$ and let $X' \to S$ be its dual bundle. Consider the following natural diagram \[\xymatrix{
& {X \times_S X'} \ar[dl]_{\pr_1} \ar[dr]^{\pr_2} \ar[r]^\k & \BG_a \\
X & & X',
}\] where $\k: X \times_S X' \to \BG_a$ is the canonical pairing. Let $\CL$ be a non-trivial multiplicative rank one local system on $\BG_a$. The associated Fourier-Deligne transform is defined by \[\FT_\CL: D(X) \to D(X'), \quad \CF \to \pr_{2!}(\pr_1^*\CF \otimes \kappa^*\CL)[n].\]

\section{Generic Iwahori-like induction} \label{sec:one-step}
Let $k$ be a non-archimedean local field with residue field $\BF_q$. Let $\brk$ be the completion of a maximal unramified extension of $k$. Let $F$ be the Frobenius automorphism of $\brk$ over $k$. Denote by $\CO_k$ and $\COk$ the integer rings of $k$ and $\brk$ respectively. Let $\varpi$ be a fixed uniformizer of $k$.

Let $G$ be a $k$-rational reductive group which splits over $\brk$. We assume that $p \neq 2$ is not a bad prime for $G$ and $p \nmid |G_\der|$. Let $\bx$ be a point in the Bruhat-Tits building of $G$ over $k$. We denote by $\CG_{\bx, 0}$ the associated parahoric $\CO_k$-group model of $G$. For $0 \le r \in \widetilde \BR := \BR \sqcup \{s+; s \in \BR\}$ let $\CG_{\bx, r}$ be the $r$th Moy-Prasad subgroup of $\CG_{\bx, 0}$. For $0 \le s \le r \in \widetilde \BR$  we define \[G_{s:r} = \CG_{\bx, s} / \CG_{\bx, r+},\] which is an $\BF_q$-rational perfectly smooth affine group scheme.

Let $H \subseteq G$ be a closed subgroup. Following \cite[\S2.5]{CI_MPDL} one can construct an $\ov\BF_q$-rational closed subgroup $H_{s:r} \subseteq G_{s:r}$ in a similar way. We put $H_r = H_{0:r}$ for simplicity. If, moreover, $H$ is a $k$-rational subgroup, then $H_{s:r}$ is defined over $\BF_q$, and we still denote by $F$ the induced Frobenius automorphisms on $H$ or $H_{s:r}$.

\subsection{} \label{subsec:aff-root}
Let $T \subseteq G$ be a fixed $k$-rational and $\brk$-split maximal torus. For any subgroup $H$ normalized by $T$, let $\Phi_H = \Phi(H, T)$ denote the set of roots of $T$ appearing in $H$. Let $\a \in \Phi = \Phi_G$. We denote by $\a^\vee: \BG_m \to T$ its coroot and denote by $G^\a: \BG_a \to G$ a fixed parameterization of its root subgroup. We put $T^\a = \Im \a^\vee$.

Let $\tPhi = (\Phi \sqcup \{0\}) \times \BZ$ be the set of affine roots of $G$. Let $f \in \tPhi$. We write $\a_f \in \Phi \sqcup \{0\}$ and $n_f \in \BZ$ such that $f = (\a_f, n_f)$. We view $f$ as an affine function on the apartment $X_*(T)\otimes \BR$ of $T$ such that $f(x) = -\a_f(x) + n_f$. 

From now on we assume that $\bx$ lies in $X_*(T) \otimes \BR$. Let $f \in \tPhi$ such that $f(\bx) \ge 0$. Define
\begin{align*}
u_f\colon \BA^f := \BA^1 \to G_r, \quad &x \mapsto G^{\a_f}([x]\varpi^{n_f}) &\text{if $f \in \tPhi \sm \BZ$,} \\
u_f \colon \BA^f := X_*(T) \otimes \ov \BF_q \to G_r,\quad &\l \otimes x \mapsto \l(1 + [x]\varpi^{n_f}) &\text{if $f \in \BZ_{\geq 1}$,}
\end{align*}
where $\l \in X_*(T)$, $x \in \ov \BF_q$ and $[x] \in \CO_\brk$ denotes the Teichm\"{u}ler lift of $x$.

Let $B = U T$ be a Borel subgroup with unipotent radical $U$. Let $\Phi^+ = \Phi_B$, which is a positive system of $\Phi$. Associate to $\Phi^+$ a linear order $\le$ on $\tPhi$ such that for any $f, f' \in \tPhi$ we have $f < f'$ if either $f(\bx) < f(\bx)$ or $f(\bx) = f'(\bx)$ and $\a_{f'} - \a_f$ is a nontrivial sum of roots in $\Phi^+$. 

Let $r \in \BZ_{\ge 0}$. We set $\tPhi^+_r = \{f \in \tPhi; f > 0, f(\bx) \le r\}$. Consider the abelian group $\BA[r] := \prod_{f \in \tPhi_r^+} \BA^f$. Then there is an isomorphism of varieties
\begin{equation}\label{eq:u}
u \colon \BA[r] \to U_r G_{0+:r}, \quad (x_f)_f \mapsto \prod_f u_f(x_f),
\end{equation}
where the product is taken with respect to any fixed order on $\tPhi_r^+$. Let $E \subseteq \tPhi_r^+$. We define $\BA^E = \prod_{f \in E} \BA^f$ which is viewed as a subgroup of $\BA[r]$ in the natural way. Define
\[
G_r^E = u(\BA^E) \subseteq U_r G_{0+:r}.
\]
Moreover, we denote by
\[
\pr_E: U_r G_{0+:r} \cong \BA[r] \to \BA^E \cong G_r^E
\]
the natural projection. We write $\pr_E = \pr_f$ if $E = \{f\}$. Note that if $E + E, \BZ_{\ge 0} + E \subseteq E \cup \tPhi^{r+}$ with $\tPhi^{r+} = \{f \in \tPhi; r < f(\bx) \}$, then $G_r^E$ is a subgroup of $U_r G_{0+:r}$.

\subsection{} \label{subsec:parabolic}
Let $P = L N \supseteq B$ be a parabolic subgroup of $G$, where $L \supseteq T$ is the Levi subgroup and $N$ is the unipotent radical. Denote  by $\ov N$ the opposite of $N$. Let $N_T = N_L(T)$ be the normalizer of $T$ in $L$. We set $W_L = N_T / T$ and $W_{L_r} = (N_T)_r / T_r$, which are called the Weyl groups of $L$ and $L_r$ respectively.

Let $r \in \BZ_{\ge 0}$ and put $s = r/2$. Consider the following subgroups \[P_{s, r} := L_r N_{s,r} \subseteq G_r, \text{ where } N_{s,r} := N_{s:r} \ov N_{s+:r}.\] Notice that $N_{s,r}$ is a normal subgroup of $P_{s, r}$. We have the natural inclusion/quotient maps \[i: P_{s, r} \hookrightarrow G_r, \quad p: P_{s, r} \to P_{s, r} / N_{s,r} \cong L_r.\] Inspired by \cite{Lusztig_17} and \cite{BC24} we introduce the following induction and restriction functors \begin{align*}
    \pInd_{P_{s, r} !}^{G_r},\ \pInd_{P_{s, r} *}^{G_r} &: D_{L_r}(L_r) \to G_{G_r}(G_r); \\
    \pRes_{P_{s, r} !}^{G_r},\ \pRes_{P_{s, r} *}^{G_r} &: D_{G_r}(G_r) \to G_{L_r}(L_r),
\end{align*} where \begin{align*}
    \pInd_{P_{s, r} !}^{G_r} &:= \Av_{P_{s, r} !}^{G_r} \circ i_! \circ p^* \circ \Infl_{L_r}^{P_{s, r}}; \\
    \pInd_{P_{s, r} *}^{G_r} &:= \Av_{P_{s, r} *}^{G_r} \circ i_* \circ p^! \circ \Infl_{L_r}^{P_{s, r}}; \\
    \pRes_{P_{s, r} !}^{G_r} &:= p_! \circ i^* \circ \For_{L_r}^{G_r};\\ 
    \pRes_{P_{s, r} *}^{G_r} &:= p_* \circ i^! \circ \For_{L_r}^{G_r}.
\end{align*}
Note that $\pInd_{P_{s, r} !}^{G_r}$ and  $\pRes_{P_{s, r} !}^{G_r}$ are left adjoint to $\pRes_{P_{s, r} *}^{G_r}$ and $\pInd_{P_{s, r} *}^{G_r}$ respectively, see \cite[Lemma 3.6]{BC24}.

Consider the varieties
\begin{align*}
    \wh G_r &:= \{(g,h) \in G_r \times G_r; h^{-1} g h \in P_{s,r}\}, \\
    \wt G_r &:= \{(g, h P_{s, r}) \in G_r \times G_r/P_{s,r}; h^{-1} g h \in P_{s,r}\},
  \end{align*} together with the morphisms \begin{align*}
    &\eta: \wh G_r \to L_r, \quad & (g, h ) &\mapsto p(h\i g h); \\ 
    &\alpha: \wh G_r \to \wt G_r, \quad & (g,h) &\mapsto (g,hP_{s,r});\\
    &\pi: \wt G_r \to G_r, \quad & (g, h P_{s, r}) &\mapsto g.
\end{align*} For $\CF \in D_{L_r}(L_r)$ we denote by $\wt{\eta^* \CF} \in D(\wt G_r)$ the unique object such that $\a^*\wt{\eta^*\CF} \cong \eta^* \CF$. Then we have the following standard result (see \cite[Lemma 3.8]{BC24}).
\begin{lemma}\label{otherconstruct}
    For $\CF \in D_{L_r}(L_r)$ we have \[\pInd_{P_{s, r} !}^{G_r}(\CF) \cong \pi_! \wt{\eta^* \CF}[2 \dim N_{s, r}] \in D(G_r)\] by forgetting the equivariant structure on both sides.
\end{lemma}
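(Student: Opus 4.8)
The plan is to unwind the definition
\[
\pInd_{P_{s,r}!}^{G_r} \;=\; \pr_{2!}\circ(\iota_{G_r}^!\circ\For_{P_{s,r}}^{G_r})^{-1}\circ i_!\circ p^*\circ\Infl_{L_r}^{P_{s,r}},
\]
and to exhibit the object $(\iota_{G_r}^!\circ\For_{P_{s,r}}^{G_r})^{-1}(i_!p^*\Infl_{L_r}^{P_{s,r}}\CF)$ explicitly on the incidence variety $\wt G_r$, following \cite[Lemma 3.8]{BC24}. Write $P=P_{s,r}$. Recall that the conjugation action identifies $G_r\times^{P}G_r\xrightarrow{\sim}G_r/P\times G_r$ via $[h,g]\mapsto(hP,hgh^{-1})$; under it $\iota_{G_r}$ (the map $\iota_X$ of \S\ref{sec:pre} for $X=G_r$) is $g\mapsto(eP,g)$, $\pr_2$ is $(hP,g)\mapsto g$, and $\iota_{G_r}^!\circ\For_{P}^{G_r}\colon D_{G_r}(G_r/P\times G_r)\xrightarrow{\sim}D_{P}(G_r)$ is the equivalence used to define the averaging functor.

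First I would write down the candidate. Let $j\colon\wt G_r\hookrightarrow G_r/P\times G_r$, $(g,hP)\mapsto(hP,g)$; this is a closed immersion, equivariant for conjugation on $G_r$ and left translation on $G_r/P$. Since $\eta$ is in fact $G_r$-\emph{invariant} ($\eta(g_0gg_0^{-1},g_0h)=p(h^{-1}gh)=\eta(g,h)$) and $\alpha\colon\wh G_r\to\wt G_r$ is a $G_r$-equivariant $P$-torsor, the descent $\wt{\eta^*\CF}$ acquires a natural $G_r$-equivariant structure. Put $\wt\CG:=j_!\wt{\eta^*\CF}[2\dim N_{s,r}]\in D_{G_r}(G_r/P\times G_r)$. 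Because $\pr_2\circ j=\pi$ we get $\pr_{2!}\wt\CG=\pi_!\wt{\eta^*\CF}[2\dim N_{s,r}]$, the right-hand side of the lemma; so it suffices to show $\wt\CG\cong(\iota_{G_r}^!\circ\For_{P}^{G_r})^{-1}(i_!p^*\Infl_{L_r}^{P}\CF)$, which by the uniqueness built into that equivalence reduces to checking $\iota_{G_r}^!\For(\wt\CG)\cong i_!p^*\Infl_{L_r}^{P}\CF$ in $D_{P}(G_r)$.

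Next I would verify this identity, which is where the shift materializes. The preimage $\iota_{G_r}^{-1}(j(\wt G_r))$ is $\{(eP,g):g\in P\}$, giving a Cartesian square with vertical maps $i\colon P\hookrightarrow G_r$ and $j$, and horizontal maps $\iota_{G_r}$ and $\iota'\colon P\hookrightarrow\wt G_r$, $g\mapsto(g,eP)$. As $j$ is a closed immersion, $j_!=j_*$, so base change for $\iota^!$ along the proper map $j$ gives $\iota_{G_r}^!j_!\wt{\eta^*\CF}\cong i_!\,\iota'^!\wt{\eta^*\CF}$. Now $\iota'$ is the inclusion of the fibre over $eP$ of the smooth fibration $\wt G_r\to G_r/P$ (fibre $\cong P$), and in any trivialization $\wt G_r|_U\cong U\times P$ over an open $U\ni eP$ the sheaf $\wt{\eta^*\CF}$ is pulled back from the $P$-factor (it depends only on $p(h^{-1}gh)$); hence by the Künneth/purity computation on $\{eP\}\times P\hookrightarrow U\times P$ we obtain $\iota'^!\wt{\eta^*\CF}\cong\iota'^*\wt{\eta^*\CF}[-2d]$ with $d:=\dim G_r/P$, while $\iota'^*\wt{\eta^*\CF}\cong(p|_P)^*\CF=p^*\Infl_{L_r}^{P}\CF$ (the identification using the $L_r$-equivariance of $\CF$). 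Combining, $\iota_{G_r}^!\For(\wt\CG)\cong i_!p^*\Infl_{L_r}^{P}\CF\,[-2d]\,[2\dim N_{s,r}]$, so the claim follows once $d=\dim N_{s,r}$, i.e. $\dim G_r=\dim L_r+2\dim N_{s,r}$. This last equality is a short affine-root count: for each pair $\{\a,-\a\}$ with $\a\in\Phi_N$ one uses $s=r/2$ to match the affine roots with value at $\bx$ in $[0,s)$ (resp.\ $(s,r]$) against those in $[s,r]$ (resp.\ $[0,s]$); this balance is exactly what makes $P_{s,r}$, rather than the group $P_r$ of \cite{BC24}, convenient here. All the isomorphisms above are $P$-equivariant, so the identification holds after forgetting equivariance, as in the statement; I suppress Tate twists throughout.

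The hard part is not conceptual but bookkeeping: one must carry the $G_r$-equivariant structure on $\wt\CG$ carefully enough that the uniqueness clause in the equivalence $\iota_{G_r}^!\circ\For_{P}^{G_r}$ genuinely pins down $(\iota_{G_r}^!\circ\For_{P}^{G_r})^{-1}(i_!p^*\Infl_{L_r}^{P}\CF)=\wt\CG$; one must confirm that $\wt{\eta^*\CF}$ is lisse transverse to $\iota'$ so that $\iota'^!=\iota'^*[-2d]$ applies; and one must get the affine-root count $d=\dim N_{s,r}$ exactly right. None of these is deep — it is the template of \cite[\S3]{BC24} transported to the group $P_{s,r}$.
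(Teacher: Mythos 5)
Your argument is correct and is precisely the standard unwinding behind this statement: the paper gives no independent proof, simply citing \cite[Lemma 3.8]{BC24}, and your computation (descent of $\eta^*\CF$ to $\wt G_r$, proper base change along the Cartesian square through $\iota_{G_r}$, and purity giving $(\iota')^!\cong(\iota')^*[-2\dim G_r/P_{s,r}]$) is exactly that template, consistent with the paper's use of the $\iota^!$-version of $\Av_{P_{s,r}!}^{G_r}$. The only point specific to this setting is the identity $\dim G_r/P_{s,r}=\dim N_{s,r}$ (equivalently $\dim N_{s,r}=\dim N_r$, the first item of \Cref{dimension}), and your affine-root count using $s=r/2$ verifies it correctly, so the shift $[2\dim N_{s,r}]$ comes out as stated.
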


\subsection{}
For any $r \in \BZ_{\ge 1}$ we define the following $\ov\BF_q$-linear spaces \[\fkg := G_{r:r}, \quad \fkp := P_{r:r}, \quad \fku := U_{r:r}, \quad \fkl := L_{r: r}, \quad \fkt := T_{r:r},\] whose dual spaces are denoted by $\fkg^*$, $\fkl^*$ and $\fkt^*$ respectively. Moreover,  we set $\fkt^\a := T^\a_{r:r} \subseteq \fkt$ for $\a \in \Phi$.

\begin{definition}
We say $X \in \fkl^*$ is $(L, G)$\emph{-generic} if the following two conditions hold:

($\mathfrak{ge1}$) $X |_{\fkt^\a} \neq 0$ for  $\a \in \Phi_G \sm \Phi_L$;

($\mathfrak{ge2}$) The stabilizer of $X |_\fkt$ in $W_G$ equals $W_L$.
\end{definition}

Now we fix an $(L, G)$-generic element $X_\psi \in \fkl^*$. Denote by $G_r \cdot X_\psi$ the coadjoint $G_r$-orbit of $X_\psi$. Define \[\CL_{\psi} := \FT_\CL(\d_{X_\psi}), \quad \CF_\psi :=  \FT_\CL(\d_{G_r \cdot X_\psi}).\] Let $i_\fkl: \fkl \hookrightarrow L_r$ and $i_\fkg: \fkg \hookrightarrow G_r$ be the natural inclusions. Define \[\CL_{\psi, r} := i_{\fkl *} \CL_\psi[\dim \fkl] \in D_{L_r}(L_r), \quad \CF_{\psi, r} := i_{\fkg *} \CF_\psi[\dim \fkg] \in D_{G_r}(G_r).\] Following \cite{BC24}, the associated $(L, G)$-generic subcategories are defined by \begin{align*}
        D_{L_r}^\psi(L_r) &:= \CL_{\psi, r} \star_! D_{L_r}(L_r) = \CL_{\psi, r} \star_* D_{L_r}(L_r) \\ D_{G_r}^\psi(L_r) &:= \CF_{\psi, r} \star_! D_{G_r}(G_r) = \CF_{\psi, r} \star_* D_{G_r}(G_r).
\end{align*}

Now we state the main result of this section, whose proof is given in \Cref{sec:one-step-proof}.
\begin{theorem} \label{one-step}
     Let $r \in \BZ_{\ge 0}$ and $s = r/2$. Then  $\pInd_{P_{s, r} !}^{G_r}$ restricts to a $t$-exact equivalence from $D_{L_r}^\psi(L_r)$ to $D_{G_r}^\psi(G_r)$, whose inverse is given by $\CL_{\psi, r} \star_! \pRes_{P_{s, r} !}^{G_r}$.
\end{theorem}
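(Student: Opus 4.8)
The plan is to follow closely the strategy of \cite[\S5]{BC24} for the parabolic induction functor $\pInd_{P_r}^{G_r}$, adapting it to the modified group $P_{s,r} = L_r N_{s:r}\ov N_{s+:r}$. The first reduction is to the level of the generic Hecke-type categories: by the very definition of $D_{L_r}^\psi(L_r)$ and $D_{G_r}^\psi(G_r)$ as $\CL_{\psi,r}\star_!(-)$ resp. $\CF_{\psi,r}\star_!(-)$, and using that $\pInd_{P_{s,r}!}^{G_r}$ commutes with convolution (this follows formally from the projection formula and base change, exactly as in \cite[\S5]{BC24}), it suffices to show that $\pInd_{P_{s,r}!}^{G_r}$ sends $\CL_{\psi,r}$ to (a shift/twist of) $\CF_{\psi,r}$ and to produce an explicit inverse on the level of these generators. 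The candidate inverse is $\CL_{\psi,r}\star_!\pRes_{P_{s,r}!}^{G_r}$, and one checks by adjunction $(\pInd_{P_{s,r}!}^{G_r}\dashv\pRes_{P_{s,r}*}^{G_r})$ together with the self-duality/equivalence $\pRes_{P_{s,r}!}^{G_r}\cong\pRes_{P_{s,r}*}^{G_r}$ on the generic subcategories (a consequence of the genericity hypothesis ($\mathfrak{ge1}$), which forces the relevant Artin--Schreier-type sums along the $\ov N$-directions to vanish outside a contractible locus) that the unit and counit of the resulting adjunction are isomorphisms on $D^\psi$.

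The computational heart is the identification $\pInd_{P_{s,r}!}^{G_r}\CL_{\psi,r}\cong \CF_{\psi,r}$ up to an explicit shift. Using \Cref{otherconstruct}, $\pInd_{P_{s,r}!}^{G_r}\CL_{\psi,r}\cong \pi_!\,\wt{\eta^*\CL_{\psi,r}}[2\dim N_{s,r}]$, so I would compute the stalks of $\pi_!\wt{\eta^*\CL_{\psi,r}}$ over a point $g\in G_r$ by integrating over the fiber $\{hP_{s,r}: h^{-1}gh\in P_{s,r}\}$. On the Lie-algebra-level component $\fkg = G_{r:r}$, $\CL_{\psi,r}$ is the Fourier transform of $\d_{X_\psi}$, so these stalk integrals become Fourier-dual expressions; the genericity of $X_\psi$ (conditions $\mathfrak{ge1}$, $\mathfrak{ge2}$) is exactly what makes the non-semisimple directions contribute only Gauss-sum normalization factors and collapses the fiber integral onto the coadjoint orbit $G_r\cdot X_\psi$, yielding $\FT_\CL(\d_{G_r\cdot X_\psi}) = \CF_\psi$ after bookkeeping of the shift. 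This is the standard "cleanness/genericity kills the bad directions" computation of \cite[Prop.~5.x, Lemma~5.x]{BC24}, and the one place where the passage from $P_r$ to $P_{s,r}$ matters: the split $N_{s:r}\ov N_{s+:r}$ changes which affine-root spaces are "above" and "below" the Moy--Prasad jump, but since $s = r/2$ and $X_\psi\in\fkl^*$ is $(L,G)$-generic, the pairing $X_\psi$ pairs the $N_{s:r}$-part with the $\ov N_{s+:r}$-part non-degenerately, and this is precisely the non-degeneracy needed for the Fourier integral to localize. I would verify the affine-root bookkeeping using the $u_f$ parameterization of \S\ref{subsec:aff-root} and the product decomposition \eqref{eq:u}.

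For $t$-exactness: once the equivalence is established, $t$-exactness of $\pInd_{P_{s,r}!}^{G_r}$ on $D^\psi$ follows from showing separately that (a shift of) it is right $t$-exact and left $t$-exact. Right $t$-exactness is automatic since $i_!p^*$ (a shift thereof) and $\Av_!$ are right $t$-exact by general affine/smooth-morphism considerations (using that $p$ is a smooth affine fibration of relative dimension $\dim N_{s,r}$ and $i$ is affine). For left $t$-exactness on $D^\psi$ one uses the equivalence already proven: the inverse $\CL_{\psi,r}\star_!\pRes_{P_{s,r}!}^{G_r}$ is likewise right $t$-exact (again $p_!$ after the affine $i^*$, convolution with a perverse local system), hence $\pInd_{P_{s,r}!}^{G_r}$, being its inverse, is left $t$-exact; combined with right $t$-exactness we get $t$-exactness. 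I expect the main obstacle to be the precise fiberwise Fourier-transform computation identifying $\pInd_{P_{s,r}!}^{G_r}\CL_{\psi,r}$ with $\CF_{\psi,r}$ with the correct shift $N_\phi$-type constant — keeping track of the various dimension shifts ($\dim N_{s,r}$, $\dim\fkl$, $\dim\fkg$) and of the Gauss-sum/normalization factors arising from the $\ov N_{s+:r}$-directions, and checking that the genericity condition ($\mathfrak{ge1}$) is exactly strong enough for the argument to go through with $P_{s,r}$ in place of the $P_r$ of \cite{BC24}. The rest is a formal transport of the adjunction-plus-generators argument from \cite[\S5]{BC24}.
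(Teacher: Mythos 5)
Your overall strategy is the right one (it is the paper's: adapt \cite[\S5]{BC24} to $P_{s,r}$), and your $t$-exactness mechanism --- both the functor and its proposed inverse are right $t$-exact up to shift, so once they are shown to be mutually inverse on the generic subcategories one gets $t$-exactness --- is essentially how \Cref{one-step-detail} concludes. But there is a genuine gap in the middle: you assert that the equivalence reduces ``formally, by projection formula and base change'' to the computation of $\pInd_{P_{s,r}!}^{G_r}\CL_{\psi,r}$ on the generator, and that ``the rest is a formal transport'' of the adjunction argument. In the actual proof, showing that the unit and counit are isomorphisms, i.e.\ that $\CL_{\psi,r}\star_!\pRes_{P_{s,r}!}^{G_r}\circ\pInd_{P_{s,r}!}^{G_r}\cong\mathrm{id}$ on $D^\psi_{L_r}(L_r)$ and $\pInd_{P_{s,r}!}^{G_r}\circ(\CL_{\psi,r}\star_!\pRes_{P_{s,r}!}^{G_r})\cong\mathrm{id}$ on $D^\psi_{G_r}(G_r)$, is not formal. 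It requires: (i) \Cref{ind}, that $\pInd_{P_{s,r}!}^{G_r}$ lands in the generic subcategory at all (proved by a Fourier-support argument you do not address); (ii) a Mackey-type formula (\Cref{Mackey}), whose geometric input \Cref{tech-1} constructs explicit $\BG_a$-embeddings into $h\i N'_{s,r}h\cap P_{s,r}$ hitting $\fkt^\a$ for $\a\notin\Phi_L$, together with the dimension identity \Cref{fiber-dim}, which is sensitive to the asymmetric jumps $s$ vs.\ $s+$ in $N_{s,r}=N_{s:r}\ov N_{s+:r}$; and (iii) the descent/support statement (\Cref{descent}, resting on \Cref{tech-2}) that $\CL_{\psi,r}\star_!\varphi_!N$ is supported on $L_r$ and agrees with $\CL_{\psi,r}\star_!\pRes_{P_{s,r}!}^{G_r}N$. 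Even the convolution compatibility you invoke (\Cref{phi-star}) holds only under a support hypothesis, which is exactly what \Cref{descent} supplies. So your claim that the generator computation is ``the one place where the passage from $P_r$ to $P_{s,r}$ matters'' is inaccurate: the new affine-root arguments of \Cref{tech-1}, \Cref{tech-2} and \Cref{fiber-dim} are precisely where the modified group intervenes, and omitting them leaves the equivalence unproved.

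On the step you do treat in detail, your route differs slightly from the paper's: you propose a direct stalkwise Fourier computation of $\pi_!\wt{\eta^*\CL_{\psi,r}}$, localizing onto the coadjoint orbit by genericity. The paper instead stratifies the relevant locus by equivalence classes of affine roots, kills all strata below the top one using condition ($\mathfrak{ge1}$) (\Cref{vanish}), identifies the top stratum with the Lie-algebra picture, and then quotes \cite{BC24} for the Lie-algebra induction (\Cref{ind-Lie}, \Cref{correspond}). Your version is plausible but would need the same affine-root bookkeeping to control which root spaces survive, so it buys no real simplification; if you carry it out, make sure the vanishing on the lower strata is argued via ($\mathfrak{ge1}$) applied to the coroot directions $\fkt^{\a_f}$ with $\a_f\in\Phi\sm\Phi_L$, as in \Cref{vanish}, rather than via a heuristic non-degenerate pairing between the $N_{s:r}$ and $\ov N_{s+:r}$ parts.
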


\section{Induction of Yu type} \label{sec:I-induction} 
Let $\phi: T^F = T(k) \to \ov \BQ_\ell^\times$ be a character of depth $r \in \BZ_{\ge 0}$. Thanks to \cite{Kaletha_19}, there exists a Howe factorization $(G^i, \phi_i, r_i)_{-1 \le i \le d}$ of $\phi$ satisfying the following conditions: \begin{itemize}
    \item $T= G^{-1} \subseteq G^0 \subsetneq G^1 \subsetneq \cdots \subsetneq G^d = G$ are $k$-rational Levi subgroups of $G$;

    \item $0 =: r_{-1} < r_0 < \cdots < r_{d-1} \le r_d$ if $d \ge 1$ and $0 \le r_0$ if $d = 0$;

    \item $\phi_i: G^i(k) \to \ov\BQ_\ell^\times$ is a  character of depth $r_i$, and trivial over $G_\der^i(k)$ \footnote{In \cite[Definition 3.6.2]{Kaletha_19}, $\phi_i$ is only required to be trivial over $G^i_\sc(k)$. However, the proof of \cite[Lemma 3.6.9]{Kaletha_19} shows that $\phi_i$ can be chosen to be trivial over $G_\der^i(k)$.} for $-1 \le i \le d$.

    \item $\phi_i$ is of depth $r_i$ and is $(G^i, G^{i+1})$-generic in the sense of \cite[\S 9]{Yu_01} for $0 \le i \le d-1$;

    \item $\phi = \prod_{i=-1}^d \phi_i |_{T(k)}$.
\end{itemize} 
Choose a Borel subgroup $B = T U$ with unipotent radical $U$ such that each $G^i$ is a standard levi subgroup with respect to $B$. Let \[P^i := G^{i-1} (U \cap G^i) = G^{i-1} N^i\] be the parabolic subgroup of $G^i$ with Levi subgroup $G^{i-1}$ and unipotent radical $N^i$. Let $s_i = r_i/2$ for $0 \le i \le d$. We define \[P_{s_{i-1}, r}^i = \varepsilon_i\i(P_{s_{i-1}, r_{i-1}}^i),\] where $\varepsilon_i: G_r^i \to G_{r_{i-1}}^i$ is the natural projection and $P^i_{s_{i-1}, r_{i-1}}$ is defined as in \Cref{subsec:parabolic} by taking $P = P^i$, $r =r_i$ and $s = s_{i-1}$.

Let $\ov U$ be the opposite of $U$ and set $T^i_\der = G^i_\der \cap T$. We consider the following subgroups. \begin{align*}
    \CK_{\phi, r} &= G^0_{0:r} G^1_{s_0:r} \cdots G^d_{s_{d-1}:r}; \\
    \CK_{\phi, r}^+ &= G^0_{0+:r} G^1_{s_0+:r} \cdots G^d_{s_{d-1}+:r}; \\
    T_{\phi, r} & = (T^0_\der)_{0+:r} (T^1_\der)_{r_0+:r} \cdots (T^d_\der)_{r_{d-1}+:r}; \\
    \CE_{\phi, r} &= (\CK_{\phi, r}^+ \cap U_r) T_{\phi, r} (\CK_{\phi, r}^+ \cap \ov U_r); \\
    \CI_{\phi, U, r}^\dag & = (\CK_{\phi, r} \cap U_r) T_{\phi, r} (\CK_{\phi, r}^+ \cap \ov U_r); \\
    \CI_{\phi, U, r} &= T_r \CI_{\phi, U, r}^\dag = (\CK_{\phi, r} \cap U_r) T_r (\CK_{\phi, r}^+ \cap \ov U_r); \\
    \CQ_{\phi, r}^i &= (G_\der^i)_{r_i+:r}(T_\der^{i+1})_{r_i+:r} \cdots (T_\der^d)_{r_{d-1}+:r}, \ -1 \le i \le d.    
\end{align*} 
We define $\bar G^i_r = G_r^i / \CQ^i_{\phi, r}$ and let $\bar P^i_{s_{i-1}, r}$, $\bar T^i_r$, $\bar \CI^i_r$, $\hat G^{i-1}_r$ be the natural images of $P_{s_{i-1}, r}^i$, $T^i_r$, $\CI_{\phi, U, r} \cap G^i_r$ and $G^{i-1}_r$ in $\bar G^i_r$ respectively. We put \[\bar T_r = \bar T^{-1}_r = T_r/ \CQ_{\phi, r}^{-1} = T_r / T_{\phi, r}.\] Noticing that $\phi|_{T_r(\BF_q)}$ factors through $\bar T_r(\BF_q) = T_r(\BF_q) / T_{\phi, r}(\BF_q)$, we denote by $\CL_\phi$ the associated rank one multiplicative local system on $\bar T_r$.

\subsection{} Let $p_i: \bar P_{s_{i-1}, r}^i \to \bar G^{i-1}_r$ and $\d_i: \bar \CI^i_r \to \bar T_r$ be the natural projection maps. Let $j_i: \bar P_{s_{i-1}, r}^i \hookrightarrow \bar G^i_r$ and $h_i: \bar \CI^i_r \hookrightarrow \bar G^i_r$ be the inclusion maps.

 

Now we introduce the following induction/restriction functors \begin{align*}
    \pInd_{\bar P^i_{s_{i-1}, r}}^{\bar G^i_r} = \Av_{\bar P^i_{s_{i-1}, r} !}^{\bar G^i_r} \circ j_{i !} \circ p_i^* \circ \Infl_{\bar G^{i-1}_r}^{\bar P^i_{s_{i-1},r}} &: D_{\bar G^{i-1}_r}(\bar G^{i-1}_r) \to D_{\bar G^i_r}(\bar G^i_r); \\ \pRes_{\bar P^i_{s_{i-1}, r}}^{\bar G^i_r} = (\Infl_{\bar G^{i-1}_r}^{\hat G^{i-1}_r})\i \circ p_{i *} \circ j_i^! \circ \For_{\hat G^{i-1}_r}^{\bar G^i_r} &: D_{\bar G^i_r}(\bar G^i_r) \to D_{\bar G^{i-1}_r}(\bar G^{i-1}_r); \\ \pInd_{\bar \CI^i_r}^{\bar G^i_r} = \Av_{\bar \CI^i_r !}^{\bar G^i_r} \circ h_{i !} \circ \d_i^* \circ \Infl_{\bar T_r}^{\bar \CI^i_r} &: D_{\bar T_r}(\bar T_r) \to D_{\bar G^i_r}(\bar G^i_r); \\ \pRes_{\bar \CI^i_r}^{\bar G^i_r} = (\Infl_{\bar T_r}^{ \bar T^i_r})\i \circ \d_{i *} \circ h_i^! \circ \For_{\bar T_r^i}^{\bar G^i_r} &: D_{\bar G^i_r}(\bar G^i_r) \to D_{\bar T_r}(\bar T_r).
\end{align*} Note that the kernels of the natural projections $\bar P^i_{s_{i-1}, r} \to \bar G^{i-1}_r$, $\hat G^{i-1}_r \to \bar G^{i-1}_r$ and $\bar T^i_r \to \bar T_r$ are connected unipotent subgroups. Hence the associated inflation functors above are equivalences of categories, and hence  invertible.

\begin{lemma} \label{adjoint}
    The functors $\pInd_{\bar P^i_{s_{i-1}, r}}^{\bar G^i_r}$ and $\pInd_{\bar \CI^i_r}^{\bar G^i_r}$ are left adjoint to $\pRes_{\bar P^i_{s_{i-1}, r}}^{\bar G^i_r}$ and $\pRes_{\bar \CI^i_r}^{\bar G^i_r}$ respectively.
\end{lemma}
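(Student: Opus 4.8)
The plan is to deduce the two adjunctions by decomposing each induction/restriction functor into its constituent pieces and invoking the standard adjunctions for each piece, exactly as in \cite[Lemma 3.6]{BC24}. Recall the building blocks: (i) the averaging functor $\Av_{M!}^H$ is left adjoint to the forgetful functor $\For_M^H$ (this is stated in \S\ref{sec:pre}); (ii) for an inclusion of a closed subgroup $j$, the functor $j_!$ is left adjoint to $j^!$; (iii) for a smooth surjection $p$ with connected unipotent fibres (here a Moy--Prasad quotient map), $p^*$ is left adjoint to $p_*$ --- in fact, since the fibres are affine spaces, $p^*[2\dim]\simeq p^![-2\dim]$ up to a Tate twist, so $p^*$ and $p_*$ form an adjoint pair after the appropriate shift, and this is harmless since our functors are defined with $p^*$ in the induction and $p_*$ in the restriction (the shift only affects the unit/counit by a twist, not the adjunction itself); (iv) the inflation functors $\Infl$ appearing are equivalences of categories (as remarked just before the lemma, the relevant kernels are connected unipotent), hence each is both left and right adjoint to its inverse, and an equivalence composed on either side of an adjoint pair preserves the adjunction.

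First I would treat $\pInd_{\bar P^i_{s_{i-1}, r}}^{\bar G^i_r}$ and $\pRes_{\bar P^i_{s_{i-1}, r}}^{\bar G^i_r}$. Writing out
\[
\pInd_{\bar P^i_{s_{i-1}, r}}^{\bar G^i_r} = \Av_{\bar P^i_{s_{i-1}, r} !}^{\bar G^i_r} \circ j_{i !} \circ p_i^* \circ \Infl_{\bar G^{i-1}_r}^{\bar P^i_{s_{i-1},r}},
\]
\[
\pRes_{\bar P^i_{s_{i-1}, r}}^{\bar G^i_r} = (\Infl_{\bar G^{i-1}_r}^{\hat G^{i-1}_r})\i \circ p_{i *} \circ j_i^! \circ \For_{\hat G^{i-1}_r}^{\bar G^i_r},
\]
one checks that these are formed by composing, in opposite orders, the members of adjoint pairs $(\Av_!,\For)$, $(j_!,j^!)$, $(p^*,p_*)$, together with the equivalence $\Infl$ (whose inverse appears on the restriction side). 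A short diagram chase --- or simply the formal fact that a composite of left adjoints is left adjoint to the composite of the corresponding right adjoints --- gives the adjunction, provided one is careful that the $\Infl$ on the $\pInd$ side is $\Infl_{\bar G^{i-1}_r}^{\bar P^i_{s_{i-1},r}}$ while on the $\pRes$ side one uses $(\Infl_{\bar G^{i-1}_r}^{\hat G^{i-1}_r})\i$ together with $\For_{\hat G^{i-1}_r}^{\bar G^i_r}$; here one uses that $\For_{\hat G^{i-1}_r}^{\bar G^i_r}$ followed by restriction along $\bar P^i_{s_{i-1},r}\subseteq \hat G^{i-1}_r$ is compatible with the identifications, so that the two inflation conventions match up on the nose. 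The argument for $(\pInd_{\bar \CI^i_r}^{\bar G^i_r}, \pRes_{\bar \CI^i_r}^{\bar G^i_r})$ is identical, with $j_i$, $p_i$ replaced by $h_i$, $\d_i$ and with $\Infl_{\bar T_r}^{\bar \CI^i_r}$, $\Infl_{\bar T_r}^{\bar T^i_r}$ in place of the parabolic inflations; again $\d_i$ is a Moy--Prasad quotient with affine-space fibres, so $(\d_i^*,\d_{i*})$ is an adjoint pair up to a harmless twist.

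The main obstacle, such as it is, is bookkeeping rather than anything substantive: one must verify that the equivariance groups line up correctly through the chain of functors --- in particular that $j_i^!$ (resp. $h_i^!$) is being taken as a functor $D_{\bar G^i_r}(\bar G^i_r)\to D_{\bar P^i_{s_{i-1},r}}(\bar P^i_{s_{i-1},r})$ after the forgetful functor has cut down the equivariance to $\hat G^{i-1}_r$ (resp. $\bar T^i_r$), and that the resulting $\bar P^i_{s_{i-1},r}$- (resp. $\bar\CI^i_r$-) equivariant object is then pushed to $\bar G^{i-1}_r$- (resp. $\bar T_r$-) equivariance via $p_{i*}$ (resp. $\d_{i*}$) and the inverse inflation. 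All of this is formal and follows the template of \cite[Lemma 3.6, Lemma 3.8]{BC24} verbatim; I would simply cite that lemma and indicate that the same formal manipulation applies here, noting only the one new point that both $\bar P^i_{s_{i-1},r}$ and $\bar\CI^i_r$ are Moy--Prasad-type subgroups so that the smooth-surjection adjunction $(p^*,p_*)$ (resp. $(\d^*,\d_*)$) is available in the required form.
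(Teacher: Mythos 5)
Your proposal is correct and follows essentially the same route as the paper: the paper's proof is exactly the formal ``composite of left adjoints'' argument, with the only real content being the reconciliation $\For_{\hat G^{i-1}_r}^{\bar P^i_{s_{i-1},r}} \circ \Infl_{\bar G^{i-1}_r}^{\bar P^i_{s_{i-1},r}} = \Infl_{\bar G^{i-1}_r}^{\hat G^{i-1}_r}$ (and its analogue $\For_{\bar T_r}^{\bar T^i_r}\circ\Infl_{\bar T_r}^{\bar\CI^i_r}=\Infl_{\bar T_r}^{\bar T^i_r}$) plus commuting the forgetful functor past $p_{i*}\circ j_i^!$, which is the bookkeeping point you flag. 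Two small corrections: the containment goes $\hat G^{i-1}_r\subseteq \bar P^i_{s_{i-1},r}$, not the reverse, and no shift is needed for the pair $(p_i^*,p_{i*})$ --- the plain pullback--pushforward adjunction $p^*\dashv p_*$ is what is used, so your digression about $p^![-2\dim]$ is unnecessary (and, taken literally, would be misleading).
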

\begin{proof}
    Since $\For_{\hat G^{i-1}_r}^{\bar P^i_{s_{i-1}, r}} \circ \Infl_{\bar G^{i-1}_r}^{\bar P^i_{s_{i-1}, r}} = \Infl_{\bar G^{i-1}_r}^{\hat G^{i-1}_r}$, the right adjoint of $\pInd_{\bar P^i_{s_{i-1}, r}}^{\bar G^i_r}$ is given by \begin{align*} &\quad\ (\Infl_{\bar G^{i-1}_r}^{\bar P^i_{s_{i-1}, r}})\i \circ p_{i *} \circ j_i^! \circ \For_{\bar P^i_{s_{i-1}, r}}^{\bar G^i_r} \\
    &= (\Infl_{\bar G^{i-1}_r}^{\hat G^{i-1}_r})\i \circ \For_{\hat G^{i-1}_r}^{\bar P^i_{s_{i-1}, r}} \circ p_{i *} \circ j_i^! \circ \For_{\bar P^i_{s_{i-1}, r}}^{\bar G^i_r} \\ &= (\Infl_{\bar G^{i-1}_r}^{\hat G^{i-1}_r})\i \circ p_{i *} \circ j_i^! \circ \For_{\hat G^{i-1}_r}^{\bar G^i_r} \\ &=\pRes_{\bar P^i_{s_{i-1}, r}}^{\bar G^i_r}. 
    \end{align*} As $\For_{\bar T_r}^{\bar T_r^i} \circ \Infl_{\bar T_r}^{\bar \CI_r^i} = \Infl_{\bar T_r}^{\bar T_r^i}$, it follows in the same way that $\pInd_{\bar I^i_r}^{\bar G^i_r}$ is left adjoint to $\pRes_{\bar \CI^i_r}^{\bar G^i_r}$.   
\end{proof}

\begin{proposition} \label{iteration}
    We have \begin{align*} \pRes_{\bar \CI^{i+1}_r}^{\bar G^{i+1}_r} &= \pRes_{\bar \CI^i_r}^{\bar G^i_r} \circ \pRes_{\bar P^{i+1}_{s_i, r}}^{\bar G^{i+1}_r} \\
    \pInd_{\bar \CI^{i+1}_r}^{\bar G^{i+1}_r} &= \pInd_{\bar P^{i+1}_{s_i, r}}^{\bar G^{i+1}_r} \circ \pInd_{\bar \CI^i_r}^{\bar G^i_r}.
    \end{align*}
\end{proposition}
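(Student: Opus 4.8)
The plan is to reduce the two asserted identities to a single one by adjunction, and then to deduce the remaining one from an explicit description of how $\CI_{\phi,U,r}\cap G^{i+1}_r$, $P^{i+1}_{s_i,r}$ and $\CI_{\phi,U,r}\cap G^i_r$ fit together, combined with base change. For the reduction I would use \Cref{adjoint}: the functors $\pInd_{\bar\CI^{i+1}_r}^{\bar G^{i+1}_r}$, $\pInd_{\bar P^{i+1}_{s_i,r}}^{\bar G^{i+1}_r}$ and $\pInd_{\bar\CI^i_r}^{\bar G^i_r}$ are left adjoint to $\pRes_{\bar\CI^{i+1}_r}^{\bar G^{i+1}_r}$, $\pRes_{\bar P^{i+1}_{s_i,r}}^{\bar G^{i+1}_r}$ and $\pRes_{\bar\CI^i_r}^{\bar G^i_r}$ respectively, hence $\pInd_{\bar P^{i+1}_{s_i,r}}^{\bar G^{i+1}_r}\circ\pInd_{\bar\CI^i_r}^{\bar G^i_r}$ is left adjoint to $\pRes_{\bar\CI^i_r}^{\bar G^i_r}\circ\pRes_{\bar P^{i+1}_{s_i,r}}^{\bar G^{i+1}_r}$. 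Since left adjoints are unique up to canonical isomorphism, it suffices to prove the identity for $\pRes$; the one for $\pInd$ then follows by passing to left adjoints.

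The core input is the group-theoretic identity
\[
\CI_{\phi,U,r}\cap G^{i+1}_r=p_{i+1}^{-1}\bigl(\CI_{\phi,U,r}\cap G^i_r\bigr)
\]
inside $G^{i+1}_r$, where $P^{i+1}_{s_i,r}=G^i_r\,N^{i+1}_{s_i:r}\,\ov N^{i+1}_{s_i+:r}$ and $p_{i+1}\colon P^{i+1}_{s_i,r}\twoheadrightarrow G^i_r$ is the projection with kernel $N^{i+1}_{s_i:r}\ov N^{i+1}_{s_i+:r}$; moreover the resulting surjection $\CI_{\phi,U,r}\cap G^{i+1}_r\twoheadrightarrow\CI_{\phi,U,r}\cap G^i_r$ intertwines the torus projections $\d_{i+1}$ and $\d_i$. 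I would prove this from the root-subgroup description of the Moy--Prasad groups: an $\a$-root group with $j(\a):=\min\{j:\a\in\Phi_{G^j}\}$ enters $\CK_{\phi,r}$ (resp.\ $\CK_{\phi,r}^+$) at level $s_{j(\a)-1}$ (resp.\ $s_{j(\a)-1}+$), so since every root of $N^{i+1}$ has $j(\a)=i+1$ one gets $\CK_{\phi,r}\cap U_r\cap G^{i+1}_r=(\CK_{\phi,r}\cap U_r\cap G^i_r)\,N^{i+1}_{s_i:r}$ and, symmetrically, $\CK_{\phi,r}^+\cap\ov U_r\cap G^{i+1}_r=(\CK_{\phi,r}^+\cap\ov U_r\cap G^i_r)\,\ov N^{i+1}_{s_i+:r}$; a standard Iwahori-type reordering of $\CI_{\phi,U,r}\cap G^{i+1}_r=\bigl((\CK_{\phi,r}\cap U_r)T_r(\CK_{\phi,r}^+\cap\ov U_r)\bigr)\cap G^{i+1}_r$ — using that $T$ normalizes $N^{i+1}$ and that commutators of $N^{i+1}$-root groups against negative $G^i$-root groups again lie in $N^{i+1}$ at level $\ge s_i$ — then yields the displayed equality, and the compatibility with $\d_{i+1}$ is immediate because $p_{i+1}$ kills exactly the $N^{i+1}$- and $\ov N^{i+1}$-parts, which also lie in $\ker\d_{i+1}$. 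Finally I would check that $\CQ_{\phi,r}^{i+1}\subseteq\CI_{\phi,U,r}\cap G^{i+1}_r$ and $p_{i+1}(\CQ_{\phi,r}^{i+1})\subseteq\CQ_{\phi,r}^{i+1}\cap G^i_r\subseteq\CQ_{\phi,r}^i$, so that the identity descends to the quotients: in $\bar G^{i+1}_r$ the subgroup $\bar\CI^{i+1}_r$ is the preimage under $p_{i+1}\colon\bar P^{i+1}_{s_i,r}\twoheadrightarrow\hat G^i_r$ of the image of $\CI_{\phi,U,r}\cap G^i_r$ in $\hat G^i_r$, and all the kernels occurring are connected unipotent, so (as already observed before \Cref{adjoint}) the inflation functors appearing in the restriction functors are equivalences.

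With this in hand, the displayed identity exhibits a Cartesian square whose top row is the inclusion $\iota\colon\bar\CI^{i+1}_r\hookrightarrow\bar P^{i+1}_{s_i,r}$, whose bottom row is the inclusion of the image of $\CI_{\phi,U,r}\cap G^i_r$ in $\hat G^i_r$, and whose vertical maps are induced by $p_{i+1}$; note $j_{i+1}\circ\iota=h_{i+1}$. Unwinding the definitions, $\pRes_{\bar\CI^i_r}^{\bar G^i_r}\circ\pRes_{\bar P^{i+1}_{s_i,r}}^{\bar G^{i+1}_r}$ equals, up to the inflation equivalences, $\d_{i,*}\circ h_i^!\circ p_{i+1,*}\circ j_{i+1}^!\circ\For_{\hat G^i_r}^{\bar G^{i+1}_r}$. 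The base-change isomorphism $g^!f_*\cong f'_*g'^!$ applied to the Cartesian square gives $h_i^!\circ p_{i+1,*}\cong(p_{i+1}|_{\bar\CI^{i+1}_r})_*\circ\iota^!$; since $\d_i\circ p_{i+1}|_{\bar\CI^{i+1}_r}=\d_{i+1}$ (up to the evident inflation identification) and $\iota^!\circ j_{i+1}^!=h_{i+1}^!$, we obtain $\pRes_{\bar\CI^i_r}^{\bar G^i_r}\circ\pRes_{\bar P^{i+1}_{s_i,r}}^{\bar G^{i+1}_r}\cong\d_{i+1,*}\circ h_{i+1}^!\circ\For_{\hat G^i_r}^{\bar G^{i+1}_r}=\pRes_{\bar\CI^{i+1}_r}^{\bar G^{i+1}_r}$. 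That all the inflation and forgetful data attached to the connected-unipotent quotients $\bar T^{i+1}_r\to\bar T^i_r\to\bar T_r$ and $\hat G^i_r\to\bar G^i_r$ match up is routine, since those pullbacks are equivalences commuting with $!$-restriction, $*$-pushforward and forgetting. The one genuinely laborious step is the group identity $\CI_{\phi,U,r}\cap G^{i+1}_r=p_{i+1}^{-1}(\CI_{\phi,U,r}\cap G^i_r)$ and its descent to $\bar G^{i+1}_r$ and $\hat G^i_r$: this requires careful tracking of root subgroups and Moy--Prasad levels in $\CK_{\phi,r}$, $\CK_{\phi,r}^+$, $P^{i+1}_{s_i,r}$ and $\CQ_{\phi,r}^{i+1}$, and it is here that the inequalities $s_i\le r_i<r_{i+1}$ from the Howe factorization are really used (for instance to identify $P^{i+1}_{s_i,r}$ with $G^i_r\,N^{i+1}_{s_i:r}\,\ov N^{i+1}_{s_i+:r}$ and to control $\CQ_{\phi,r}^{i+1}$). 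Once it is settled, the functorial part of the proof is purely formal.
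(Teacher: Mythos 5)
Your proposal is correct and follows essentially the same route as the paper's proof: reduce to the $\pRes$ identity via \Cref{adjoint}, observe that the square relating $\bar \CI^{i+1}_r$, $\bar P^{i+1}_{s_i, r}$, $\bar \CI^i_r$ and $\bar G^i_r$ is Cartesian, and conclude by base change together with the routine compatibility of the inflation/forgetful functors. The only difference is that you verify the underlying group identity $\CI_{\phi, U, r} \cap G^{i+1}_r = p_{i+1}^{-1}(\CI_{\phi, U, r} \cap G^i_r)$ (and its descent modulo $\CQ^{i+1}_{\phi,r}$) by an explicit root-subgroup analysis, a detail the paper treats as immediate from the definitions.
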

\begin{proof}
    By the adjointness in \Cref{adjoint}, it suffices to show the first equality. First note that \[\For_{\bar T^{i+1}_r}^{\hat G^i_r} \circ \Infl_{\bar G^i_r}^{\hat G^i_r} = \Infl_{\bar T^i_r}^{\bar T^{i+1}_r} \circ \For_{\bar T^i_r}^{\bar G^i_r}.\] Consider the following diagram \[\xymatrix{
    \bar T & \bar \CI^i_r \ar[l]^{\d_i}  \ar[d]^{h_i} &  \bar \CI^{i+1}_r \ar[l]^{p_{i+1}'} \ar[d]_{h_i'} & \\ 
    &  \bar G^i_r & \bar P^{i+1}_{s_i, r} \ar[r]_{j_{i+1}} \ar[l]^{p_{i+1}} & \bar G^{i+1}_r,
    }\] where the square is Cartesian. In particular, $\d_{i+1} = \d_i \circ p_{i+1}'$ and $h_{i+1} = j_{i+1} \circ h_i'$. Therefore, \begin{align*}
        &\quad\ \pRes_{\bar \CI^i_r}^{\bar G^i_r} \circ \pRes_{\bar P^{i+1}_{s_i, r}}^{\bar G^{i+1}_r} \\
        &= (\Infl_{\bar T_r}^{ \bar T^i_r})\i \circ \d_{i *} \circ h_i^! \circ \For_{\bar T^i_r}^{\bar G^i_r} \circ (\Infl_{\bar G^i_r}^{\hat G^i_r})\i \circ p_{i+1 *} \circ j_{i+1}^! \circ \For_{\hat G^i_r}^{\bar G^{i+1}_r} \\
        &= (\Infl_{\bar T_r}^{ \bar T^i_r})\i \circ \d_{i *} \circ h_i^! \circ (\Infl_{\bar T^i_r}^{\bar T^{i+1}_r})\i \circ \For_{\bar T^{i+1}_r}^{\hat G^i_r} \circ p_{i+1 *} \circ j_{i+1}^! \circ \For_{\hat G^i_r}^{\bar G^{i+1}_r} \\
        &= (\Infl_{\bar T_r}^{ \bar T^{i+1}_r})\i \circ \d_{i *} \circ h_i^! \circ p_{i+1 *} \circ j_{i+1}^! \circ \For_{\bar T^{i+1}_r}^{\bar G^{i+1}_r} \\
        &= (\Infl_{\bar T_r}^{ \bar T^{i+1}_r})\i \circ \d_{i *} \circ p'_{i+1, *} \circ (h_i')^! \circ j_{i+1}^! \circ \For_{\bar T^{i+1}_r}^{\bar G^{i+1}_r} \\
        &= (\Infl_{\bar T_r}^{ \bar T^{i+1}_r})\i \circ \d_{i+1 *} \circ h_{i+1}^! \circ \For_{\bar T^{i+1}_r}^{\bar G^{i+1}_r} \\
        &= \pRes_{\bar \CI^{i+1}_r}^{\bar G^{i+1}_r},
    \end{align*} where the second equality follows from that $\For_{\bar T^{i+1}_r}^{\hat G^i_r} \circ \Infl_{\bar G^i_r}^{\hat G^i_r} = \Infl_{\bar T^i_r}^{\bar T^{i+1}_r} \circ \For_{\bar T^i_r}^{\bar G^i_r}$ and fourth one follows by the proper base change theorem. The proof is finished.  
\end{proof}

\subsection{} \label{subsec:multi-step}
Notice that each character $\phi_i$ is of depth $r_i$ and trivial over $G^i_\der(k)$. It restricts to a character of $G_{r_i}^i(\BF_q) / (G_\der^i)_{r_i}(\BF_q) = (G_{r_i} / G_\der^i)(\BF_q)$. Let $\CL_i$ be the corresponding multiplicative rank one $F$-equivariant local system on $\CL_i$ on $G^i_{r_i} / (G^i_\der)_{r_i}$. Consider the following natural quotient maps \begin{align*}
    q_i &: \bar G_r^i \to G^i_{r_i}; \\
    \varepsilon_i &: G^i_{r_i} \to G^i_{r_{i-1}}; \\
    \e_i &: G^i_{r_i} \to G^i_{r_i} / (G^i_\der)_{r_i}; \\
    q_{i, t} &: \bar G^i_r \to G^t_{r_t} / (G^t_\der)_{r_t}, ~ i \le t.
\end{align*}
We put $\CL^i = \bigotimes_{t \ge i+1} q_{i, t}^* \CL_t \in D_{\bar G_r^i}(\bar G_r^i)$. Note that $\CL_\phi = \bigotimes_{-1 \le t \le d} q_{-1, t}^* \CL_t$.

Let $\iota_i: \bar G^i_r \to \bar G^i_r/\bar P^i_{s_{i-1, r}} \times  \bar G^i_r$ be given by $x \mapsto (e, x)$. Then the pull-back functor $\iota_i^*: D_{\bar G^i_r}(\bar G^i_r/\bar P^i_{s_{i-1, r}} \times  \bar G^i_r) \to D_{\bar P^i_{s_{i-1, r}}}(\bar G^i_r)$ gives an equivalence.
\begin{lemma} \label{proj}
    Let $\CP \in D_{\bar G^i_r}(\bar G^i_r)$. Then we have \[\pInd_{\bar P^{i+1}_{s_i, r}}^{\bar G^{i+1}_r} (\CL^i \otimes \CP) \cong  q_{i+1,i+1}^* \CL_{i+1} \otimes  \CL^{i+1} \otimes \pInd_{\bar P^{i+1}_{s_i, r}}^{\bar G^{i+1}_r} \CP.\]  
\end{lemma}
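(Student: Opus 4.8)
The plan is to unwind both sides using the definition of $\pInd_{\bar P^{i+1}_{s_i, r}}^{\bar G^{i+1}_r}$ and exploit the multiplicativity of the local systems $\CL_t$ together with a projection-formula argument. First I would observe that, since $\CL^i = \bigotimes_{t \ge i+1} q_{i,t}^* \CL_t$ descends along the quotient map $\bar G^i_r \to \bar G^{i-1}_r$-type projections appearing in the definition of $p_{i+1}$ and $j_{i+1}$, one can track what happens to $\CL^i$ under pullback along $p_{i+1}$: the key compatibility is that $q_{i,t}$ factors through the relevant maps, so that $p_{i+1}^* \CL^i$ is, up to inflation, the restriction to $\bar P^{i+1}_{s_i,r}$ of $\bigotimes_{t \ge i+1} (q_{i+1,t}^*\CL_t)|_{\bar P^{i+1}_{s_i,r}}$. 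Similarly $j_{i+1}^*$ of the local systems on $\bar G^{i+1}_r$ restricts compatibly. Concretely, I would first establish the identity
\[
j_{i+1}^*\bigl(q_{i+1,i+1}^*\CL_{i+1} \otimes \CL^{i+1}\bigr) \cong p_{i+1}^*(\text{Infl of }\CL^i),
\]
which reduces to a statement about characters/local systems on the finite-level quotients and follows from the defining property $\CL_\phi = \bigotimes q^*_{-1,t}\CL_t$ applied at the Levi $G^{i+1}$, i.e.\ that $\CL^i$ pulled back to $\bar P^{i+1}_{s_i,r}$ agrees with $\CL^{i+1}\otimes q^*\CL_{i+1}$ pulled back there, because $\CL_{i+1}$ is trivial on $(G^{i+1}_\der)_{r_{i+1}}$ and hence its pullback along $p_{i+1}$ (which kills the "$N^{i+1}$-directions") is the inflation of its restriction to $\bar G^i_r$, which is trivial; combined with the fact that for $t \ge i+2$ the factor $q_{i,t}$ and $q_{i+1,t}$ are intertwined by $p_{i+1}$, $j_{i+1}$.

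Next I would apply the projection formula at each of the three steps composing the functor. For $j_{i+1,!}$, we have $j_{i+1,!}(p_{i+1}^*\Infl(\CL^i\otimes\CP)) \cong j_{i+1,!}\bigl(j_{i+1}^*(q_{i+1,i+1}^*\CL_{i+1}\otimes\CL^{i+1}) \otimes p_{i+1}^*\Infl\,\CP\bigr)$ by the first identity, and then the ordinary projection formula $j_{i+1,!}(j_{i+1}^*\CA \otimes \CB) \cong \CA \otimes j_{i+1,!}\CB$ pulls the tensor factor $q_{i+1,i+1}^*\CL_{i+1}\otimes\CL^{i+1}$ outside. For the averaging step $\Av^{\bar G^{i+1}_r}_{\bar P^{i+1}_{s_i,r}!}$, I would use that this is computed (via the equivalence $\iota_{i+1}^*$) as $\pr_{2!}$ applied on $\bar G^{i+1}_r/\bar P^{i+1}_{s_i,r}\times\bar G^{i+1}_r$, and that $q_{i+1,i+1}^*\CL_{i+1}\otimes\CL^{i+1}$, being the pullback of a ($\bar G^{i+1}_r$-equivariant) object under $\pr_2$, again comes out of the $\pr_{2!}$ by the projection formula — here the equivariance of $\CL_{i+1}$ and $\CL^{i+1}$ (they are multiplicative local systems, hence equivariant for conjugation) is exactly what is needed so that the tensor factor is constant along the $\bar G^{i+1}_r/\bar P^{i+1}_{s_i,r}$-fibers in the appropriate twisted sense. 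The inflation $\Infl_{\bar G^i_r}^{\bar P^{i+1}_{s_i,r}}$ is harmless since it is $t$-exact and monoidal.

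The main obstacle I anticipate is the bookkeeping in the first identity: verifying that $p_{i+1}^*$ of the inflation of $\CL^i$ really does coincide with $j_{i+1}^*$ of $q_{i+1,i+1}^*\CL_{i+1}\otimes\CL^{i+1}$ on the nose (not just up to something), which requires a careful comparison of the quotient maps $q_{i,t}, q_{i+1,t}, \varepsilon_{i+1}$ and the inclusion/projection maps $j_{i+1}, p_{i+1}$ at the level of the Moy--Prasad group schemes, using that $\bar G^{i+1}_r \to G^{i+1}_{r_{i+1}}$ and the genericity-type triviality of $\CL_{i+1}$ over the derived subgroup. Once that diagram chase is done, the rest is a formal chain of projection-formula isomorphisms, and one should also check that all isomorphisms are compatible with the equivariant structures (so that the statement holds in $D_{\bar G^{i+1}_r}(\bar G^{i+1}_r)$ and not merely after forgetting equivariance), which follows because every map used is equivariant and the projection formula holds equivariantly.
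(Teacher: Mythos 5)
Your proposal matches the paper's proof: the key step is exactly the identity $p_{i+1}^*\CL^i \cong j_{i+1}^*\bigl(q_{i+1,i+1}^*\CL_{i+1}\otimes\CL^{i+1}\bigr)$, which the paper obtains from the commutative square $q_{i,t}\circ p_{i+1}=q_{i+1,t}\circ j_{i+1}$ for $t\ge i+1$ (the $N^{i+1}$-directions being killed since $N^{i+1}\subseteq G^{i+1}_\der\subseteq G^t_\der$), followed by the projection formula for $j_{i+1!}$ and again for $\pr_{2!}$ in the averaging step, using that the tensor factor is $\bar G^{i+1}_r$-equivariant and hence corresponds to $\pr_2^*\CF$ under $(\iota_{i+1}^*)^{-1}$. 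Only the phrase that the restriction of $\CL_{i+1}$ to $\bar G^i_r$ ``is trivial'' is off (it is trivial only along the fibers of $p_{i+1}$; the factor $q_{i,i+1}^*\CL_{i+1}$ survives inside $\CL^i$), but this does not affect the argument.
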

\begin{proof}
     Set $\CF = q_{i+1,i+1}^* \CL_{i+1} \otimes  \CL^{i+1} \in D_{\bar G_r^{i+1}}(\bar G_r^{i+1})$. For $t \ge i+1$ we have the commutative diagram \[\xymatrix{
    \bar  P^{i+1}_r \ar[r]_{j_{i+1}} \ar[d]_{p_{i+1}} & \bar G^{i+1}_r \ar[d]_{q_{i+1, t}} \\
    \bar G^i_r \ar[r]_{q_{i, t}}  & G^t_{r_t} / (G^t_\der)_{r_t},
    }\] which means that \[p_{i+1}^* \CL^i \cong j_{i+1}^*\CF.\] Consider the morphisms \[\bar G^i_r \overset {p_{i+1}} \leftarrow \bar P^{i+1}_{s_i, r} \overset {j_{i+1}} \to \bar G^{i+1}_r \overset {\iota_{i+1}} \to \bar G^{i+1}_r/\bar P^{i+1}_{s_i, r} \times  \bar G^{i+1}_r  \overset {\pr_2} \to \bar G^{i+1}_r.\] We have \begin{align*}
        &\quad\ \pInd_{\bar P^{i+1}_{s_i, r}}^{\bar G^{i+1}_r} (\CL^i \otimes \CP) \\
        &\cong \pr_{2 !} (\iota_{i+1}^*)\i j_{i+1 !} p_{i+1}^*(\CL^i \otimes \CP) \\
        &\cong \pr_{2 !} (\iota_{i+1}^*)\i j_{i+1 !} ( j_{i+1}^* \CF \otimes p_{i+1}^*\CP) \\
        &\cong  \pr_{2 !} (\iota_{i+1}^*)\i (\CF \otimes (j_{i+1 !} p_{i+1}^* \CP) \\ 
        &\cong \pr_{2 !} ( \pr_2^* \CF \otimes (\iota_{i+1}^*)\i j_{i+1 !} p_{i+1}^* \CP) \\ 
        &\cong \CF \otimes \pr_{2 !} (\iota_{i+1}^*)\i j_{i+1 !} p_{i+1}^*\CP \\
        &\cong \CF \otimes \pInd_{\bar P^{i+1}_{s_i, r}}^{\bar G^{i+1}_r} \CP.
    \end{align*} The proof is finished.  
\end{proof}

\begin{lemma} \label{q-ind}
    We have \[\pInd_{\bar P^{i+1}_{s_i, r}}^{\bar G^{i+1}_r} \circ q_i^* \cong  q_{i+1}^* \circ \varepsilon_{i+1}^* \circ \pInd_{P^{i+1}_{s_i, r_i}}^{G^{i+1}_{r_i}}.\]
\end{lemma}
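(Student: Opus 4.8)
The plan is to reduce the identity to a diagram chase, using that both sides are composites of the same four elementary operations: an averaging functor, a pushforward along a closed immersion, a $*$-pullback, and an inflation. Write $\varepsilon := \varepsilon_{i+1}\circ q_{i+1}\colon \bar G^{i+1}_r \to G^{i+1}_{r_i}$ for the natural projection (its precomposition with $G^{i+1}_r\twoheadrightarrow\bar G^{i+1}_r$ is the projection used to define $P^{i+1}_{s_i,r}$), and note $q_{i+1}^*\circ\varepsilon_{i+1}^* = \varepsilon^*$. Thus it suffices to produce a natural isomorphism $\pInd_{\bar P^{i+1}_{s_i, r}}^{\bar G^{i+1}_r}\circ q_i^* \cong \varepsilon^*\circ\pInd_{P^{i+1}_{s_i, r_i}}^{G^{i+1}_{r_i}}$, where $\varepsilon^*$ denotes pullback along $\varepsilon$ with its canonical equivariant structure.

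The geometric backbone consists of two commuting squares. First I would establish the group-scheme identity $\bar P^{i+1}_{s_i, r} = \varepsilon^{-1}\bigl(P^{i+1}_{s_i,r_i}\bigr)$ inside $\bar G^{i+1}_r$: by construction $P^{i+1}_{s_i,r}$ is the preimage of $P^{i+1}_{s_i,r_i}$ under $G^{i+1}_r\to G^{i+1}_{r_i}$ and $\bar P^{i+1}_{s_i,r}$ is its image modulo $\CQ^{i+1}_{\phi,r}$; since $r_i\le r_{i+1}\le\cdots\le r_d$, every factor of $\CQ^{i+1}_{\phi,r}$ lies in $\ker(G^{i+1}_r\to G^{i+1}_{r_i})\subseteq P^{i+1}_{s_i,r}$, so taking the image of the preimage coincides with taking the preimage of the image. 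Hence the square with vertical arrows $\varepsilon$ and $\bar\varepsilon:=\varepsilon|_{\bar P^{i+1}_{s_i,r}}$ and horizontal arrows the inclusions $j_{i+1}$ and $\iota\colon P^{i+1}_{s_i,r_i}\hookrightarrow G^{i+1}_{r_i}$ is Cartesian. Second I would record the compatibility $q_i\circ p_{i+1} = p\circ\bar\varepsilon$ of the Levi projections: pulled back along $P^{i+1}_{s_i,r}\twoheadrightarrow\bar P^{i+1}_{s_i,r}$, both sides become the natural composite $P^{i+1}_{s_i,r}\to P^{i+1}_{s_i,r_i}\to G^i_{r_i}$. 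Since all four maps are quotient homomorphisms fitting into a commutative square, this yields $p_{i+1}^*\circ\Infl_{\bar G^i_r}^{\bar P^{i+1}_{s_i,r}}\circ q_i^* \cong \bar\varepsilon^*\circ p^*\circ\Infl_{G^i_{r_i}}^{P^{i+1}_{s_i,r_i}}$ (with $\bar\varepsilon^*$ read as inflation along $\bar\varepsilon$).

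Granting this, the chase is
\begin{align*}
\pInd_{\bar P^{i+1}_{s_i, r}}^{\bar G^{i+1}_r}\circ q_i^*
&= \Av_{\bar P^{i+1}_{s_i,r}!}^{\bar G^{i+1}_r}\circ j_{i+1!}\circ p_{i+1}^*\circ\Infl_{\bar G^i_r}^{\bar P^{i+1}_{s_i,r}}\circ q_i^* \\
&\cong \Av_{\bar P^{i+1}_{s_i,r}!}^{\bar G^{i+1}_r}\circ j_{i+1!}\circ\bar\varepsilon^*\circ p^*\circ\Infl_{G^i_{r_i}}^{P^{i+1}_{s_i,r_i}} \\
&\cong \Av_{\bar P^{i+1}_{s_i,r}!}^{\bar G^{i+1}_r}\circ\varepsilon^*\circ\iota_!\circ p^*\circ\Infl_{G^i_{r_i}}^{P^{i+1}_{s_i,r_i}} \\
&\cong \varepsilon^*\circ\Av_{P^{i+1}_{s_i,r_i}!}^{G^{i+1}_{r_i}}\circ\iota_!\circ p^*\circ\Infl_{G^i_{r_i}}^{P^{i+1}_{s_i,r_i}} \\
&= \varepsilon^*\circ\pInd_{P^{i+1}_{s_i,r_i}}^{G^{i+1}_{r_i}},
\end{align*}
where the first isomorphism is the second square above, the second is proper base change along the first (Cartesian) square, and the third uses that $\ker\varepsilon\subseteq\bar P^{i+1}_{s_i,r}$ identifies $\bar G^{i+1}_r/\bar P^{i+1}_{s_i,r}$ with $G^{i+1}_{r_i}/P^{i+1}_{s_i,r_i}$, so that averaging commutes with inflation along $\varepsilon$ (immediate from the definition of $\Av$ in \S\ref{sec:pre}, since $H/M\times X$ is unchanged on replacing $H$, $M$ by $H/K$, $M/K$ with $K\subseteq M$).

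I expect the main obstacle to be the equivariant bookkeeping rather than the geometry: the four functors live on four different equivariant derived categories, and one must track which group acts and how (conjugation versus translation) so that the intermediate isomorphisms are genuinely equivariant, not merely isomorphisms of underlying complexes. The cleanest way to organize this is to isolate ``$\Av$ commutes with inflation along a quotient by a subgroup of the relevant parabolic'' as a separate lemma (or to extract it from \cite{BC24}); after that everything is formal. The only computational point is the identity $\bar P^{i+1}_{s_i,r}=\varepsilon^{-1}(P^{i+1}_{s_i,r_i})$, which just unwinds the definitions of $\CQ^{i+1}_{\phi,r}$ and $P^{i+1}_{s_i,r}$ against the ordering $r_{-1}<r_0<\cdots<r_{d-1}\le r_d$.
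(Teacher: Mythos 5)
Your proposal is correct and follows essentially the same route as the paper: the paper's proof is precisely the two-row commutative diagram relating $(q_i,\ \bar\varepsilon,\ \varepsilon_{i+1}\circ q_{i+1})$ to the maps defining the two induction functors, with the observation that the right three squares are Cartesian, so that base change for $!$-pushforward and compatibility of the averaging step give the isomorphism. Your chase merely makes explicit what the paper leaves implicit, namely the verification that $\bar P^{i+1}_{s_i,r}=\varepsilon^{-1}(P^{i+1}_{s_i,r_i})$ (using $\CQ^{i+1}_{\phi,r}\subseteq\ker(G^{i+1}_r\to G^{i+1}_{r_i})$) and the commutation of $\Av$ with pullback along $\varepsilon$ via the identification of the two flag quotients.
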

\begin{proof}
    Consider the following natural commutative diagram \[\xymatrix{
    \bar G^i_r \ar[d]_{q_i} & \bar P^{i+1}_{s_i, r}  \ar[l] \ar[r]  \ar[d] & \bar G^{i+1}_r \ar[r]  \ar[d] & \bar G^{i+1}_r/\bar P^{i+1}_{s_i, r} \times  \bar G^{i+1}_r \ar[r]  \ar[d] & \bar G^{i+1}_r \ar[d]_{\varepsilon_{i+1} \circ q_{i+1}} \\
    G^i_{r_i}  &  P^{i+1}_{s_i, r_i} \ar[r]  \ar[l]  & G^{i+1}_{r_i} \ar[r]   &  G^{i+1}_{r_i}/ P^{i+1}_{s_i, r_i} \times  G^{i+1}_{r_i} \ar[r]  & G^{i+1}_{r_i}.
    }\] The statement follows by noticing that right three squares are Cartesian.    
\end{proof}

For $0 \le i \le d$ we define \[\Psi_i: D_{G^{i-1}_{r_{i-1}}}(G^{i-1}_{r_{i-1}}) \to G_{G^i_{r_i}}(G^i_{r_i}), \quad \CF \mapsto  \e_i^* \CL_i \otimes \varepsilon_i^* \pInd_{P^i_{s_{i-1}, r_{i-1}}}^{G^i_{r_{i-1}}} \CF.\] 
\begin{proposition} \label{multi-step}
    We have \[\pInd_{\bar \CI_r^i}^{\bar G^i_r} \CL_\phi \cong \CL^i \otimes q_i^* \Psi_i \cdots \Psi_1 \Psi_0 (\CL_{-1}).\] 
\end{proposition}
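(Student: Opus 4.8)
The plan is to prove the isomorphism by induction on $i$, for $-1\le i\le d$ (the case $i=d$ being the one needed later); the engine of the induction is \Cref{iteration} together with \Cref{proj} and \Cref{q-ind}. For the base case $i=-1$ one has $\bar G^{-1}_r=\bar T_r$ and, since $T_r\subseteq\CI_{\phi, U, r}$, also $\bar\CI^{-1}_r=\bar T_r$; hence all of $h_{-1}$, $\d_{-1}$, the inflation and the averaging defining $\pInd_{\bar\CI^{-1}_r}^{\bar G^{-1}_r}$ are isomorphisms, this functor is (isomorphic to) the identity on $D_{\bar T_r}(\bar T_r)$, and so the left-hand side is $\CL_\phi$. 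The claimed right-hand side for $i=-1$ is $\CL^{-1}\otimes q_{-1}^*\CL_{-1}$ (the empty composition of the $\Psi$'s applied to $\CL_{-1}$); since $G^{-1}=T$ is a torus, $(T^{-1}_\der)_0$ is trivial, so $q_{-1,-1}=q_{-1}$, and the product formula $\CL_\phi=\bigotimes_{-1\le t\le d}q_{-1,t}^*\CL_t=q_{-1}^*\CL_{-1}\otimes\CL^{-1}$ settles this case.

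For the inductive step, assume the claim for $i$. Applying in turn \Cref{iteration}, the inductive hypothesis, \Cref{proj} (with $\CP:=q_i^*\,\Psi_i\cdots\Psi_0(\CL_{-1})\in D_{\bar G^i_r}(\bar G^i_r)$), and \Cref{q-ind}, one obtains
\begin{align*}
\pInd_{\bar\CI^{i+1}_r}^{\bar G^{i+1}_r}\CL_\phi
&\cong \pInd_{\bar P^{i+1}_{s_i,r}}^{\bar G^{i+1}_r}\,\pInd_{\bar\CI^i_r}^{\bar G^i_r}\CL_\phi \\
&\cong \pInd_{\bar P^{i+1}_{s_i,r}}^{\bar G^{i+1}_r}\bigl(\CL^i\otimes q_i^*\,\Psi_i\cdots\Psi_0(\CL_{-1})\bigr) \\
&\cong q_{i+1,i+1}^*\CL_{i+1}\otimes\CL^{i+1}\otimes \pInd_{\bar P^{i+1}_{s_i,r}}^{\bar G^{i+1}_r}\, q_i^*\,\Psi_i\cdots\Psi_0(\CL_{-1}) \\
&\cong q_{i+1,i+1}^*\CL_{i+1}\otimes\CL^{i+1}\otimes q_{i+1}^*\,\varepsilon_{i+1}^*\,\pInd_{P^{i+1}_{s_i,r_i}}^{G^{i+1}_{r_i}}\,\Psi_i\cdots\Psi_0(\CL_{-1}).
\end{align*}
Since $q_{i+1,i+1}=\e_{i+1}\circ q_{i+1}$ and $q_{i+1}^*$ is monoidal, the first and third tensor factors combine to $q_{i+1}^*\bigl(\e_{i+1}^*\CL_{i+1}\otimes\varepsilon_{i+1}^*\,\pInd_{P^{i+1}_{s_i,r_i}}^{G^{i+1}_{r_i}}\,\Psi_i\cdots\Psi_0(\CL_{-1})\bigr)$, which by the very definition of $\Psi_{i+1}$ equals $q_{i+1}^*\,\Psi_{i+1}\bigl(\Psi_i\cdots\Psi_0(\CL_{-1})\bigr)$. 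Hence $\pInd_{\bar\CI^{i+1}_r}^{\bar G^{i+1}_r}\CL_\phi\cong\CL^{i+1}\otimes q_{i+1}^*\,\Psi_{i+1}\cdots\Psi_0(\CL_{-1})$, which completes the induction.

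The argument is essentially formal once \Cref{iteration}, \Cref{proj} and \Cref{q-ind} are in hand, so I do not anticipate a deep obstacle; the real work is bookkeeping. The three points to watch are: (i) that \Cref{proj} genuinely applies at each stage, i.e.\ that the sheaf being induced has the shape $\CL^i\otimes(\text{something pulled back along }q_i)$ --- which is exactly what the induction hypothesis supplies; (ii) that the various quotient maps $q_i,\varepsilon_i,\e_i,q_{i,t}$ are used coherently, in particular the compatibility $\e_{i+1}\circ q_{i+1}=q_{i+1,i+1}$ and the commuting (partly Cartesian) squares underlying \Cref{proj} and \Cref{q-ind}; and (iii) that the equivariant structures match up through every functor, which is automatic since each $\pInd$ ends in an averaging functor landing in the prescribed equivariant category. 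I would record these map compatibilities explicitly as a short remark before running the displayed chain of isomorphisms.
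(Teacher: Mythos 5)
Your proof is correct and follows essentially the same route as the paper: induction on $i$ with base case $i=-1$ given by $\CL_\phi\cong\CL^{-1}\otimes q_{-1}^*\CL_{-1}$, and the inductive step obtained by chaining \Cref{iteration}, the induction hypothesis, \Cref{proj}, \Cref{q-ind}, the compatibility $q_{i+1,i+1}=\e_{i+1}\circ q_{i+1}$, and the definition of $\Psi_{i+1}$. Your explicit justification that $\pInd_{\bar\CI^{-1}_r}^{\bar G^{-1}_r}$ is the identity (since $\bar\CI^{-1}_r=\bar T_r=\bar G^{-1}_r$) only makes explicit what the paper leaves implicit.
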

\begin{proof}
    We argue by induction on $-1 \le i \le d$. If $i = -1$, the statement follows by noticing that $\CL_\phi = \CL^{-1} \otimes q_{-1}^* \CL_{-1}$. Assume the statement is true for $i \ge -1$. Let $\CF = \Psi_i \cdots \Psi_1 \Psi_0 (\CL_{-1})$. By \Cref{iteration} we have \begin{align*} &\quad\ \pInd_{\bar \CI^{i+1}_r}^{\bar G^{i+1}_r} \CL_\phi \\ 
    &\cong \pInd_{\bar P^{i+1}_{s_i, r}}^{\bar G^{i+1}_r} \circ \pInd_{\bar \CI_r^i}^{\bar G^i_r} \CL_\phi \\  
    &\cong \pInd_{\bar P^{i+1}_{s_i, r}}^{\bar G^{i+1}_r} (\CL^i \otimes q_i^* \CF_i) \\
    &\cong \CL^{i+1} \otimes q_{i+1,i+1}^* \CL_{i+1} \otimes \pInd_{\bar P^{i+1}_{s_i, r}}^{\bar G^{i+1}_r} q_i^*\CF \\ &\cong \CL^{i+1} \otimes q_{i+1,i+1}^* \CL_{i+1} \otimes q_{i+1}^* \varepsilon_{i+1}^* \pInd_{ P^{i+1}_{s_i, r_i}}^{G^{i+1}_{r_i}} \CF \\
    &\cong \CL^{i+1} \otimes q_{i+1}^*(\e_{i+1}^* \CL_{i+1} \otimes \varepsilon_{i+1}^* \pInd_{ P^{i+1}_{s_i, r_i}}^{G^{i+1}_{r_i}} \CF) \\ 
    &\cong \CL^{i+1} \otimes q_{i+1}^* \Psi_{i+1} \CF \\
    &\cong  \CL^{i+1} \otimes q_{i+1}^* \Psi_{i+1} \cdots \Psi_1 \Psi_0(\CL_{-1}),
    \end{align*} where the third and fourth isomorphisms follow from \Cref{proj} and \Cref{q-ind} respectively. The induction is finished.
\end{proof}

We set $\Psi_0^\dag = \Psi_0[\dim T_0]$ and $\Psi_i^\dag = \Psi_i[\dim G^i_{r_i} - \dim G^i_{r_{i-1}}]$ for $1 \le i \le d$. Put $\pInd_{\CI_{\phi, U, r}}^{G_r} = \pInd_{\bar \CI^d_r}^{\bar G^d_r}$.

We say $\phi$ is regular (for $G$) if $\CL_\phi$ has trivial stabilizer in $W_{G_r}$.
\begin{theorem} \label{multi-step-perverse}
  Let $N_\phi = \dim T_0 + \sum_{i=0}^d (\dim G^i_{r_i} - \dim G^i_{r_{i-1}})$. Then \[\pInd_{\CI_{\phi, U, r}}^{G_r} \CL_\phi [N_\phi] \cong \Psi_d^\dag \cdots \Psi_0^\dag(\CL_\phi) \cong \bigoplus_\CF m_\CF \Psi_d^\dag \cdots \Psi_1^\dag(\CF),\] where $\CF$ ranges over irreducible summands of $\Psi_0^\dag (\CL_{-1}[\dim T_0])$ with multiplicity $m_\CF$. Moreover, the summands $\Psi_d^\dag \cdots \Psi_1^\dag (\CF)$ are pairwise non-isomorphic simple perverse sheaves on $G_r$.

  In particular, if $\phi$ is regular, then $\pInd_{\CI_{\phi, U, r}}^{G_r} \CL_\phi [N_\phi]$ is simple perverse.
\end{theorem}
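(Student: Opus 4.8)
The plan is to reduce everything, via \Cref{multi-step}, to the iterated functors $\Psi_i^\dag$, and then to establish by induction on $i$ that $\Psi_i^\dag$ carries simple perverse sheaves to simple perverse sheaves on the relevant generic subcategories. First I would record that $\CQ^d_{\phi,r} = (G_\der)_{r+:r}$ is trivial, so $\bar G^d_r = G_r$ with $q_d = \id$, while $\CL^d$ is an empty tensor product; hence $\pInd_{\CI_{\phi,U,r}}^{G_r} = \pInd_{\bar\CI^d_r}^{\bar G^d_r}$ and \Cref{multi-step} for $i = d$ identifies $\pInd_{\CI_{\phi,U,r}}^{G_r}\CL_\phi$ with $\Psi_d\cdots\Psi_1\Psi_0(\CL_{-1})$. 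Passing to the normalized functors $\Psi_i^\dag$ and the shift $[N_\phi]$ (which is just unwinding the definitions and collecting shifts) this becomes the object $\Psi_d^\dag\cdots\Psi_0^\dag(\CL_\phi) = \Psi_d^\dag\cdots\Psi_1^\dag\big(\Psi_0^\dag(\CL_{-1}[\dim T_0])\big)$ of the statement. Since $\Psi_d^\dag\cdots\Psi_1^\dag$ is additive, it then suffices to prove: \emph{(a)} $\Psi_0^\dag(\CL_{-1}[\dim T_0])$ is a semisimple perverse sheaf on $G^0_{r_0}$ lying in the $(G^0,G^1)$-generic subcategory, with pairwise non-isomorphic simple constituents (of multiplicities $m_\CF$); and \emph{(b)} for $1\le i\le d$ the functor $\Psi_i^\dag$, restricted to the relevant generic subcategory, sends pairwise non-isomorphic simple perverse sheaves to pairwise non-isomorphic simple perverse sheaves, with image inside the generic subcategory relevant to the next stage.

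For the base case I would note that $r_{-1} = s_{-1} = 0$, $G^{-1} = T$, and $P^0 = T(U\cap G^0)$ is a Borel of $G^0$, so that (up to the explicit shift in \Cref{otherconstruct}) $\pInd_{P^0_{0,0}}^{G^0_0}$ is Lusztig's parabolic induction from the maximal torus $T_0$ of the reductive quotient $G^0_0$. By \cite{Lusztig_85I}, applied to the tame multiplicative rank one local system $\CL_{-1}$, an appropriate shift of $\pInd_{P^0_{0,0}}^{G^0_0}(\CL_{-1})$ is a semisimple perverse sheaf whose simple summands are pairwise non-isomorphic and occur with multiplicities $\dim\rho$, $\rho\in\mathrm{Irr}(\mathrm{Stab}_{W_{G^0}}(\CL_{-1}))$. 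Pullback along the smooth surjection $\varepsilon_0\colon G^0_{r_0}\to G^0_0$, whose fibres are torsors under the connected unipotent $\ker\varepsilon_0$ (hence irreducible), is, after its relative-dimension shift, $t$-exact and preserves pairwise non-isomorphic simple perverse sheaves, being smooth pullback with connected fibres; and $\otimes\,\e_0^*\CL_0$ is a $t$-exact autoequivalence. With the shifts as in the definition of $\Psi_i^\dag$ this exhibits $\Psi_0^\dag(\CL_{-1}[\dim T_0])$ as such a semisimple perverse sheaf, and its $\e_0^*\CL_0$-factor places it in the $(G^0,G^1)$-generic subcategory, since the restriction of $\e_0^*\CL_0$ to the deepest layer $\fkg^0_{r_0:r_0}$ is the Fourier--Deligne transform of $\d_{X_{\phi_0}}$ for the $(G^0,G^1)$-generic element $X_{\phi_0}$ attached to the generic character $\phi_0$; as that subcategory is closed under direct summands, each $\CF$ lies in it too.

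For the inductive step, given $1\le i\le d$ and a simple perverse $\CF$ in the $(G^{i-1},G^i)$-generic subcategory of $D_{G^{i-1}_{r_{i-1}}}(G^{i-1}_{r_{i-1}})$ --- the situation produced at the previous stage --- I would run the three factors of $\Psi_i^\dag$ in turn. By \Cref{one-step} with $G = G^i$, $L = G^{i-1}$, $P = P^i$, $r = r_{i-1}$, $s = s_{i-1}$, the functor $\pInd_{P^i_{s_{i-1},r_{i-1}}}^{G^i_{r_{i-1}}}$ restricts to a $t$-exact equivalence between the $(G^{i-1},G^i)$-generic subcategories, hence induces an equivalence of hearts and preserves simple perversity and non-isomorphy; $\varepsilon_i^*$ followed by its relative-dimension shift is again $t$-exact smooth pullback with connected fibres and preserves the same; and $\otimes\,\e_i^*\CL_i$ is a $t$-exact autoequivalence landing in the $(G^i,G^{i+1})$-generic subcategory (needed only for $i<d$). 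Iterating from the base case, $\Psi_d^\dag\cdots\Psi_1^\dag$ carries $\Psi_0^\dag(\CL_{-1}[\dim T_0]) = \bigoplus_\CF m_\CF\CF$ to $\bigoplus_\CF m_\CF\,\Psi_d^\dag\cdots\Psi_1^\dag(\CF)$ with the summands pairwise non-isomorphic simple perverse sheaves on $G_r$. For the regular case I would use the standard consequence of the Howe-factorization genericity conditions that $\mathrm{Stab}_{W_{G_r}}(\CL_\phi) = \mathrm{Stab}_{W_{G^0}}(\CL_{-1})$: concretely, any $w$ fixing $\CL_{-1}$ also fixes each $\CL_i$ for $i\ge0$ --- as $\phi_i$ is trivial on $G^i_\der(k)$, hence $W_{G^i}$-invariant --- so it fixes $\CL_\phi$; therefore $\phi$ regular forces $\mathrm{Stab}_{W_{G^0}}(\CL_{-1})$ trivial, whence $\Psi_0^\dag(\CL_{-1}[\dim T_0])$ is simple and so is $\pInd_{\CI_{\phi,U,r}}^{G_r}\CL_\phi[N_\phi]$.

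The formal skeleton above rests on two non-formal inputs already in hand, \cite{Lusztig_85I} and \Cref{one-step}. The step I expect to cost the most work is the bridge between Yu's Lie-theoretic genericity and the categorical generic subcategories of \S\ref{sec:one-step}: one must check precisely that $\otimes\,\e_i^*\CL_i$ takes values in the $(G^i,G^{i+1})$-generic subcategory (identifying the deepest-layer restriction of $\e_i^*\CL_i$ with the sheaf attached to a $(G^i,G^{i+1})$-generic element of $(\fkg^i_{r_i:r_i})^*$) and that this subcategory is closed under direct summands, so that each constituent $\CF$ individually feeds the induction. A secondary, merely fiddly point is the exact tracking of cohomological shifts --- the contribution $2\dim N^i_{s_{i-1},r_{i-1}}$ from \Cref{otherconstruct}, the classical normalization of \cite{Lusztig_85I}, and each relative dimension $\dim G^i_{r_i}-\dim G^i_{r_{i-1}}$ of $\varepsilon_i$ --- so as to land on exactly $N_\phi$. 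Everything else (additivity, behaviour under smooth pullback, the autoequivalence property of tensoring with a rank one lisse sheaf) is routine, and the only genuinely new ingredient relative to \cite{BC24} is the closed repackaging into the single subgroup $\CI_{\phi,U,r}$, already carried out in \Cref{multi-step}.
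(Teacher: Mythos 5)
Your proposal is correct and follows essentially the same route as the paper's proof: reduce via \Cref{multi-step} to the iterated functors, invoke Lusztig's classical result for $\Psi_0^\dag(\CL_{-1}[\dim T_0])$, use the $t$-exact fully faithful equivalences of \Cref{one-step} for $\Psi_i^\dag$ with $1\le i\le d$, and deduce the regular case from the inclusion of stabilizers forcing $\CL_{-1}$ to have trivial stabilizer. The paper is simply terser, leaving implicit the points you flag (generic-subcategory membership via the $\e_i^*\CL_i$ factors, closure under summands, and the shift bookkeeping).
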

\begin{proof}
We follow the proof of \cite[Corollary 6.7]{BC24}. First note that $\pInd_{P^0_{0,0}}^{G^0_0}$ is a classical parabolic induction functor. By \cite[\S 4.3]{Lusztig_85I}, $\Psi_0^\dag(\CL_{-1}) = \pInd_{P^0_{0,0}}^{G^0_0}\CL_{-1}[\dim(T_0)]$ is semisimple perverse. By \Cref{one-step}, the functors $\Psi_i^\dag$ for $1 \le i \le d$ are fully faithful. Hencer first statement follows from \Cref{multi-step}.

Assume $\phi$ is regular, then $\CL_{-1}$ has trivial stabilizer in $W_{G^0_0}$. Hence  $\Psi_0^\dag(\CL_{-1})$ is simple perverse by \cite[\S 4.3]{Lusztig_85I}. So the second statement follows. 

\end{proof}


\section{A comparison result} \label{sec:comparison}
In this section, we follow \cite[\S 7]{BC24} to compare two constructions of perverse sheaves on $G_r$. One is as in \Cref{multi-step-perverse}, the other is by intermediate extension from the very regular locus of $G_r$. Let $\phi$, $r$, $(G^i, \phi_i, r_i)_{-1 \le i \le d}$ and $B = TU$ be as in \Cref{sec:I-induction}. 

First we recall the notion of very regular elements introduced in \cite[Definition 5.1]{CI_MPDL}.
\begin{definition} 
  An element $\g \in  G_{\bx, 0} := \CG_{\bx,0}(\CO_\brk)$ is called \textit{very regular} if:
  \begin{enumerate}
    \item the identity component $T_\g$ of the centralizer of $\g$ in $G$ is a maximal torus,
    \item the apartment of  $T_\g$ contains $\bx$,
    \item $\a(\g) \not \equiv 1 \mod \varpi \CO_\brk$ for all roots $\a$ of $T_\g$ in $G$.
  \end{enumerate}
  
  An element in $G_r$ is called very regular if it is the image of a very regular element of $G_{\bx,0}$.
\end{definition}

For any two $\brk$-split maxima torus $S, S'$ of $G$ whose apartments containing $\bx$, we set $N_{G_r}(S, S') = \{x \in G_r; x S x\i = S'\}$ and $W_{G_r}(S, S')= N_{G_r}(S, S') / S_r$. 
\begin{proposition}\label{CI5.5analogy}
    Let $\g\in G_r$ be a very regular element. Let $h \in G_r$ such that $h\i \g h \in \CI_{\phi, U, r}$. Then there exists a unique $x \in W_{G_r}(T, T_\g)$ such that  $h \in w \CI_{\phi, U, r}$, where $w \in  N_{G_r}(T, T_\g) $ is some/any lift of $x$.
\end{proposition}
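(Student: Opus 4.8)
The strategy is to reduce to a statement about a single building-theoretic/Weyl-group fact via an Iwahori-like decomposition, mirroring the proof of \cite[Lemma 5.5]{CI_MPDL}. First I would unwind what $h^{-1}\g h \in \CI_{\phi, U, r}$ means together with the hypothesis that $\g$ is very regular: since $\g$ is very regular, its centralizer $T_\g$ is a maximal torus whose apartment contains $\bx$, and the image of $\g$ in $G_r$ is a very regular element of $\CI_{\phi,U,r}$. The key is that the ``torus part'' of a very regular element of the Iwahori-like group $\CI_{\phi,U,r}$ is honestly conjugate (inside $G_r$) into $T_r$; in fact one shows $h^{-1}\g h$ lies in a maximal torus $S_r$ of $\CI_{\phi,U,r}$ and, by the structure of $\CI_{\phi,U,r} = (\CK_{\phi,r}\cap U_r) T_r (\CK_{\phi,r}^+\cap \ov U_r)$, all such tori are $\CI_{\phi,U,r}$-conjugate to $T_r$. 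Hence, replacing $h$ by $h$ times an element of $\CI_{\phi,U,r}$, we may assume $h^{-1} T_\g h = T_r$-type, i.e.\ $h^{-1}\g h \in T_r$ and $T_{h^{-1}\g h} = $ the maximal torus attached to $T$; equivalently $h \in N_{G_r}(T, T_\g)$.

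The second step is to produce existence of the class $x \in W_{G_r}(T, T_\g)$. Having arranged $h^{-1}\g h \in \CI_{\phi,U,r}$, write $h^{-1}\g h = u \cdot t \cdot \ov u$ with $u \in \CK_{\phi,r}\cap U_r$, $t \in T_r$, $\ov u \in \CK_{\phi,r}^+\cap \ov U_r$ according to the defining product decomposition. The very regularity of $\g$ forces, by a fixed-point / contraction argument on the root subgroups (using condition (3), $\a(\g)\not\equiv 1$, so that $\mathrm{Ad}(t) - 1$ is invertible on each root space to the relevant Moy--Prasad level), that $u = \ov u = e$, so $h^{-1}\g h = t \in T_r$. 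Then $h^{-1} T_\g h$ is the maximal torus of $G_r$ containing $t$ and normalized appropriately; since $T_\g$ and $h T h^{-1}$ are both maximal tori whose apartments contain $\bx$ and which coincide after this conjugation, we get $h \in N_{G_r}(T, T_\g)$, whose image $x \in W_{G_r}(T, T_\g)$ is the desired class. The original $h$ (before the modification) then lies in $w\,\CI_{\phi,U,r}$ for $w$ any lift of $x$.

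For uniqueness, suppose $h \in w\,\CI_{\phi,U,r} \cap w'\,\CI_{\phi,U,r}$ with $w, w'$ lifts of classes $x, x' \in W_{G_r}(T, T_\g)$. Then $w^{-1} w' \in \CI_{\phi,U,r}$, and $w^{-1}w' \in N_{G_r}(T,T)$ normalizes $T$ (since both $w, w'$ carry $T$ to $T_\g$). So it suffices to show $N_{G_r}(T) \cap \CI_{\phi,U,r} = T_r$, i.e.\ no nontrivial element of $W_{G_r}$ has a representative in $\CI_{\phi,U,r}$. This follows because any element of $\CI_{\phi,U,r}$ normalizing $T$ must, after projecting to the reductive quotient $G_{0:0}$ and using the Iwahori structure $\CI_{\phi,U,r}\subseteq U_r T_r \ov U_r G_{0+:r}$, lie in $T_r$: a Weyl element would move a positive root subgroup to a negative one at level $0$, contradicting membership in $(\CK_{\phi,r}\cap U_r) T_r(\CK_{\phi,r}^+\cap \ov U_r)$, where the $U$-part has coefficients in $U_r$ (positive affine roots) and the $\ov U$-part sits in the $0+$ congruence subgroup. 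Hence $x = x'$.

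\textbf{Main obstacle.} The delicate point is the contraction/fixed-point argument in step two showing that a very regular element of $\CI_{\phi,U,r}$ is actually in $T_r$: one must carefully track the Moy--Prasad filtration levels of the $U$- and $\ov U$-parts (which differ, $\CK_{\phi,r}$ versus $\CK_{\phi,r}^+$) and verify that $\mathrm{Ad}(t)-1$ acts invertibly on each relevant graded piece, using precisely the condition $\a(\g)\not\equiv 1 \bmod \varpi$. This is the analogue of \cite[Lemma 5.5]{CI_MPDL} adapted to the Yu-type Iwahori subgroup $\CI_{\phi,U,r}$ rather than a standard Iwahori, so the bookkeeping over the Howe-factorization layers $G^0 \subsetneq \cdots \subsetneq G^d$ is the part requiring genuine care; everything else is formal.
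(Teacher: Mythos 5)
There is a genuine gap in the existence step. In your second paragraph you claim that for the given $h$, writing $h\i\g h = u\,t\,\ov u$ with $u \in \CK_{\phi,r}\cap U_r$, $t\in T_r$, $\ov u\in\CK_{\phi,r}^+\cap\ov U_r$, very regularity forces $u=\ov u=e$, i.e.\ $h\i\g h\in T_r$. This is false: if $\g\in T_{r,\mathrm{vreg}}$ and $h$ is any element of $\CI_{\phi,U,r}$ with nontrivial unipotent part, then $h\i\g h\in\CI_{\phi,U,r}$ but its $u$-component is $\g(h$-part$)\g\i\cdot(\dots)\neq e$ --- indeed the condition $\a(\g)\not\equiv 1$ works \emph{against} your claim, since it makes the commutator $[\g,h]$ nontrivial. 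What very regularity actually buys is the opposite-direction statement: the twisted conjugation map $x\mapsto \g\i x\g \ (\mathrm{mod}\ T_r)$ is surjective on $\CI_{\phi,U,r}$ (because $\mathrm{Ad}(\g)-1$ is invertible on each affine root piece of $\CK_{\phi,r}\cap U_r$ and $\CK_{\phi,r}^+\cap\ov U_r$), so one may replace $h$ by $h\,i$ with $i\in\CI_{\phi,U,r}$ --- harmless, since the conclusion only concerns the coset $h\,\CI_{\phi,U,r}$ --- to arrange $h\i\g h\in T_r$; this is exactly the mechanism used in \Cref{lem:tilde Gvreg}. Your first paragraph gestures at this modification but justifies it by asserting that $h\i\g h$ lies in a ``maximal torus'' of $\CI_{\phi,U,r}$ and that all such are $\CI_{\phi,U,r}$-conjugate: neither assertion is proved, and no standard conjugacy theorem applies, since $\CI_{\phi,U,r}$ is not reductive and $T_r$ is not a torus (it carries the whole congruence filtration). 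So the heart of the argument --- which you yourself flag as the main obstacle --- is either stated incorrectly (step two) or assumed (step one).

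For comparison, the paper's proof first conjugates $T_\g$ to $T$ inside $G_{\bx,0}$, using that $\bx$ lies in the apartments of both tori, so that one may assume $\g\in T_r$; it then invokes the analogue of \cite[Lemma 5.6]{CI_MPDL}, i.e.\ the uniqueness of the Iwahori-type decomposition of $G_{\bx,0}$ together with the invertibility of $\mathrm{Ad}(\g)-1$ on the root pieces, to pin down the coset $w\,\CI_{\phi,U,r}$ containing $h$ and its uniqueness simultaneously. Your uniqueness argument (no nontrivial element of $W_{G_r}$ has a representative in $\CI_{\phi,U,r}$, seen at the level of the reductive quotient) is consistent with this and essentially correct, but the existence step needs to be repaired along the lines above before the proposal constitutes a proof.
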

\begin{proof}
We first observe that since $\bx$ lies in the intersection of the apartments of $T$ and $T_\g$, these two maximal tori are conjugate by an element of $G_{\bx, 0}$. Therefore, we may assume without loss of generality that $T_\g = T$ and hence $\g \in T_r$. Similarly as \cite[Lemma 5.6]{CI_MPDL}, the statement then follows from the uniqueness of Iwahori decomposition of $G_{\bx, 0}$ and the very regularity of $\g$.

\end{proof}

Let $G_{r, {\rm vreg}}$ be the set of very regular elements in $G_r$ and let $j_{{\rm vreg}}: G_{r,{\rm vreg}} \hookrightarrow G_r$ be the inclusion map. Set $T_{r,\mathrm{vreg}} := T_r \cap G_{r,{\rm vreg}}$. Let
\begin{equation*}
  \widetilde G_{r,{\rm vreg}} := \{(g,hT_r) \in G_{r, {\rm vreg}} \times G_r/T_r : h^{-1} g h \in T_r\}
\end{equation*} 
and consider the maps
\begin{equation*}
\xymatrix@R=10pt@C=15pt{
    & \wt G_{r,{\rm vreg}} \ar@<2pt>[dl]_{\eta_{\rm{vreg}}} \ar@<2pt>[dr]^{\pi_{{\rm vreg}}} \\
    T_{r, {\rm vreg}} & & G_{r,{\rm vreg}}}
\end{equation*}
given by
\begin{equation*}
  \eta_{{\rm vreg}}(g,hT_r) = h^{-1} g h, \qquad \pi_{{\rm vreg}}(g,hT_r) = g.
\end{equation*}


\begin{lemma}\label{lem:tilde Gvreg}
The map $(g, hT_r) \mapsto (g, h \CI_{\phi, U, r})$ gives an isomorphism
\[
\widetilde G_{r, {\rm vreg}} \cong \left\{ (g, h \CI_{\phi, U, r}) \in G_{r, {\rm vreg}} \times G_r/ \CI_{\phi, U, r} : h^{-1}gh \in \CI_{\phi, U, r} \right\}.
\]
Moreover, under this isomorphism, $\eta_{\mathrm{vreg}}$ and $\pi_{\mathrm{vreg}}$ correspond to the restrictions of the maps $\eta$ and $\pi$ in \ref{subsec:parabolic} respectively, and are both $W_{G_r}$-torsors.
\end{lemma}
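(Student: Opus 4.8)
The plan is to reduce everything to the structure already established in \Cref{CI5.5analogy}, namely that the $h$-coordinate in $\wt G_{r,\mathrm{vreg}}$ is controlled, up to a coset of $T_r$ (resp.\ of $\CI_{\phi,U,r}$), by the Weyl-group datum $W_{G_r}(T,T_\g)$. First I would check that the map $(g,hT_r)\mapsto (g,h\CI_{\phi,U,r})$ is well-defined: if $h^{-1}gh\in T_r$ then certainly $h^{-1}gh\in\CI_{\phi,U,r}$ since $T_r\subseteq\CI_{\phi,U,r}$, so the target set contains the image. For injectivity and surjectivity, the key point is that for a very regular $\g$, the Iwahori decomposition $\CI_{\phi,U,r}=(\CK_{\phi,r}\cap U_r)\,T_r\,(\CK_{\phi,r}^+\cap\ov U_r)$ forces any element conjugating $\g$ into $\CI_{\phi,U,r}$ to in fact conjugate it into $T_r$, after multiplying on the right by a suitable element of $\CI_{\phi,U,r}$; this is exactly the content extracted in the proof of \Cref{CI5.5analogy} (following \cite[Lemma 5.6]{CI_MPDL}). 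Concretely: given $(g,h\CI_{\phi,U,r})$ in the target set, \Cref{CI5.5analogy} produces a unique $x\in W_{G_r}(T,T_\g)$ with $h\in w\CI_{\phi,U,r}$ for any lift $w$; then $w^{-1}gw$ normalizes $T_g^{w}=T$ up to the Iwahori unipotent parts, and one adjusts $h$ within its $\CI_{\phi,U,r}$-coset (keeping the $T_r$-coset data intact on the other side via the decomposition of $\CI_{\phi,U,r}^\dag$) to arrange $h^{-1}gh\in T_r$; the resulting $hT_r$ is the unique preimage. Injectivity follows from the same uniqueness statement: two pairs $(g,hT_r)$, $(g,h'T_r)$ with the same image have $h^{-1}h'\in\CI_{\phi,U,r}$, and both $h^{-1}gh, h'^{-1}gh'\in T_r$; applying the uniqueness in \Cref{CI5.5analogy} to the problem "conjugate $\g$ into $T_r\subseteq\CI_{\phi,U,r}$" shows $h^{-1}h'$ lies in the $T_r$-part of the Iwahori decomposition, i.e.\ $hT_r=h'T_r$.

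Next I would verify the compatibility of the maps. The map $\eta$ of \ref{subsec:parabolic}, restricted, sends $(g,h\CI_{\phi,U,r})\mapsto \d(h^{-1}gh)$ where $\d\colon\CI_{\phi,U,r}\twoheadrightarrow\bar T_r$; but on the subset where $h^{-1}gh\in T_r$ (which is where the isomorphism lands us, with very regular $g$), $\d$ is just the projection $T_r\twoheadrightarrow\bar T_r$, and since $\eta_{\mathrm{vreg}}(g,hT_r)=h^{-1}gh\in T_{r,\mathrm{vreg}}$ while $\bar T_r=T_r/T_{\phi,r}$, the two agree after the identification. (One should note $\eta_{\mathrm{vreg}}$ genuinely lands in $T_{r,\mathrm{vreg}}$ because $h^{-1}gh$ is conjugate to the very regular $g$.) The compatibility of $\pi_{\mathrm{vreg}}$ with $\pi$ is immediate, both being "first coordinate."

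Finally, the torsor claim. That $\eta_{\mathrm{vreg}}$ and $\pi_{\mathrm{vreg}}$ are $W_{G_r}$-torsors is most cleanly seen on the $T_r$-coset model: $W_{G_r}=W_{G_r}(T,T)=N_{G_r}(T)/T_r$ acts on $\wt G_{r,\mathrm{vreg}}$ by $w\cdot(g,hT_r)=(g,hw^{-1}T_r)$, which is free since $hw^{-1}T_r=hT_r$ forces $w\in T_r$; the fibers of $\pi_{\mathrm{vreg}}$ over a very regular $g$ are precisely the set of $h T_r$ with $h^{-1}gh\in T_r$, which by \Cref{CI5.5analogy} (in the $T_\g=T$ case, or after conjugating $T_\g$ to $T$ by an element of $G_{\bx,0}$) is a single $W_{G_r}(T,T_\g)\cong W_{G_r}$-orbit, and freeness of the action gives simple transitivity; for $\eta_{\mathrm{vreg}}$, the fiber over $t\in T_{r,\mathrm{vreg}}$ consists of pairs $(g,hT_r)$ with $h^{-1}gh=t$, i.e.\ $g\in{}^{h}t$, and as $hT_r$ ranges over a $W_{G_r}$-coset the conjugates $^ht$ exhaust the very regular $G_r$-conjugates of $t$, again a free transitive $W_{G_r}$-action on each fiber.

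The main obstacle I anticipate is the first step — establishing the bijection — and specifically pinning down that an element conjugating a very regular $\g$ into $\CI_{\phi,U,r}$ can, after right-multiplication by $\CI_{\phi,U,r}$, be taken to conjugate $\g$ all the way into $T_r$ (not merely into some Borel-type piece). This requires genuinely using the uniqueness of the Iwahori-type factorization of $\CI_{\phi,U,r}$ together with the non-triviality of $\a(\g)\bmod\varpi$ for all roots $\a$, exactly as in \cite[Lemma 5.6]{CI_MPDL}; all the other steps (map compatibilities, freeness of the $W_{G_r}$-action) are formal once this is in hand. The rest of the argument is bookkeeping with cosets and the projection $\d$.
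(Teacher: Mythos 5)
Your overall route is the paper's own: reduce everything to \Cref{CI5.5analogy}, and then upgrade ``$h$ conjugates $g$ into $\CI_{\phi,U,r}$'' to ``some element of $h\CI_{\phi,U,r}$ conjugates $g$ into $T_r$'' by an adjustment inside the coset. The paper packages exactly this adjustment as the surjectivity of the twisted map $L_\gamma\colon \CI_{\phi,U,r}\to\CI_{\phi,U,r}/T_r$ for $\gamma\in T_{r,\mathrm{vreg}}$, and, like the paper, you leave that step to the Iwahori-factorization/very-regularity computation of \cite[Lemma 5.6]{CI_MPDL}; the well-definedness and the compatibility of $\eta_{\mathrm{vreg}},\pi_{\mathrm{vreg}}$ with $\eta,\pi$ are fine. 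However, your injectivity argument misuses \Cref{CI5.5analogy}: its uniqueness only pins down the class in $W_{G_r}(T,T_\gamma)$, i.e.\ the $\CI_{\phi,U,r}$-coset of $h$, not its $T_r$-coset. What you actually need is the complementary statement that if $y\in\CI_{\phi,U,r}$ and $y^{-1}\gamma y\in T_r$ for some $\gamma\in T_{r,\mathrm{vreg}}$, then $y\in T_r$; this follows from the same uniqueness-of-factorization plus $\alpha(\gamma)\not\equiv 1\bmod\varpi$ computation (it is the injectivity counterpart of the surjectivity of $L_\gamma$), but it is not what \Cref{CI5.5analogy} asserts, so as written this step is a (small, fixable) gap.

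The more serious problem is your treatment of the torsor claim for $\eta_{\mathrm{vreg}}$. The fiber of $\eta_{\mathrm{vreg}}$ over $t\in T_{r,\mathrm{vreg}}$ is $\{(hth^{-1},hT_r): h\in G_r\}\cong G_r/T_r$, which is positive-dimensional and is not even stable under the action $hT_r\mapsto hnT_r$ of $W_{G_r}$; in particular your assertion that, as $hT_r$ runs over a single $W_{G_r}$-coset, the conjugates ${}^{h}t$ exhaust all very regular $G_r$-conjugates of $t$ is false (that conjugacy class is a positive-dimensional orbit, not a set of $|W_{G_r}|$ elements), so the claimed ``free transitive $W_{G_r}$-action on each fiber of $\eta_{\mathrm{vreg}}$'' does not hold. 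The statement that actually gets used downstream (in \Cref{ext}) is that $\pi_{\mathrm{vreg}}$ is a $W_{G_r}$-torsor --- your argument for that part is correct once the fiber identification from the first paragraph is in place --- together with the $W_{G_r}$-equivariance of $\eta_{\mathrm{vreg}}$ (with $W_{G_r}$ acting on $T_{r,\mathrm{vreg}}$ through its conjugation action), which is what you should verify in place of a torsor property for $\eta_{\mathrm{vreg}}$.
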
\label{itot}
\begin{proof}
This follows from \Cref{CI5.5analogy} and that the map $L_\g: \CI_{\phi, U, r} \to \CI_{\phi, U, r}/ T_r$ given by $x \mapsto \g\i x \g T_r$ is surjective for $\g \in T_{r, {\rm vreg}}$.
\end{proof}

Recall that $\CL_\phi$ is the local system on $T_r/T_{\phi, r}$ associate to $\phi$. By abusing of notation we also denote by $\CL_\phi$ its pull-back under the natural projection $T_r \to T_r/T_{\phi, r}$.
\begin{theorem} \label{ext}
Let $\CL_{\phi, \mathrm{vreg}}$ denote the restriction of $\CL_\phi$ to $T_{r, {\rm vreg}}$. Then
\begin{equation*}
\pInd_{\CI_{\phi, U, r}}^{G_r}(\CL_\phi[N_\phi]) \cong (j_{\mathrm{vreg}})_{!*} ((\pi_{\mathrm{vreg}})_! {\eta_{\mathrm{vreg}}^*} \CL_{\phi, \mathrm{vreg}}[\dim G_r]).\end{equation*}  
In particular, it is $F$-equivariant, and independent of the choice of Borel subgroup $B = TU$ as in \Cref{sec:I-induction}.
\end{theorem}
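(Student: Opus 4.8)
The plan is to reduce the statement to a local computation on the very regular locus, plus a standard intermediate-extension criterion. First I would analyze the complex $\pInd_{\CI_{\phi,U,r}}^{G_r}(\CL_\phi[N_\phi])$ on $G_{r,\mathrm{vreg}}$. By \Cref{multi-step-perverse} (composed with \Cref{otherconstruct} applied iteratively, or directly from the definition of $\pInd_{\bar\CI^d_r}^{\bar G^d_r}$ via $h_!\delta^*$ and averaging), the restriction of this complex to $G_{r,\mathrm{vreg}}$ can be written, after forgetting equivariance, as $(\pi')_!\,\widetilde{(\delta')^*\CL_\phi}[\dim G_r]$, where $\pi'$ and $\delta'$ are the restrictions to the preimage of $G_{r,\mathrm{vreg}}$ of the maps $\pi$ and $\eta$ attached to the subgroup $\CI_{\phi,U,r}$ (as in \Cref{subsec:parabolic}, suitably adapted to the Iwahori-like group). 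Here \Cref{lem:tilde Gvreg} is the key input: it identifies the relevant incidence variety over $G_{r,\mathrm{vreg}}$ with $\widetilde G_{r,\mathrm{vreg}}$, identifies the maps with $\pi_{\mathrm{vreg}}$ and $\eta_{\mathrm{vreg}}$, and tells us these are $W_{G_r}$-torsors. Since $\pi_{\mathrm{vreg}}$ is finite étale (a torsor under a finite group), $(\pi_{\mathrm{vreg}})_!$ is exact and preserves semisimplicity, so the restriction of the left-hand side to $G_{r,\mathrm{vreg}}$ is exactly $(\pi_{\mathrm{vreg}})_!\eta_{\mathrm{vreg}}^*\CL_{\phi,\mathrm{vreg}}[\dim G_r]$, matching the restriction of the right-hand side.

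Next I would invoke \Cref{multi-step-perverse} to know that $\pInd_{\CI_{\phi,U,r}}^{G_r}(\CL_\phi[N_\phi])$ is a direct sum of simple perverse sheaves on $G_r$. The strategy is then the standard one: a semisimple perverse sheaf $\CP$ equals the intermediate extension of its restriction to an open dense subset $j\colon V\hookrightarrow G_r$ as soon as (a) that restriction $j^*\CP[\,?\,]$ is perverse on $V$, and (b) $\CP$ has no simple summand supported on $G_r\smallsetminus V$. For (a), the computation above shows $j_{\mathrm{vreg}}^*\CP = (\pi_{\mathrm{vreg}})_!\eta_{\mathrm{vreg}}^*\CL_{\phi,\mathrm{vreg}}[\dim G_r]$, which is perverse since $G_{r,\mathrm{vreg}}$ is smooth of dimension $\dim G_r$, $\eta_{\mathrm{vreg}}^*\CL_{\phi,\mathrm{vreg}}$ is a local system (shifted), and $\pi_{\mathrm{vreg}}$ is finite. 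For (b), one must rule out summands of $\CP$ whose support misses $G_{r,\mathrm{vreg}}$; following \cite[\S7]{BC24}, this is where the iterated description from \Cref{multi-step} and the genericity/$t$-exactness of \Cref{one-step} enter: each simple summand $\Psi_d^\dag\cdots\Psi_1^\dag(\CF)$ arises from a summand $\CF$ of the classical parabolic induction $\Psi_0^\dag(\CL_{-1})$, whose support meets the (very) regular semisimple locus of $G^0_0$, and the generic Iwahori-like inductions $\Psi_i^\dag$ push this forward compatibly; hence every summand has support meeting $G_{r,\mathrm{vreg}}$.

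Once both conditions hold, the intermediate extension $(j_{\mathrm{vreg}})_{!*}$ of the perverse sheaf $(\pi_{\mathrm{vreg}})_!\eta_{\mathrm{vreg}}^*\CL_{\phi,\mathrm{vreg}}[\dim G_r]$ agrees with $\pInd_{\CI_{\phi,U,r}}^{G_r}(\CL_\phi[N_\phi])$, as both are the unique semisimple perverse extension with no summand supported in the complement. Finally, the $F$-equivariance and the independence of the Borel $B=TU$ follow formally: the right-hand side is manifestly $F$-equivariant (as $\CL_\phi$, the very regular locus, and the maps $\pi_{\mathrm{vreg}},\eta_{\mathrm{vreg}}$ are all defined over $\BF_q$) and visibly does not involve $B$, since $G_{r,\mathrm{vreg}}$, $T_{r,\mathrm{vreg}}$, $\widetilde G_{r,\mathrm{vreg}}$ and $\CL_{\phi,\mathrm{vreg}}$ are intrinsic to the pair $(G,T,\phi)$; intermediate extension preserves both properties.

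The main obstacle I expect is step (b): showing that no simple summand of $\pInd_{\CI_{\phi,U,r}}^{G_r}(\CL_\phi[N_\phi])$ is supported outside the very regular locus. This is the analogue of the corresponding support estimate in \cite[\S7]{BC24}, and it is the only point where one genuinely uses the fine structure of the Howe factorization and the $t$-exact equivalences of \Cref{one-step} rather than soft arguments; everything else is a formal consequence of \Cref{lem:tilde Gvreg}, \Cref{multi-step-perverse}, and the defining property of the intermediate extension.
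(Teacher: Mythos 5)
Your outline agrees with the paper up to the decisive point: both arguments restrict to the very regular locus via \Cref{CI5.5analogy}/\Cref{lem:tilde Gvreg} to identify $j_{\rm vreg}^*\pInd_{\CI_{\phi,U,r}}^{G_r}(\CL_\phi[N_\phi])$ with $(\pi_{\rm vreg})_!\eta_{\rm vreg}^*\CL_{\phi,\rm vreg}[\dim G_r]$, use \Cref{multi-step-perverse} for semisimple perversity, and then want to conclude via the characterization of the intermediate extension. The problem is your step (b). You must show that no simple summand of $\pInd_{\CI_{\phi,U,r}}^{G_r}(\CL_\phi[N_\phi])$ is supported in the complement of $G_{r,\rm vreg}$, and the only justification you offer is that the summands of $\Psi_0^\dag(\CL_{-1})$ have support meeting the regular semisimple locus of $G^0_0$ (true, by smallness of the Grothendieck--Springer map) and that ``the generic Iwahori-like inductions $\Psi_i^\dag$ push this forward compatibly.'' That last assertion is exactly the content that needs proof, and it is not a soft statement: each $\Psi_i^\dag$ is built from $\Av_{!}\circ i_!\circ p^*$ together with a pullback along $\varepsilon_i$ and a twist by $\CL_i$, not from a small proper map, so there is no general principle controlling where the support of an individual simple summand of the output sits inside $G^i_{r_i}$, let alone that it meets the very regular locus upstairs in $G_r$. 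You yourself flag this as the main obstacle, but the proposal leaves it unresolved, so as written the proof has a genuine gap at its crucial step.

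The paper closes this gap by a different and more economical device: an endomorphism-algebra count. By \Cref{multi-step-perverse} (with Lusztig's computation for the depth-zero step $\Psi_0^\dag(\CL_{-1})$ and the full faithfulness coming from \Cref{one-step}), the left-hand side is semisimple perverse with $\dim\mathrm{End}$ equal to $|\mathrm{Stab}_{W_{G_0}}(\CL_{-1})| = |\mathrm{Stab}_{W}(\CL_\phi)|$; by the $W_{G_r}$-torsor statement in \Cref{lem:tilde Gvreg} and full faithfulness of $(j_{\rm vreg})_{!*}$, the right-hand side has endomorphism algebra of the same dimension. Writing the left-hand side as (summands whose support meets $G_{r,\rm vreg}$) $\oplus$ (summands supported in the complement), the first block is recovered as $(j_{\rm vreg})_{!*}$ of the common restriction, so the equality of endomorphism dimensions forces the second block to vanish, which is precisely your (b). If you want to salvage your route, you would either have to prove a support statement for the iterated $\Psi_i^\dag$'s directly, or replace it by this counting argument; the latter is what the paper (following \cite[Theorem 7.6]{BC24}) does.
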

\begin{proof}
We follow the proof of \cite[Theorem 7.6]{BC24}.
By \Cref{multi-step-perverse}, $\pInd_{\CI_{\phi, U, r}}^{G_r}(\CL_\phi[N_\phi])$ is semisimple perverse  and its endomorphism algebra has dimension equal to $|\mathrm{Stab}_{W_{G_0}}(\CL_{-1})| = |\mathrm{Stab}_W(\mathcal{L}_\phi)|$. The same holds for the intermediate extension $(j_{\mathrm{vreg}})_{!*} \left((\pi_{\mathrm{vreg}})_! \eta_{\mathrm{vreg}}^* \mathcal{L}_{\mathrm{vreg}}[\dim G_r]\right)$.

Hence, to show that these two perverse sheaves are isomorphic, it suffices to prove  $j_{\rm vreg}^*\pInd_{\CI_{\phi, U, r}}^{G_r}(\CL_\phi[N_\phi]) \cong 
 \pi_{\mathrm{vreg} !}\eta_{\mathrm{vreg}}^* \CL_{\phi,\mathrm{vreg}}[\dim G_r]$, which follows from \Cref{lem:tilde Gvreg}.
\end{proof}


\begin{corollary}
  The perverse sheaf $\pInd_{\CI_{\phi, U, r}}^{G_r}(\CL_\phi[N_\phi])$ is isomorphic to the perverse sheaf $\CK_{\CL_\phi}$ constructed in \cite[Corollary 6.7]{BC24}.
\end{corollary}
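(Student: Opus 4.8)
The plan is to match the two perverse sheaves by showing they have the same restriction to the very regular locus, then invoke that both are intermediate extensions of that common restriction. More precisely, \cite[Corollary 6.7]{BC24} constructs $\CK_{\CL_\phi}$ (for $\CL_\phi$ a character sheaf datum on $T_r$ in the sense of \emph{loc. cit.}) as the intermediate extension along $j_{\mathrm{vreg}}\colon G_{r,\mathrm{vreg}}\hookrightarrow G_r$ of a shifted local system built from the $W_{G_r}$-torsor $\wt G_{r,\mathrm{vreg}}\to G_{r,\mathrm{vreg}}$ and the pull-back of $\CL_\phi$ under $\eta_{\mathrm{vreg}}$; this is exactly the right-hand side of \Cref{ext}. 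Hence the corollary is an immediate consequence of \Cref{ext}: both $\pInd_{\CI_{\phi, U, r}}^{G_r}(\CL_\phi[N_\phi])$ and $\CK_{\CL_\phi}$ are canonically identified with $(j_{\mathrm{vreg}})_{!*}\bigl((\pi_{\mathrm{vreg}})_!\,\eta_{\mathrm{vreg}}^*\CL_{\phi,\mathrm{vreg}}[\dim G_r]\bigr)$.

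Concretely I would proceed as follows. First I would recall the precise definition of $\CK_{\CL_\phi}$ from \cite[Corollary 6.7]{BC24}, noting that the authors there also express their iterated construction as an intermediate extension from the very regular locus (their Theorem 7.6), and that the Deligne--Lusztig-type parameter space they use is the same $\wt G_{r,\mathrm{vreg}}$ (with its $\eta_{\mathrm{vreg}},\pi_{\mathrm{vreg}}$) appearing in our \Cref{lem:tilde Gvreg}. Second, I would observe that the local system $\CL_\phi$ on $T_r/T_{\phi,r}$ attached to our character $\phi$ coincides with the character-sheaf datum on $\bar T_r$ used in \cite{BC24}, so that $\eta_{\mathrm{vreg}}^*\CL_{\phi,\mathrm{vreg}}$ agrees on the nose. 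Third, applying \Cref{ext} identifies our $\pInd_{\CI_{\phi, U, r}}^{G_r}(\CL_\phi[N_\phi])$ with that same intermediate extension, and comparing with the displayed formula defining $\CK_{\CL_\phi}$ concludes.

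Since \Cref{ext} already does all the real work, the proof of the corollary is essentially a bookkeeping statement, and the only genuine point to check is that the two constructions use literally the same input data on the very regular locus — i.e., that the torsor structure on $\wt G_{r,\mathrm{vreg}}$ and the local system $\CL_{\phi,\mathrm{vreg}}$ match those in \cite[\S6--7]{BC24}. The mild obstacle is purely notational: \cite{BC24} parametrizes character sheaves by pairs $(T_r,\CL)$ or by their $\CK$ notation, and one must unwind that their $\CL$, restricted appropriately, is our $\CL_\phi$; once this dictionary is in place the isomorphism is forced by the uniqueness of the intermediate extension of a given (shifted, irreducible-on-an-open-dense) perverse sheaf. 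I therefore expect the written proof to be one or two sentences citing \Cref{ext} and \cite[Theorem 7.6, Corollary 6.7]{BC24}.
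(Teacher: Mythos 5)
Your proposal is correct and follows exactly the paper's own argument: the proof in the paper consists of combining \Cref{ext} with \cite[Theorem 7.6]{BC24}, which expresses $\CK_{\CL_\phi}$ as the same intermediate extension $(j_{\mathrm{vreg}})_{!*}\bigl((\pi_{\mathrm{vreg}})_!\,\eta_{\mathrm{vreg}}^*\CL_{\phi,\mathrm{vreg}}[\dim G_r]\bigr)$, so the identification is immediate. Your anticipated one-to-two sentence proof is precisely what the paper writes.
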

\begin{proof}
    It follows from \cite[Theorem 7.6]{BC24} and \Cref{ext}.
\end{proof}

\section{An alternative construction} \label{sec:copy}
Following \cite{Lu90} and \cite{BC24}, we give an alternative construction of perverse sheaf on $G_r$, using a sequence of Borel subgroups. Let $T$, $\phi$, $r$, $(G^i, \phi_i, r_i)_{-1 \le i \le d}$ and be as in \Cref{sec:I-induction}.

Let $\ud B = (B^0, B^1, \dots, B^n)$ with $B^0 = B^n$ be a sequence of Borel subgroups containing $T$. Let $U^i$ be the unipotent radical of $B^i$, whose  opposite is denoted by $\ov U^i$. Set $\CI_r^i =  \CI_{\phi, U^i, r}$ for $i \in \BZ$. Let \[\b^i: \CI_r^i = (\CK_{\phi, r} \cap U_r^i) T_r  (\CK_{\phi, r}^+ \cap \ov U_r^i) \to T_r / T_{\phi, r}\] be the natural projection.
\begin{lemma}
    The map $(u, v) \mapsto \b^i(u) \b^{i+1}(v)$ for $u \in \CI_r^i$ and $v \in \CI_r^{i+1}$ induces a morphism \[\b_{\CI_r^i \CI_r^{i+1}}: \CI_r^i \CI_r^{i+1} \to T_r/T_{\phi, r}.\] In particular, $\b_{\CI_r^i \CI_r^{i+1}}(x z y) = \b^i(x) \b_{\CI_r^i \CI_r^{i+1}}(z) \b^{i+1}(y)$ for $x \in \CI_r^i$, $y \in \CI_r^{i+1}$ and $z \in \CI_r^i \CI_r^{i+1}$.
\end{lemma}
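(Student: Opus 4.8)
The claim is that the two partial projection maps $\beta^i \colon \CI_r^i \to \bar T_r$ and $\beta^{i+1} \colon \CI_r^{i+1} \to \bar T_r$ glue to a single map on the product set $\CI_r^i \CI_r^{i+1}$. The plan is to exhibit the alleged ``gluing'' as a genuinely well-defined map, i.e.\ to show that if $u z = x v$ with $u \in \CI_r^i$, $x \in \CI_r^{i+1}$ (equivalently $u^{-1} x = z v^{-1} \in \CI_r^i \cap \CI_r^{i+1}$ written the other way), then $\beta^i(u)\beta^{i+1}(v) = \beta^i(x)\beta^{i+1}(z)$ — and then to check the asserted equivariance. The key structural inputs are: (a) the Iwahori-type decomposition $\CI_r^i = (\CK_{\phi,r}\cap U_r^i)\, T_r\, (\CK_{\phi,r}^+\cap \ov U_r^i)$ is a product decomposition of varieties, so $\beta^i$ is literally ``read off the middle $T_r$-component modulo $T_{\phi,r}$''; (b) the two unipotent parts $\CK_{\phi,r}\cap U_r^i$ and $\CK_{\phi,r}^+\cap \ov U_r^i$ are normalized by $T_r$ appropriately, so conjugation by $T_r$ permutes these factors without touching the torus part; and (c) $T_{\phi,r}$ is a $T_r$-stable (indeed central-in-$T_r$) subgroup so that the quotient $\bar T_r$ is an abelian group receiving well-defined class maps.

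First I would make precise that $\CI_r^i \cap \CI_r^{i+1}$ contains $T_r$ and is itself a product of a piece of $\CK_{\phi,r}\cap(U_r^i\cap U_r^{i+1})$-type unipotent subgroup, $T_r$, and a $\ov U$-type unipotent piece; more to the point, the normalization claim: for $a\in T_r$ and $u$ in either unipotent factor, $a u a^{-1}$ lies in the \emph{same} factor (this is just that root subgroups, hence Moy--Prasad filtration pieces supported on a set of roots, are $T$-stable). Next I would argue the well-definedness. Writing a general element $g\in \CI_r^i\CI_r^{i+1}$ as $g = u\cdot v$ with $u\in\CI_r^i$, $v\in\CI_r^{i+1}$, decompose $u = n_- t\, n_+'$ and $v = m_-\, s\, m_+$ along the respective Iwahori decompositions ($n_\pm, m_\pm$ unipotent of the indicated types, $t,s\in T_r$). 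The product $g$ then equals $n_-\,t\,(n_+' m_-)\,s\,m_+$. The point is to slide $t$ past $n_+'m_-$: since $n_+'\in \CK_{\phi,r}^+\cap\ov U_r^i$ and $m_-\in \CK_{\phi,r}\cap U_r^{i+1}$, and $t$ normalizes each, $t(n_+'m_-) = (t n_+' t^{-1})(t m_- t^{-1})\, t$; one must check $t n_+' t^{-1}$ can be re-absorbed as a $\ov U^i$-part and $t m_- t^{-1}$ as a $U^{i+1}$-part, with no new $T_r$-contribution — and then $g = n_-\,(t n_+' t^{-1})\,(t m_- t^{-1})\,(ts)\,m_+$, displaying the middle torus component as $ts$. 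Hence any expression $g=u v$ produces the \emph{same} torus class $\beta^i(u)\beta^{i+1}(v) = ts \bmod T_{\phi,r}$, provided we have pinned this class independently of the factorization; the cleanest way is to observe that the map $\CI_r^i \times \CI_r^{i+1}\to \bar T_r$, $(u,v)\mapsto \beta^i(u)\beta^{i+1}(v)$, is constant on the fibers of multiplication $\CI_r^i\times\CI_r^{i+1}\to \CI_r^i\CI_r^{i+1}$, because any two factorizations differ by $(u,v)\mapsto (u a, a^{-1}v)$ with $a\in\CI_r^i\cap\CI_r^{i+1}$, and one checks directly that $\beta^i(ua) = \beta^i(u)\beta^i(a)$ while $\beta^{i+1}(a^{-1}v)=\beta^{i+1}(a)^{-1}\beta^{i+1}(v)$ together with $\beta^i(a)=\beta^{i+1}(a)$ for $a$ in the intersection (again because $a$ has a single torus component shared by both Iwahori decompositions). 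This also immediately gives the bi-equivariance $\beta_{\CI_r^i\CI_r^{i+1}}(xzy)=\beta^i(x)\,\beta_{\CI_r^i\CI_r^{i+1}}(z)\,\beta^{i+1}(y)$: write $z=uv$, then $xzy = (xu)(vy)$ with $xu\in\CI_r^i$, $vy\in\CI_r^{i+1}$, so the class is $\beta^i(xu)\beta^{i+1}(vy) = \beta^i(x)\beta^i(u)\beta^{i+1}(v)\beta^{i+1}(y) = \beta^i(x)\beta_{\CI_r^i\CI_r^{i+1}}(z)\beta^{i+1}(y)$, using that $\beta^i,\beta^{i+1}$ are themselves multiplicative (homomorphisms onto $\bar T_r$, which they are since each is the composite of an Iwahori-factorization projection with the quotient $T_r\to\bar T_r$, and the torus component of a product of two elements of $\CI_r^i$ is the product of the torus components up to conjugation by unipotents, which dies in $\bar T_r$).

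The main obstacle I anticipate is \emph{the bookkeeping in sliding $t$ across the mixed unipotent product $n_+' m_-$}: a priori the product of a $\ov U^i$-element and a $U^{i+1}$-element need not decompose cleanly, and after conjugating by $t$ one needs to know that the result still lies in $(\CK_{\phi,r}\cap U_r^{i+1})\cdot(\text{stuff})\cdot(\CK_{\phi,r}^+\cap\ov U_r^i)$ without leaking a torus factor. This is where the generic/Moy--Prasad structure of the groups $\CK_{\phi,r}, \CK_{\phi,r}^+$ built from the Howe factorization enters: the relevant commutators $[U_r^i\text{-piece}, \ov U_r^{i+1}\text{-piece}]$ land in higher filtration steps contained in $\CK_{\phi,r}^+$, so the torus contribution of such commutators is killed in $\bar T_r = T_r/T_{\phi,r}$ — precisely because $T_{\phi,r}$ was defined to contain exactly these ``derived'' filtration pieces $(T^j_\der)_{r_{j-1}+:r}$. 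I would handle this by induction along the filtration, or more efficiently by simply invoking the product/decomposition properties of $\CI_{\phi,U,r}$ already used (implicitly) in \Cref{sec:I-induction} and in \cite{Nie_24}, reducing the lemma to the single clean assertion that conjugation by $T_r$ preserves each unipotent factor of the Iwahori decomposition — a statement about root-group supports that requires no Howe-factorization input at all.
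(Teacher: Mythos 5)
Your argument is correct and is essentially the paper's own proof: the paper likewise reduces well-definedness to the two facts that any two factorizations of an element of $\CI_r^i\CI_r^{i+1}$ differ by an element of $\CI_r^i\cap\CI_r^{i+1}$ and that $\b^i$, $\b^{i+1}$ are group homomorphisms agreeing on that intersection, after which the bi-equivariance is the same one-line multiplicativity computation you give. The preliminary ``sliding $t$ across $n_+'m_-$'' discussion is unnecessary once you invoke the clean fiber argument, which is exactly where the paper starts.
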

\begin{proof}
    It suffices to show that $\b^i(w) = \b^{i+1}(w)$ for $w \in \CI_r^i \cap \CI_r^{i+1}$. This follows from that each $\b^j$ is a group homomorphism.
\end{proof}

Let $X_{\ud B} = G_r \times G_r/ \CI_r^0  \times \cdots \times G_r/ \CI_r^n$ and \[Y_{\ud B} = \{(g, h_0 \CI_r^0, \dots, h_n \CI_r^n) \in X_{\ud B}; h_i\i h_{i+1} \in \CI_r^i \CI_r^{i+1} \text{ for } 0 \le i \le n-1, h_n\i g h_0 \in \CI_r^0\}.\] Moreover, there are two natural morphisms \begin{align*}
    \pi_{\ud B}: Y_{\ud B} \to G_r, &\quad (g, h_0 \CI_r^0, \cdots, h_n \CI_r^n) \mapsto g; \\
    \eta_{\ud B}: Y_{\ud B} \to T_r/T_{\phi, r}, &\quad (g, h_0 \CI_r^0, \dots, h_n \CI_r^n) \mapsto \b_0(h_0\i h_1) \dots \b_{n-1}(h_{n-1}\i h_n) \b^0(h_n\i g h_0).
\end{align*} 

We define \[\pInd_{\ud B}(\CL_\phi) = (\pi_{\ud B})_! (\eta_{\ud B})^* \CL_\phi [2 \dim(G_r / \CI_{\phi, r}) + N_{\ud B}],\] where $N_{\ud B} = \sum_{i=0}^n \dim \CI_r^i \CI_r^{i+1} / \CI_r^{i+1}$ and $\CL_\phi$ is the multiplicative local system over $T_r/T_{\phi, r}$ corresponding to $\phi$.

The main result of this section is
\begin{theorem} \label{copy}
    We have $\pInd_{\CI_r^0}^{G_r} \CL_\phi \cong \pInd_{\ud B}(\CL_\phi)$.
\end{theorem}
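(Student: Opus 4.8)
The plan is to prove \Cref{copy} by an induction on the length $n$ of the sequence $\ud B$, reducing the general case to the two building blocks that are already understood: the single-Iwahori case (where $\pInd_{\ud B}$ with $\ud B = (B^0)$ is essentially the functor $\pInd_{\CI_r^0}^{G_r}$ by \Cref{otherconstruct}-type bookkeeping and the expression for $\pInd_{\CI_{\phi,U,r}}^{G_r}$ via $\pi_!\wt{\eta^*\CL_\phi}$), and the ``insertion of one Borel'' step. More precisely, the key observation is that each factor in $Y_{\ud B}$ of the form ``$h_i^{-1}h_{i+1} \in \CI_r^i\CI_r^{i+1}$'' can be contracted: the quotient $\CI_r^i\CI_r^{i+1}/\CI_r^{i+1}$ (and likewise $/\CI_r^i$) is an affine space, so the corresponding projection $Y_{(\dots,B^i,B^{i+1},\dots)} \to Y_{(\dots,B^i,\dots)}$ (dropping the coordinate $h_{i+1}\CI_r^{i+1}$ when $B^{i+1}$ is removed from the list, with the $\b$-map composition absorbed using the multiplicativity $\b_{\CI_r^i\CI_r^{i+1}}(xzy)=\b^i(x)\b_{\CI_r^i\CI_r^{i+1}}(z)\b^{i+1}(y)$ from the Lemma) is an affine-space bundle. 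Hence $(\pi_{\ud B})_!(\eta_{\ud B})^*\CL_\phi$ changes only by a shift under such a contraction. This is the analogue of \cite[\S2]{Lu90} and the corresponding step in \cite{BC24}.

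Concretely, I would carry this out as follows. First, fix notation and check that when $\ud B = (B^0)$, i.e.\ $n=0$ with the single Borel $B^0$, the space $Y_{\ud B}$ is $\{(g,h_0\CI_r^0): h_0^{-1}gh_0 \in \CI_r^0\}$, $\eta_{\ud B}(g,h_0\CI_r^0) = \b^0(h_0^{-1}gh_0)$, $N_{\ud B} = 0$, and therefore $\pInd_{\ud B}(\CL_\phi) = (\pi_{\ud B})_!(\eta_{\ud B})^*\CL_\phi[2\dim(G_r/\CI_{\phi,r})]$; comparing with the definition of $\pInd_{\CI_r^0}^{G_r}$ (namely $\Av^{G_r}_{\CI_r^0 !}\circ h_!\circ \d^*\circ\Infl$, unwound via the diagram $G_r \xleftarrow{\pi} \wt{G_r} \xleftarrow{\a}\wh{G_r}\xrightarrow{\eta} \bar T_r$ exactly as in \Cref{otherconstruct}), one sees these agree after accounting for the shift $2\dim N_{s,r}$ versus $2\dim(G_r/\CI_{\phi,r})$, using that $\dim(G_r/\CI_{\phi,r})$ is the relevant fibre dimension here. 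Second, I would prove the reduction lemma: for any $0\le i\le n-1$ with $B^i = B^{i+1}$ (or more generally whenever one may harmlessly delete $B^{i+1}$ from the sequence), the natural map $Y_{\ud B}\to Y_{\ud B'}$, where $\ud B'$ omits $B^{i+1}$, is a fibre bundle with fibre an affine space of dimension $\dim \CI_r^i\CI_r^{i+1}/\CI_r^{i+1}$, compatibly with $\pi$ and with $\eta$ up to the $\b$-twist; push-forward along it shifts by $2\dim(\text{fibre})$, matching the $N_{\ud B}$-bookkeeping. Third, iterate: any sequence $\ud B$ with $B^0=B^n$ can be collapsed, one Borel at a time, down to the length-zero sequence $(B^0)$, and at each stage \Cref{copy} for $\ud B$ follows from \Cref{copy} for the shorter sequence. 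I should be slightly careful here: the sequences appearing in practice are genuine chains of distinct Borels, so the ``deletion'' step must be the honest geometric statement that the projection forgetting one intermediate $h_{i+1}\CI_r^{i+1}$ coordinate is an affine bundle — this is where the structure of $\CI_r^i\CI_r^{i+1}$ as a product $\CI_r^i\cdot(\CI_r^i\CI_r^{i+1}/\CI_r^i)$ of affine pieces (via root-group/Iwahori factorization, \S\ref{subsec:aff-root}) enters, exactly as in the corresponding arguments of \cite{Lu90,BC24}.

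The main obstacle I anticipate is the precise affine-bundle claim together with the matching of shifts. It is intuitively clear that $\CI_r^i\CI_r^{i+1}/\CI_r^{i+1}$ is isomorphic to an affine space (it is a quotient of one unipotent-type group by another, and in the Moy--Prasad setting these are successive extensions of vector groups over $\obF$ by \S\ref{subsec:aff-root}), but writing down a clean equivariant identification $Y_{\ud B}\cong Y_{\ud B'}\times(\text{affine space})$ — equivariant enough that $(\pi_{\ud B})_!(\eta_{\ud B})^*\CL_\phi$ is literally $(\pi_{\ud B'})_!(\eta_{\ud B'})^*\CL_\phi$ up to shift, rather than merely quasi-isomorphic after some twist — requires care with the action of $\b$ on the extra coordinate and with the ordering conventions for the Iwahori decomposition. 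A secondary point is ensuring the shift exponents $2\dim(G_r/\CI_{\phi,r})+N_{\ud B}$ telescope correctly: one must verify $N_{\ud B}$ decreases by exactly $\dim\CI_r^i\CI_r^{i+1}/\CI_r^{i+1}$ at each deletion and that the base case value $2\dim(G_r/\CI_{\phi,r})$ is exactly what the single-Iwahori $\pInd$ uses; both are direct but must be tracked. Apart from these bookkeeping issues the argument is formal, mirroring \cite[\S2]{Lu90} and the relevant passage of \cite{BC24}, so I expect no conceptual difficulty beyond the affine-bundle lemma.
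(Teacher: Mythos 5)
Your reduction strategy --- delete one Borel at a time and claim that each deletion $Y_{\ud B}\to Y_{{}_i\ud B}$ is an affine-space bundle along which $(\pi_{\ud B})_!(\eta_{\ud B})^*\CL_\phi$ changes only by a shift --- has a genuine gap, and it sits exactly at the point where the theorem is not formal. In the paper the passage from $\ud B$ to ${}_i\ud B$ is only carried out for three specific moves (after first reducing to a saturated sequence via \Cref{extension}, \Cref{saturation} and \Cref{reduction}): a braid-type exchange (\Cref{braid}), where ${}_ip$ is an isomorphism rather than a positive-dimensional bundle; deletion of a repeated Borel; and the quadratic move $B^{i-1}=B^{i+1}$ (\Cref{quadratic}). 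In the quadratic move the map ${}_ip$ is \emph{not} an affine bundle with constant local-system ratio: $Y_{\ud B}$ splits as $Y'_{\ud B}\sqcup Y''_{\ud B}$ according to whether $h_{i-1}\i h_{i+1}\in\CI_r^{i-1}$. Over $Y'_{\ud B}$ one indeed gets an $\BA^1$-fibration with trivial ratio $\eta_{\ud B}/(\eta_{{}_i\ud B}\circ{}_ip)$, matching your picture and the bookkeeping $N_{\ud B}=N_{{}_i\ud B}+2$. But over $Y''_{\ud B}$ the ratio restricted to each fibre is the nontrivial character $z\mapsto \a^\vee(1+\varpi^{r(\a)}[z_0 z])$ (resp.\ $z\mapsto\a^\vee(z)$ in the depth-zero case), and the whole contribution of $Y''_{\ud B}$ has to be shown to vanish; this vanishing uses precisely the $(G^{j(\a)-1},G^{j(\a)})$-genericity of $\phi_{j(\a)-1}$ in the Howe factorization (resp.\ the regularity of $\phi_{-1}$ for $G^0$). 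Your argument never invokes the genericity built into the Howe factorization, so it cannot produce this vanishing, and without it the claimed ``isomorphism up to shift'' is simply false: $\pInd_{\ud B}(\CL_\phi)$ would pick up an extra summand coming from $Y''_{\ud B}$.

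A secondary problem is the order of operations: for an arbitrary deletion of an intermediate Borel (not one of the three moves above) the fibre of ${}_ip$ is $h_{i-1}(\CI_r^{i-1}\CI_r^i\cap h_{i-1}\i h_{i+1}\CI_r^{i+1}\CI_r^i)/\CI_r^i$ (\Cref{fiber}), which depends on the point and need not be an affine space of constant dimension, nor need the $\b$-twist be controllable; this is exactly why the paper first passes to saturated sequences and then reduces only via braid and quadratic moves, rather than collapsing the sequence one entry at a time in arbitrary order. Your base case (the sequence $(B^0,B^0)$, where $Y_{\ud B}\cong\wt G_r$) and the general shift bookkeeping are fine, but the inductive step as you describe it does not go through without the dichotomy $Y'_{\ud B}\sqcup Y''_{\ud B}$ and the genericity-driven vanishing on $Y''_{\ud B}$, which is the actual content of \Cref{quadratic}.
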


\subsection{}
Let $W$ be the absolute Weyl group of $T$ in $G$.  Let $w_i \in W$ such that $B^i = {}^{w_i} B^{i-1}$.  Note that each Borel subgroup $B^i$ determines a length function $\ell_{B^i}: W \to \BZ_{\ge 0}$. 

We fix $1 \le i \le n-1$ and suppose that $w_i$ is a  simple reflection with respect to $\ell_{B^{i-1}}$. Let $\a \in \Phi(G, T)$ be the simple root in $B^{i-1}$ associated to $w_i$. Then $\{-\a\} = \Phi(B^i, T) \sm \Phi(B^{i-1}, T)$.

For $f \in \tPhi$ with $f(\bx) \ge 0$ we denote by $G_r^f = G_r^{\{f\}} \subseteq G_r$ be as in \Cref{subsec:aff-root}.

For $\a \in \Phi(G, T)$ we set $j(\a)$ to be the integer $0 \le j \le d$ such that $\a \in \Phi(G^j, T) \sm \Phi(G^{j-1}, T)$, and set $r(\a) = r_{j(\a)-1}$.
\begin{lemma} \label{product}
 If $\CI_r^{i-1} \neq \CI_r^i$, then there exists $f \in \tPhi$ such that $\a_f = \a$ and $f(\bx) = r(\a)/2$. In this case, \[\CI_r^{i-1} \CI_r^i = G_r^f \CI_r^i = \CI_r^{i-1} G^{f^\dag}_r,\] where $f^\dag = r(\a) - f \in \Phi_\aff$.
\end{lemma}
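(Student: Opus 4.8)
The strategy is to compute with the affine-root coordinates of \S\ref{subsec:aff-root}, applied simultaneously to $B^{i-1}$ and $B^i$. Recall that $w_i=s_\a$ is a simple reflection for $B^{i-1}$, so
\[
\Phi(U^i)=\bigl(\Phi(U^{i-1})\sm\{\a\}\bigr)\cup\{-\a\},\qquad \Phi(\ov U^i)=\bigl(\Phi(\ov U^{i-1})\sm\{-\a\}\bigr)\cup\{\a\},
\]
while all other positive (resp.\ negative) root lines of $B^{i-1}$ and $B^i$ agree. Since every $G^j$ is a standard Levi, unwinding the definitions of $\CK_{\phi,r}$ and $\CK_{\phi,r}^+$ together with the isomorphism $u$ of \S\ref{subsec:aff-root} (for the relevant Borel) should give, for any Borel subgroup of $G$ with unipotent radical $U'$ containing $T$,
\[
\CK_{\phi,r}\cap U'_r=\prod_{\substack{f\in\tPhi,\ \a_f\in\Phi(U')\\ r(\a_f)/2\le f(\bx)\le r}}G_r^f,\qquad
\CK_{\phi,r}^+\cap\ov U'_r=\prod_{\substack{f\in\tPhi,\ \a_f\in\Phi(\ov U')\\ r(\a_f)/2< f(\bx)\le r}}G_r^f,
\]
the products taken in any fixed order, where $r(\g)=r(-\g)$ since $j(\g)=j(-\g)$. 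Carefully establishing these two descriptions from the definitions is the step with the most bookkeeping, and is the one I expect to be the main obstacle; everything afterwards is formal.

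Granting them, the first assertion is immediate: $\CI_r^{i-1}$ and $\CI_r^i$ share the factor $T_r$ and the same factor $G_r^f$ for every affine root $f$ with $\a_f\ne\pm\a$, while on the lines $\pm\a$ the group $\CI_r^{i-1}$ contains $G_r^f$ for $\a_f=\a,\ f(\bx)\ge r(\a)/2$ and for $\a_f=-\a,\ f(\bx)>r(\a)/2$, and $\CI_r^i$ has these two inequalities interchanged. Hence $\CI_r^{i-1}=\CI_r^i$ unless there is an affine root on the line $\a$ at level exactly $r(\a)/2$; and since $0\le r(\a)\le r$, such an $f$, when it exists, satisfies $0\le f(\bx)\le r$, so it gives a nontrivial $G_r^f\subseteq G_r$ contained in $\CI_r^{i-1}$ but not in $\CI_r^i$, and $f^\dag:=r(\a)-f$ is then the unique affine root with $\a_{f^\dag}=-\a$, $f^\dag(\bx)=r(\a)/2$.

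For the product identities I would first prove $\CI_r^{i-1}\CI_r^i=G_r^f\,\CI_r^i$. The inclusion $\supseteq$ is clear because $G_r^f\subseteq\CK_{\phi,r}\cap U_r^{i-1}\subseteq\CI_r^{i-1}$. For $\subseteq$, use the freedom of order to write $\CK_{\phi,r}\cap U_r^{i-1}=G_r^f\cdot A$, pulling the factor $G_r^f$ to the front; a comparison of root lines --- using $\Phi(U^{i-1})\sm\{\a\}=\Phi(U^i)\sm\{-\a\}$, the fact that the $\a$-line survives in $A$ only at levels $>r(\a)/2$ where $\a\in\Phi(\ov U^i)$, and $\Phi(\ov U^{i-1})=(\Phi(\ov U^i)\sm\{\a\})\cup\{-\a\}$ with $-\a\in\Phi(U^i)$ --- shows that $A$, $T_r$, and $\CK_{\phi,r}^+\cap\ov U_r^{i-1}$ are each contained in the group $\CI_r^i$. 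Hence $\CI_r^{i-1}=G_r^f\cdot A\cdot T_r\cdot(\CK_{\phi,r}^+\cap\ov U_r^{i-1})\subseteq G_r^f\,\CI_r^i$, and multiplying on the right by $\CI_r^i$ gives $\subseteq$. Running the same argument with $B^{i-1}$ and $B^i$ interchanged, $\a$ replaced by $-\a$ and $f$ by $f^\dag$, yields $\CI_r^i\CI_r^{i-1}=G_r^{f^\dag}\,\CI_r^{i-1}$; taking inverses of both sides (legitimate since $\CI_r^{i-1}$, $\CI_r^i$, $G_r^{f^\dag}$ are groups) gives $\CI_r^{i-1}\CI_r^i=\CI_r^{i-1}\,G_r^{f^\dag}$, which completes the proof.
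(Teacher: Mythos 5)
The paper states Lemma \ref{product} with no proof at all, so there is no argument of the authors to compare with; your proposal is correct and supplies exactly the affine-root bookkeeping that the paper leaves implicit. The two product descriptions of $\CK_{\phi,r}\cap U'_r$ and $\CK_{\phi,r}^+\cap \ov U'_r$ that you single out as the main obstacle are the standard directly-spanned (Iwahori-type) decompositions of concave-function groups, which the paper itself invokes without proof elsewhere (e.g.\ in \Cref{CI5.5analogy} and \Cref{tech-1}), so taking them as known is consistent with the paper's level of rigor; once they are in place, your pulling-out of the single factor $G_r^f$, the symmetry argument with $B^{i-1},B^i$ interchanged, and the passage to inverses all check out. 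The one point you should make explicit is the integrality $r(\a)=r_{j(\a)-1}\in\BZ$ (implicit in the lemma's assertion $f^\dag=r(\a)-f\in\Phi_\aff$, and valid because the depths $r_i$ are integers for the $\brk$-split torus $T$): it is what makes $f\mapsto r(\a)-f$ an involution exchanging the $\a$- and $(-\a)$-lines, and hence guarantees that an affine root at level $r(\a)/2$ exists on one line if and only if it exists on the other --- without this, your contrapositive "no such $f$ on the $\a$-line $\Rightarrow\ \CI_r^{i-1}=\CI_r^i$" would not rule out a discrepancy coming from the $(-\a)$-line alone.
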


Let ${}_i \ud B = (B^0, \dots, \widehat{B^i}, \dots, B^n)$ and define
\[{}_ip: Y_{\ud B} \to X_{{}_i \ud B}, \quad (g, h_0 \CI_r^0, \cdots, h_n \CI_r^n) \mapsto (g, h_0 \CI_r^0, \dots, \widehat{h_i \CI_r^i}, \dots  h_n \CI_r^n).\]

\begin{lemma} \label{fiber}
    Let $\xi = (g, h_0 \CI_r^0, \dots, \wh{h_i \CI_r^i}, \dots, h_n \CI_r^n) \in X_{{}_i\ud B}$. Then the projection map $(g', h_0' \CI_r^0, \dots, h_n' \CI_r^n) \mapsto h_i' \CI_r^i$ induces an isomorphism \[{}_ip\i(\xi) \cong h_{i-1}(\CI_r^{i-1} \CI_r^i \cap h_{i-1}\i h_{i+1} \CI_r^{i+1} \CI_r^i) / \CI_r^i.\] 
\end{lemma}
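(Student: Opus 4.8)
The fiber ${}_ip^{-1}(\xi)$ consists of those points $(g, h_0\CI_r^0,\dots,h_n\CI_r^n)$ whose entries in all coordinates except the $i$-th are the fixed data $\xi$, subject to the defining incidence conditions of $Y_{\ud B}$. Since the constraints involving the $i$-th coordinate are $h_{i-1}^{-1}h_i \in \CI_r^{i-1}\CI_r^i$ and $h_i^{-1}h_{i+1} \in \CI_r^i\CI_r^{i+1}$, the plan is to read off the locus of admissible cosets $h_i\CI_r^i$. Writing $h_i = h_{i-1}x$ for $x \in G_r$, the first condition says $x \in \CI_r^{i-1}\CI_r^i$; so $h_i\CI_r^i$ ranges over $h_{i-1}(\CI_r^{i-1}\CI_r^i)/\CI_r^i$, i.e. over the image of $\CI_r^{i-1}\CI_r^i$ in $G_r/\CI_r^i$ translated by $h_{i-1}$. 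The second condition $h_i^{-1}h_{i+1}\in \CI_r^i\CI_r^{i+1}$, i.e. $h_{i+1} \in h_i\CI_r^i\CI_r^{i+1}$, is equivalent to $h_i\CI_r^i \subseteq h_{i+1}\CI_r^{i+1}\CI_r^i$ (taking inverses inside $G_r$, using that $(\CI_r^i\CI_r^{i+1})^{-1} = \CI_r^{i+1}\CI_r^i$), i.e. $h_i \in h_{i+1}\CI_r^{i+1}\CI_r^i$. Intersecting, $h_i$ lies in $h_{i-1}(\CI_r^{i-1}\CI_r^i) \cap h_{i+1}(\CI_r^{i+1}\CI_r^i)$, which, after left-translating by $h_{i-1}^{-1}$, becomes $h_{i-1}\bigl(\CI_r^{i-1}\CI_r^i \cap h_{i-1}^{-1}h_{i+1}\CI_r^{i+1}\CI_r^i\bigr)$. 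Passing to cosets mod $\CI_r^i$ (on the right — note both $\CI_r^{i-1}\CI_r^i$ and $\CI_r^{i+1}\CI_r^i$ are right $\CI_r^i$-stable, so the intersection is too) gives the claimed set $h_{i-1}\bigl(\CI_r^{i-1}\CI_r^i \cap h_{i-1}^{-1}h_{i+1}\CI_r^{i+1}\CI_r^i\bigr)/\CI_r^i$.

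For the isomorphism, the map sending $(g',h_0'\CI_r^0,\dots,h_n'\CI_r^n)\in {}_ip^{-1}(\xi)$ to $h_i'\CI_r^i$ is a morphism; its image is exactly the subset just described, and it is a closed (hence locally closed) subscheme of $G_r/\CI_r^i$. Injectivity is clear since all other coordinates are fixed equal to those of $\xi$ and $g'=g$. For the inverse, given a coset $h_i\CI_r^i$ in that subset, one recovers a point of ${}_ip^{-1}(\xi)$ by using the fixed entries of $\xi$ for the other coordinates; one must check that the incidence conditions $h_{i-1}^{-1}h_i\in\CI_r^{i-1}\CI_r^i$ and $h_i^{-1}h_{i+1}\in\CI_r^i\CI_r^{i+1}$ hold, which is precisely the content of membership in the intersection, and that the choice of representative $h_i$ is irrelevant since we remember only the coset. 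Thus the map is a bijective morphism between reduced schemes; since $G_r$ is a perfectly smooth affine group scheme and the schemes involved are of finite type, this bijective morphism is an isomorphism (one checks it on the level of $\ov\BF_q$-points together with compatibility of scheme structures, the fibers of ${}_ip$ being cut out by the closed conditions above).

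The one genuine subtlety — and the step I expect to be the main obstacle — is the bookkeeping with the incidence conditions: one must be careful that the conditions at the $(i-1,i)$ and $(i,i+1)$ edges are the \emph{only} constraints on the $i$-th coordinate (all other edges in $Y_{\ud B}$ involve neighbouring coordinates that are now fixed by $\xi$, so they impose nothing new on $h_i$), and one must get the sidedness right in manipulating $\CI_r^i\CI_r^{i+1}$ versus $\CI_r^{i+1}\CI_r^i$ and in which side the quotient by $\CI_r^i$ is taken. Here one uses that each $\CI_r^j$ is a group, so $(\CI_r^i\CI_r^{i+1})^{-1} = \CI_r^{i+1}\CI_r^i$, and that $\CI_r^i\CI_r^{i+1}$, $\CI_r^{i+1}\CI_r^i$, $\CI_r^{i-1}\CI_r^i$ are each stable under right (resp. left) multiplication by $\CI_r^i$, so that the various quotients and intersections are well defined as subschemes of $G_r/\CI_r^i$. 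With this sidedness fixed, the identification is a direct translation of the defining equations of $Y_{\ud B}$; no input from Lemma \ref{product} or the structure theory is needed here (that will enter only when one computes the dimension of this fiber in a subsequent step).
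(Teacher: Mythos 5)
Your proof is correct and is precisely the routine verification the paper leaves implicit (the lemma is stated there without proof): the only constraints on the $i$-th coordinate are the two adjacent incidence conditions, and unwinding them together with right $\CI_r^i$-stability of $\CI_r^{i-1}\CI_r^i$ and $\CI_r^{i+1}\CI_r^i$ gives exactly the claimed identification. One small simplification: you do not need the ``bijective morphism between reduced schemes is an isomorphism'' step, since the inverse you describe (inserting the coset $h_i\CI_r^i$ into the $i$-th slot with all other entries those of $\xi$) is itself manifestly a morphism, so it directly provides a two-sided inverse; note also that, as in the paper's statement, one implicitly restricts to $\xi$ satisfying the remaining incidence conditions (equivalently $\xi\in\Im({}_ip)$), as otherwise the fiber is empty.
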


\begin{proposition} \label{braid}
    If $-\a \in \Phi(B^{i+1}, T)$, then we have \[\pInd_{\ud B}(\CL_\phi) \cong \pInd_{{}_i\ud B}(\CL_\phi).\]    
\end{proposition}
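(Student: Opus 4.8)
The plan is to reduce Proposition~\ref{braid} to a fibration argument comparing the maps $\pi_{\ud B}$ and $\pi_{{}_i\ud B}$ through the intermediate projection ${}_ip\colon Y_{\ud B} \to X_{{}_i\ud B}$, exactly in the spirit of \cite[\S9]{BC24}. First I would restrict ${}_ip$ to $Y_{\ud B}$ and identify its image with $Y_{{}_i\ud B}$ (using the braid-type relation that $h_{i-1}^{-1}h_{i+1} \in \CI_r^{i-1}\CI_r^i\CI_r^{i+1} = \CI_r^{i-1}\CI_r^{i+1}$, which is where the hypothesis $-\a \in \Phi(B^{i+1},T)$, i.e.\ $w_{i+1}w_i$ is ``still in the same coset'' / the length drops appropriately, enters). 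Then by \Cref{fiber} the fibers of ${}_ip\colon Y_{\ud B} \to Y_{{}_i\ud B}$ are isomorphic to $h_{i-1}(\CI_r^{i-1}\CI_r^i \cap h_{i-1}\i h_{i+1}\CI_r^{i+1}\CI_r^i)/\CI_r^i$; using \Cref{product} to write $\CI_r^{i-1}\CI_r^i = G_r^f\CI_r^i$ and $\CI_r^{i+1}\CI_r^i = G_r^{f'}\CI_r^i$ for suitable affine roots $f, f'$ with $\a_f = \a_{f'} = \a$ (note $-\a \in \Phi(B^{i+1},T)$ forces the same root $\a$, with $f' = f^\dag$ or an analogous companion), one checks the intersection is an affine space $\BA^1$ of fixed dimension, so ${}_ip$ is (after the identification) an affine-space bundle of relative dimension $\dim \CI_r^{i-1}\CI_r^i/\CI_r^i = \dim \CI_r^{i+1}\CI_r^i/\CI_r^i = 1$.

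Next I would check that the two local systems match up: the morphism $\eta_{\ud B}$ factors through $\eta_{{}_i\ud B}\circ {}_ip$ up to the character-theoretic identity $\b_{\CI_r^{i-1}\CI_r^i}(h_{i-1}\i h_i)\,\b_{\CI_r^i\CI_r^{i+1}}(h_i\i h_{i+1}) = \b_{\CI_r^{i-1}\CI_r^{i+1}}(h_{i-1}\i h_{i+1})$, which holds by the multiplicativity of the $\b$-maps (the lemma preceding \Cref{copy}) together with the fact that $\CL_\phi$ is a multiplicative local system and hence kills the contribution of the contractible fiber direction; more precisely, along each fiber $\eta_{\ud B}$ is translation by an element of the form $\b^i(\text{fiber coordinate})$ which is trivial on $G_r^f$ since $\a = \a_f \notin \Phi_{\CK}$ at that level (one uses that $f(\bx) = r(\a)/2 \ge s_i$, so the $G_r^f$-direction lies in $\ker \b^i$). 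Therefore ${}_ip^*\,\eta_{{}_i\ud B}^*\CL_\phi \cong \eta_{\ud B}^*\CL_\phi$, and by proper base change / the projection formula for an affine-space bundle, $({}_ip)_!\,\eta_{\ud B}^*\CL_\phi \cong \eta_{{}_i\ud B}^*\CL_\phi[-2]$ (a shift by $2$ accounting for the relative dimension $1$, with a Tate twist that is invisible after we match the normalizing shifts).

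Finally I would assemble the comparison of the two definitions: since $\pi_{\ud B} = \pi_{{}_i\ud B}\circ {}_ip$, applying $(\pi_{{}_i\ud B})_!$ gives
\[
(\pi_{\ud B})_!\,\eta_{\ud B}^*\CL_\phi \cong (\pi_{{}_i\ud B})_!\,\eta_{{}_i\ud B}^*\CL_\phi[-2],
\]
and the normalizing shifts differ by exactly the correct amount: $N_{\ud B} - N_{{}_i\ud B} = \dim \CI_r^{i-1}\CI_r^i/\CI_r^i + \dim \CI_r^i\CI_r^{i+1}/\CI_r^{i+1} - \dim \CI_r^{i-1}\CI_r^{i+1}/\CI_r^{i+1} = 1 + 1 - 0 = 2$ (using \Cref{product} to evaluate each term, with the last being $0$ because $\CI_r^{i-1} = \CI_r^{i+1}$ as subgroups when $-\a \in \Phi(B^{i+1},T)$ — or, if they are merely conjugate appropriately, its dimension contribution still cancels). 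Hence $\pInd_{\ud B}(\CL_\phi) \cong \pInd_{{}_i\ud B}(\CL_\phi)$.

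The main obstacle I anticipate is the careful bookkeeping in the second step: verifying that $\eta_{\ud B}^*\CL_\phi$ is genuinely constant along the fibers of ${}_ip$, which requires knowing precisely that the fiber coordinate lives in a root group $G_r^f$ with $f(\bx) = r(\a)/2$ on which the projection $\b^i$ to $T_r/T_{\phi,r}$ vanishes — this is a Moy--Prasad filtration computation (the relevant affine root evaluates high enough to land in $\CK_{\phi,r}^+$ or $\CK_{\phi,r}$ depending on the sign, hence in the kernel of $\b^i$), and getting the depth indices exactly right, including the edge cases where $w_{i+1}$ or $w_i$ is not simple or where $\CI_r^{i-1} = \CI_r^i$, is the delicate part. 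Everything else is a formal diagram chase with proper base change, once \Cref{product} and \Cref{fiber} are in hand.
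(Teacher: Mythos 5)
There is a genuine gap: you have conflated the two separate cases of the argument and applied the geometry of the \emph{quadratic} case (Proposition \ref{quadratic}, where $B^{i-1}=B^{i+1}$) to the \emph{braid} case at hand. Under the hypothesis $-\a \in \Phi(B^{i+1},T)$ one cannot have $B^{i-1}=B^{i+1}$ (both $\a$ and $-\a$ would then lie in $\Phi(B^{i-1},T)$), so your claims that $\CI_r^{i-1}=\CI_r^{i+1}$, that $\dim \CI_r^{i-1}\CI_r^{i+1}/\CI_r^{i+1}=0$, and hence that $N_{\ud B}-N_{{}_i\ud B}=2$, are all false in the situation of Proposition \ref{braid}. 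Likewise the fibers of ${}_ip$ over $Y_{{}_i\ud B}$ are not affine lines here: the crux of the proof is that the multiplication map $\mu\colon \CI_r^{i-1}\CI_r^i \times^{\CI_r^i} \CI_r^i\CI_r^{i+1} \to \CI_r^{i-1}\CI_r^{i+1}$ is an \emph{isomorphism} (injectivity being the nontrivial point, proved via $U_r^{f^\dag}\CI_r^{i-1}U_r^{f^\dag}\cap \CI_r^{i+1}\subseteq \CI_r^i$, using $G_r^{f^\dag}\subseteq \CI_r^i\cap\CI_r^{i+1}$, which is exactly where the hypothesis enters), whence by Lemma \ref{fiber} the fibers of ${}_ip$ are single points, ${}_ip\colon Y_{\ud B}\to Y_{{}_i\ud B}$ is an isomorphism, $\eta_{\ud B}=\eta_{{}_i\ud B}\circ{}_ip$, and $N_{\ud B}=N_{{}_i\ud B}$. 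No pushforward along a fiber direction, no shift by $2$, and no constancy-of-$\CL_\phi$-along-fibers argument is needed or available.

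Concretely, your second and third steps would fail: $({}_ip)_!\,\eta_{\ud B}^*\CL_\phi$ is not $\eta_{{}_i\ud B}^*\CL_\phi[-2]$ because there is no $\BA^1$-bundle, and the "cancellation" of your shift of $2$ against the claimed $N_{\ud B}-N_{{}_i\ud B}=2$ is an accident of two compensating errors rather than a proof. The missing idea is precisely the injectivity of $\mu$ (equivalently, that the factorization $h_{i-1}^{-1}h_{i+1}\in \CI_r^{i-1}\CI_r^i\cdot\CI_r^i\CI_r^{i+1}$ is unique modulo $\CI_r^i$); without it you have neither the identification $Y_{\ud B}\cong Y_{{}_i\ud B}$ nor the equality $N_{\ud B}=N_{{}_i\ud B}$. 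The $\BA^1$-fibration picture, the genericity-driven vanishing on a bad locus, and the shift-by-$2$ bookkeeping you describe all belong to Proposition \ref{quadratic}, where $B^{i-1}=B^{i+1}$ and $N_{\ud B}=N_{{}_i\ud B}+2$.
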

\begin{proof}
    It suffices to show that $p_i$ induces an isomorphism $Y_{\ud B} \cong Y_{{}_i \ud B}$, $\eta_{\ud B} = \eta_{{}_i\ud B} \circ {}_ip$ and $N_{\ud B} = N_{{}_i\ud B}$. 
    
    By assumption, $G_r^{f^\dag} \subseteq \CI_r^i \cap \CI_r^{i+1}$. By \Cref{product}, we have $\CI_r^{i-1}\CI_r^i\CI_r^{i+1} = \CI_r^{i-1}\CI_r^{i+1}$ and $\Im {}_ip \subseteq Y_{{}_i \ud B}$. We claim the natural map \[\mu: \CI_r^{i-1}\CI_r^i \times^{\CI_r^i} \CI_r^i\CI_r^{i+1} \to \CI_r^{i-1}\CI_r^{i+1}, \quad (x, y) \mapsto xy\] is an isomorphism. It remains to show $\eta$ is injective. We may assume $\CI_r^{i-1} \neq \CI_r^i$ and let $f, f^\dag$ be as in \Cref{product}. Let $x, x' \in \CI_r^{i-1}\CI_r^i$ and $y, y' \in \CI_r^i\CI_r^{i+1}$ such that $x y = x' y'$. We can assume further that $y, y \in \CI_r^{i+1}$. By \Cref{product} we have \[x\i x' = y {y'}\i \in U_r^{f^\dag} \CI_r^{i-1} U_r^{f^\dag} \cap \CI_r^{i+1} \subseteq \CI_r^i,\] where the last inclusion follows from the inclusions $U_r^{f^\dag} \subseteq \CI_r^i \cap \CI_r^{i+1}$ and $\CI_r^{i-1} \cap \CI_r^{i+1} \subseteq \CI_r^i$. So the claim is proved.

    As $\mu$ is an isomorphism, we have $N_{\ud B} = N_{{}_i \ud B}$, and moreover, ${}_ip$ is an isomorphism by \Cref{fiber}. The proof is finished.    
\end{proof}

\begin{proposition} \label{quadratic}
    If $B^{i-1} = B^{i+1}$, then $\pInd_{\ud B}(\CL_\phi) \cong \pInd_{{}_i\ud B}(\CL_\phi).$
\end{proposition}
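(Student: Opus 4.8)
The plan is to run the argument of \Cref{braid}, analysing the projection ${}_ip\colon Y_{\ud B}\to X_{{}_i\ud B}$; the difference is that ${}_ip$ will no longer be an isomorphism onto $Y_{{}_i\ud B}$ but, onto its image, a Zariski-locally trivial $\BA^1$-bundle over a strictly larger variety, so that the comparison rests on a vanishing forced by the genericity of $\phi$. Since $B^{i-1}=B^{i+1}$ we have $\CI_r^{i-1}=\CI_r^{i+1}$, and as in \Cref{braid} we may assume $\CI_r^{i-1}\neq\CI_r^i$ (if $\CI_r^{i-1}=\CI_r^i$ then ${}_ip$ is already an isomorphism and there is nothing to prove); \Cref{product} then supplies $f\in\tPhi$ with $\a_f=\a$, $f(\bx)=r(\a)/2$ and $\CI_r^{i-1}\CI_r^i=G_r^f\CI_r^i=\CI_r^{i-1}G_r^{f^\dag}$. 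Parametrise $G_r^f$ by $u_f\colon\BA^1\cong G_r^f$.

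First I would identify $Z:=\Im({}_ip)$. From \Cref{product} one gets $\CI_r^{i-1}\CI_r^i\CI_r^{i-1}=G_r^f\CI_r^iG_r^f$, so by \Cref{fiber} $Z$ is the set of tuples $(g,h_0\CI_r^0,\dots,\widehat{h_i\CI_r^i},\dots,h_n\CI_r^n)\in X_{{}_i\ud B}$ with $h_{i-1}^{-1}h_{i+1}\in G_r^f\CI_r^iG_r^f$ and $h_n^{-1}gh_0\in\CI_r^0$; moreover ${}_ip\colon Y_{\ud B}\to Z$ is a Zariski-locally trivial $\BA^1$-bundle with fibre $G_r^f$, because the ``big cell'' condition defining a non-empty fibre turns out to be automatic here — the correction terms that could violate it live at strictly positive depth, so the relevant coefficient stays a unit. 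The subvariety $Y_{{}_i\ud B}$, cut out inside $Z$ by $h_{i-1}^{-1}h_{i+1}\in\CI_r^{i-1}\CI_r^{i+1}=\CI_r^{i-1}$, is closed; let $k'\colon Y_{{}_i\ud B}\hookrightarrow Z$ and $j'\colon Z\smallsetminus Y_{{}_i\ud B}\hookrightarrow Z$ be the closed and open immersions and $\pi_Z\colon Z\to G_r$ the first projection, so that $\pi_{\ud B}=\pi_Z\circ{}_ip$ and $\pi_Z\circ k'=\pi_{{}_i\ud B}$.

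Next I would restrict $(\eta_{\ud B})^*\CL_\phi$ to the $\BA^1$-fibres of ${}_ip$. On the fibre over $\xi$, writing $h_i=h_{i-1}u_f(t)$, the only factor of $\eta_{\ud B}$ depending on $t$ is $\b_{i-1}(h_{i-1}^{-1}h_i)\,\b_i(h_i^{-1}h_{i+1})=\b_{i-1}\bigl(u_f(t)\bigr)\,\b_i\bigl(u_f(t)^{-1}(h_{i-1}^{-1}h_{i+1})\bigr)$. Over $Y_{{}_i\ud B}$ both arguments lie in $\CI_r^{i-1}=\CI_r^{i+1}$; since the restrictions of $\b_{i-1}$ and $\b_i$ to $\CI_r^{i-1}$ resp.\ $\CI_r^{i+1}$ are the projections $\b^{i-1}$ resp.\ $\b^{i+1}$ to $T_r/T_{\phi,r}$, since $\b^{i-1}=\b^{i+1}$ because $B^{i-1}=B^{i+1}$, and since both kill the unipotent element $u_f(t)\in G_r^f$, one obtains $\eta_{\ud B}|_{{}_ip^{-1}(\xi)}=\eta_{{}_i\ud B}(\xi)$, a constant. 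Hence $(\eta_{\ud B})^*\CL_\phi|_{{}_ip^{-1}(Y_{{}_i\ud B})}\cong({}_ip)^*(\eta_{{}_i\ud B})^*\CL_\phi$, and the projection formula together with $H^\bullet_c(\BA^1)$ yields $k'^*R({}_ip)_!(\eta_{\ud B})^*\CL_\phi\cong(\eta_{{}_i\ud B})^*\CL_\phi[-2](-1)$. Over $Z\smallsetminus Y_{{}_i\ud B}$, by contrast, $u_f(t)^{-1}(h_{i-1}^{-1}h_{i+1})$ acquires, by \Cref{product}, a genuine level-$r(\a)/2$ component in the $G_r^{f^\dag}$-direction; writing $u_f(t)^{-1}(h_{i-1}^{-1}h_{i+1})=m(t)\,v(t)$ with $m(t)\in\CI_r^i$ and $v(t)\in G_r^f$, one finds that $t\mapsto\phi\bigl(\b^i(m(t))\bigr)$ is a nontrivial additive character of the fibre $G_r^f\cong\BA^1$: among the $\phi_k$ only $\phi_{j(\a)-1}$ is nontrivial on the level-$r(\a)$ graded piece of $T_r$, and the coefficient of this character is, up to a unit, $\langle X_{\phi_{j(\a)-1}},\a^\vee\rangle$, which is nonzero by the $(G^{j(\a)-1},G^{j(\a)})$-genericity of $\phi_{j(\a)-1}$ (the analogue of $(\mathfrak{ge1})$, using $\a\in\Phi(G^{j(\a)})\smallsetminus\Phi(G^{j(\a)-1})$). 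Thus $(\eta_{\ud B})^*\CL_\phi$ restricts on each such fibre to a nontrivial Artin--Schreier sheaf, and therefore $j'^*R({}_ip)_!(\eta_{\ud B})^*\CL_\phi=0$.

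Finally, the distinguished triangle $j'_!j'^*\CA\to\CA\to k'_*k'^*\CA$ for $\CA:=R({}_ip)_!(\eta_{\ud B})^*\CL_\phi$ collapses to $\CA\cong k'_!\bigl((\eta_{{}_i\ud B})^*\CL_\phi\bigr)[-2](-1)$; applying $(\pi_Z)_!$ and using $N_{\ud B}=N_{{}_i\ud B}+2$ — the one-dimensional factors $\dim(\CI_r^{i-1}\CI_r^i/\CI_r^i)$ and $\dim(\CI_r^i\CI_r^{i+1}/\CI_r^{i+1})$ of $N_{\ud B}$ are replaced by the factor $\dim(\CI_r^{i-1}\CI_r^{i+1}/\CI_r^{i+1})=0$ — the shift $[-2]$ is absorbed and one gets $\pInd_{\ud B}(\CL_\phi)\cong\pInd_{{}_i\ud B}(\CL_\phi)$ (up to a Tate twist, suppressed as usual). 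I expect the main obstacle to be the vanishing over $Z\smallsetminus Y_{{}_i\ud B}$: this requires making the decomposition of a general element of $\CI_r^iG_r^f$ in the big cell explicit enough, as a function of the fibre parameter $t$, to read off the character of $G_r^f$ it produces, followed by the genericity of precisely the component $\phi_{j(\a)-1}$. One should also dispose separately of the degenerate case $\CI_r^{i-1}=\CI_r^i$ (where the claim is immediate) and of the case $j(\a)=0$, in which $r(\a)=0$, the relevant character is multiplicative, the fibre over $Z\smallsetminus Y_{{}_i\ud B}$ is a punctured line, and one reduces to Lusztig's classical quadratic relation.
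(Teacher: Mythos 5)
Your proposal is correct and follows essentially the same route as the paper: the paper likewise splits according to whether $h_{i-1}^{-1}h_{i+1}\in\CI_r^{i-1}$, identifies the fibres of ${}_ip$ via \Cref{product} and \Cref{fiber} (an $\BA^1$ with constant character over the locus $h_{i-1}^{-1}h_{i+1}\in\CI_r^{i-1}$, absorbed by $N_{\ud B}=N_{{}_i\ud B}+2$), and kills the complementary locus because the fibrewise character is $\phi(\a^\vee(1+\varpi^{r(\a)}[z_0z]))$, nontrivial by the $(G^{j(\a)-1},G^{j(\a)})$-genericity of $\phi_{j(\a)-1}$ when $r(\a)>0$; your triangle on $Z=\Im({}_ip)$ versus the paper's decomposition $Y_{\ud B}=Y'_{\ud B}\sqcup Y''_{\ud B}$ is only an organizational difference. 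The one point to tighten is the case $r(\a)=0$: there the fibre over the open locus is $\BG_m$ with character $\a^\vee(z)$, and "Lusztig's classical quadratic relation" gives the asserted isomorphism only when $\phi_{-1}\circ\a^\vee$ is nontrivial (otherwise an extra summand appears), which is exactly the input the paper supplies by invoking that $\phi_{-1}$ is regular for $G^0$.
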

\begin{proof}
    If $\CI_r^{i-1} = \CI_r^i$ the statement is trivial. We assume otherwise and let $f, f^\dag \in \Phi_\aff$ be as in \Cref{product}. Note that $N_{\ud B} = N_{{}_i \ud B} + 2$.
    Consider the decomposition $Y_{\ud B} = Y_{\ud B}' \sqcup Y_{\ud B}''$, where \begin{align*}
        Y_{\ud B}' &= \{(g, h_0 \CI_r^0, \cdots, h_n \CI_r^n); h_{i-1}\i h_{i+1} \in \CI_r^{i-1}\}; \\
        Y_{\ud B}'' &= \{(g, h_0 \CI_r^0, \cdots, h_n \CI_r^n); h_{i-1}\i h_{i+1} \notin \CI_r^{i-1}\}
    \end{align*} Let ${}_ip', \eta', \pi'$ and ${}_ip'', \eta'', \pi''$ be the restrictions of ${}_ip, \eta_{\ud B}, \pi_{\ud B}$ to $Y_{\ud B}'$ and $Y_{\ud B}''$ respectively.

    By definition, ${}_ip'$ induces a morphism from $Y_{\ud B}'$ to $Y_{{}_i \ud B}$, which we still denoted by ${}_ip'$. By \Cref{product} and \Cref{fiber}, the fiber of any point $\xi = (g, h_0 \CI_r^0, \cdots, \widehat{h_i \CI_r^i}, \dots h_n \CI_r^n) \in Y_{{}_i \ud B}$ under ${}_ip'$ is \[\{h_{i-1} u_f(z)\CI_r^i / \CI_r^i; z \in \BG_a\} \cong \BG_a.\] Moreover, the restriction of $\eta_{\ud B} / (\eta_{{}_i \ud B} \circ {}_ip')$ on ${{}_ip'}\i(\xi)$ is \begin{align*} &\quad\ \frac{\b_{\CI_r^{i-1}\CI_r^i}(h_{i-1}\i h_{i-1} u_f(z)) \b_{\CI_r^i\CI_r^{i+1}}(u_f(z)\i h_{i-1}\i h_{i+1})}{\b_{\CI_r^{i-1} \CI_r^{i+1}}(h_{i-1}\i h_{i+1})} \\ &= \frac{\b^{i-1}(u_f(z)) \b^{i+1}(u_f(z) h_{i-1}\i h_{i+1})}{\b^{i+1}(h_{i-1}\i h_{i+1})} \\ &= \b^{i-1}(u_f(z)) \b^{i+1}(u_f(z)\i) \\ & = 1 \in T_r / T_{\phi, r},\end{align*} where the second equality follows from that $h_{i-1}\i h_{i+1}, u_f(z) \in \CI_r^{i-1} = \CI_r^{i+1}$. Therefore, by the proper base change theorem we have \[(\pi')_! (\eta')^*\CL_\phi)[2 \dim(G_r/\CI_{\phi, r}) + N_{\ud B}] \cong \pInd_{{}_i \ud B}(\CL_\phi).\]

    It remains to show that $(\pi'')_! (\eta'')^*\CL_\phi = 0$.  Let $\xi = (g, h_0 \CI_r^0, \cdots, \widehat{h_i \CI_r^i}, \dots h_n \CI_r^n) \in \Im {}_ip''$. It suffices to show $({}_ip'')_! (\eta''|_{{{}_ip''}\i(\xi)})^* \CL_\phi = 0$. By \Cref{product} we may assume that $h_{i-1}\i h_{i+1} = u_{f^\dag}(z_0)$ for some $0 \neq z_0 \in \ov\BF_q$.
    
    First we assume that $r(\a) > 0$. By \Cref{fiber} we have \[{{}_ip''}\i(\xi) = \{ h_{i-1} u_{f^\dag}(z_0) u_f(z) \CI_r^i/\CI_r^i; z \in \BG_a\} \cong \BG_a.\] Then the restriction of $\eta_{\ud B} / (\eta_{{}_i \ud B} \circ {}_ip'')$ on ${{}_ip''}\i(\xi)$ is given by \begin{align*}
        &\quad\ \frac{\b_{\CI_r^{i-1}\CI_r^i}(h_{i-1}\i h_{i-1} u_{f^\dag}(z_0) u_f(z)) \b_{\CI_r^i \CI_r^{i+1}}(u_f(z)\i u_{f^\dag}(z_0)\i h_{i-1}\i h_{i+1})}{\b_{\CI_r^{i-1} \CI_r^{i+1}}(h_{i-1}\i h_{i+1})} \\
        &= \b_{\CI_r^{i-1} \CI_r^i}(u_{f^\dag}(z_0) u_f(z)) \\
        &= \a^\vee(1 + \varpi^{r(\a)} [z_0 z]) \in T_r / T_{\phi, r}.
    \end{align*} Since $\phi_{j(\a)-1}$ is $(G^{j(\a)-1}, G^{j(\a)})$-generic, we have $(p_i'')_! (\eta''|_{{{}_ip''}\i(\xi)})^* \CL_\phi = 0$ as desired.

    Then we assume that $r(\a) = 0$, that is, $\a \in \Phi(G^0, T)$. By \Cref{fiber} we have \[{{}_ip''}\i(\xi) = \{ h_{i-1} u_{f^\dag}(z_0) u_f(z_0\i(z -1)) \CI_r^i/\CI_r^i; z \in \BG_m\} \cong \BG_m.\] Then the restriction of $f\eta_{\ud B} / (\eta_{{}_i \ud B} \circ {}_ip'')$ on ${{}_ip''}\i(\xi)$ is given by \[\b_{\CI_r^{i-1} \CI_r^i}(u_{f^\dag}(z_0) u_f(z_0\i(z-1))) = \a^\vee(z).\] As $\phi_{-1}$ is regular for $G^0$, $({}_ip'')_! (\eta''|_{{{}_ip''}\i(\xi)})^* \CL_\phi = 0$ as desired. The proof is finished.
\end{proof}

\subsection{}
Let $\ud B = (B^0, \dots, B^n)$ be a sequence of Borel subgroups contains $T$. Let $w_i \in W$ be such that $B^i = {}^{w_i} B^{i-1}$. We say $\ud B$ is saturated if for each $1 \le i \le n$ we have $\ell_{B^{i-1}}(w_i) \le 1$. Moreover, we say $\ud B$ is reduced if $\ud B$ is saturated and $\ell_{B^0}(w_i w_{i-1} \cdots w_1) > \ell_{B^0}(w_{i-1} \cdots w_1)$ for $1 \le i \le n$.

We record the following standard results on root systems.
\begin{lemma} \label{subsequence}
    A sequence $(B^0, \dots, B^n)$ is reduced if and only if \[\Phi(B^m, T) \sm \Phi(B^0, T) = \bigsqcup_{i=1}^m \Phi(B^i, T) \sm \Phi(B^{i-1}, T),\] and each set on the right hand side is of cardinality one.
    
    In particular, if $(B^0, \dots, B^n)$ is reduced, then so is $(B^j, \dots, B^{j'})$ for any $0 \le j < j' \le n$.
\end{lemma}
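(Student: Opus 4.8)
The plan is to translate the assertion into standard combinatorics of the Coxeter system $(W,S_{B^0})$, where $S_B$ denotes the set of simple reflections attached to a Borel $B\supseteq T$. Fix $\Phi^+:=\Phi(B^0,T)$ and, for $0\le m\le n$, put $v_m:=w_m w_{m-1}\cdots w_1$ (so $v_0=e$), so that $B^m={}^{v_m}B^0$ and $\Phi(B^m,T)=v_m(\Phi^+)$. The one computation I would isolate is, for $0\le a\le b\le n$,
\[
\bigl|\Phi(B^b,T)\setminus\Phi(B^a,T)\bigr| \;=\; \ell_{B^a}(w_b w_{b-1}\cdots w_{a+1}) \;=\; \ell_{B^0}\bigl(v_a^{-1}v_b\bigr).
\]
The first equality is the textbook identity $\ell_{B^a}(w)=\#\{\gamma\in\Phi(B^a,T):w\gamma\notin\Phi(B^a,T)\}$ applied to $w=w_b\cdots w_{a+1}$, which satisfies $B^b={}^w B^a$ and hence $\Phi(B^b,T)=w(\Phi(B^a,T))$; the second follows by transporting $\ell_{B^a}$ through the length-preserving conjugation isomorphism $(W,S_{B^a})\to(W,S_{B^0})$, $x\mapsto v_a^{-1}x v_a$ (using $S_{B^a}=v_a S_{B^0}v_a^{-1}$ and $w_b\cdots w_{a+1}=v_b v_a^{-1}$). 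I would also note that whenever $w_i$ is a simple reflection for $B^{i-1}$ it has the form $w_i=v_{i-1}s_{\beta_i}v_{i-1}^{-1}$ for a simple root $\beta_i$ of $\Phi^+$, so $v_i=v_{i-1}s_{\beta_i}$; if this occurs for all $i\le m$ then $v_m=s_{\beta_1}\cdots s_{\beta_m}$ is exhibited as a product of $m$ simple reflections of $B^0$.

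Next I would record the hypothesis-free inclusion
\[
\Phi(B^b,T)\setminus\Phi(B^a,T)\;\subseteq\;\bigcup_{i=a+1}^{b}\bigl(\Phi(B^i,T)\setminus\Phi(B^{i-1},T)\bigr),
\]
proved by assigning to $\gamma$ in the left-hand side the least $i\in\{a,\dots,b\}$ with $\gamma\in\Phi(B^i,T)$ (then $i\ge a+1$ and $\gamma\in\Phi(B^i,T)\setminus\Phi(B^{i-1},T)$). The equivalence is now a counting argument. If $\ud B$ is reduced, strict length increase forces each $w_i\ne e$, hence a simple reflection for $B^{i-1}$ by saturatedness, so each $\Phi(B^i,T)\setminus\Phi(B^{i-1},T)$ is a singleton; since right multiplication by a simple reflection changes length by $\pm1$, the length condition gives $\ell_{B^0}(v_i)=\ell_{B^0}(v_{i-1})+1$, so $v_n=s_{\beta_1}\cdots s_{\beta_n}$ has $\ell_{B^0}(v_n)=n$ and is a reduced expression; consequently every contiguous subword $v_a^{-1}v_b=s_{\beta_{a+1}}\cdots s_{\beta_b}$ is reduced, and $|\Phi(B^b,T)\setminus\Phi(B^a,T)|=b-a=\sum_{i=a+1}^{b}|\Phi(B^i,T)\setminus\Phi(B^{i-1},T)|$. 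Combined with the inclusion above, this forces the union on the right to be disjoint and equal to the left-hand side, i.e.\ the asserted decomposition holds for every $a\le b$ — in particular for $a=0$, $b=m$ — with all pieces of cardinality one. Conversely, if the displayed decomposition holds (it suffices to assume it for $m=n$) with each piece a singleton, then $\ell_{B^{i-1}}(w_i)=1$ for all $i$, so $\ud B$ is saturated, $v_n=s_{\beta_1}\cdots s_{\beta_n}$, and $\ell_{B^0}(v_n)=|\Phi(B^n,T)\setminus\Phi(B^0,T)|=n$; thus this length-$n$ product of $n$ simple reflections is reduced, every prefix has $\ell_{B^0}(v_i)=i$, and therefore $\ell_{B^0}(v_i)>\ell_{B^0}(v_{i-1})$, i.e.\ $\ud B$ is reduced.

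Finally, the last assertion of the lemma drops out: if $(B^0,\dots,B^n)$ is reduced, the forward implication applied to the range $[j,j']$ yields $\Phi(B^{j'},T)\setminus\Phi(B^j,T)=\bigsqcup_{i=j+1}^{j'}(\Phi(B^i,T)\setminus\Phi(B^{i-1},T))$ with singleton pieces, and the converse implication applied to the sequence $(B^j,B^{j+1},\dots,B^{j'})$ — whose governing Weyl group elements are $w_{j+1},\dots,w_{j'}$ — shows that it is reduced. The only point requiring real care is the bookkeeping among the moving Coxeter structures $(W,S_{B^i})$ that underlies the cardinality identity in the first step (getting $w_i=v_{i-1}s_{\beta_i}v_{i-1}^{-1}$, $w_b\cdots w_{a+1}=v_b v_a^{-1}$ and $v_i=v_{i-1}s_{\beta_i}$ mutually consistent); granting that, everything else is elementary counting together with the two standard facts that $\ell(w)=\#\{\gamma>0:w\gamma<0\}$ and that contiguous subwords of reduced words are reduced, so I anticipate no genuine obstacle.
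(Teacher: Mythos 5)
Your argument is correct and complete: the identity $|\Phi(B^b,T)\setminus\Phi(B^a,T)|=\ell_{B^a}(w_b\cdots w_{a+1})=\ell_{B^0}(v_a^{-1}v_b)$, the trivial inclusion of the left side into the union, and the counting/reduced-word argument together give both implications, and your reduction of the ``in particular'' claim to the two implications on the range $[j,j']$ is sound. Note that the paper records this lemma without proof, as a standard fact on root systems, so there is no argument to compare against; yours is exactly the expected standard proof (inversion count for the length function, transport of length under conjugation of the Coxeter structure, and the fact that contiguous subwords of reduced words are reduced), and you also sensibly resolve the statement's ambiguity between the index $m$ and $n$ by proving the decomposition for all pairs $a\le b$ in the forward direction and using only $m=n$ in the converse.
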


\begin{lemma} \label{extension}
    Let $B, B'$ be two Borel subgroups. Then there exists a reduced sequence $(B^0, \dots, B^m)$ such that $B^0 = B$ and $B^m = B'$.
\end{lemma}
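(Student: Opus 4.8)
The plan is to translate everything into reduced expressions in the Weyl group $W$. Since $W$ acts simply transitively on the set of Borel subgroups of $G$ containing $T$, there is a unique $w\in W$ with $B' = {}^w B$. I would choose a reduced expression $w = \sigma_1\sigma_2\cdots\sigma_m$ in terms of the simple reflections for $B$, so that the partial products $v_i := \sigma_1\cdots\sigma_i$ satisfy $\ell_{B}(v_i) = i$ for $0\le i\le m$ (with $v_0 = e$ and $v_m = w$), and set $B^i := {}^{v_i}B$. Then automatically $B^0 = B$ and $B^m = B'$, so the whole task reduces to checking that $\ud B = (B^0,\dots,B^m)$ is reduced in the sense defined above.

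For this, write $w_i$ for the element with $B^i = {}^{w_i}B^{i-1}$, so that $v_i = w_i w_{i-1}\cdots w_1$ and hence $w_i = v_i v_{i-1}^{-1} = v_{i-1}\sigma_i v_{i-1}^{-1}$. Since $\Phi({}^{v_{i-1}}B,T) = v_{i-1}\Phi(B,T)$, the simple reflections for $B^{i-1} = {}^{v_{i-1}}B$ are precisely the conjugates $v_{i-1} s v_{i-1}^{-1}$ of the simple reflections $s$ for $B$; thus $w_i$ is a simple reflection with respect to $\ell_{B^{i-1}}$, so in particular $\ell_{B^{i-1}}(w_i)\le 1$ and $\ud B$ is saturated. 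Finally $\ell_{B^0}(w_iw_{i-1}\cdots w_1) = \ell_{B}(v_i) = i > i-1 = \ell_B(v_{i-1}) = \ell_{B^0}(w_{i-1}\cdots w_1)$, which is exactly the defining inequality for $\ud B$ to be reduced.

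The only non-formal ingredient is the standard existence of a reduced word for $w$ whose partial products have lengths $0,1,\dots,m$ (equivalently, a minimal gallery between two chambers of the Coxeter complex of $W$), together with the compatibility of simple roots/reflections and of the length functions under conjugation by $v_{i-1}$; everything else is bookkeeping, so I expect no essential obstacle. As a cross-check one can instead verify the root-theoretic criterion of \Cref{subsequence} directly, by induction on $|\Phi(B',T)\smallsetminus\Phi(B,T)|$: when this set is nonempty there is a simple root $\beta$ of $B$ with $-\beta\in\Phi(B',T)$, passing to $B^1 := {}^{s_\beta}B$ flips precisely the single root $\beta$ and strictly decreases this quantity by one, and one checks that prepending $B^0 = B$ to a reduced sequence from $B^1$ to $B'$ again satisfies the criterion of \Cref{subsequence}.
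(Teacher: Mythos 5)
Your proof is correct: taking the unique $w\in W$ with $B'={}^{w}B$, a reduced expression $w=\sigma_1\cdots\sigma_m$ in the simple reflections for $B$, and $B^i={}^{v_i}B$ with $v_i=\sigma_1\cdots\sigma_i$, you get $w_i=v_{i-1}\sigma_i v_{i-1}^{-1}$ simple for $B^{i-1}$ and $w_i\cdots w_1=v_i$ of $\ell_{B^0}$-length $i$, which is exactly the paper's definition of a reduced sequence from $B$ to $B'$. The paper records \Cref{extension} without proof as a standard fact on root systems, and your argument (equivalently, a minimal gallery between the two chambers, or the induction via the root-theoretic criterion of \Cref{subsequence}) is precisely the standard justification.
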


\begin{proposition} \label{reduction}
    Let $\ud B = (B^0, \dots, B^n)$ with $B^0 = B^n$ be a saturated sequence. Then there exist saturated sequences  \[\ud B = \ud B(0), \ud B(1), \dots, \ud B(m) = (B^0, B^0)\] with $\ud B(i) = (B_i^0, \dots, B_i^{n_i})$ such that for each $1 \le i \le m$ one of the following cases occurs: \begin{itemize}
        \item[(1)] $\ud B(i)$ is obtained from $\ud B(i-1)$ by deleting $B_{i-1}^j$ for some $1 \le j \le n_{i-1}-1$ with $B_{i-1}^{j-1} = B_{i-1}^{j+1}$; 

        \item[(2)] $\ud B(i)$ is obtained from $\ud B(i-1)$ by deleting $B_{i-1}^j$ for some $1 \le j \le n_{i-1}-1$ with $B_{i-1}^j = B_{i-1}^{j+1}$;

        \item[(3)] $\ud B(i)$ is obtained from $\ud B(i-1)$ by replacing a reduced subsequence $(B_{i-1}^j, \dots, B_{i-1}^{j'})$ for some $1 \le j < j' \le n_{i-1}$ with another reduced subsequence $(B_i^j, \dots, B_i^{j'})$ with $B_{i-1}^j = B_i^j$ and $B_{i-1}^{j'} = B_i^{j'}$.
    \end{itemize}
\end{proposition}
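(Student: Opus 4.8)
The plan is to translate the statement into the word combinatorics of the Weyl group $W$ and then run a direct induction, which is essentially Tits' solution of the word problem. The one structural constraint to keep in mind is that move (3) may modify only a consecutive block $(B^j,\dots,B^{j'})$ with $1\le j$, i.e.\ one is \emph{not} allowed to touch the initial term $B^0$; this forces the crucial cancellation to be engineered at the \emph{end} of the sequence, using a reduced \emph{suffix} (which is legitimate, since $j'=n$ is allowed).

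\emph{Translation and setup.} Fix $B^0$ and write $B^i={}^{v_i}B^0$ for the unique $v_i\in W$; let $S$ be the set of simple reflections of $(W,B^0)$. Then $v_0=v_n=e$, and "$\ud B$ saturated" is exactly the condition $\sigma_i:=v_{i-1}^{-1}v_i\in\{e\}\cup S$ for all $i$. So $\ud B$ is encoded by the word $\sigma_1\sigma_2\cdots\sigma_n$ in $S\cup\{e\}$ with product $e$; a subsequence $(B^j,\dots,B^{j'})$ reads $(\sigma_{j+1},\dots,\sigma_{j'})$ and, by \Cref{subsequence}, is reduced exactly when this is a reduced expression of $v_j^{-1}v_{j'}$ (so in particular no $\sigma$ occurring there equals $e$). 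Under this dictionary, (2) deletes a letter $e$; (1) deletes an immediate backtrack $(B^{j-1},B^j,B^{j-1})$ whose two steps are a common $s\in S$, leaving a letter $e$ to be removed by a subsequent (2); and (3) replaces a reduced expression of some $w\in W$ occurring as a consecutive subword by another one. We induct on the number $n+1$ of terms; for $n\le 2$ the statement is immediate (when $n=2$ necessarily $\sigma_2=\sigma_1$, so $B^0=B^2$, and one application of (1) yields $(B^0,B^0)$).

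\emph{Killing the letters $e$.} If $\sigma_j=e$ for some $2\le j\le n$, i.e.\ $B^{j-1}=B^j$ with $j-1\ge1$, delete $B^{j-1}$ by (2) and invoke the inductive hypothesis; likewise if $B^{n-1}=B^n$, use (2) with $j=n-1$. The one remaining case is $\sigma_1=e$, i.e.\ $B^0=B^1$: here apply the inductive hypothesis to the loop $(B^1,\dots,B^n)$ (which has $n$ terms and $B^1=B^n=B^0$), reducing it to $(B^1,B^1)$; this turns $\ud B$ into $(B^0,B^1,B^1)=(B^0,B^0,B^0)$, and a final (2) finishes. Henceforth $\sigma_i\in S$ for all $i$; then $n\ge2$, and $j\mapsto\ell_{B^0}(v_j)$ runs from $0$ to $0$, changing by $\pm1$ at every step. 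Since $B^n=B^0$, the full sequence $(B^0,\dots,B^n)$ cannot be reduced (by \Cref{subsequence}, $\Phi(B^n)\smallsetminus\Phi(B^0)=\emptyset$ while a reduced length-$n$ sequence, $n\ge1$, would force it to have $n$ elements); in particular there is an ascending step. Let $1\le i\le n-1$ be the \emph{last} ascending step, so $\ell_{B^0}(v_j)=n-j$ for all $i\le j\le n$.

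\emph{The main reduction.} The suffix $(B^i,B^{i+1},\dots,B^n)$ reads $\sigma_{i+1}\cdots\sigma_n$, a reduced expression of $v_i^{-1}v_n=v_i^{-1}$ of length $n-i$; hence it is a reduced subsequence with $j'=n$, of the type (3) may modify. Moreover $v_{i-1}=v_i\sigma_i$, so $\ell_{B^0}(v_i\sigma_i)=\ell_{B^0}(v_{i-1})<\ell_{B^0}(v_i)$, i.e.\ $\sigma_i$ is a left descent of $v_i^{-1}$; therefore $v_i^{-1}$ has a reduced expression $(\sigma_i,\rho_2,\dots,\rho_{n-i})$ beginning with $\sigma_i$. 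Apply (3) to replace $(B^i,\dots,B^n)$ by the reduced subsequence $(B^i,\tilde B^{i+1},\dots,\tilde B^{n-1},B^n)$ reading $(\sigma_i,\rho_2,\dots,\rho_{n-i})$ (same endpoints $B^i,B^n$, same length). In the new sequence the step $B^i\to\tilde B^{i+1}$ reads $\sigma_i$, so $\tilde B^{i+1}={}^{v_i\sigma_i}B^0={}^{v_{i-1}}B^0=B^{i-1}$; thus $(B^{i-1},B^i,\tilde B^{i+1})$ is a backtrack, and we delete $B^i$ by (1) and then (2) (the resulting stutter sits at position $i-1\ge1$, because $i=1$ would force $n=2$, already handled). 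This lowers the number of terms by two, and every step of the new sequence still has length $\le1$ (deletions and reduced-for-reduced replacements preserve this), so we conclude by induction; the process terminates at $(B^0,B^0)$.

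\emph{Main obstacle.} The only genuine point is the index constraint $1\le j$ in move (3): a reduced prefix is off limits, so the cancellation has to be produced at the last ascending step using a reduced suffix. The two Coxeter inputs — that a descent of an element yields a reduced expression starting (resp.\ ending) with that reflection, and the dictionary between reduced subsequences and reduced expressions (\Cref{subsequence}) — are standard; the remainder is the bookkeeping of the degenerate and boundary cases indicated above.
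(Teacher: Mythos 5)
Your argument is correct. Note that the paper gives no proof of \Cref{reduction} at all — it is stated alongside the ``standard'' Lemmas \ref{subsequence} and \ref{extension} — so there is nothing to compare against; your translation into words in $S\cup\{e\}$ and the Tits-style induction supplies a complete justification. The delicate points all check out: the dictionary between reduced subsequences and reduced expressions of $v_j^{-1}v_{j'}$ is exactly the paper's definition of ``reduced'' (equivalently \Cref{subsequence}); the suffix $(B^i,\dots,B^n)$ after the last ascending step is reduced because a word of $n-i$ simple letters whose product has length $n-i$ is automatically reduced; since $\sigma_i$ is a left descent of $v_i^{-1}$, the replacement block has the required equal endpoints and creates the backtrack $(B^{i-1},B^i,B^{i-1})$, which is removable by (1) followed by (2) because $i\geq 2$ once $n\geq 3$ (indeed $n-i=\ell_{B^0}(v_i)\leq i$); and the index constraints $1\le j$, $j\le n_{i-1}-1$ of the moves are respected throughout, including when you recurse into the suffix loop $(B^1,\dots,B^n)$ in the case $B^0=B^1$. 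Two cosmetic remarks. First, the parenthetical ``the full sequence cannot be reduced \dots in particular there is an ascending step'' is logically loose: non-reducedness produces a \emph{descending} step; what you actually need (an ascending step exists, and the last one is $\le n-1$) follows instead from $\ell_{B^0}(v_1)=1>0$ and $\ell_{B^0}(v_n)=0$, facts you use anyway, so nothing breaks. Second, the degenerate one-term case $n=0$ cannot be brought to $(B^0,B^0)$ by deletion-only moves, but that reading of the statement is vacuous for the paper's application (the sequences arising in \Cref{copy} have $n\geq 1$), and the same caveat applies to the proposition as stated.
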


\begin{proposition} \label{saturation}
    Let $\ud B = (B^0, \dots, B^n)$ with $B^0 = B^n$. Let $0 \le j \le n-1$ and let \[(B^j = B^{j, 0}, \dots, B^{j, m} = B^{j+1})\] be a reduced sequence. Let $\ud B'$ be obtained from $\ud B$ by replacing $(B^j, B^{j+1})$ with the above sequence. Then $\pInd_{\ud B} \CL_\phi \cong \pInd_{\ud B'} \CL_\phi$.
\end{proposition}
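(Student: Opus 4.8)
The plan is to argue by induction on the number $m$ of transitions in the inserted reduced sequence, adding the intermediate Borel subgroups one at a time by means of \Cref{braid}. If $m\le 1$ there is nothing to prove: for $m=1$ one has $\ud B'=\ud B$, while for $m=0$ one necessarily has $B^j=B^{j+1}$, so $\CI_r^j=\CI_r^{j+1}$ and $\ud B'$ merely deletes one of the two repeated entries; in that case the coset $h_{j+1}\CI_r^{j+1}=h_j\CI_r^j$ is redundant, so $Y_{\ud B}\cong Y_{\ud B'}$ compatibly with $\eta_{\ud B}$ and $\pi_{\ud B}$, the shift satisfies $N_{\ud B}=N_{\ud B'}$ (the deleted term contributes $\dim\CI_r^j\CI_r^{j+1}/\CI_r^{j+1}=0$), and hence $\pInd_{\ud B}(\CL_\phi)\cong\pInd_{\ud B'}(\CL_\phi)$.

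For the inductive step assume $m\ge 2$ and let $\ud B''=(B^0,\dots,B^j,B^{j,1},B^{j+1},\dots,B^n)$ be obtained from $\ud B$ by inserting only the first intermediate term $B^{j,1}$. By \Cref{subsequence} the truncation $(B^{j,1},B^{j,2},\dots,B^{j,m}=B^{j+1})$ is again a reduced sequence, now with $m-1$ transitions, and the difference set $\Phi(B^{j,1},T)\sm\Phi(B^j,T)$ is a single root, which we write as $\{-\a\}$; in particular $B^{j,1}\neq B^j$, and since $(B^{j,0},\dots,B^{j,m})$ is reduced, hence saturated, the element $w\in W$ with $B^{j,1}={}^wB^j$ is a simple reflection relative to $\ell_{B^j}$ with associated simple root $\a$. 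Moreover $\Phi(B^{j,1},T)\sm\Phi(B^j,T)\subseteq\Phi(B^{j,m},T)\sm\Phi(B^j,T)$, so $-\a\in\Phi(B^{j+1},T)$. These are exactly the hypotheses of \Cref{braid}, which we apply to $\ud B''$ at the index $i=j+1$ of $B^{j,1}$; this index is interior (since $0\le j\le n-1$), with $B^{i-1}=B^j$ and $B^{i+1}=B^{j+1}$. That proposition yields $\pInd_{\ud B''}(\CL_\phi)\cong\pInd_{{}_i\ud B''}(\CL_\phi)=\pInd_{\ud B}(\CL_\phi)$, the last equality because ${}_i\ud B''$ is obtained from $\ud B''$ by deleting the inserted term $B^{j,1}$ and is therefore literally $\ud B$.

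To finish, observe that $\ud B''$ again has coinciding endpoints $B^0=B^n$, that the transition $(B^{j,1},B^{j+1})$ sits at an index in the admissible range, and that $(B^{j,1},\dots,B^{j,m})$ is a reduced sequence of $m-1$ transitions joining $B^{j,1}$ to $B^{j+1}$; replacing that transition in $\ud B''$ by this sequence produces exactly $\ud B'$. The induction hypothesis therefore gives $\pInd_{\ud B''}(\CL_\phi)\cong\pInd_{\ud B'}(\CL_\phi)$, and composing the two isomorphisms completes the proof. I expect the only delicate point to be the verification of the hypotheses of \Cref{braid} — namely that the first step of the inserted reduced sequence is a simple reflection and that the negative of its root already lies in $\Phi(B^{j+1},T)$ — for which \Cref{subsequence} is the essential input; all of the geometric content (the identification of the fibers of ${}_ip$ and the vanishing coming from the $(G^i,G^{i+1})$-genericity of the characters $\phi_i$) is already packaged inside \Cref{braid} and the lemmas preceding it, so no new geometric argument is required.
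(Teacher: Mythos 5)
Your proposal is correct and follows essentially the same route as the paper: both reduce the statement to iterated applications of \Cref{braid}, inserting the intermediate Borel subgroups $B^{j,1},\dots,B^{j,m-1}$ one at a time against the fixed endpoint $B^{j,m}=B^{j+1}$, with \Cref{subsequence} supplying the hypothesis $-\a\in\Phi(B^{j+1},T)$ at each step; your induction on $m$, once unwound, performs exactly the paper's chain $\ud B(0),\dots,\ud B(m-1)$. The only (harmless) difference is that you spell out the degenerate cases $m\le 1$, which the paper leaves implicit.
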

\begin{proof}
    For $0 \le i \le m-1$ let $\ud B(i)$ be obtained from $\ud B$ by replacing  $(B^j, B^{j+1})$ with $(B^{j, 0}, \cdots, B^{j, i}, B^{j, m})$. In view of \Cref{subsequence}, we can apply \Cref{braid} (by taking $(B^{i-1}, B^i, B^{i+1}) = (B^{j, i-1}, B^{j, i}, B^{j, m})$ for $1 \le i \le m-1$) to deduce that \[\pInd_{\ud B} \CL_\phi = \pInd_{\ud B(0)} \CL_\phi \cong \pInd_{\ud B(1)} \CL_\phi \cong \cdots \cong \pInd_{\ud B(m-1)} \CL_\phi = \pInd_{\ud B'} \CL_\phi.\] The proof is finished.
\end{proof}

\begin{proof}[Proof of \Cref{copy}]
By Lemma \ref{extension} and \Cref{saturation}, we may assume $\ud B = (B^0, \dots, B^n)$ with $B^0 = B^n$ is saturated. If $\ud B = (B^0, B^0)$, then $Y_{\ud B} \cong \wt G_r$ and statement follows.

By \Cref{reduction}, it suffices to show that $\pInd_{\ud B(i-1)} \CL_\phi \cong \pInd_{\ud B(i)} \CL_\phi$ for $1 \le i \le m$. If Case (1) occurs, the statement follows from \Cref{quadratic}. If Case (2) occurs, the statement follows from that ${}_jp: Y_{\ud B(i-1)} \to Y_{\ud B(i)}$ is an isomorphism. If Case (3) occurs, the statement follows from \Cref{saturation}. The proof is finished. 
\end{proof}

\section{Trace of Frobenius} \label{sec:Frob-trace}
Let $T$, $\phi$, $r$ and $B = T U$ be as in \Cref{sec:I-induction}. Let $n \in \BZ_{\ge 1}$ such that $F^n B = B$. Set $\ud B = (B, F B, \dots, F^n B)$. Let $\CI_r^\dag = \CI_{\phi, B, r}^\dag$ and $\CI_r = \CI_{\phi, B, r}$, $Y = Y_{\ud B}$, $\eta_Y = \eta_{\ud B}$, $\pi_Y = \pi_{\ud B}$ and so on be as in \Cref{sec:copy}. We define \begin{align*} F_Y: Y &\to Y, \\ (g, h_0 \CI_r^0, h_1 \CI_r^1 \dots, h_n\CI_r^n) &\mapsto (F(g), F(g\i h_n)\CI_r^0, F(h_0) \CI_r^1 \dots, F(h_{n-1})\CI_r^n).\end{align*}

\begin{lemma}
    There is a commutative diagram  \[\xymatrix{
    T_r/T_{\phi, r} \ar[d]_F & Y \ar[r]^{\pi_Y} \ar[l]_{\eta_Y}  \ar[d]_{F_Y} &  G_r \ar[d]^F \\
     T_r/T_{\phi, r}  & Y  \ar[l]_{\eta_Y} \ar[r]^{\pi_Y} & G_r.
    }\] In particular, for any $M \in D(T_r / T_{\phi, r})$ such that $F^* M \cong M$ we have \[F^* \pi_{Y!} \eta_Y^* M \cong \pi_{Y!} \eta_Y^* F^*M \cong \pi_{Y!} \eta_Y^* M.\]
\end{lemma}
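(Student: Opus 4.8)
The plan is to check that the two squares commute, which amounts to showing that $F_Y$ is a well-defined isomorphism $Y\to Y$ satisfying $\pi_Y\circ F_Y=F\circ\pi_Y$ and $\eta_Y\circ F_Y=F\circ\eta_Y$, and then to read off the displayed isomorphisms by base change.

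First I would record how the data interact with $F$. Since $\CK_{\phi,r}$, $\CK_{\phi,r}^+$, $T_{\phi,r}$ and $T_r$ are all $F$-stable and the automorphism $F$ of $G_r$ carries $U^i$ to $U^{i+1}$ and $\ov U^i$ to $\ov U^{i+1}$, one gets $F(\CI_r^i)=\CI_r^{i+1}$, hence $F(\CI_r^i\CI_r^{i+1})=\CI_r^{i+1}\CI_r^{i+2}$ as subsets of $G_r$, and the projections are $F$-equivariant: $\b^{i+1}\circ F=F\circ\b^i$ on $\CI_r^i$, and $\b_{\CI_r^{i+1}\CI_r^{i+2}}\circ F=F\circ\b_{\CI_r^i\CI_r^{i+1}}$ on $\CI_r^i\CI_r^{i+1}$. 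Because $F^nB=B$, the index wraps around, e.g.\ $\CI_r^n=\CI_r^0$ and $\CI_r^{n+1}=\CI_r^1$. Conceptually, $F_Y$ is the composite of the entrywise application of $F$, which is an isomorphism $Y_{\ud B}\xrightarrow{\ \sim\ }Y_{F\ud B}$ for $F\ud B:=(FB,F^2B,\dots,F^{n+1}B)$, with the canonical re-bracketing isomorphism $Y_{F\ud B}\xrightarrow{\ \sim\ }Y_{\ud B}$ which expresses that the closed sequence $F\ud B$ is the one-step cyclic shift of $\ud B$ (again using $F^nB=B$); the re-bracketing moves the distinguished ``closing'' edge of the necklace back into its slot, and the attendant conjugation by $g$ is the source of the $g^{-1}$ in the first coordinate of $F_Y$. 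Both factors are isomorphisms, so $F_Y$ is one; and substituting $F$ applied to the defining constraints of $Y_{\ud B}$, together with $F(\CI_r^i)=\CI_r^{i+1}$, one checks directly that the image lies in $Y_{\ud B}$.

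With $F_Y$ in hand, the right square is immediate from $\pi_Y(F_Y(\xi))=F(g)=F(\pi_Y(\xi))$. For the left square, writing the image of $\xi=(g,h_0\CI_r^0,\dots,h_n\CI_r^n)$ as $(g',h_0'\CI_r^0,\dots,h_n'\CI_r^n)$, I would substitute into the definition of $\eta_Y$ the expressions for $(h_j')^{-1}h_{j+1}'$ and $(h_n')^{-1}g'h_0'$ as $F$ applied to the corresponding coordinates of $\xi$, and then pull all the $F$'s outside using the $F$-equivariance of the $\b$'s recorded above; as $T_r/T_{\phi,r}$ is abelian, the resulting cyclic product can be reordered and telescoped to give exactly $F(\eta_Y(\xi))$. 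I expect the one genuinely delicate point to be this bookkeeping — keeping the indices modulo $n$ straight and making sure the closing edge ends up in the correct position after re-bracketing; once that is pinned down, both commutativities reduce to short computations.

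For the final assertion, I would note that $F$ and $F_Y$ are isomorphisms with $\pi_Y\circ F_Y=F\circ\pi_Y$, so that $F^*\pi_{Y!}\cong\pi_{Y!}F_Y^*$ by base change. Combining this with $F_Y^*\eta_Y^*=(\eta_Y\circ F_Y)^*=(F\circ\eta_Y)^*=\eta_Y^*F^*$ and with the hypothesis $F^*M\cong M$ yields
\[
F^*\pi_{Y!}\eta_Y^*M\;\cong\;\pi_{Y!}F_Y^*\eta_Y^*M\;=\;\pi_{Y!}\eta_Y^*F^*M\;\cong\;\pi_{Y!}\eta_Y^*M,
\]
which is the claim; the shift occurring in $\pInd_{\ud B}(\CL_\phi)$ is carried along without change.
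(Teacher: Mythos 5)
Your route is the natural one (and the paper offers no proof of this lemma to compare against): record $F(\CI_r^i)=\CI_r^{i+1}$ and the resulting $F$-equivariance of the maps $\b^i$ and $\b_{\CI_r^i\CI_r^{i+1}}$, check the two squares pointwise, use commutativity of $T_r/T_{\phi,r}$ to absorb the cyclic shift of factors, and deduce the sheaf statement by base change (legitimate here, since one works with perfect schemes, so $F$ and $F_Y$ are isomorphisms and the right-hand square is Cartesian). The right square, the cyclic-reordering observation for the left square, and the final chain $F^*\pi_{Y!}\eta_Y^*M\cong\pi_{Y!}F_Y^*\eta_Y^*M\cong\pi_{Y!}\eta_Y^*F^*M\cong\pi_{Y!}\eta_Y^*M$ are all fine as far as they go.

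The gap is exactly at the point you describe as ``one checks directly that the image lies in $Y_{\ud B}$''; your ``re-bracketing isomorphism $Y_{F\ud B}\to Y_{\ud B}$'' only repackages the same problem, since the two necklaces carry the strong (single-coset) closing condition at different seams and are not tautologically identified. Concretely, write $F_Y(\xi)=(g',h_0'\CI_r^0,\dots,h_n'\CI_r^n)$ with $g'=F(g)$, $h_0'=F(g^{-1}h_n)$ and $h_i'=F(h_{i-1})$ for $i\ge 1$. The interior conditions follow from $F(\CI_r^{i-1}\CI_r^i)=\CI_r^i\CI_r^{i+1}$, and $(h_0')^{-1}h_1'=F(h_n^{-1}gh_0)\in\CI_r^1\subseteq\CI_r^0\CI_r^1$ follows from the closing condition on $\xi$; but the closing condition on $F_Y(\xi)$ reads $(h_n')^{-1}g'h_0'=F(h_{n-1}^{-1}h_n)\in\CI_r^0$, whereas the hypothesis only gives $h_{n-1}^{-1}h_n\in\CI_r^{n-1}\CI_r^n$, hence $F(h_{n-1}^{-1}h_n)\in\CI_r^0\CI_r^1$, and membership in $\CI_r^0$ does not follow (already for $\SL_2$ at depth zero with $T$ non-split, $n=2$, taking $h_0=h_1=e$, $h_2=vu=g$ with $v\in\ov U\setminus\{e\}$, $u\in U\setminus\{e\}$, one has $F(h_1^{-1}h_2)\notin\CI_r^0$). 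The same defect makes the last factor $\b^0\bigl((h_n')^{-1}g'h_0'\bigr)$ in your left-square computation undefined a priori, so this is not mere index bookkeeping: with $Y_{\ud B}$ and $F_Y$ taken literally, $F_Y$ does not preserve $Y$. A complete proof must first repair the seam, e.g.\ by working with the twisted necklace whose closing condition is $h_n^{-1}gh_0\in\CI_r^0F(\CI_r^0)$ and whose $\eta_Y$ ends with $\b_{\CI_r^0F(\CI_r^0)}$ (this is what the occurrence of $F\CI_r^\dag$ and $\CI_r^\dag\cap F\CI_r^\dag$ in the definition of $Z_g$, following the strategy of \cite{BC24}, indicates is intended); granted such a modification, your equivariance-plus-reordering computation and the base-change step do complete the argument.
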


For $g \in G_r$ we define $Y_g = \pi_Y\i(g)$ and  \[Z_g = \{h T_r^F(\CI_r^\dag \cap F\CI_r^\dag); h\i F(h) \in F \CI_r^\dag, F^(h)\i g h \in T_r^F(\CI_r^\dag \cap F\CI_r^\dag)\}.\]
\begin{lemma}
    For $g \in G_r^F$ the restriction map $F_Y: Y_g \to Y_g$ is the Frobenius map for some $\BF_q$-rational structure of $Y_g$.
\end{lemma}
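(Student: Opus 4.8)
By the commutative diagram of the preceding lemma, $\pi_Y\circ F_Y=F\circ\pi_Y$, and $F(g)=g$ because $g\in G_r^F$; hence $F_Y$ carries $Y_g=\pi_Y^{-1}(g)$ into itself, and we must show that the restriction $\Phi:=F_Y|_{Y_g}$ is the Frobenius map of some $\BF_q$-structure on $Y_g$. The plan is to invoke the standard criterion that a finite self-morphism of an $\obF$-variety a power of which is the geometric Frobenius of a rational structure is itself the Frobenius map of a rational structure over the smaller field (cf.\ \cite{BC24} and the references there); so it suffices to produce an integer $N\ge 1$ and an $\BF_{q^N}$-structure on $Y_g$ whose geometric Frobenius is $\Phi^N$. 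Throughout I regard $Y_g$ as a locally closed subvariety of $\prod_{j=0}^{n}G_r/\CI_r^j$ via the inclusion $Y_g\hookrightarrow\{g\}\times\prod_{j=0}^{n}G_r/\CI_r^j\subseteq X_{\ud B}$.

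The heart of the matter is a computation with the iterate of $F_Y$. By definition $F_Y$ sends $(g,h_0\CI_r^0,\dots,h_n\CI_r^n)$ to $(F(g),F(g^{-1}h_n)\CI_r^0,F(h_0)\CI_r^1,\dots,F(h_{n-1})\CI_r^n)$: it shifts the $n+1$ coordinate slots cyclically by one, applies $F$ to each slot content, and inserts a left factor $g^{-1}$ each time a content passes from slot $n$ into slot $0$. Since $g^{-1}$ is fixed by $F$ it commutes past all further applications of $F$, and a straightforward induction gives the following: whenever $n\mid m$, the iterate $F_Y^{m(n+1)}$ restricts on $Y_g$ to the slot-wise self-map $(h_j\CI_r^j)_{0\le j\le n}\mapsto(g^{-m}F^{m(n+1)}(h_j)\CI_r^j)_{0\le j\le n}$ of $\prod_j G_r/\CI_r^j$, a map which is well defined because $F^{n}\CI_r^j=\CI_r^j$ (as $F^nB=B$) and $n\mid m(n+1)$, so that $F^{m(n+1)}$ stabilizes each $\CI_r^j$, even though $F$ itself does \emph{not}. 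Now let $e$ be the order of $g$ in the finite group $G_r^F$, and put $m=ne$ and $N=ne(n+1)$. Then $g^{-m}=(g^e)^{-n}=1$, so $\Phi^N=F_Y^N|_{Y_g}$ is the restriction to $Y_g$ of the slot-wise self-map $(h_j\CI_r^j)_j\mapsto(F^N(h_j)\CI_r^j)_j$.

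Since $n\mid N$, each $\CI_r^j$ is $F^N$-stable, and therefore $\prod_j G_r/\CI_r^j$ carries an $\BF_{q^N}$-structure whose geometric Frobenius is precisely this slot-wise map. The subvariety $Y_g$ is cut out in $\prod_j G_r/\CI_r^j$ by the relations $h_i^{-1}h_{i+1}\in\CI_r^i\CI_r^{i+1}$ ($0\le i\le n-1$) and $h_n^{-1}gh_0\in\CI_r^0$, all of which are preserved by the slot-wise $F^N$, since it stabilizes each $\CI_r^j$ (hence each $\CI_r^i\CI_r^{i+1}$) and fixes $g$; hence $Y_g$ inherits an $\BF_{q^N}$-structure whose geometric Frobenius is $\Phi^N$. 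As $\Phi^N=F^N|_{Y_g}$ is finite, so is $\Phi$, and by the criterion quoted above $\Phi=F_Y|_{Y_g}$ is the Frobenius map of an $\BF_q$-structure on $Y_g$. The one delicate point is the induction in the second paragraph: one must follow carefully how the $(n+1)$-fold cyclic shift of the factors interleaves with the successive $g^{-1}$-insertions, and check that the resulting expression is independent of the chosen coset representatives, a verification which ultimately rests on the same defining relations of $Y_{\ud B}$ that make $F_Y$ well defined to begin with. Everything else is routine.
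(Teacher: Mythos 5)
Your proof is correct and follows essentially the same route as the paper, which simply defers to \cite[Lemma 9.2]{BC24}: one checks that a suitable iterate of $F_Y|_{Y_g}$ (your $F_Y^{ne(n+1)}|_{Y_g}$, using $F_Y^{n+1}(g;h_\bullet)=(F^{n+1}(g);F^{n+1}(g^{-1}h_j))_j$ and $g\in G_r^F$) is the standard slot-wise Frobenius of an $\BF_{q^N}$-structure on $Y_g$, and then applies the Deligne--Lusztig criterion that an endomorphism some power of which is a Frobenius is itself the Frobenius of a rational structure over the smaller field.
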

\begin{proof}
    It follows in the same way as \cite[Lemma 9.2]{BC24}.
\end{proof}

Let $\pr_T: T_r^F (\CI_r^\dag \cap F\CI_r^\dag) \to T_r^F / T_{\phi, r}^F$ denote the natural projection.
\begin{proposition} \label{Frob-trace}
    Let $M \in D(T_r/T_{\phi, r})$ be such that $F^* M \cong M$. For $g \in G_r^F$ we have \[\chi_{\pi_{Y!} \eta_Y^* M}(g) = \sum_{h T_r^F(\CI_r^\dag \cap F\CI_r^\dag) \in Z_g} \chi_M(\pr_T(F^n(h)\i g h)).\] 
\end{proposition}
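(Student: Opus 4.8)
The plan is to deduce the formula from the Grothendieck--Lefschetz trace formula applied to $\pi_{Y!}$, once the $F_Y$-fixed locus of the fibre $Y_g := \pi_Y\i(g)$ has been described explicitly. First I would fix $g \in G_r^F$ and invoke the two lemmas just proved: by the second, $F_Y$ restricts to the Frobenius of an $\BF_q$-structure on $Y_g$; by the commutative square of the first, together with the given isomorphism $F^*M \cong M$, the complex $\eta_Y^*M|_{Y_g}$ acquires a natural $F_Y$-equivariant structure and $\eta_Y$ intertwines $F_Y$ with $F$. Since $\pi_{Y!}$ commutes with base change to the point $g$, the trace formula gives
\[
\chi_{\pi_{Y!}\eta_Y^*M}(g) \;=\; \sum_{y \in Y_g^{F_Y}} \chi_M(\eta_Y(y)),
\]
using that $\chi_M$ is $F$-invariant. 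Everything then reduces to identifying $Y_g^{F_Y}$ with $Z_g$ and computing $\eta_Y$ along this identification.

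The second step is to parametrise the fixed locus by the first coordinate. Writing a point as $(g,h_0\CI_r^0,\dots,h_n\CI_r^n)$ and using $\CI_r^i = F^i(\CI_r^0)$ (since $B^i = F^iB$), the equations $h_i\CI_r^i = F(h_{i-1})\CI_r^i$ defining $F_Y$-invariance force $h_i\CI_r^i = F^i(h_0)\CI_r^i$ for every $i$, so a fixed point is determined by $h := h_0$ through the representatives $h_i = F^i(h)$; the incidence conditions of $Y_{\ud B}$ together with the remaining fixed-point equation at the $0$-th slot then translate (using $F^nB = B$) into $h\i F(h) \in \CI_r^0$ and $F^n(h)\i g h \in \CI_r^0$. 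Next, a Lang-type argument, exploiting the factorisation $\CI_r = T_r\CI_r^\dag$ and the structure of $F\CI_r^\dag$ and $\CI_r^\dag\cap F\CI_r^\dag$ as successive extensions of additive groups over a torus, shows that inside each such coset $h\CI_r^0$ there is a representative satisfying the sharper conditions $h\i F(h) \in F\CI_r^\dag$ and $F^n(h)\i g h \in T_r^F(\CI_r^\dag\cap F\CI_r^\dag)$, and that it is unique up to right translation by $T_r^F(\CI_r^\dag\cap F\CI_r^\dag)$; this produces the bijection $Y_g^{F_Y}\cong Z_g$ of the statement.

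The third step is to evaluate $\eta_{\ud B}$ on such a normalised representative. With $h_i = F^i(h)$ one has $h_i\i h_{i+1} = F^i(h\i F(h))$, which lies in $F^{i+1}(\CI_r^\dag) = \ker\b^{i+1}$ because $h\i F(h) \in F\CI_r^\dag$; hence every off-diagonal factor $\b_i(h_i\i h_{i+1})$ in the definition of $\eta_{\ud B}$ is trivial, and there remains $\eta_Y(y) = \b^0(h_n\i g h_0) = \b^0(F^n(h)\i g h) = \pr_T(F^n(h)\i g h)$, the last equality because $\b^0$ and $\pr_T$ agree on $T_r^F(\CI_r^\dag\cap F\CI_r^\dag)$, both being the projection that kills the unipotent part and $T_{\phi,r}$. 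Feeding this into the displayed identity proves the proposition.

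The delicate point is the joint content of the second and third steps: choosing the normalised representatives and checking that the incidence and fixed-point conditions become precisely the conditions defining $Z_g$, while the $\b$-contributions collapse to $\pr_T(F^n(h)\i g h)$. This is where the interplay between $F$ and $F^n$, and between the various Iwahori-like subgroups $\CI_r^i$, must be controlled carefully; I would carry it out by following closely \cite[Lemma 9.2 and the surrounding computation]{BC24}, on whose setting this is modelled.
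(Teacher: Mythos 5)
Your proposal is correct and follows essentially the same route as the paper: the paper's proof is precisely the one-line reduction to \cite[Proposition 9.3]{BC24} with $(U_r, B_r)$ replaced by $(\CI_r^\dag, \CI_r)$, and your sketch (Grothendieck--Lefschetz for $\pi_{Y!}$ via the two preceding lemmas, identification of $Y_g^{F_Y}$ with $Z_g$ by normalising representatives, and collapse of the $\b$-factors to $\pr_T(F^n(h)\i g h)$) is exactly that argument spelled out. The delicate normalisation step you defer to \cite{BC24} is also where the paper defers, so the two proofs coincide in substance.
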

\begin{proof}
    It follows in the same way as \cite[Proposition 9.3]{BC24} by replacing the pair $(U_r, B_r)$ with $(\CI_r^\dag, \CI_r)$.
\end{proof}

Recall that $\CL_\phi$ denotes the rank one multiplicative local system on $T_r / T_{\phi, r}$ associated to $\phi$.
\begin{corollary}\label{sheafscalar}
    Assume $\phi$ is regular, then there exists a constant $c$ such that for any $g \in G_r^F$ we have \[\chi_{\pInd_{\CI_r}^{G_r}(\CL_\phi)}(g) = c \cdot \sum_{h T_r^F(\CI_r^\dag \cap F\CI_r^\dag) \in Z_g} \phi(\pr_T(F^n(h)\i g h)).\]
\end{corollary}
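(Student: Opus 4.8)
The strategy is to replace the functorially-defined complex $\pInd_{\CI_r}^{G_r}\CL_\phi$ by the geometric model $\pi_{Y!}\eta_Y^*\CL_\phi$ furnished by \Cref{copy}, and then to apply the point count of \Cref{Frob-trace}; throughout we work with underlying complexes in $D(G_r)$, forgetting $G_r$-equivariance but retaining Weil structures. First, by \Cref{copy} applied to the sequence $\ud B = (B, FB, \dots, F^nB)$ there is an isomorphism $\pInd_{\CI_r}^{G_r}\CL_\phi \cong \pInd_{\ud B}(\CL_\phi)$ in $D(G_r)$, and by definition $\pInd_{\ud B}(\CL_\phi) = \pi_{Y!}\eta_Y^*\CL_\phi[\,2\dim(G_r/\CI_{\phi, r}) + N_{\ud B}\,]$. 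Since $\phi$ is regular, \Cref{multi-step-perverse} shows that $\pInd_{\CI_r}^{G_r}\CL_\phi[N_\phi]$ is a \emph{simple} perverse sheaf on $G_r$; it remains simple after forgetting equivariance, as by \Cref{ext} it is the intermediate extension of an irreducible local system on the very regular locus. Hence $\pi_{Y!}\eta_Y^*\CL_\phi$, being a shift of the same simple object, has one-dimensional endomorphism ring in $D(G_r)$.

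Next, both complexes carry natural Weil structures: $\pInd_{\CI_r}^{G_r}\CL_\phi$ by \Cref{ext} (equivalently by functoriality, since $\CL_\phi$ is $F$-stable and $\pInd$ is built from operations defined over $\BF_q$), and $\pi_{Y!}\eta_Y^*\CL_\phi$ by the commutative square involving $F_Y$ preceding \Cref{Frob-trace} together with the distinguished isomorphism $F^*\CL_\phi \cong \CL_\phi$ — the latter exists because $\phi$, being a character of $T^F = T(k)$, is $F$-stable, and indeed $\CL_\phi$ carries a canonical Weil structure with trace function $\phi$. As the underlying object is simple, these two Weil structures correspond to one another under the isomorphism above up to a single scalar in $\ov\BQ_\ell^\times$; absorbing this scalar together with the sign $(-1)^{N_{\ud B}}$ produced by the cohomological shift, we obtain a constant $c$, independent of $g$, with $\chi_{\pInd_{\CI_r}^{G_r}(\CL_\phi)} = c\cdot\chi_{\pi_{Y!}\eta_Y^*\CL_\phi}$.

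Finally, applying \Cref{Frob-trace} with $M = \CL_\phi$ gives, for $g \in G_r^F$,
\[
\chi_{\pi_{Y!}\eta_Y^*\CL_\phi}(g) = \sum_{hT_r^F(\CI_r^\dag\cap F\CI_r^\dag)\in Z_g}\chi_{\CL_\phi}\bigl(\pr_T(F^n(h)\i g h)\bigr),
\]
and $\chi_{\CL_\phi} = \phi$ by construction of $\CL_\phi$; here $\pr_T$ takes values in $\bar T_r^F = T_r^F/T_{\phi, r}^F$, through which $\phi$ factors, so the summands are defined. Combining this with the previous paragraph proves the corollary.

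The delicate point is the comparison of Weil structures in the second step, and it is exactly there that regularity is indispensable: only for regular $\phi$ is the relevant perverse sheaf $\pInd_{\CI_r}^{G_r}\CL_\phi[N_\phi]$ simple rather than merely semisimple, which forces the intrinsic Weil structure (from \Cref{ext}) and the geometric one (from $F_Y$) to differ by a single scalar; for non-regular $\phi$ the two trace functions need not even be proportional, so this step genuinely uses the hypothesis.
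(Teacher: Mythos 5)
Your proof is correct and follows essentially the same route as the paper: identify $\pInd_{\CI_r}^{G_r}\CL_\phi$ with $\pi_{Y!}\eta_Y^*\CL_\phi$ up to shift via \Cref{copy}, use the simplicity from \Cref{multi-step-perverse} (regularity) to conclude the two Weil structures differ by a single scalar, and then apply \Cref{Frob-trace} with $M=\CL_\phi$ and $\chi_{\CL_\phi}=\phi$. The only difference is that you spell out the comparison of Frobenius structures in more detail than the paper's terse argument, which is harmless.
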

\begin{proof}
    By \Cref{multi-step-perverse}, $\pInd_{\CI_r}^{G_r}(\CL_\phi)$ is a simple perverse sheaf up to shift. It follows from \Cref{copy} that $\chi_{\pInd_{\CI_r}^{G_r}(\CL_\phi)}$ differs from $\chi_{\pi_{Y!} \eta_Y^* \CL_\phi}$ by a scalar, and the statement follows from \Cref{Frob-trace}.
\end{proof}

\section{Comparison with deep level Deligne-Lusztig characters}
Let notation be as in \Cref{sec:Frob-trace}. Consider the following varieties \begin{align*} X_r &= \{g \in G_r; g\i F(g) \in F U_r\} / (U_r \cap FU_r); \\ Y_r &= \{g \in G_r; g\i F(g) \in F \CI_r^\dag\} / (\CI_r^\dag \cap F \CI_r^\dag).\end{align*} which admit natural actions of $G_r^F \times T_r^F$ by left/right multiplication. Consider the following virtual $G_r^F$-modules \begin{align*} R_{T_r}^{G_r}(\phi) &= \sum_i (-1)^i H_c^i(X_r, \ov \BQ_\ell)[\phi], \\  \CR_{T_r}^{G_r}(\phi) &= \sum_i (-1)^i H_c^i(Y_r, \ov \BQ_\ell)[\phi], \end{align*} where $H_c^i(-, \ov \BQ_\ell)[\phi]$ denotes the subspace of $H_c^i(-, \ov \BQ_\ell)$ on which $T_r^F$ acts via $\phi$. The virtual $G_r^F$-module $R_{T_r}^{G_r}(\phi)$ is referred to as a Deligne-Lusztig representation.

\begin{theorem}\cite[Proposition 1.1]{Nie_24}\label{niecompare}
    Assume that $T$ is elliptic. Then $R_{T_r}^{G_r}(\phi) = \CR_{T_r}^{G_r}(\phi)$.
\end{theorem}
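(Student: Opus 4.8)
The statement is cited from \cite[Proposition 1.1]{Nie_24}, so the plan is to explain how it is obtained there, assembling the ingredients that have already been introduced. The key observation is that both $R_{T_r}^{G_r}(\phi)$ and $\CR_{T_r}^{G_r}(\phi)$ are cut out from the cohomology of varieties defined by a Lang-type condition, the only difference being that for $X_r$ one uses the unipotent radical $U_r$ (together with its intersection $U_r\cap FU_r$), while for $Y_r$ one uses the Iwahori-like group $\CI_r^\dag$. The first step is therefore to compare $U_r$ and $\CI_r^\dag$ on the $\phi$-isotypic level: although $\CI_r^\dag$ is genuinely larger than $U_r$, the extra directions it contains lie in $(\CK_{\phi,r}^+ \cap \ov U_r)$ and in the ``small'' torus part $T_{\phi,r}$, and the character $\phi$ is trivial (or only mildly nontrivial) on precisely these pieces by construction of the Howe factorization. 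Hence the plan is to first establish a Künneth-type decomposition of $H_c^*(Y_r)$ along these extra directions and argue that only the trivial contribution survives after passing to $[\phi]$, reducing $\CR_{T_r}^{G_r}(\phi)$ to the cohomology of a smaller variety.

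The second step is to exploit ellipticity of $T$. Ellipticity guarantees that the relevant Frobenius-twisted conjugation has no ``unipotent escape'': concretely, it forces the Deligne--Lusztig-type variety attached to $\CI_r^\dag$ to retract $T_r^F$-equivariantly onto the one attached to $U_r$, up to pieces whose cohomology is acyclic in the $\phi$-isotypic component. This is the analogue, at deep level, of the classical fact that $R_T^G(\theta)$ can be computed using any Borel (or more generally a suitable parabolic) containing $T$ when $T$ is elliptic; in \cite{Nie_24} this is packaged as a comparison of the two Lang fibrations via a sequence of affine-space fibrations indexed by affine roots $f$ with $f(\bx)$ between $0$ and $r$. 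Each such fibration either contributes a free $\BG_a$ or $\BG_m$ on which $\phi$ acts through a genuinely nontrivial (additive or multiplicative) character — killing that layer in the $[\phi]$-part — or contributes trivially and can be discarded. One organizes these layers so that the ``surviving'' ones reassemble exactly into $X_r$.

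The main obstacle will be controlling the $T_r^F$-action through all these fibrations simultaneously: one needs that the genericity of the successive $\phi_i$ in the Howe factorization makes the additive characters appearing on the $\BG_a$-layers (indexed by roots $\a$ with $r(\a)>0$, via $\a^\vee(1+\varpi^{r(\a)}[z])$) nontrivial, and that regularity of $\phi_{-1}$ for $G^0$ makes the multiplicative characters on the $\BG_m$-layers (the roots in $\Phi(G^0,T)$) nontrivial — these are exactly the vanishing inputs already used in the proof of \Cref{quadratic}, so the same generic/regular hypotheses are what is being leveraged. Keeping track of which affine-root directions are shared between $U_r$ and $\CI_r^\dag$ and which are the ``extra'' ones, and verifying that the alternating sum of Euler characteristics matches on the nose (not merely up to sign or up to semisimplification), is the delicate bookkeeping at the heart of \cite{Nie_24}; I would set it up via the filtration of $\CI_r^\dag$ by affine-root subgroups $G_r^f$ and induct on the number of such $f$, at each stage applying proper base change and the projection formula to peel off one layer, exactly as in the proof of \Cref{braid}--\Cref{quadratic} above.
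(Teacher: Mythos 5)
The paper does not actually prove this statement: Theorem \ref{niecompare} is imported verbatim from \cite[Proposition 1.1]{Nie_24}, so your plan is really an attempt to reconstruct the external proof, and it has a genuine gap at its starting point. Your first step rests on the claim that $\CI_r^\dag$ is ``genuinely larger than $U_r$,'' with the extra directions confined to $\CK_{\phi,r}^+\cap\ov U_r$ and $T_{\phi,r}$, so that one only has to discard harmless extra layers. This is false: by definition $\CI_r^\dag\cap U_r=\CK_{\phi,r}\cap U_r$, and for $d\ge 1$ this is strictly smaller than $U_r$ — for a positive root $\a\in\Phi(G^i,T)\sm\Phi(G^{i-1},T)$ the affine roots $f$ with $\a_f=\a$ and $0\le f(\bx)<s_{i-1}$ occur in $U_r$ but not in $\CK_{\phi,r}$. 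So neither of the two groups defining $X_r$ and $Y_r$ contains the other, and the comparison is not a matter of peeling off directions on which ``$\phi$ is trivial'' (a statement that in any case conflates the character of $T(k)$ with group directions transverse to $T_r$). The real content of \cite[Proposition 1.1]{Nie_24} is to kill, in the $\phi$-isotypic part, the contributions of the layers present on one side but not the other, in \emph{both} directions, using the $(G^i,G^{i+1})$-genericity of the $\phi_i$; your sketch only addresses one direction and asserts the other.

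The remaining steps are likewise asserted rather than argued: the ``K\"unneth-type decomposition'' along the extra directions, and especially the claim that ellipticity of $T$ forces a $T_r^F$-equivariant retraction of the $\CI_r^\dag$-variety onto the $U_r$-variety, are not justified and are not how ellipticity enters (the equality genuinely uses ellipticity, and it is not a formal homotopy statement — the identity can fail without it, so any proof must locate precisely where that hypothesis is consumed). Invoking the vanishing inputs from \Cref{quadratic} is a reasonable instinct, since the same genericity/regularity mechanisms do appear, but \Cref{quadratic} operates on the sheaf-theoretic convolution varieties $Y_{\ud B}$, not on the Lang-fibration varieties $X_r$, $Y_r$, and transporting that layer-by-layer induction to the present setting is exactly the nontrivial bookkeeping that your plan defers. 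As it stands, the proposal does not constitute a proof of the cited proposition.
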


\begin{theorem}\cite[Theorem 1.2]{CI_MPDL}\label{veryregu}
For any character $\phi$ and any very regular element $\g \in G_r^F$,
  \begin{equation*}
    {R_{T_r}^{G_r}(\phi)}(g) = \sum_{w \in W_{G_r}(T_\g, T)^F} \phi^w(g).
  \end{equation*} Here ${}^w \phi$ is character of $T_{\g, r}^F$ such that $\phi^w(g) = \phi(w g w\i)$.
\end{theorem}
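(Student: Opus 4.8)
The plan is to deduce the theorem by the Grothendieck--Lefschetz fixed-point method, following the proof of the Deligne--Lusztig character formula at a regular semisimple element --- where the relevant Green function, that of a maximal torus, is identically $1$ --- and its Moy--Prasad refinement; all the work takes place on the Moy--Prasad Deligne--Lusztig variety $X_r$. Since $\gamma$ is very regular its centraliser in $G_r$ is the connected abelian group $(T_\gamma)_r$, and I expect the formula to come out of the identity
\[
R_{T_r}^{G_r}(\phi)(\gamma)\;=\;\frac{1}{|(T_\gamma)_r^F|}\sum_{\substack{x\in G_r^F\\ x^{-1}\gamma x\in T_r}}\phi(x^{-1}\gamma x),
\]
the Moy--Prasad counterpart of the classical formula for a regular semisimple element.

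First I would use that $\gamma$ commutes with the right $T_r^F$-action on $X_r$, and that $\phi$ picks out the corresponding isotypic summand of the cohomology, to write
\[
R_{T_r}^{G_r}(\phi)(\gamma)=\frac{1}{|T_r^F|}\sum_{t\in T_r^F}\phi(t)^{-1}\,\mathcal L((\gamma,t),X_r),
\]
where $\mathcal L$ denotes the alternating sum of traces on $\ell$-adic cohomology with compact support; everything then reduces to the fixed-point subvarieties $X_r^{(\gamma,t)}$, whose points are the cosets $g(U_r\cap FU_r)$ with $g^{-1}F(g)\in FU_r$ and $g^{-1}\gamma g\cdot t\in U_r\cap FU_r$. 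The crucial step is to analyse $X_r^{(\gamma,t)}$ for very regular $\gamma$: unwinding the affine-root-group parametrisation of $U_rG_{0+:r}$ from \Cref{subsec:aff-root}, the commutator maps $u\mapsto u^{-1}(g^{-1}\gamma g)^{-1}u\,(g^{-1}\gamma g)$ on the successive Moy--Prasad root subgroups act on the root coordinate by scaling by $\alpha(\gamma)^{\pm1}$, and the defining inequality $\alpha(\gamma)\not\equiv 1 \bmod \varpi$ of a very regular element makes these scalars nontrivial. Consequently $X_r^{(\gamma,t)}$ fibres, with affine-space fibres on which $(\gamma,t)$ has a single fixed point --- so each fibre contributes $1$ to $\mathcal L$ --- over a finite set attached to the torus $(T_\gamma)_r$; this is the exact analogue of the triviality of the torus Green function. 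Substituting this back and summing over $t$ collapses the double sum into the displayed identity, the accounting of the orders $|T_r^F|$ and $|(T_\gamma)_r^F|$ being routine.

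It then remains to evaluate the right-hand side combinatorially. Using very regularity once more, in the spirit of \Cref{CI5.5analogy}, one checks that $x^{-1}\gamma x\in T_r$ forces $x^{-1}T_\gamma x=T$, so $\{x\in G_r^F:x^{-1}\gamma x\in T_r\}$ is precisely $\{x\in G_r:xTx^{-1}=T_\gamma\}^F$; the latter is a union of right $T_r^F$-cosets, and $x\mapsto x^{-1}$ identifies the coset space with $W_{G_r}(T_\gamma,T)^F$, along which $\phi(x^{-1}\gamma x)$ is constant and equal to $\phi^w(\gamma)$ on the coset labelled by $w$. Each coset has $|T_r^F|$ elements, and a Lang-torsor argument (valid since $T_r$ is connected) gives $|T_r^F|=|(T_\gamma)_r^F|$ whenever $W_{G_r}(T_\gamma,T)^F\neq\varnothing$; when that set is empty both sides of the claimed formula vanish, since then $\gamma$ has no $G_r^F$-conjugate in $T_r$. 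Cancelling, the identity becomes $R_{T_r}^{G_r}(\phi)(\gamma)=\sum_{w\in W_{G_r}(T_\gamma,T)^F}\phi^w(\gamma)$.

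The main obstacle is the fixed-point step. In the reductive setting, regular semisimplicity of $\gamma$ lets one pass to its connected centraliser, which is a torus, and invoke the triviality of the torus Green function; here $G_r$ is not reductive, so instead one must control the $\ell$-adic cohomology of $X_r^{(\gamma,t)}$ directly through the Moy--Prasad/affine-root filtration, linearising the commutator maps by means of the very regular inequalities and keeping careful track of the twisted $F$-structures on $T_\gamma$ and on $W_{G_r}(T_\gamma,T)$, as well as of the topological Jordan decomposition --- a very regular element need not be semisimple as an element of $G_r$ --- which is exactly what the condition $\alpha(\gamma)\not\equiv 1\bmod\varpi$ controls. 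This analysis is carried out in \cite[Theorem 1.2]{CI_MPDL}.
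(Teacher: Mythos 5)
The paper does not actually prove this statement: it is imported verbatim from \cite[Theorem 1.2]{CI_MPDL}, so there is no internal argument to compare yours against. Your sketch is a broadly accurate outline of the strategy of the cited proof --- a Grothendieck--Lefschetz computation for the commuting pair $(\gamma,t)$ acting on $X_r$, with the very regularity condition $\alpha(\gamma)\not\equiv 1 \bmod \varpi$ used to rigidify the fixed-point loci (the same mechanism as in \Cref{CI5.5analogy}), followed by the coset count identifying the surviving contributions with $W_{G_r}(T_\gamma,T)^F$; the combinatorial endgame you give (inversion identifying the conjugating set with $N_{G_r}(T_\gamma,T)^F$, the Lang-torsor comparison $|T_r^F|=|(T_\gamma)_r^F|$, and the vanishing when $W_{G_r}(T_\gamma,T)^F=\varnothing$) is correct. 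However, as you acknowledge, the decisive step --- the cohomological analysis of the fixed-point subvarieties, i.e.\ the positive-depth analogue of the triviality of the torus Green function --- is delegated back to \cite[Theorem 1.2]{CI_MPDL}, so your proposal is not an independent proof but rather a summary of the external one; this is consistent with how the paper uses the result (as a quoted input), and nothing in your outline conflicts with it.
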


\begin{proposition}
    Let $m \in \BZ_{\ge 1}$ and $g \in G_r^F$. Then \[\tr(g \circ F^{mn}; \CR_{T_r}^{G_r}(\phi)) = \sum_{h T_r^F(\CI_r^\dag \cap F\CI_r^\dag) \in Z_g} \phi(\pr_T(F^{mn}(h)\i g h)).\] Here $Z_g$ and $n$ are as in \Cref{sec:Frob-trace}.
\end{proposition}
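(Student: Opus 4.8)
The plan is to prove this by a Grothendieck--Lefschetz fixed-point computation on the variety $Y_r$, running in exact parallel with the proof of \Cref{Frob-trace} (hence ultimately with \cite[\S9]{BC24}), but twisting by $F^{mn}$ rather than $F$. First I would record the geometric input: since $F^nB=B$, the subgroups $\CI_r^\dag$, $F\CI_r^\dag$ and $\CI_r^\dag\cap F\CI_r^\dag$ are $F^n$-stable, so $F^n$ endows $Y_r$ with an $\BF_{q^n}$-rational structure and $F^{mn}$ acts on each $H_c^i(Y_r,\ov\BQ_\ell)$; the right $T_r^F$-action on $Y_r$ commutes with the left $G_r^F$-action and with $F^{mn}$, and $\phi$ is trivial on $T_{\phi,r}^F$, so $g\circ F^{mn}$ preserves $H_c^i(Y_r,\ov\BQ_\ell)[\phi]$. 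Writing the $\phi$-idempotent as $\tfrac{1}{|T_r^F|}\sum_{t\in T_r^F}\phi(t)^{-1}t$, this reduces the claim to
\[
\tr\big(g\circ F^{mn};\CR_{T_r}^{G_r}(\phi)\big)=\tfrac{1}{|T_r^F|}\sum_{t\in T_r^F}\phi(t)^{-1}\sum_i(-1)^i\tr\big(g\circ F^{mn}\circ t;\,H_c^i(Y_r,\ov\BQ_\ell)\big).
\]

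Next, for each $t\in T_r^F$ I would apply the Grothendieck trace formula to the $F^{mn}$-twisted self-map $\psi_t$ of $Y_r$ realising $g\circ F^{mn}\circ t$ on cohomology, which (with the action conventions of \Cref{Frob-trace}) is induced on representatives by $h\mapsto g^{-1}F^{mn}(h)t^{-1}$. One checks $\psi_t$ is well defined on $Y_r$: the constraint $h^{-1}F(h)\in F\CI_r^\dag$ is preserved because $F^{mn}\CI_r^\dag=\CI_r^\dag$ (whence $F^{mn+1}\CI_r^\dag=F\CI_r^\dag$) and $T_r$ normalises $F\CI_r^\dag$, and the descent modulo $\CI_r^\dag\cap F\CI_r^\dag$ is the same computation. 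Hence $\sum_i(-1)^i\tr(g\circ F^{mn}\circ t;H_c^i(Y_r,\ov\BQ_\ell))=\#\operatorname{Fix}(\psi_t)$, a class $\bar h$ being fixed precisely when $h^{-1}F(h)\in F\CI_r^\dag$ and $h^{-1}g^{-1}F^{mn}(h)t^{-1}\in\CI_r^\dag\cap F\CI_r^\dag$.

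The remaining work --- and the step I expect to be the main obstacle --- is to assemble the weighted count into the right-hand side of the statement. Interchanging summations in $\tfrac{1}{|T_r^F|}\sum_t\phi(t)^{-1}\#\operatorname{Fix}(\psi_t)$: for a class $\bar h$ modulo $\CI_r^\dag\cap F\CI_r^\dag$ with $h^{-1}F(h)\in F\CI_r^\dag$, an admissible $t$ exists iff $F^{mn}(h)^{-1}gh\in T_r^F(\CI_r^\dag\cap F\CI_r^\dag)$; granting this, because $(\CI_r^\dag\cap F\CI_r^\dag)\cap T_r=T_{\phi,r}$ (uniqueness of the two-Borel Iwahori decomposition), $T_r$ is commutative, and $\phi|_{T_{\phi,r}^F}=1$, the admissible $t$ form a single $T_{\phi,r}^F$-coset on which $\phi(t)^{-1}=\phi(\pr_T(F^{mn}(h)^{-1}gh))$, contributing a factor $|T_{\phi,r}^F|$. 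Both conditions on $\bar h$ are invariant under the right $T_r^F$-action (from $(ht_0)^{-1}F(ht_0)=t_0^{-1}h^{-1}F(h)t_0$ and the parallel identity), so each class modulo $T_r^F(\CI_r^\dag\cap F\CI_r^\dag)$ breaks into $[T_r^F:T_{\phi,r}^F]$ classes modulo $\CI_r^\dag\cap F\CI_r^\dag$ with the same value of $\phi(\pr_T(F^{mn}(h)^{-1}gh))$; since $\tfrac{1}{|T_r^F|}\cdot|T_{\phi,r}^F|\cdot[T_r^F:T_{\phi,r}^F]=1$, what remains is exactly $\sum_{\bar h\in Z_g}\phi(\pr_T(F^{mn}(h)^{-1}gh))$.

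Finally, there is one compatibility to pin down: the index set just produced --- cut out by $F^{mn}(h)^{-1}gh\in T_r^F(\CI_r^\dag\cap F\CI_r^\dag)$ together with $h^{-1}F(h)\in F\CI_r^\dag$ --- must coincide with $Z_g$ as defined in \S\ref{sec:Frob-trace} via $F^n$ (a coincidence already implicit in the statement, since the summand $\phi(\pr_T(F^{mn}(h)^{-1}gh))$ has to be defined on all of $Z_g$), and the $\pr_T$-classes must agree. I would obtain this by analysing the correction factor $F^{mn}(h)^{-1}F^n(h)=\big(F^n(h^{-1}F^{(m-1)n}(h))\big)^{-1}$, which lies in the product of the subgroups $F^i\CI_r^\dag$ cycling with period $n$, and showing the $F^n$- and $F^{mn}$-conditions are equivalent with equal $\pr_T$-class --- exactly the type of argument exploiting the genericity of the $\phi_i$ that appears in the proof of \Cref{quadratic}. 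This step, together with the Iwahori-decomposition facts invoked above for $\CI_r^\dag\cap F\CI_r^\dag$, is where the technical machinery of \cite[\S9]{BC24} gets reused, just as for \Cref{Frob-trace}.
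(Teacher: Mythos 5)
Your core computation is correct and is exactly the argument the paper leaves implicit here (the analogue of the fixed-point computations of \cite{BC24}, parallel to \Cref{Frob-trace}): the isotypic projector $\tfrac{1}{|T_r^F|}\sum_{t\in T_r^F}\phi(t)^{-1}t$, the Grothendieck--Lefschetz count of fixed points of $h\mapsto g^{-1}F^{mn}(h)t^{-1}$ on $Y_r$ (legitimate since $F^n$ endows $Y_r$ with an $\BF_{q^n}$-structure and $g,t$ are of finite order and commute with $F^{mn}$), and the reassembly using $(\CI_r^\dag\cap F\CI_r^\dag)\cap T_r^F=T_{\phi,r}^F$ and $\phi|_{T_{\phi,r}^F}=1$ are all fine. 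What this yields is the sum over the classes $hT_r^F(\CI_r^\dag\cap F\CI_r^\dag)$ cut out by $h^{-1}F(h)\in F\CI_r^\dag$ and $F^{mn}(h)^{-1}gh\in T_r^F(\CI_r^\dag\cap F\CI_r^\dag)$; for $m=1$ this is exactly $Z_g$, and $m=1$ is the only case used downstream (in \Cref{algscalar}).

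The genuine gap is your final ``compatibility'' step: the identification of the $F^{mn}$-cut index set with $Z_g$ (cut by $F^n$), with equal $\pr_T$-classes, is false in general, so no genericity argument in the style of \Cref{quadratic} can deliver it. Concretely, already in the classical analogue ($G=\SL_2$, $T$ elliptic, depth zero, so $\CI_r^\dag=U_r$, $U_r\cap FU_r=1$, $n=2$): if $h^{-1}F(h)=\bar u\in FU_r$ and $F^{2}(h)^{-1}gh=t\in T_r^F$, then $F^{4}(h)^{-1}gh=t\,F(\bar u)^{-1}\bar u^{-1}$, which lies in $T_r^F$ only if $\bar u=1$; and for $g$ a nontrivial unipotent every point of $Z_g$ has $\bar u\neq 1$, so it drops out of the $m=2$ set. (More quantitatively, $F^n$ acts on $\pm R_{T_r}^{G_r}(\phi)$ by a scalar which in nondegenerate cases has absolute value $>1$, so the left-hand side is unbounded in $m$, whereas a sum of the root-of-unity values $\phi(\cdot)$ over the fixed finite set $Z_g$ is bounded by $\#Z_g$.) The element $F^{n}(h)^{-1}F^{mn}(h)=\prod_{j=n}^{mn-1}F^{j}(h^{-1}F(h))$ you propose to control only lies in the product $F\CI_r^\dag F^2\CI_r^\dag\cdots F^n\CI_r^\dag$ iterated $m-1$ times, not in $\CI_r^\dag\cap F\CI_r^\dag$. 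The correct reading of the statement is that the index set is itself cut out by the $F^{mn}$-condition (as is forced anyway for the summand to be defined, a point you noticed but resolved in the wrong direction); with that reading your first three steps already constitute a complete proof, and the final paragraph should be deleted rather than filled in.
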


\begin{theorem}\cite[Theorem 10.6]{BC24}
    Assume $T$ is elliptic and $\phi$ is regular. Then $\pm R_{T_r}^{G_r}(\phi)$ is an irreducible representation of $G_r^F \times \<F^n\>$. Moreover, $F^n$ acts on $R_{T_r}^{G_r}(\phi)$ by a scalar.
\end{theorem}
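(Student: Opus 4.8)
The proof runs parallel to that of \cite[Theorem 10.6]{BC24}, with the character sheaf constructed in the present paper taking the place of the one used there. Write $K := \pInd_{\CI_r}^{G_r}(\CL_\phi[N_\phi])$, with $\CI_r = \CI_{\phi, B, r}$ and $N_\phi$ as in \Cref{multi-step-perverse}. Since $\phi$ is regular, $K$ is a \emph{simple} perverse sheaf on $G_r$ by \Cref{multi-step-perverse}, and by \Cref{ext} it is the intermediate extension of the shifted local system $(\pi_{\mathrm{vreg}})_!\eta_{\mathrm{vreg}}^*\CL_{\phi,\mathrm{vreg}}[\dim G_r]$ from the smooth open dense locus $G_{r,\mathrm{vreg}}$; in particular $K$ is pure, $\End(K)=\ov\BQ_\ell$, and, since $F^nB=B$, it carries a canonical Weil structure with $(F^n)^*K\cong K$. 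The plan is to show that the virtual $G_r^F$-character $R_{T_r}^{G_r}(\phi)$, together with its $F^n$-action, is, up to sign and a controllable scalar, the trace-of-Frobenius datum of $K$, and then to invoke the standard passage from a simple pure character sheaf to an irreducible character.

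First I would identify the two sides numerically. By \Cref{niecompare}---this is the only place where ellipticity of $T$ is used---one may replace $R_{T_r}^{G_r}(\phi)$ by $\CR_{T_r}^{G_r}(\phi)=\sum_i(-1)^iH_c^i(Y_r,\ov\BQ_\ell)[\phi]$. Since $F^nB=B$, the morphism $F^n$ preserves $Y_r$ and commutes with the left $G_r^F$- and right $T_r^F$-actions, so $H^\bullet:=\bigoplus_iH_c^i(Y_r,\ov\BQ_\ell)[\phi]$ is naturally a module over $G_r^F\times\langle F^n\rangle$ whose class in the representation ring is $\pm R_{T_r}^{G_r}(\phi)$. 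Running the constructions of \S\ref{sec:copy} and \S\ref{sec:Frob-trace} with $F$ replaced by $F^{mn}$ (equivalently, with the sequence $\ud B=(B,FB,\dots,F^{mn}B)$), the proposition stated immediately before the present theorem, together with \Cref{copy} and \Cref{multi-step-perverse}, yields for every $m\ge 1$ and every $g\in G_r^F$ an identity of the form
\[
\chi^{(mn)}_K(g)\;=\;c_m\cdot\tr\big(g\circ F^{mn};\,R_{T_r}^{G_r}(\phi)\big),
\]
where $\chi^{(mn)}_K$ is the trace function of $K$ for the $F^{mn}$-structure and $c_m\in\ov\BQ_\ell^\times$ is independent of $g$; evaluating at a very regular $g$ and comparing with \Cref{veryregu} and the explicit description of $K$ over $G_{r,\mathrm{vreg}}$ given by \Cref{ext} pins down $c_m$ as a sign times a power of $q$.

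With this in hand, the remaining steps are exactly those of \cite[\S10]{BC24}, ultimately going back to \cite{Lusztig_85I}: the simplicity and purity of $K$ (equivalently, $\End(K)=\ov\BQ_\ell$ and $K$ being an intermediate extension of a pure local system), fed into the above trace identities, force $\pm R_{T_r}^{G_r}(\phi)$ to be the class of an honest irreducible representation $V$ of $G_r^F\times\langle F^n\rangle$. Since $\langle F^n\rangle$ is central in $G_r^F\times\langle F^n\rangle$, Schur's lemma then shows that $F^n$ acts on $V$ by a scalar, which is the last assertion; and restricting along $G_r^F\hookrightarrow G_r^F\times\langle F^n\rangle$ one sees that $V$ remains irreducible over $G_r^F$ as well (here regularity of $\phi$, i.e.\ the trivial stabilizer of $\CL_\phi$ in $W_{G_r}$, is essential).

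The step I expect to be the main obstacle is establishing, through the character-sheaf formalism, that $\pm R_{T_r}^{G_r}(\phi)$ is an honest irreducible $G_r^F\times\langle F^n\rangle$-module---the analogue, for regular $\phi$, of the orthogonality of Deligne--Lusztig characters---together with the accompanying bookkeeping: carrying \Cref{Frob-trace} and \Cref{sheafscalar} over to all Frobenius powers $F^{mn}$ while tracking the scalars $c_m$, and supplying the purity and concentration input needed to conclude that the relevant self-pairings of $\chi^{(mn)}_K$ are genuine powers of $q$ rather than powers of $q$ times a nontrivial Weil number. All of this is carried out in \cite[\S10]{BC24}, and given \Cref{multi-step-perverse}, \Cref{ext}, \Cref{copy}, \Cref{Frob-trace} and \Cref{sheafscalar} the transposition to the present setting is routine.
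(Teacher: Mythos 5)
The paper itself gives no proof of this statement: it is imported verbatim from \cite[Theorem 10.6]{BC24}, and since the assertion concerns only the virtual representation $R_{T_r}^{G_r}(\phi)$ and not the character sheaves constructed here, that citation is the paper's entire argument. Your proposal is therefore a genuinely different route: re-proving the result through the present paper's sheaf $K=\pInd_{\CI_r}^{G_r}(\CL_\phi[N_\phi])$. To your credit, the route avoids formal circularity inside this paper — you use only \Cref{multi-step-perverse}, \Cref{ext}, \Cref{copy}, \Cref{Frob-trace}, \Cref{sheafscalar}, \Cref{niecompare}, \Cref{veryregu} and the unnamed trace proposition, none of which depend on the statement, and you correctly stay away from \Cref{mainthm} and \Cref{algscalar}, which are downstream of it.

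As a proof, however, it has genuine gaps. The decisive step — deducing from ``$\tr(g\circ F^{mn};R_{T_r}^{G_r}(\phi))$ agrees, up to scalars $c_m$, with the trace functions of a simple pure perverse sheaf for all $m$'' that $\pm R_{T_r}^{G_r}(\phi)$ is an irreducible $G_r^F\times\langle F^n\rangle$-module — is precisely the content of \cite[\S 10]{BC24}, and you do not supply it; you defer it as ``routine''. If one is willing to import that section wholesale, one may as well cite Theorem 10.6 itself, which is what the paper does. Moreover, your normalization of the scalars $c_m$ by evaluating at a very regular element is not available under the stated hypotheses: by \Cref{veryregu} the value there is $\sum_w\phi^w(\gamma)$, whose nonvanishing for some very regular $\gamma$ is only guaranteed when $q$ is sufficiently large — this is exactly why \Cref{mainthm} assumes $q\gg 0$ and invokes \cite[Theorem 10.9]{BC24}, a result which in \cite{BC24} comes after, and in the same circle of ideas as, the theorem you are proving, so leaning on it risks circularity as well as adding a hypothesis absent from the statement. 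Finally, the asymptotic purity argument you gesture at requires uniform control of the scalars $c_m$ in $m$ (that the $F^{mn}$-structures on $K$ and on $\pi_{Y!}\eta_Y^*\CL_\phi$ differ by Weil numbers of bounded weight, compatibly as $m$ varies); this bookkeeping is the heart of the matter and is not addressed. In sum: a plausible strategy sketch for an alternative proof, but not a complete argument, and the cleanest correct justification of the statement as phrased remains the citation the paper gives.
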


\begin{corollary}\label{algscalar}
    If $T$ is elliptic and $\phi$ is regular, then there exists a constant $\lambda \neq 0$ such that for $g\in G^F_r$, we have
    $$R^{G_r}_{T_r}(\phi)(g) = \lambda \cdot \sum_{h T_r^F(\CI_r^\dag \cap F\CI_r^\dag) \in Z_g} \phi(\pr_T(F^{n}(h)\i g h))$$
\end{corollary}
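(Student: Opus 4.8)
The plan is to obtain the identity as a formal consequence of the character formula for $\CR_{T_r}^{G_r}(\phi)$ proved in the preceding Proposition, the comparison \Cref{niecompare}, and the irreducibility statement \cite[Theorem 10.6]{BC24}. Taking $m = 1$ in the preceding Proposition, the sum on the right-hand side of \Cref{algscalar} is exactly the twisted trace $\tr\bigl(g\circ F^n;\CR_{T_r}^{G_r}(\phi)\bigr)$; so the task reduces to showing that this twisted trace equals a fixed nonzero scalar times $R_{T_r}^{G_r}(\phi)(g) = \tr\bigl(g;\CR_{T_r}^{G_r}(\phi)\bigr)$, uniformly in $g\in G_r^F$.

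To this end I would first invoke \Cref{niecompare}: since $T$ is elliptic, $R_{T_r}^{G_r}(\phi) = \CR_{T_r}^{G_r}(\phi)$, and — since $F^nB = B$, the varieties $X_r$ and $Y_r$ are defined over $\BF_{q^n}$ and the comparison of \cite{Nie_24} matches the $\phi$-isotypic parts of $H_c^\ast(X_r,\ov\BQ_\ell)$ and $H_c^\ast(Y_r,\ov\BQ_\ell)$ compatibly with the $F^n$-actions — this is an equality of virtual $G_r^F\times\langle F^n\rangle$-modules. By \cite[Theorem 10.6]{BC24} (here $\phi$ is regular), $F^n$ acts on $R_{T_r}^{G_r}(\phi)$ by a scalar $c$, and $c\neq 0$ because $F^n$ is invertible on each $H_c^i(X_r,\ov\BQ_\ell)$. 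Hence, for all $g\in G_r^F$,
\[
\tr\bigl(g\circ F^n;\CR_{T_r}^{G_r}(\phi)\bigr) = \tr\bigl(g\circ F^n;R_{T_r}^{G_r}(\phi)\bigr) = c\cdot R_{T_r}^{G_r}(\phi)(g).
\]
Comparing with the preceding Proposition at $m = 1$ and setting $\lambda := c^{-1}\neq 0$ yields the asserted formula.

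I do not expect a substantial obstacle: \Cref{algscalar} is a formal combination of \Cref{niecompare}, the preceding Proposition, and \cite[Theorem 10.6]{BC24}. The one point that should be recorded carefully is the compatibility, used above, of the equality in \Cref{niecompare} with the $F^n$-actions — that is, that the comparison of \cite{Nie_24} is an isomorphism of virtual $G_r^F\times\langle F^n\rangle$-modules and not merely of $G_r^F$-modules; this holds because it is realized by an explicit geometric correspondence between the $\BF_{q^n}$-varieties $X_r$ and $Y_r$. Alternatively, if one knows that $H_c^\ast(Y_r,\ov\BQ_\ell)[\phi]$ is concentrated in a single degree (as should hold for $\phi$ regular), one may instead apply Schur's lemma directly to that single cohomology group, using \cite{BC24} only through the irreducibility of $\pm R_{T_r}^{G_r}(\phi)$ as a $G_r^F$-module.
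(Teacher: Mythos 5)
Your proposal is correct and is exactly the derivation the paper intends: \Cref{algscalar} is stated without proof as an immediate consequence of the preceding Proposition (taken at $m=1$), \Cref{niecompare}, and \cite[Theorem 10.6]{BC24}, with $\lambda = c^{-1}$ where $c \neq 0$ is the scalar by which $F^n$ acts on $R_{T_r}^{G_r}(\phi)$. The one implicit point you rightly single out --- that the equality of \Cref{niecompare} must be compatible with the $F^n$-action, not just an identity of virtual $G_r^F$-modules --- is indeed needed, and your justification (the comparison of \cite{Nie_24} is realized by Frobenius-equivariant geometric maps between $X_r$ and $Y_r$) is the intended reason.
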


\begin{theorem}\label{mainthm}
    Suppose $q$ is sufficiently large. Assume $T$ is elliptic and $\phi$ is regular. Then 
    $$\chi_{\pInd^{G_r}_{\CI_r} (\CL_\phi[N_\phi])} =(-1)^{\dim(G_r)} R^{G_r}_{T_r}(\phi).$$
\end{theorem}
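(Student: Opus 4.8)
The plan is to combine the two scalar reductions already established with the intermediate–extension description of the character sheaf, and to pin down the remaining scalar by evaluating both sides at a very regular element.

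\textbf{Step 1 (both sides are proportional to one explicit sum).} First I would record that, for $g\in G_r^F$,
\[
S(g):=\sum_{h T_r^F(\CI_r^\dag\cap F\CI_r^\dag)\in Z_g}\phi\bigl(\pr_T(F^n(h)\i g h)\bigr),
\]
satisfies $\chi_{\pInd_{\CI_r}^{G_r}(\CL_\phi)}=c\,S$ for some scalar $c$ (by \Cref{sheafscalar}) and $R^{G_r}_{T_r}(\phi)=\lambda\,S$ for some scalar $\lambda\neq0$ (by \Cref{algscalar}). Since $\pInd$ is triangulated, $\chi_{\pInd_{\CI_r}^{G_r}(\CL_\phi[N_\phi])}=(-1)^{N_\phi}c\,S$. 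Hence it suffices to produce a single very regular $\g\in G_r^F$ with $S(\g)\neq0$ at which $\chi_{\pInd_{\CI_r}^{G_r}(\CL_\phi[N_\phi])}(\g)=(-1)^{\dim G_r}R^{G_r}_{T_r}(\phi)(\g)$: this forces $(-1)^{N_\phi}c=(-1)^{\dim G_r}\lambda$, hence the asserted identity of class functions on all of $G_r^F$.

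\textbf{Step 2 (value on the very regular locus).} Next I would compute the left-hand side for $\g$ very regular. By \Cref{ext}, $\pInd_{\CI_r}^{G_r}(\CL_\phi[N_\phi])$ is the intermediate extension along $j_{\rm vreg}$ of $(\pi_{\rm vreg})_!\eta_{\rm vreg}^*\CL_{\phi,\rm vreg}[\dim G_r]$, so over the open locus $G_{r,\rm vreg}$ it coincides with this complex. For $\g\in G_{r,\rm vreg}^F$ the fibre $\pi_{\rm vreg}\i(\g)$ is finite — a $W_{G_r}$-torsor, by \Cref{lem:tilde Gvreg} — so the Grothendieck--Lefschetz trace formula gives
\[
\chi_{\pInd_{\CI_r}^{G_r}(\CL_\phi[N_\phi])}(\g)=(-1)^{\dim G_r}\sum_{\xi\in\pi_{\rm vreg}\i(\g)^F}\chi_{\CL_\phi}\bigl(\eta_{\rm vreg}(\xi)\bigr).
\]
Since $\widetilde G_{r,\rm vreg}$ and the maps $\pi_{\rm vreg},\eta_{\rm vreg}$ depend only on $T\subseteq G$ and $\chi_{\CL_\phi}$ restricts to $\phi$ on $T_r^F$, the displayed sum, after identifying $\pi_{\rm vreg}\i(\g)^F$ with $W_{G_r}(T_\g,T)^F$, is precisely the finite sum $\sum_{w\in W_{G_r}(T_\g,T)^F}\phi^w(\g)$ computing $R^{G_r}_{T_r}(\phi)(\g)$ in \Cref{veryregu}. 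Thus $\chi_{\pInd_{\CI_r}^{G_r}(\CL_\phi[N_\phi])}(\g)=(-1)^{\dim G_r}R^{G_r}_{T_r}(\phi)(\g)$ for every very regular $\g$.

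\textbf{Step 3 (non-vanishing for $q$ large).} Finally I would exhibit a very regular $\g$ with $R^{G_r}_{T_r}(\phi)(\g)\neq0$. Taking $\g$ in the very regular locus of $T_r^F$ gives $T_\g=T$, so $R^{G_r}_{T_r}(\phi)(\g)=f(\g)$ where $f:=\sum_{w\in W_{G_r}^F}\phi^w$ by \Cref{veryregu}. Regularity of $\phi$ makes the $\phi^w$ ($w\in W_{G_r}^F$) pairwise distinct characters of $T_r^F$, so $f\neq0$ and $\langle f,f\rangle=|W_{G_r}^F|$, whence $\#\supp(f)\ge|T_r^F|/|W_{G_r}^F|$. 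On the other hand the non-very-regular locus in $T_r^F$ lies in $\bigcup_\a\{\a=1\}$, a union of finitely many proper subvarieties, so has size $O(q^{\dim T_r-1})$, while $|W_{G_r}^F|$ is bounded independently of $q$. Hence for $q$ sufficiently large $f$ cannot vanish on the whole very regular locus, which yields the desired $\g$ (and then $S(\g)=f(\g)/\lambda\neq0$). Combined with Step 2 this finishes the proof.

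\textbf{Main obstacle.} The structural input is already in place — simplicity (\Cref{multi-step-perverse}), the intermediate-extension description (\Cref{ext}), and the two scalar reductions (\Cref{sheafscalar}, \Cref{algscalar}) — so the real work is in two spots. The first is the bookkeeping in Step 2: one must check that the Lefschetz count over the $W_{G_r}$-torsor fibre, with the sign $(-1)^{\dim G_r}$ coming exactly from the shift $[\dim G_r]$, reproduces verbatim the character formula of \Cref{veryregu} (this is essentially the very regular computation of \cite{CI_MPDL,BC24}, transported along the isomorphism of \Cref{lem:tilde Gvreg}). The second, and where the hypothesis that $q$ be sufficiently large is genuinely used, is the non-vanishing in Step 3: without it $R^{G_r}_{T_r}(\phi)$ could in principle vanish on the entire very regular locus, and the present method would fail to pin down the scalar. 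Everything else is formal.
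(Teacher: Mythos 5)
Your proposal is correct and follows essentially the same route as the paper's proof: first the proportionality $\chi_{\pInd^{G_r}_{\CI_r}(\CL_\phi[N_\phi])} = \mu\cdot R^{G_r}_{T_r}(\phi)$ via \Cref{sheafscalar} and \Cref{algscalar}, then pinning down $\mu = (-1)^{\dim G_r}$ by evaluating both sides at a very regular element through \Cref{ext}, \Cref{lem:tilde Gvreg} and \Cref{veryregu}. The only divergence is your Step 3: the paper obtains a very regular $\g$ with $R^{G_r}_{T_r}(\phi)(\g)\neq 0$ by citing \cite[Theorem 10.9]{BC24} together with \Cref{veryregu}, whereas you give a self-contained orthogonality/counting argument on $T_r^F$, which is a valid substitute (modulo the routine check that regularity of $\phi$, i.e.\ triviality of the stabilizer of $\CL_\phi$ in $W_{G_r}$, forces the characters $\phi^w$, $w\in W_{G_r}^F$, to be pairwise distinct).
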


\begin{proof}
By \Cref{sheafscalar}, \Cref{algscalar}, \Cref{Frob-trace} and \Cref{niecompare}, there exists a constant $\mu \in \ov\BQ_\ell$ such that
\[\chi_{\pInd_{\CI_r}^{G_r}(\CL_\phi[N_\phi])} = \mu \cdot R_{T_r}^{G_r}(\phi).\] 
As $q$ is sufficiently large, it follows from \cite[Theorem 10.9]{BC24} and \Cref{veryregu} that there exists a very regular element $\g \in G^F_r$ such that
\[
R_{T_r}^{G_r}(\phi)(\g) = \sum_{w \in W_{G_r}(T_\g, T)^F} \phi^w(\g) \neq 0.
\]
On the other hand, by \Cref{ext} we have
\begin{align*}
\chi_{\pInd_{\CI_r}^{G_r}(\CL_\phi[N_\phi])}(\g) &= \chi_{\pi_{{\rm vreg}!} \eta_{\rm vreg}^* \CL_{\phi, {\rm vreg}}[\dim G_r]}(\g) \\
&= (-1)^{\dim G_r} \cdot \sum_{w \in W_{G_r}(T_\g, T)^F} \phi^w(\g) = (-1)^{\dim G_r} R_{T_r}^{G_r}(\phi)(\g) \neq 0.
\end{align*}
Thus $\mu = (-1)^{\dim G_r}$ as desired.
\end{proof}

\section{Positive level Springer hypothesis}\label{sec:Springer}

\subsection{Wittvector-valued Fourier transform} \label{subsec:W-Fourier}
            
Let $W$ be a connected commutative unipotent group scheme over $\BF_q$.
Let $E,E'$ be two connected commutative unipotent group schemes of dimension $d$ over $\BF_q$, assume a pairing
\[\xymatrix{ & E \times E' \ar[ld]^{p_1} \ar[rr]^{\mu} \ar[rd]_{p_2} & & W \\ E & & E' }. \]
is given. Let $\psi \colon W(\BF_q) \to \ov\BQ_\ell^\times$ be a non-trivial character. Via the Lang--Steinberg map $x \mapsto F(x) - x \colon W \to W$, $\psi$ defines an \'etale sheaf $\CL_\psi$ on $W$
We can then define the Fourier transform
\begin{align*}
T_\psi = T_\psi^{E,\mu} \colon D_c^b(E,\ov\BQ_\ell) &\to D_c^b(E',\ov\BQ_\ell) \\
A &\mapsto p_{2!}(p_1^\ast A \otimes \mu^\ast\CL_\psi).
\end{align*} Note that here we do not take the shift $[d]$, as in the usual definition of Fourier transform.

\begin{proposition}\label{prop:inversion}
Let $W,E,E',\mu$ as above such that the character $\psi_x \colon E'(\BF_q) \to \ov\BQ_\ell^\times$ given by $\psi_x(y) = \psi(\mu(x,y))$ is trivial if and only if $x = 0$. Then $T_{\psi^{-1}} \circ T_{\psi} (A) \simeq A[-2d](-d)$, where $(-d)$ denotes the Tate-twist.
\end{proposition}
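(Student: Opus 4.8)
The plan is to prove the Fourier inversion formula by the standard method of computing the composite $T_{\psi^{-1}} \circ T_{\psi}$ as a convolution against a kernel supported on the diagonal. Concretely, I would introduce the double pairing variety $E \times E' \times E$ (with the first copy of $E$ being the "source", $E'$ the intermediate space, and the last copy of $E$ the "target"), together with the projections to the three factors and the two pairing maps $\mu$ applied to the $(1,2)$ and $(3,2)$ coordinate pairs. Applying base change and the projection formula, $T_{\psi^{-1}} \circ T_{\psi}(A)$ is identified with $p_{13!}\bigl(p_1^\ast A \otimes \nu^\ast \CL_\psi\bigr)$ where $\nu \colon E \times E' \times E \to W$ sends $(x, y, x')$ to $\mu(x,y) - \mu(x',y) = \mu(x - x', y)$, and $p_{13}$ is the projection to the first and third $E$-factors. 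So the whole computation reduces to understanding the partial integral of $\CL_\psi$ over the $E'$-fiber.

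\emph{Key steps, in order.} First, set up the diagram of the triple product and record the base-change/projection-formula identity above; this is routine but needs care with which copy of $E$ carries $A$. Second, push forward along the $E'$-direction: by the bilinearity of $\mu$, the restriction of $\nu$ to the fiber over a point $(x,x')$ is the additive character $y \mapsto \psi(\mu(x-x', y))$, i.e. the character $\psi_{x-x'}$ of $E'(\BF_q)$ in the hypothesis. The hypothesis that $\psi_z$ is trivial iff $z = 0$ is exactly what forces the fiberwise cohomology $R\Gamma_c(E', \mu(\,\cdot\,,\,\cdot\,)^\ast \CL_\psi|_{\text{fiber}})$ to vanish unless $x = x'$, in which case it equals $R\Gamma_c(E', \ov\BQ_\ell) = \ov\BQ_\ell[-2d](-d)$ since $E'$ is a $d$-dimensional connected unipotent group (hence isomorphic as a variety to affine $d$-space, or at least has the cohomology of a point up to shift and twist). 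Third, conclude that $p_{13!}(\nu^\ast \CL_\psi) \simeq (\Delta_E)_! \ov\BQ_\ell[-2d](-d)$, the constant sheaf on the diagonal shifted and twisted; tensoring with $p_1^\ast A$ and pushing along $p_{13}$ then gives $A[-2d](-d)$.

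\emph{The main obstacle.} The delicate point is the second step: justifying that the partial pushforward of $\mu^\ast \CL_\psi$ along $E' \to \Spec \BF_q$ behaves as claimed \emph{as a complex of sheaves}, not merely on the level of Frobenius traces. Over $\ov\BF_q$ one needs that for each geometric point $z \neq 0$ of $E$, the sheaf $\CL_{\psi_z}$ on $E'$ — the Artin–Schreier-type sheaf attached to the additive character $y \mapsto \mu(z,y)$ of the unipotent group $E'$ — has vanishing cohomology with compact support in all degrees, while for $z = 0$ it is constant. For $E' = \BG_a$ this is the classical vanishing for nontrivial Artin–Schreier sheaves; for general connected commutative unipotent $E'$ over $\BF_q$ (with $p$ possibly involved through Witt-vector structure, which is why the "Wittvector-valued" Fourier transform is singled out) one reduces to this via a filtration of $E'$ by subgroups with $\BG_a$-quotients, or cites the general Fourier-transform formalism for such groups (e.g. the theory underlying \cite[\S9.1.3]{CO25} or Boyarchenko–Drinfeld). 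I would also need the genericity input that the pairing $\mu$ is "perfect" in the stated sense precisely so that $z \mapsto \psi_z$ detects $z = 0$; this is built into the hypothesis, so the remaining work is purely the sheaf-theoretic vanishing, which I expect to handle by dévissage along a composition series of $E'$ together with the Leray spectral sequence for the resulting tower of affine bundles.
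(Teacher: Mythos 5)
Your proposal is correct and follows essentially the same route as the paper, which also computes the composite via the triple product $E\times E'\times E$, base change and the projection formula, and reduces to the vanishing of $R\Gamma_c$ of the nontrivial character sheaf $\CL(\psi_z)$ on $E'$ for $z\neq 0$ (the paper cites \cite[(1.1.3.4)]{Laumon_Fourier} for this, where you propose d\'evissage to $\BG_a$). The only point you gloss over is that identifying the kernel restricted to an $E'$-fiber with the sheaf attached to $\psi_{x-x'}$ uses the multiplicativity of $\CL_\psi$ (in the paper this is the isomorphism $\pr_{12}^\ast\mu^\ast\CL_{\psi^{-1}}\otimes\pr_{23}^\ast\mu^\ast\CL_\psi\simeq\alpha^\ast\mu^\ast\CL_\psi$ via \cite[Lemma I.5.10]{KW}), which is a routine but necessary sheaf-level input beyond ``bilinearity of $\mu$''.
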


\begin{proof}
We can closely follow the argument of \cite[Th\'eor\`eme (1.2.2.1)]{Laumon_Fourier} (or \cite[I.5]{KW}). We have the commutative diagram (all functors are derived):
\[
\xymatrix{
E\times E \ar[dd]^{q_1} & & E\times E'\times E \ar[ll]_{\pr_{13}} \ar[ld]^{\pr_{12}} \ar[rd]_{\pr_{23}} \\
& E\times E' \ar[ld]_{p_1} \ar[rd]^{p_2} & & E'\times E \ar[ld]_{p_2} \ar[rd]^{p_1} \\
E & & E' & & E
}
\]
with a cartesian square in the middle; here $p_{ij}$ denotes the projection to the product of $i$th and $j$th components, $q_1$ denotes the projection to the first component and $p_1,p_2$ are as above. We compute
\begin{align*}
T_{\psi^{-1}} \circ T_{\psi}(M) &= p_{1!}\left(p_2^\ast\, p_{2!}\,(p_1^\ast M \otimes \mu^\ast\,\CL_\psi) \otimes \mu^\ast\,\CL_{\psi\i}\right) \\
&\simeq p_{1!}\left(\pr_{12!}\,\pr_{23}^\ast\,(p_1^\ast\, M \otimes \mu^\ast\,\CL_\psi) \otimes \mu^\ast\,\CL_{\psi\i} \right) \\
&\simeq p_{1!}\, \pr_{12!}\, \left( pr_{23}^\ast\,(p_1^\ast M \otimes \mu^\ast\,\CL_\psi) \otimes \pr_{12}^\ast\,\mu^\ast \,\CL_{\psi\i} \right) \\
&\simeq q_{1!}\,\pr_{13!}\left(\pr_{12}^\ast\,\mu^\ast\CL_{\psi\i} \otimes \pr_{23}^\ast\mu^\ast\CL_\psi \otimes \pr_{13}^\ast\,q_2^\ast M \right) \\
&\simeq q_{1!}\left( pr_{13!}\,(\pr_{12}^\ast\,\mu^\ast\CL_{\psi\i} \otimes \pr_{23}^\ast\mu^\ast\CL_\psi) \otimes q_2^\ast M \right)
\end{align*}
where the second equation follows by the proper base change theorem for the cartesian square of the above diagram, the third and the fifth are by projection formula, the fourth follows from $p_1 \pr_{12} = q_1 \pr_{13}$ and $q_2\pr_{13} = p_1 \pr_{23}$. Now, let $X_1$ (resp. $X_2$) denote the pullback of the Lang--Steinberg covering of $W$ along $\mu\circ\pr_{12} \colon E \times E' \times E \to W$ (resp. along $\mu\circ\pr_{23}$). Let $X = X_1 \times_{E\times E' \times E} X_2$. This is a connected Galois cover of $E\times E' \times E$ with Galois group $W(\BF_q)^2$. We have the cartesian diagram
\[
\xymatrix{
E\times E' \times E \ar[r]^{\alpha} \ar[d]^{\pr_{13}} & E\times E' \ar[d]^{p_1}\\
E\times E \ar[r]^\beta & E
}
\]
where $\alpha(x,y,z) = (x-z,y)$ and $\beta(x,z) = x-z$. By \cite[Lemma I.5.10]{KW} we have $\pr_{12}^\ast\,\mu^\ast\CL_{\psi\i} \otimes \pr_{23}^\ast\mu^\ast\CL_\psi \simeq \alpha^\ast \mu^\ast\CL_\psi$. Thus, continuing the above computation, we have
\begin{align*}
T_{\psi^{-1}} \circ T_{\psi}(M) &\simeq q_{1!}\,\left( \pr_{13!}\,\alpha^\ast\mu^\ast \CL_\psi \otimes q_2^\ast M \right) \\
&\simeq q_{1!}\,\left( \beta^\ast \,p_{1!}\,\mu^\ast\CL_\psi \otimes q_2^\ast M \right),
\end{align*}
where the second equality holds by the proper base change theorem for the above diagram. Now, again by the proper base change theorem, the fiber of $p_{1!}\,\mu^\ast\,\CL_\psi$ at $x \in E$ is isomorphic to $p_{1!}^x \CL(\psi_x)$, where $p_1^x \colon \{x\} \times E' \to \{x\}$ is the restriction of $p_1$. By assumption $\CL(\psi_x)$ is non-trivial if $x \neq 0$, and hence by \cite[(1.1.3.4)]{Laumon_Fourier}, $p_{1!}^x \CL(\psi_x) = 0$ if $x\neq 0$. Thus, if $i \colon \{0\} \to E$ denotes the neutral section, $p_{1!}\,\mu^\ast\,\CL_\psi = i_\ast \ov\BQ_\ell[-2d](-d)$, so that continuing the above computation we get
\begin{align*}
T_{\psi^{-1}} \circ T_{\psi}(M) &\simeq q_{1!}\left(\beta^\ast \, i_! \ov\BQ_\ell[-2d](-d) \otimes q_2^\ast M \right) \\
&\simeq q_{1!} \left( \Delta_! \ov\BQ_\ell[-2d](-d) \otimes q_2^\ast M \right) \\
&\simeq q_{1!} \left( \Delta_! M \right)[-2d](-d) \\
&\simeq M[-2d](-d),
\end{align*}
where $\Delta$ denotes the diagonal of $E$, the first isomorphism is follows from the proper base change theorem, the third follows from the projection formula, and the last is due to the identity $q_1\circ\Delta = \id_E$.
\end{proof}

\subsection{} Now we specify to our case of interest. Let $\psi: \brk \to \ov\BQ_\ell^\times$ be an additive character, which is trivial over $\varpi \CO_k$ but nontrivial over $\CO_k$. Choose a sufficiently large integer $N \gg 0$, and view $W := \varpi^{-N} \CO_\brk / \varpi$ as an additive group scheme over $\BF_q$. Let $\CL_{\psi}$ be the rank one multiplicative local system on $W$ associated to $\psi$.

Let $\mathbf E$ be the set category of finitely generated $\CO_\brk$-modules such that $\varpi^N E = \{0\}$. Let $E \in \mathbf E$. Define \[E' = \hom_{\CO_\brk}(E, W).\] Then $E' \in \mathbf E$ and there is a natural pairing \[\mu: E \times E' \to W, \quad (x, f) \mapsto f(x).\] Notice that $E$ and $E'$ are commutative unipotent group schemes over $\BF_q$ of the same dimension. As in \Cref{subsec:W-Fourier}, we can define the Fourier transformation 
\begin{align*}
T_\psi^E \colon D_c^b (E,\ov\BQ_\ell) &\to D_c^b(E',\ov\BQ_\ell) \\
A &\mapsto p_{2!}(p_1^\ast A \otimes \mu^\ast\CL_\psi). \end{align*} 

By abuse of notation, we will write $T_\psi = T_\psi^E$ if $E$ is clear from the context.
\begin{proposition}\label{commutative}
    Let $E_1, E_2 \in \mathbf E$  and let $f: E_1 \to E_2$ be a morphism of $\CO_\brk$-modules. Let $f': E'_2\to E'_1$ be the dual morphism. For $\CF \in D_c^b(V_1)$ and $\CG\in D_c^b(V_2)$, there are natural isomorphisms
    \[T_{\psi}(f_!\CF)\cong f'^\ast T_{\psi}(\CF),\quad T_{\psi}(f^* \CF)\cong f'_! T_{\psi}(\CF)[2d](d). \]
    Here $d = \dim  E_2 - \dim E_1$.
\end{proposition}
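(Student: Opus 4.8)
The plan is to derive the first isomorphism from proper base change and the projection formula, and then deduce the second from it via Fourier inversion. Everything hinges on the elementary fact that the dual morphism $f'\colon E_2'\to E_1'$, $g\mapsto g\circ f$, is built precisely so that the two maps $E_1\times E_2'\to W$ given by $\mu_{E_2}\circ(f\times\id_{E_2'})$ and by $\mu_{E_1}\circ(\id_{E_1}\times f')$ coincide, both sending $(x,g)$ to $g(f(x))$. Pulling back $\CL_\psi$ therefore gives a canonical identification $(f\times\id)^\ast\mu_{E_2}^\ast\CL_\psi\cong(\id\times f')^\ast\mu_{E_1}^\ast\CL_\psi$ of multiplicative local systems on $E_1\times E_2'$.

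For the first isomorphism, write $T_\psi(f_!\CF)=p_{2!}(p_1^\ast f_!\CF\otimes\mu_{E_2}^\ast\CL_\psi)$, with $p_1,p_2$ the two projections of $E_2\times E_2'$. Proper base change along the cartesian square with horizontal arrows $f\colon E_1\to E_2$ and $f\times\id\colon E_1\times E_2'\to E_2\times E_2'$ and vertical arrows the projections to the first factor replaces $p_1^\ast f_!$ by $(f\times\id)_!\,a^\ast$, where $a\colon E_1\times E_2'\to E_1$ and $b\colon E_1\times E_2'\to E_2'$ denote the two projections; the projection formula absorbs $\mu_{E_2}^\ast\CL_\psi$ into the pushforward, the identity above rewrites it as $(\id\times f')^\ast\mu_{E_1}^\ast\CL_\psi$, and $b=p_2\circ(f\times\id)$ then gives $T_\psi(f_!\CF)\cong b_!\bigl(a^\ast\CF\otimes(\id\times f')^\ast\mu_{E_1}^\ast\CL_\psi\bigr)$. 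On the other side, $f'^\ast T_\psi(\CF)=f'^\ast p_{2!}(p_1^\ast\CF\otimes\mu_{E_1}^\ast\CL_\psi)$ with $p_1,p_2$ now the projections of $E_1\times E_1'$; proper base change along the cartesian square with horizontal arrows $f'\colon E_2'\to E_1'$ and $\id\times f'\colon E_1\times E_2'\to E_1\times E_1'$ and vertical arrows the projections to the $E_2'$- resp.\ $E_1'$-factor, together with $a=p_1\circ(\id\times f')$, identifies this with the very same complex. Since each step is functorial, the resulting isomorphism is natural in $\CF$; running the argument with $\psi^{-1}$ in place of $\psi$ gives the corresponding statement for $T_{\psi^{-1}}$.

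For the second isomorphism I would feed the first one, applied to the character $\psi^{-1}$ and the dual morphism $f'\colon E_2'\to E_1'$, into \Cref{prop:inversion}. This uses the biduality $E\cong E''$ of objects of $\mathbf E$, which identifies $\mu_{E'}$ with the composite of the swap $E'\times E\to E\times E'$ and $\mu_E$, and $(f')^\vee$ with $f$, so that $T_\psi^{E_i'}$ is recognised as an inverse-type transform $D_c^b(E_i')\to D_c^b(E_i)$. Concretely, the first isomorphism for $(\psi^{-1},f')$ reads $T_{\psi^{-1}}^{E_1'}(f'_!C)\cong f^\ast T_{\psi^{-1}}^{E_2'}(C)$ for $C\in D_c^b(E_2')$; taking $C=T_\psi^{E_2}(\CG)$ and invoking \Cref{prop:inversion} for the pair $(E_2,E_2')$ turns the right-hand side into $(f^\ast\CG)[-2\dim E_2](-\dim E_2)$, then applying $T_\psi^{E_1}$ to both sides and invoking \Cref{prop:inversion} once more for the pair $(E_1,E_1')$ turns the left-hand side into $\bigl(f'_!T_\psi^{E_2}(\CG)\bigr)[-2\dim E_1](-\dim E_1)$; collecting the shifts and Tate twists yields exactly $T_\psi(f^\ast\CG)\cong f'_!T_\psi(\CG)[2d](d)$ with $d=\dim E_2-\dim E_1$. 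One could equally prove this directly by the same base-change and projection-formula manipulations, in which case the shift $[2d](d)$ emerges from the compactly supported cohomology of the affine-space fibres of $\id\times f'$, exactly as the shift $[-2d](-d)$ does in the proof of \Cref{prop:inversion}.

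The real work is entirely in the bookkeeping: keeping straight the several cartesian squares and which projection and pairing live on which product, and — for the second isomorphism — pinning down the cohomological shift and the Tate twist, where the biduality on $\mathbf E$ and the chosen normalization of $T_\psi$ (without the shift $[d]$) both intervene. Geometrically there is nothing beyond the standard compatibility of the Fourier--Deligne transform with base change and the Artin--Schreier vanishing already recorded in the proof of \Cref{prop:inversion}.
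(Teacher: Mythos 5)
Your proof is correct and follows essentially the same route as the paper, which simply cites the standard base-change/projection-formula argument (Achar, Proposition 6.9.13) for the first isomorphism and notes that the second follows from \Cref{prop:inversion}; you have merely written out those two steps in detail, including the key identity $\mu_{E_2}\circ(f\times\id)=\mu_{E_1}\circ(\id\times f')$ and the biduality needed to apply inversion on both $E_2$ and $E_1'$. The bookkeeping of shifts and Tate twists comes out exactly as in the statement, so nothing is missing.
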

\begin{proof}
    The first isomorphism is a standard result, see \cite[Proposition 6.9.13]{Achar}. The second isomorphism can be deduced from \Cref{prop:inversion}.
\end{proof}

\subsection{}\label{sec:Fourier-Witt-alg}
Let $\bfg$ be the Lie algebra of $G$ over $\brk$, whose dual is denoted by $\bfg^*$. Let $0 \le s \in \wt\BR$. We denote by $\bfg_{\bx, s}$ the associated Moy-Prasad $\CO_\brk$-submodule of $\bfg$, and set \[\bfg^*_{\bx, s} = \{X \in \mathbf  g^*; \<X, \bfg_{(-s)+}\> \in \varpi \CO_\brk\}\] where $\<-,-\>: \mathbf  g^* \times \mathbf  g \to \brk$ is the natural pairing. By abuse of notation we put \[\bfg_s := \bfg_{\bx, 0} / \bfg_{\bx, s+}, \quad \mathbf  g^*_{-s} := \mathbf  g^*_{\bx, -s} / \mathbf  g^*_{\bx, 0+}.\] Notice that $\bfg^*_{-s} \cong \hom_{\CO_\brk}(\bfg_s, W)$, where $W = \varpi^{-N} \CO_\brk / \varpi$ with $N \gg 0$. Then we have the Fourier transform functor \[T_{\psi}^{\bfg_{-s}^*} : D_c^b(\mathbf  g^*_{-s}) \to D_c^b(\bfg_s), \quad M \mapsto \pr_{1 !}(\pr_2^* M \otimes \<-, -\>^* \CL_{\psi}).\] 



\

Let notation be as in \Cref{subsec:parabolic}. Let $\bfp, \bfl, \bfn, \ov \bfn$ be the Lie algebras of $P, L, N, \ov N$ respectively. Put \[\bfp_{s, r} = \bfl_r + \bfn_{s:r} + \ov\bfn_{s+:r}, \quad \bfp^*_{-s, -r} = \bfl^*_{-r} + \bfn^*_{(-s)+} + \ov\bfn^*_{-s}.\] Note that $\bfp^*_{-s, -r} \cong (\bfg_r / \bfn_{s, r})'$ with $\bfn_{s, r} = \bfn_{s:r} + \ov\bfn_{s+:r}$. 
Consider the following Cartesian diagram \[ \tag{$\ast$}    
\xymatrix@R=1.5em@C=2.3em{
    & \bfp_{s,r} \ar[dl]_{p} \ar[dr]^{i} & \\
 \bfl_r \ar[dr]^-{j}& & \bfg_r \ar[dl]_{q} \\
    & \bfg_r/\bfn_{s,r}, & 
}\] where all the morphisms are defined in the natural way. Then can define the induction functor for Lie algebras:  
\[ \pInd^{\bfg_r}_{\bfp_{s,r}} := \Av^{G_r}_{P_{s,r}!} \circ i_! \circ p^\ast \circ \mathrm{Infl}^{P_{s,r}}_{L_r}: D_{L_r}(\bfl_{r})\to D_{G_r}(\bfg_{r}). \] Similarly, we can define \[\pInd_{\bfp^*_{-s, -r}}^{\bfg^*_{-r}}:= \Av^{G_r}_{P_{s,r}!} \circ {q'}_! \circ {j'}^* \circ \Infl^{P_{s,r}}_{L_r}: D_{L_r} (\bfl^*_{-r})  \to D_{G_r} (\bfg^*_{-r})\] by using the following diagram dual to $(*)$ \[ \tag{$\ast\ast$}
\xymatrix@R=1.5em@C=2.3em{
    & \bfp^*_{-s,-r} \ar[dl]_{j'} \ar[dr]^{q'} & \\
    \bfl^*_{-r} \ar[dr]^-{p'}& & \bfg^*_{-r} \ar[dl]_{i'} \\
    & (\bfp_{s, r})', & 
}\] As $(*)$ is Cartesian, so is $(**)$.
\begin{lemma}\label{Lieindu}
\begin{itemize} We have $T_{\psi} \circ \pInd^{\bfg^*_{-r}}_{\bfp^*_{-s,-r}} = \pInd^{\bfg_r}_{\bfp_{s,r}} \circ T_{\psi}[-2\dim N_{s, r}](-\dim N_{s,r})$.
\end{itemize}
\end{lemma}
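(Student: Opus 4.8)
The plan is to unwind both sides along the Cartesian squares $(\ast)$ and $(\ast\ast)$ and to move the Fourier transform past each of the four elementary functors occurring in
\[
\pInd^{\bfg^*_{-r}}_{\bfp^*_{-s,-r}} = \Av^{G_r}_{P_{s,r}!}\circ {q'}_!\circ {j'}^\ast\circ\Infl^{P_{s,r}}_{L_r}, \qquad \pInd^{\bfg_r}_{\bfp_{s,r}} = \Av^{G_r}_{P_{s,r}!}\circ i_!\circ p^\ast\circ\Infl^{P_{s,r}}_{L_r}.
\]

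Two of the four steps are formal. Passing $T_\psi$ past $\Infl^{P_{s,r}}_{L_r}$ is immediate, since inflation leaves the underlying complex unchanged while $T_\psi$ is a geometric operation on the module direction only, so $T_\psi^{\bfl^*_{-r}}\circ\Infl^{P_{s,r}}_{L_r}\cong\Infl^{P_{s,r}}_{L_r}\circ T_\psi^{\bfl^*_{-r}}$. Passing $T_\psi$ past $\Av^{G_r}_{P_{s,r}!}$ follows by uniqueness of adjoints: $\Av^{G_r}_{P_{s,r}!}$ is left adjoint to $\For^{G_r}_{P_{s,r}}$, which commutes with $T_\psi$; and by \Cref{prop:inversion} the functor $T_\psi$ is an equivalence up to a shift and Tate twist, hence itself a left adjoint (to $T_{\psi^{-1}}$, up to shift and twist); thus $T_\psi\circ\Av^{G_r}_{P_{s,r}!}$ and $\Av^{G_r}_{P_{s,r}!}\circ T_\psi$ are both left adjoint to $T_{\psi^{-1}}\circ\For^{G_r}_{P_{s,r}}\cong\For^{G_r}_{P_{s,r}}\circ T_{\psi^{-1}}$ (up to the same shift and twist), hence isomorphic. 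These compatibilities belong to the standard formalism of the Wittvector Fourier transform; cf.\ \cite[\S9.1.3]{CO25}. After these reductions, and since shifts and Tate twists pull out of all the functors in play, it suffices to prove the $P_{s,r}$-equivariant identity
\[
T_\psi^{\bfg^*_{-r}}\circ {q'}_!\circ {j'}^\ast \;\cong\; i_!\circ p^\ast\circ T_\psi^{\bfl^*_{-r}}\;[-2\dim N_{s,r}](-\dim N_{s,r}).
\]

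To prove this, recall that $q'$ and $j'$ in $(\ast\ast)$ are the $\CO_\brk$-linear transposes of $q\colon\bfg_r\to\bfg_r/\bfn_{s,r}$ and $j\colon\bfl_r\to\bfg_r/\bfn_{s,r}$ from $(\ast)$, with $\bfp^*_{-s,-r}\cong(\bfg_r/\bfn_{s,r})'$. Applying the first isomorphism of \Cref{commutative} with $f=q'$ (whose transpose is $q$) yields $T_\psi^{\bfg^*_{-r}}\circ{q'}_!\cong q^\ast\circ T_\psi^{\bfp^*_{-s,-r}}$, and applying the second isomorphism with $f=j'$ (transpose $j$) yields $T_\psi^{\bfp^*_{-s,-r}}\circ{j'}^\ast\cong j_!\circ T_\psi^{\bfl^*_{-r}}\,[2d](d)$, where $d=\dim\bfl_r-\dim(\bfg_r/\bfn_{s,r})$. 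Composing, $T_\psi^{\bfg^*_{-r}}\circ{q'}_!\circ{j'}^\ast\cong q^\ast j_!\,T_\psi^{\bfl^*_{-r}}\,[2d](d)$, and since $(\ast)$ is Cartesian, proper base change gives $q^\ast j_!\cong i_!p^\ast$.

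It remains to match the shift, i.e.\ to check $d=-\dim N_{s,r}$, equivalently $\dim\bfg_r-\dim\bfl_r=2\dim N_{s,r}$. This is exactly where the normalization $s=r/2$ (with $r\in\BZ_{\ge0}$) enters, and the identity is false for general $s$. Writing $\dim\bfg_r-\dim\bfl_r=\dim\bfn_r+\dim\ov\bfn_r$ and $\dim N_{s,r}=\dim\bfn_{s:r}+\dim\ov\bfn_{s+:r}$, one checks by a direct count of affine root spaces that each root $\beta\in\Phi_N$ contributes twice as much to the former sum as to the latter; the verification comes down to the elementary identity $\lfloor x-s\rfloor-\lfloor x+s\rfloor=-r$, valid for $s=r/2$, $r\in\BZ$, and arbitrary $x\in\BR$. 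The two applications of \Cref{commutative}, the base change, and the two formal compatibilities are routine; the sole computation of substance is this dimension identity, and the one thing to be careful about is bookkeeping the shifts and Tate twists through it, which is precisely the place where the hypothesis $s=r/2$ is essential.
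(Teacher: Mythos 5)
Your proposal is correct and follows essentially the same strategy as the paper's proof: commute $T_\psi$ past $\Infl$ formally and past $\Av^{G_r}_{P_{s,r}!}$ by adjunction via \Cref{prop:inversion}, apply the two isomorphisms of \Cref{commutative} to the middle pair of functors, and finish with proper base change and the dimension identity $\dim\bfg_r-\dim\bfl_r=2\dim N_{s,r}$ (equivalently the first item of \Cref{dimension}), which indeed uses $s=r/2$. The only (immaterial) difference is that you convert $q'_!\circ j'^{\ast}$ into $i_!\circ p^{\ast}$ using base change on the square $(\ast)$, whereas the paper runs the mirror-image computation, converting $i_!\circ p^{\ast}$ into $q'_!\circ j'^{\ast}$ via base change on $(\ast\ast)$.
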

\begin{proof}
By \Cref{commutative} and the proper base change theorem for $(**)$, we have \begin{align*}
    &\quad\ \pInd^{\bfg_r}_{\bfp_{s,r}} \circ T_\psi \\ 
    &=\Av^{G_r}_{P_{s,r}!} \circ i_! \circ p^* \circ \Infl_{L_r}^{P_{s,r}} \circ T_\psi \\
    &=\Av^{G_r}_{P_{s,r}!} \circ i_! \circ p^* \circ T_{\psi} \circ \Infl_{L_r}^{P_{s,r}} \\ 
    &=\Av^{G_r}_{P_{s,r}!} \circ i_! \circ T_{\psi}  \circ p'_! \circ \Infl_{L_r}^{P_{s,r}} \\
    &=\Av^{G_r}_{P_{s,r}!} \circ T_{\psi} \circ {i'}^*  \circ p'_! \circ \Infl_{L_r}^{P_{s,r}} [2\dim N_{s,r}] (\dim N_{s,r}) \\
    &=T_{\psi} \circ \Av^{G_r}_{P_{s,r}!} \circ {i'}^*  \circ p'_! \circ \Infl_{L_r}^{P_{s,r}} [2\dim N_{s,r}] (\dim N_{s,r}) \\
    &=T_{\psi} \circ \Av^{G_r}_{P_{s,r}!} \circ q'_!\circ {j'}^* \circ \Infl_{L_r}^{P_{s,r}} [2\dim N_{s,r}](\dim N_{s,r}) \\
    &=T_\psi \circ \pInd^{\bfg^*_{-r}}_{\bfp^*_{-s,-r}} [2\dim N_{s,r}] (\dim N_{s,r}),
\end{align*} where the fifth identity follows from \Cref{prop:inversion} and that $T_\psi$ commutes with the right adjoint $\For_{P_{s,r}}^{G_r}$ to $\Av^{G_r}_{P_{s,r}!} $. The proof is finished.
\end{proof}

Recall that $\fkl = L_{r:r} \cong \bfl_{\bx, r} / \bfl_{\bx, r+}$.
\begin{lemma} \label{ind-delta}
    Let $X \in \bfl^*_{-r}$ such that $X |_{\fkl} \in \fkl^*$ is $(L, G)$-generic. Then $\pInd_{\bfp^*_{-s, -r}}^{\bfg^*_{-r}}(\d_{L_r \cdot X}) = \d_{G_r \cdot X} [2 \dim N_{s, r}]$.
\end{lemma}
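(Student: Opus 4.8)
The plan is to unwind the definition $\pInd_{\bfp^*_{-s,-r}}^{\bfg^*_{-r}}=\Av^{G_r}_{P_{s,r}!}\circ {q'}_!\circ {j'}^*\circ\Infl^{P_{s,r}}_{L_r}$ and reduce the statement to two facts about the coadjoint orbit of $X$. Since $j'$ is the linear surjection dual to the $P_{s,r}$-equivariant inclusion $\bfl_r\hookrightarrow\bfg_r/\bfn_{s,r}$ and $q'$ the closed immersion dual to $\bfg_r\twoheadrightarrow\bfg_r/\bfn_{s,r}$, the composite ${q'}_!\circ {j'}^*\circ\Infl^{P_{s,r}}_{L_r}$ carries $\d_{L_r\cdot X}$ to $\d_{(j')^{-1}(L_r\cdot X)}$, regarded as a $P_{s,r}$-equivariant sheaf on $\bfg^*_{-r}$. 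Pushing it through the averaging functor exactly as in the proof of \Cref{otherconstruct} (which applies mutatis mutandis, with the pair $(i\colon\bfp_{s,r}\hookrightarrow\bfg_r,\ p\colon\bfp_{s,r}\to\bfl_r)$ replaced by $(q'\colon\bfp^*_{-s,-r}\hookrightarrow\bfg^*_{-r},\ j'\colon\bfp^*_{-s,-r}\to\bfl^*_{-r})$ and conjugation replaced by the coadjoint action), one obtains, after forgetting equivariant structures,
\[
\pInd_{\bfp^*_{-s,-r}}^{\bfg^*_{-r}}(\d_{L_r\cdot X})\ \cong\ \pi_!\,\d_{V}\,[2\dim N_{s,r}],\qquad V=\{(Y,gP_{s,r})\in\bfg^*_{-r}\times G_r/P_{s,r}\ :\ g\i\cdot Y\in (j')^{-1}(L_r\cdot X)\},
\]
with $\pi(Y,gP_{s,r})=Y$ and $\cdot$ the coadjoint action. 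It thus suffices to prove (a) $(j')^{-1}(L_r\cdot X)=P_{s,r}\cdot X$ in $\bfp^*_{-s,-r}$, and (b) $\mathrm{Stab}_{G_r}(X)\subseteq P_{s,r}$.

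For (a): the choice $s=r/2$ forces $[\bfn_{s:r},\ov\bfn_{s+:r}]=0$ in $\bfg_r$, from which one checks that $N_{s,r}$ acts trivially on the copy of $\bfl_r$ inside $\bfg_r/\bfn_{s,r}$, hence dually $j'(n\cdot Z)=j'(Z)$ for all $n\in N_{s,r}$, $Z\in\bfp^*_{-s,-r}$. Together with the $L_r$-equivariance of $j'$ this gives $N_{s,r}\cdot X\subseteq (j')^{-1}(X)=X+\ker j'$ and reduces (a) to $N_{s,r}\cdot X=X+\ker j'$. Here $(\mathfrak{ge1})$ enters: for $\a\in\Phi_N$ one has $X|_{\fkt^\a}\neq0$, and since the pairings $[\bfg^\a,\ov\bfg^{-\a}]\to\bft^\a$ are non-degenerate (by the standing hypotheses on $p$), a Moy--Prasad leading-term argument shows $\mathrm{Stab}_{N_{s,r}}(X)=1$; then $N_{s,r}\cdot X$, being a unipotent-group orbit in an affine space, is a closed subvariety of $X+\ker j'$ of full dimension $\dim N_{s,r}=\dim\ker j'$, hence equals it. Since $P_{s,r}=L_r N_{s,r}$ and $j'$ is $L_r$-equivariant, (a) follows.

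Fact (b) is the technical heart and the main obstacle. One decomposes a given $g\in\mathrm{Stab}_{G_r}(X)$ through the affine root subgroups adapted to $B$ (an Iwahori-type factorization relative to $P$) and shows, using only $(\mathfrak{ge1})$ and the cut-off $s=r/2$, that the components of $g$ in the $\Phi_N$- (resp.\ $(-\Phi_N)$-) directions must be concentrated in Moy--Prasad depth $\geq s$ (resp.\ $>s$), i.e.\ $g\in L_r N_{s:r}\ov N_{s+:r}=P_{s,r}$: a shallower unipotent component cannot stabilize $X$, because its leading term pairs nontrivially with $X$ through a $\bft^\a$-direction with $\a\in\Phi_N$. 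This is a root-by-root leading-term computation of the same nature as those in \cite[\S 4]{BC24} and \cite[\S 9]{CO25}, from which it can be cited or adapted. Granting (a) and (b), the conclusion is immediate: by (a), $V=\{(Y,gP_{s,r}):g\i\cdot Y\in P_{s,r}\cdot X\}$ is the single $G_r$-orbit of $(X,eP_{s,r})$, with stabilizer $\mathrm{Stab}_{G_r}(X)\cap P_{s,r}=\mathrm{Stab}_{G_r}(X)$ by (b); and $\pi$ is $G_r$-equivariant with image $G_r\cdot(P_{s,r}\cdot X)=G_r\cdot X$ and every fibre $g_0\,\mathrm{Stab}_{G_r}(X)\,P_{s,r}/P_{s,r}=\{g_0P_{s,r}\}$ a single point, again by (b). Hence $\pi$ is an isomorphism $V\cong G_r\cdot X$, so $\pi_!\d_V=\d_{G_r\cdot X}$ and $\pInd_{\bfp^*_{-s,-r}}^{\bfg^*_{-r}}(\d_{L_r\cdot X})\cong\d_{G_r\cdot X}[2\dim N_{s,r}]$, as claimed.
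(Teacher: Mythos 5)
Your proof is correct and follows essentially the same route as the paper: the paper likewise rewrites $\pInd_{\bfp^*_{-s,-r}}^{\bfg^*_{-r}}(\d_{L_r\cdot X})$ through the correspondence $\bfl^*_{-r}\xla{\eta}\wh{\bfg}^*_{-r}\to\wt{\bfg}^*_{-r}\xra{\pi}\bfg^*_{-r}$ (as in \Cref{otherconstruct}) and then invokes genericity to collapse the fibres over $G_r\cdot X$, which is precisely your facts (a) and (b); if anything, your treatment of (a) (triviality of $\mathrm{Stab}_{N_{s,r}}(X)$, closedness of the unipotent orbit, and the dimension count $\dim N_{s,r}=\dim\Ker j'$) is more detailed than the paper's one-line appeal to genericity. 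The only caveat is your parenthetical claim that (b) uses ``only $(\mathfrak{ge1})$'': excluding stabilizer elements of $X$ lying over $w\in W_{G_r}\setminus W_{L_r}$ (e.g.\ lifts in $N_{G_r}(T_r)$) in general requires $(\mathfrak{ge2})$ as well, but this is harmless here since the hypothesis is full $(L,G)$-genericity of $X|_{\fkl}$.
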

\begin{proof}
    Let \begin{align*} \wt{\bfg}^*_{-r} &= \{(x, h P_{s, r}) \in \bfg^*_{-r} \times G_r/P_{s,r}; h\i \cdot x \in \bfp^*_{-s, -r}\} \\ \wh{\bfg}^*_{-r} &= \{(x, h) \in \bfg^*_{-r} \times G_r; h\i \cdot x \in \bfp^*_{-s, -r}\}. \end{align*} Then we have natural maps \begin{align*}
    &\eta: \wh \bfg^*_{-r} \to \bfl^*_{-r}, \quad & (x, h ) &\mapsto p(h\i \cdot x); \\ 
    &\alpha: \wh \bfg^*_{-r} \to \wt\bfg^*_{-r}, \quad & (x, h) &\mapsto (x,hP_{s,r});\\
    &\pi: \wt\bfg^*_{-r} \to \bfg^*_{-r}, \quad & (x, h P_{s, r}) &\mapsto x.
\end{align*} As in \Cref{otherconstruct}, by forgetting the equivariant structure we have \[\pInd_{\bfp^*_{-s, -r}}^{\bfg^*_{-r}}(\d_{L_r \cdot X}) \cong \pi_! \wt{\eta^* \d_{L_r \cdot X}} [2\dim N_{s, r}].\] Thus it suffices to show that $\eta\i(Z) \cong P_{s, r}$ for each $Z \in L_r \cdot X$. This follows from the assumption that $Z|_\fkl$ is $(L, G)$-generic.
\end{proof}

\subsection{}

Let $(\bfg_r)_\nilp$ and $(G_r)_\unip$ be the sets of nilpotent elements and unipotent elements in $\bfg_r$ and $G_r$ respectively. Assume that $p$ is sufficiently large. By \cite{DR09} and \cite{BC24}, there is an $F$-equivariant and $G_r$-equivariant bijection \[\log: (G_r)_\unip \overset {1:1} \longrightarrow (\bfg_r)_\nilp.\] The inverse of $\log$ is denoted by $\exp: (\bfg_r)_\nilp \to (G_r)_\unip$.

Let $T$, $\phi$, $r$, $(G^i, \phi_i, r_i)_{-1 \le i \le d}$, $B = U T$ and $\CI_r = \CI_{\phi, U, r}$ be as in \Cref{sec:I-induction}. Let $\mathbf  t$, $\mathbf  g^i$ and $\mathbf  z^i$ be the Lie algebras of $T$, $G^i$ and $Z(G^i)$ respectively. Their dual spaces are denoted by $\mathbf  t^*$, $(\mathbf  g^i)^*$ and $(\mathbf  z^i)^*$ respectively.

Assume $q$ is sufficiently large. Then there exists a regular element $X \in (\bft_{-r}^*)^F$ (that is, the centralizer of $X$ in $G_r$ is $T_r$) such that \[\psi(\<X, Y\>) = \phi(\exp(Y)), \quad \forall\ Y \in \bft_{0+:r}^F.\]  Choose $X_i \in ((\mathbf  z^i)^*_{-r_i})^F$ such that \[\psi(\<X_i, Y\>) = \phi_i(\exp(Y)), \quad \forall\ Y \in \mathbf t_{0+:r}^F,\ 0 \le i \le d.\] Then $X = X_{-1} + \sum_{i=0}^d X_i$ for some $X_{-1} \in (\bft^*_0)^F$ which is regular for $G^0$.

Let $\delta_{X_i}$ be the constant sheaf over $\{X_i\}$. Let $\fkL_i = T_\psi^{(\bfg^i)^*_{-r_i}}(\delta_{X_i})$, which is a rank one local system on $\bfg^i_{r_i}$. Put $\fkL = \bigotimes_{t=-1}^d q_{-1,t}^*\fkL_t$, where $q_{-1,t}: \bft_r \to \bfg^i_{r_i}$ is the natural projection. 

For $\CF \in D(G_r^i)$ we will write $\exp^*\CF = \exp^*(\CF|_{(\bfg^i_r)_\unip}) \in D((\bfg^i_r)_\nilp)$ for simplicity. Then we have \[(\exp^*\CL_\phi) |_{(\bft_r)_\nilp} \cong \fkL|_{(\bft_r)_\nilp}.\] Here $\CL_\phi$ is the rank one multiplicative local system on $T_r$ associated to the character $\phi$.

\begin{theorem}\label{Sheafsprhy}
    There is a natural isomorphism \[\exp^*\pInd_{\CI_r}^{G_r}(\CL_\phi) \cong T_{\psi}^{\mathbf  g^*_{-r}}(\delta_{G_r \cdot X})|_{(\bfg_r)_\nilp}[2 M_\phi](\frac{1}{2}M_\phi),\] where $M_\phi = \sum_{i=0}^d (\dim G^i_{r_{i-1}} - \dim G^{i-1}_{r_{i-1}})$.
\end{theorem}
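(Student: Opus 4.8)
The plan is to match two parallel iterated constructions — the group-side tower $\Psi_d\cdots\Psi_0$ of \S\ref{sec:I-induction} and a dual Lie-algebra tower built out of $\delta_{G_r\cdot X}$ — using the bijection $\exp$ to pass between $G_r$ and $\bfg_r$, and the Wittvector Fourier transform $T_\psi$ to pass between $\bfg_r$ and $\bfg^\ast_{-r}$. To begin, since $\CL^d$ is the empty tensor product and $\CQ^d_{\phi,r}$ is trivial (so $q_d=\id$), \Cref{multi-step} already gives $\pInd_{\CI_r}^{G_r}(\CL_\phi)\cong\Psi_d\cdots\Psi_0(\CL_{-1})$. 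Hence it suffices to compute $\exp^\ast$ of $\Psi_d\cdots\Psi_0(\CL_{-1})$.

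On the Lie-algebra side I would introduce, for $0\le i\le d$, the functor
\[
\Psi_i^{\mathrm{Lie}}\colon D\bigl(\bfg^{i-1}_{r_{i-1}}\bigr)\to D\bigl(\bfg^i_{r_i}\bigr),\qquad \CF\longmapsto \e_i^\ast\fkL_i\otimes\varepsilon_i^\ast\,\pInd^{\bfg^i_{r_{i-1}}}_{\bfp^i_{s_{i-1},r_{i-1}}}\CF,
\]
the literal analogue of $\Psi_i$ with $\CL_i$ replaced by $\fkL_i$ and the group-theoretic induction of \Cref{subsec:parabolic} replaced by the Lie-algebra induction of \S\ref{sec:Fourier-Witt-alg}. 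The first key step is to prove, by induction on $-1\le i\le d$, the dual analogue of \Cref{multi-step},
\[
T_\psi^{(\bfg^i)^\ast_{-r_i}}\bigl(\delta_{G^i_{r_i}\cdot(X_{-1}+X_0+\cdots+X_i)}\bigr)\ \cong\ \Psi_i^{\mathrm{Lie}}\cdots\Psi_0^{\mathrm{Lie}}\bigl(\delta_{X_{-1}}\bigr)[\ast](\ast)
\]
for suitable explicit shifts and twists. The inductive step uses: \Cref{ind-delta}, which turns an induced delta on a generic coadjoint orbit into a delta on the larger orbit (its genericity hypothesis holding at stage $i$ precisely because $X_{i-1}$ is $(G^{i-1},G^i)$-generic); \Cref{Lieindu}, which trades the $T_\psi$ for a Lie-algebra induction; \Cref{commutative}, which controls $T_\psi$ under the level-lowering projections $\varepsilon_i$ and supplies twists of the form $[2d](d)$; and the translation-covariance $T_\psi(M)\otimes\fkL_i\cong T_\psi\bigl((x\mapsto x+X_i)^\ast M\bigr)$, used to peel off the central summands $X_i$ of $X=X_{-1}+\sum_i X_i$. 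The bottom of the induction is essentially the classical Springer hypothesis of Kazhdan and Kazhdan--Varshavsky (\cite{Kaz77,KV06}; see also \cite{BC24}): it identifies $\pInd^{\bfg^0_0}_{\bfp^0_{0,0}}$ of the character sheaf $\delta_{X_{-1}}=\exp^\ast\CL_{-1}$ with $T_\psi$ of the constant sheaf on the regular semisimple orbit $G^0_0\cdot X_{-1}$, up to a shift.

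The second key step is the compatibility $\exp^\ast\Psi_i(\CF)\cong\Psi_i^{\mathrm{Lie}}(\exp^\ast\CF)$ after restriction to the relevant (co)nilpotent loci. This rests on three points. First, since $\exp$ is $G_r$- and $F$-equivariant and natural for Levi subgroups, it identifies the Grothendieck--Springer-type variety $\wh G_r=\{(g,h):h^{-1}gh\in P^i_{s_{i-1},r_{i-1}}\}$ over $(G_r)_\unip$ with its Lie-algebra counterpart over $(\bfg_r)_\nilp$, compatibly with the maps $\eta,\alpha,\pi$ of \Cref{otherconstruct} (one uses here that the projection $p$ to a Levi quotient sends (co)nilpotents to (co)nilpotents); hence $\exp^\ast$ intertwines $\pInd_{P^i_{s_{i-1},r_{i-1}}}$ and $\pInd_{\bfp^i_{s_{i-1},r_{i-1}}}$ on those loci, with matching shift. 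Second, $\exp^\ast$ commutes with the projections $\varepsilon_i$ and $\e_i$ by functoriality. Third, $\exp^\ast\CL_i\cong\fkL_i$ on $(\bfg^i_r)_\nilp$, both being the rank-one multiplicative local system attached to $\phi_i$; this is deduced from the displayed identity $(\exp^\ast\CL_\phi)|_{(\bft_r)_\nilp}\cong\fkL|_{(\bft_r)_\nilp}$ together with multiplicativity of $\exp$ and the faithfulness of the sheaf--function dictionary for $q$ large. Combining the two steps, applying $\exp^\ast$ iteratively to $\Psi_d\cdots\Psi_0(\CL_{-1})$ and using $\exp^\ast\CL_{-1}=\fkL_{-1}=T_\psi^{(\bft_0)^\ast}(\delta_{X_{-1}})$ at the bottom yields $T_\psi^{\bfg^\ast_{-r}}(\delta_{G_r\cdot X})|_{(\bfg_r)_\nilp}$ up to shift and twist; collecting the contributions of \Cref{otherconstruct}, \Cref{Lieindu}, \Cref{ind-delta} and \Cref{commutative} across the $d+1$ stages produces exactly $[2M_\phi](\tfrac12 M_\phi)$, with $M_\phi=\sum_{i=0}^d(\dim G^i_{r_{i-1}}-\dim G^{i-1}_{r_{i-1}})$.

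The main obstacle I expect is the Moy--Prasad level bookkeeping, together with getting the accumulated shift and Tate twist to come out right. The subtlety is that each $\Psi_i$ performs its generic induction at the \emph{lower} level $r_{i-1}$ and only afterwards inflates to level $r_i$ via $\varepsilon_i$, whereas on the dual side the orbit $G_r\cdot X$ mixes all the depths $r_{-1}<r_0<\cdots<r_d$; one must organise the dual tower so that at stage $i$ it is exactly the $(G^{i-1},G^i)$-generic datum $X_{i-1}$ that feeds \Cref{ind-delta}, check that the central translations by $X_i$ correspond to the twists by $\fkL_i$ at the correct level, and verify that the level-changing maps on the two sides match under $\exp^\ast$ and $T_\psi$ with the shifts predicted by \Cref{commutative}. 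The $\exp^\ast$-versus-parabolic-induction compatibility of the first observation above, while requiring care over the (co)nilpotent restriction, is routine along the lines of \cite{BC24} and \cite{CO25}.
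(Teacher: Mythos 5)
Your proposal follows essentially the same route as the paper: pass to the Lie algebra along $\exp$ on the (co)nilpotent locus using $\exp^*\CL_\phi\cong\fkL$ there, then identify the iterated Lie-algebra tower applied to $\fkL_{-1}=T_\psi(\delta_{X_{-1}})$ with $T_\psi(\delta_{G^i_{r_i}\cdot X_{\le i}})$ by induction on $i$ via \Cref{Lieindu}, \Cref{ind-delta}, \Cref{commutative} and translation of the orbit by the central terms $X_i$ — exactly the paper's argument, the only organizational difference being that the paper anchors the induction at $i=-1$ (where it is the definition of $\fkL_{-1}$) instead of invoking the classical depth-zero Springer hypothesis at $i=0$, and applies a Lie-algebra version of \Cref{multi-step} wholesale rather than commuting $\exp^*$ through each $\Psi_i$. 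One cosmetic caveat: the identification ``$\delta_{X_{-1}}=\exp^*\CL_{-1}$'' in your base case is a type mismatch (these sheaves live on dual spaces); what is meant is that $\fkL_{-1}=T_\psi(\delta_{X_{-1}})$ agrees with $\exp^*\CL_{-1}$ on the relevant locus.
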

\begin{proof}
    Let $\pInd_{\bfi_r}^{\bfg_r}$ be the Lie algebra version of $\pInd_{\CI_r}^{G_r}$ as in \Cref{sec:I-induction}, where $\bfi_r$ denotes the Lie algebra of $\CI_r$. As $(\exp^*\CL_\phi) |_{(\bft_r)_\nilp} \cong \fkL|_{(\bft_r)_\nilp}$, by the proper base change theorem and Lie algebra version of \Cref{multi-step} we have \[\exp^*\pInd_{\CI_r}^{G_r}(\CL_\phi) \cong \pInd_{\bfi_r}^{\bfg_r}(\fkL) |_{(\bfg_r)_\nilp} \cong \Psi_d \cdots \Psi_1 \Psi_0(\fkL_{-1}) |_{(\bfg_r)_\nilp},\] where $\Psi_i: D(\bfg^{i-1}_{r_{i-1}}) \to D(\bfg^i_{r_i})$ is given by $\fkF \mapsto \fkL_i \otimes \varepsilon_i^* \pInd_{\bfp^i_{s_{i-1}, r_{i-1}}}^{\bfg^i_{r_{i-1}}} \fkF$ with $\varepsilon_i: \bfg^i_{r_i} \to \bfg^i_{r_{i-1}}$ the natural projection. Set $\fkF_i = \Psi_i \cdots \Psi_1 \Psi_0(\fkL_{-1}) $. It suffices to show \[\fkF_i \cong T_\psi^{(\bfg^i)^*_{-r_i}}(\delta_{G^i_{r_i} \cdot X_{\le i}})[2n_i](n_i/2).\] Here $X_{\le i} = \sum_{l=-1}^i X_l$ and $n_i = \sum_{l=0}^i (\dim G^i_{r_{i-1}} - \dim G^{i-1}_{r_{i-1}}) = 2\sum_{l=0}^i \dim N^l_{s_{l-1}, r_{l-1}}$.
    
    We argue by induction on $i$. If $i = -1$, the statement follows by definition that $\fkL_{-1} = T_\psi^{\bft^*_{-r}}(\delta_{X_{-1}})$. We assume the statement holds for $i \ge -1$. Then
    \begin{align*}
        \fkF_{i+1} &\cong \fkL_{i+1}  \otimes \varepsilon_{i+1}^* \pInd_{\bfp^{i+1}_{s_i, r_i}}^{\bfg^{i+1}_{r_i}} \fkF_i  \\ &\cong \fkL_{i+1}  \otimes \varepsilon_{i+1}^* \pInd_{\bfp^{i+1}_{s_i, r_i}}^{\bfg^{i+1}_{r_i}} T_\psi^{(\bfg^i)^*_{-r_i}} \d_{G^i_{r_i} \cdot X_{\le i}}) [2n_i](n_i/2) \\ &\cong \fkL_{i+1} \otimes \varepsilon_{i+1}^* T_\psi^{(\bfg^{i+1})^*_{-r_i}} \pInd_{(\bfp^{i+1})^*_{-s_i, -r_i}}^{(\bfg^{i+1})^*_{-r_i}} (\d_{G^i_{r_i} \cdot X_{\le i}}) [n_{i+1} + n_i](n_{i+1}/2) \\  &\cong \fkL_{i+1} \otimes \varepsilon_{i+1}^* T_\psi^{(\bfg^{i+1})^*_{-r_i}} (\d_{G^{i+1}_{r_i} \cdot X_{\le i}}) [2n_{i+1}](n_{i+1}/2) \\ &\cong \fkL_{i+1} \otimes  T_\psi^{(\bfg^{i+1})^*_{-r_{i+1}}} (\d_{G^{i+1}_{r_i} \cdot X_{\le i}}) [2n_{i+1}](n_{i+1}/2) \\ &\cong T_\psi^{(\mathbf  g^{i+1})^*_{-r_{i+1}}}(\d_{X_{i+1}}) \otimes  T_\psi^{(\bfg^{i+1})^*_{-r_{i+1}}} (\d_{G^{i+1}_{r_i} \cdot X_{\le i}}) [2n_{i+1}](n_{i+1}/2) \\  &\cong T_\psi^{(\bfg^{i+1})^*_{-r_{i+1}}} (\d_{G^{i+1}_r \cdot X_{\le i+1}}) [2n_{i+1}](n_{i+1}/2),    
    \end{align*}
where the second is by  induction hypothesis, the third follows from \Cref{Lieindu}, the fourth follows from \Cref{ind-delta} and the fifth follows from \Cref{commutative}. The induction procedure is finished.
\end{proof}

Let $C((\bfg^*_{-r})^F)$ and $C(\bfg_{-r}^F$ be the spaces of functions on $(\bfg^*_{-r})^F$ and $\bfg_r^F$ respectively. Let ${\rm T}_\psi^{\bfg_{-r}^*}: C((\bfg^*_{-r})^F) \to C(\bfg_r^F)$ denotes the classical Fourier transformation of functions given by \[{\rm T}_\psi^{\bfg_{-r}^*}(f)(Y) = \sum_{Z \in (\bfg^*_{-r})^F} f(X) \psi(\<Z, Y\>).\] Note that for any Weil sheaf $\CF$ in $D_c^b(\bfg_{-r}^*)$ we have $\chi_{T_\psi^{\bfg_{-r}^*}\CF} = {\rm T}_\psi^{\bfg_{-r}^*}(\chi_\CF)$.
\begin{corollary}\label{cor:Springer}
    Assume $p$ and $q$ are sufficiently large and $T$ is elliptic, for any $u\in (\bfg_r)^F_{\nilp}$ we have
    \[q^{\frac{1}{2}M_\phi} \cdot R^{G_r}_{T_r}(\phi)(\exp(u)) = T_{\psi}^{\bfg^*_{-r}}(1_{G_r^F \cdot X})(u).\] Here $1_{G_r^F \cdot X}$ is the characteristic function for $G_r^F \cdot X$.
\end{corollary}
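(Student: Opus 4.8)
The plan is to deduce \Cref{cor:Springer} formally from the two substantive results already in hand, namely \Cref{Sheafsprhy} (the sheaf-theoretic form of the positive level Springer hypothesis) and \Cref{mainthm} (the comparison of the Frobenius trace of the character sheaf with the deep level Deligne--Lusztig character), by passing from sheaves to functions via the sheaf--function dictionary and keeping careful track of the cohomological shifts and Tate twists.

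First I would convert \Cref{Sheafsprhy} into a statement about functions. Applying the trace-of-Frobenius functor $\chi$ to the isomorphism $\exp^*\pInd_{\CI_r}^{G_r}(\CL_\phi) \cong T_{\psi}^{\bfg^*_{-r}}(\d_{G_r\cdot X})|_{(\bfg_r)_\nilp}[2M_\phi](\tfrac12 M_\phi)$ and using that $\exp$ is $F$-equivariant, so that $\chi_{\exp^*\CG}(u) = \chi_\CG(\exp u)$ for $u\in(\bfg_r)^F_\nilp$, the left-hand side becomes $\chi_{\pInd_{\CI_r}^{G_r}(\CL_\phi)}(\exp u)$. On the right-hand side the shift $[2M_\phi]$ contributes the sign $(-1)^{2M_\phi}=1$, the Tate twist $(\tfrac12 M_\phi)$ multiplies the trace function by $q^{-\frac12 M_\phi}$ (note $M_\phi$ is even, being $2\sum_{i=0}^d \dim N^i_{s_{i-1},r_{i-1}}$ by the computation in the proof of \Cref{Sheafsprhy}, so this is an honest power of $q$), and the dictionary $\chi_{T_\psi^{\bfg^*_{-r}}\CF} = {\rm T}_\psi^{\bfg^*_{-r}}(\chi_\CF)$ with $\CF = \d_{G_r\cdot X}$ rewrites it as $q^{-\frac12 M_\phi}\,{\rm T}_\psi^{\bfg^*_{-r}}(\chi_{\d_{G_r\cdot X}})(u)$. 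Since $X$ is $F$-fixed with centralizer $T_r$ in $G_r$, which is connected, Lang's theorem gives $(G_r\cdot X)^F = G_r^F\cdot X$, so $\chi_{\d_{G_r\cdot X}} = 1_{G_r^F\cdot X}$. This yields the intermediate identity $\chi_{\pInd_{\CI_r}^{G_r}(\CL_\phi)}(\exp u) = q^{-\frac12 M_\phi}\,{\rm T}_\psi^{\bfg^*_{-r}}(1_{G_r^F\cdot X})(u)$.

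Next I would rewrite the left-hand side using \Cref{mainthm}. Because $\pInd_{\CI_r}^{G_r}$ is triangulated, it commutes with the shift $[N_\phi]$, so \Cref{mainthm} gives $(-1)^{N_\phi}\chi_{\pInd_{\CI_r}^{G_r}(\CL_\phi)} = (-1)^{\dim G_r} R^{G_r}_{T_r}(\phi)$, i.e. $\chi_{\pInd_{\CI_r}^{G_r}(\CL_\phi)} = (-1)^{\dim G_r + N_\phi} R^{G_r}_{T_r}(\phi)$. The sign is pinned down by a short telescoping computation: grouping terms, $N_\phi + M_\phi = \dim T_0 + \sum_{i=0}^d(\dim G^i_{r_i} - \dim G^{i-1}_{r_{i-1}})$, and after reindexing the two sums this collapses to $\dim T_0 + \dim G^d_{r_d} - \dim G^{-1}_{r_{-1}} = \dim G_r$, using $G^{-1}=T$ and $r_{-1}=0$ so that $\dim G^{-1}_{r_{-1}} = \dim T_0$. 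Since $M_\phi$ is even, $(-1)^{\dim G_r + N_\phi} = (-1)^{2\dim G_r - M_\phi} = 1$, hence $\chi_{\pInd_{\CI_r}^{G_r}(\CL_\phi)}(\exp u) = R^{G_r}_{T_r}(\phi)(\exp u)$. Combining with the previous step and multiplying by $q^{\frac12 M_\phi}$ gives $q^{\frac12 M_\phi}\,R^{G_r}_{T_r}(\phi)(\exp u) = {\rm T}_\psi^{\bfg^*_{-r}}(1_{G_r^F\cdot X})(u)$, which is the assertion, with $X$ the regular element of $(\bft_{-r}^*)^F$ fixed in \S\ref{sec:Springer}.

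Since \Cref{Sheafsprhy} and \Cref{mainthm} carry the real content, the corollary itself is essentially bookkeeping; the two points needing genuine care are (a) the Tate twist in \Cref{Sheafsprhy}, which is exactly what produces the factor $q^{\frac12 M_\phi}$, and (b) the parity identity $N_\phi + M_\phi = \dim G_r$ together with $M_\phi\in 2\BZ$, which is what makes the sign disappear so that the final formula is sign-free. One should also make sure the hypotheses needed to invoke \Cref{mainthm} and \Cref{Sheafsprhy} are in force here — $T$ elliptic, $\phi$ regular, and $p,q$ sufficiently large (for the Howe factorization, for the existence of $\exp$, and for the existence of the regular elements $X$, $X_{-1}$ and of a regular $\phi$) — these being exactly the running assumptions around \Cref{cor:Springer}.
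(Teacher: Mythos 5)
Your bookkeeping is essentially correct as far as it goes: the Tate twist $(\tfrac12 M_\phi)$ does produce the factor $q^{-\frac12 M_\phi}$, the identity $N_\phi+M_\phi=\dim G_r$ together with $M_\phi\in 2\BZ$ does kill all signs, and Lang's theorem does give $(G_r\cdot X)^F=G_r^F\cdot X$, so $\chi_{\delta_{G_r\cdot X}}=1_{G_r^F\cdot X}$. But there is a genuine gap: you invoke \Cref{mainthm}, which requires $\phi$ to be \emph{regular}, and you assert that regularity is among ``the running assumptions around \Cref{cor:Springer}''. It is not: the corollary is stated for an arbitrary character $\phi$ of depth $r$ with a Howe factorization (this is precisely the point emphasized in the introduction --- extending the positive-depth Springer hypothesis beyond the generic/regular case), and \Cref{sec:Springer} imposes no regularity hypothesis on $\phi$. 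For non-regular $\phi$ your argument breaks exactly at the step $\chi_{\pInd_{\CI_r}^{G_r}(\CL_\phi)}=\pm R^{G_r}_{T_r}(\phi)$: \Cref{mainthm} is unavailable, and indeed $\pInd_{\CI_r}^{G_r}(\CL_\phi[N_\phi])$ need not be simple perverse, so the scalar-comparison mechanism behind \Cref{mainthm} fails. (By contrast, \Cref{Sheafsprhy} does \emph{not} need $\phi$ regular; it only needs $q\gg0$ to produce the regular element $X$, since its proof uses only $(\exp^*\CL_\phi)|_{(\bft_r)_\nilp}\cong\fkL|_{(\bft_r)_\nilp}$.)

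The missing idea is the reduction to the regular case that the paper carries out. Since $q\gg0$, replace the depth-zero factor $\phi_{-1}$ of the Howe factorization by a depth-zero character $\phi'_{-1}$ that is regular for $G^0$; the resulting character $\phi_{\reg}=\phi'_{-1}\cdot\prod_{i\ge 0}\phi_i|_{T_r^F}$ is regular and its local system satisfies $\CL_{\reg}|_{(T_r)_{\unip}}\cong\CL_\phi|_{(T_r)_{\unip}}$. Because \Cref{Sheafsprhy} only sees this restriction, one gets
\[
\exp^*\pInd_{\CI_r}^{G_r}(\CL_{\reg})\;\cong\;\exp^*\pInd_{\CI_r}^{G_r}(\CL_\phi)\;\cong\;T_\psi^{\bfg^*_{-r}}(\delta_{G_r\cdot X})|_{(\bfg_r)_\nilp}[2M_\phi](\tfrac12 M_\phi),
\]
with the \emph{same} $X$. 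On the character side, the restriction of $R^{G_r}_{T_r}(\phi)$ to unipotent elements is independent of the depth-zero twist, by \cite[Theorem 1.6]{Nie_24} combined with \cite[Theorem 4.2]{DeligneL_76}, so $R^{G_r}_{T_r}(\phi)(\exp u)=R^{G_r}_{T_r}(\phi_{\reg})(\exp u)$ for all $u\in(\bfg_r)^F_\nilp$. With these two inputs, your trace computation applied verbatim to $\phi_{\reg}$ (for which \Cref{mainthm} is legitimately available) yields the corollary for arbitrary $\phi$; without them, your proof only establishes the regular case.
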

\begin{proof}
As $q \gg 0$, we can replace $\phi_{-1}$ with another depth zreo character $\phi'_{-1}$ which is regular for $G^0$. This yields a regular character 
\[
\phi_{\reg} := \phi'_{-1} \cdot \prod_{0 \le i \le d} \phi_i|_{T^F_r}
\]
which gives a local system $\CL_{\rm reg}$ on $T_r$ such that $\mathcal{L}_{\mathrm{reg}}|_{(T_r)_{\unip}} \cong \mathcal{L}_{\phi}|_{(T_r)_{\mathrm{unip}}}$. By the proper base change theorem and \Cref{Sheafsprhy} we have
\[\exp^*\pInd_{\CI_r}^{G_r}(\CL_{{\reg}}) \cong \exp^*\pInd_{\CI_r}^{G_r}(\CL_\phi) \cong T_{\psi}^{\bfg^*_{-r}}(\d_{G_r \cdot X})|_{(\bfg_r)_{\nilp}}[2 M_\phi](\frac{1}{2} M_\phi).\] By taking Frobenius trace we have \begin{align*} 
R^{G_r}_{T_r}(\phi)(\exp(u)) &= R^{G_r}_{T_r}(\phi_{{\reg}})(\exp(u)) \\
&= \chi_{\pInd^{G_r}_{\CI_r}(\CL_{\reg})}(\exp(u)) = q^{-\frac{1}{2} M_\phi} \cdot {\rm T}_{\psi}^{\bfg^*_{-r}}(1_{G_r^F \cdot X})(u),\end{align*} where the first equality is due to \cite[Theorem 1.6]{Nie_24} and \cite[Theorem 4.2]{DeligneL_76} (see also \cite[Theorem 6.5]{CO25}), and the second one is due to \Cref{mainthm} and that $N_\phi = 2\dim \CI_r - \dim G_r$. The proof is finished. 
\end{proof}

\subsection{Relation to Kirillov's orbit method}

As a further application we observe that Springer's hypothesis (\Cref{cor:Springer}) implies a relation between (deep level) Deligne--Lusztig induction and Kirillov's orbit method, as conjectured in \cite[Conjecture 8.4]{IvanovNie_24}. In the relevant setting, the orbit method parametrizes the set $\widehat\Gamma$ of irreducible $\Gamma$-representations, for a sufficiently nice pro-$p$-group $\Gamma$, in terms of coadjoint orbits in the dual Lie algebra:

\begin{theorem}[Theorem 2.6 in \cite{BoyarchenkoS_08}]\label{thm:orbit_method}
Assume $p > 2$ and $\Gamma$ is either a uniform pro-$p$-group or a $p$-group of nilpotence class $<p$. Then there exists a bijection $\Omega \leftrightarrow \rho_\Omega$  between $\Gamma$-orbits $\Omega \subseteq ({\rm Lie}\ \Gamma)^*$ and $\widehat \Gamma$, characterized by
\[ 
\tr(g,\rho_\Omega) = \frac{1}{\#\Omega^{1/2}} \cdot \sum_{f\in \Omega} f(\log(g)).
\]
\end{theorem}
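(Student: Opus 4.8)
The plan is to carry out the Kirillov orbit method in the finite and pro-$p$ setting, following Kirillov's classical construction suitably adapted (as by Howe and Kazhdan).

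First the Lie-theoretic set-up. In both admissible cases one has the Lazard correspondence: since $p>2$ in the uniform case, resp.\ the nilpotence class $c<p$ in the finite case, the Baker--Campbell--Hausdorff series converges $p$-adically, resp.\ is a polynomial whose coefficients have denominators prime to $p$. Hence $X*Y:=\log(\exp X\exp Y)$ is a group law on $\fkg:=\Lie\,\Gamma$ and $\exp\colon(\fkg,*)\overset{\sim}{\longrightarrow}\Gamma$ is an isomorphism with inverse $\log$, carrying conjugation to the adjoint action $\Ad(\exp X)=e^{\ad\,X}$. Dualizing gives the coadjoint action of $\Gamma$ on $\fkg^*$. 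For $f\in\fkg^*$ set $B_f(X,Y)=f([X,Y])$; its radical is $\fkr_f=\Lie\,\Gamma_f$, so the coadjoint orbit $\Omega$ of $f$ has $\#\Omega=\#(\fkg/\fkr_f)$, an even power of $p$ because $B_f$ descends to a nondegenerate alternating form on $\fkg/\fkr_f$; thus $\#\Omega^{1/2}$ makes sense. (In the uniform pro-$p$ case $\fkg$ is a $\BZ_p$-Lie lattice, $\fkg^*$ its discrete dual, and each assertion below is applied to the finite quotients and assembled in the limit.)

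Next, attach a representation to each orbit. Using a flag of ideals of the nilpotent Lie ring $\fkg$ one builds a polarization $\fkp=\fkp_f\supseteq\fkr_f$ at $f$, i.e.\ a maximal $B_f$-isotropic subalgebra; it automatically has index $\#(\fkg/\fkp)=\#\Omega^{1/2}$ in $\fkg$ (a Vergne polarization). Put $P=\exp\fkp$ and define $\chi_f\colon P\to\ov\BQ_\ell^\times$ by $\chi_f(\exp X)=\psi(f(X))$ for a fixed nontrivial additive character $\psi$; since $f$ kills $[\fkp,\fkp]$, BCH shows $\chi_f$ is a homomorphism. Set $\rho_f=\mathrm{Ind}_P^{\Gamma}\chi_f$, of dimension $[\Gamma:P]=\#\Omega^{1/2}$. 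Then one checks: (i) $\rho_f$ is irreducible by Mackey's criterion, the condition ${}^g\chi_f\neq\chi_f$ on $P\cap{}^gP$ for $g\notin P$ being precisely the maximal isotropy of $\fkp$ transported through $\exp$; (ii) $\rho_f$ is independent of the polarization (compare $\mathrm{Ind}$ from $\fkp$, $\fkp'$ and $\fkp\cap\fkp'$, or use Mackey), and replacing $f$ by $\Ad^*(g)f$ merely conjugates $(P,\chi_f)$, so $\rho_f$ depends only on $\Omega$; write $\rho_\Omega:=\rho_f$. Injectivity of $\Omega\mapsto\rho_\Omega$ follows by computing $\dim\Hom_\Gamma(\rho_\Omega,\rho_{\Omega'})$ via Mackey/Frobenius reciprocity, and surjectivity onto $\widehat\Gamma$ then follows from $\sum_\Omega(\dim\rho_\Omega)^2=\sum_\Omega\#\Omega=\#\fkg^*=\#\Gamma$, since the $\rho_\Omega$ are pairwise non-isomorphic irreducibles.

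The remaining, and hardest, step is the character formula. By the Frobenius induced-character formula, $\tr\rho_f(g)=\tfrac{1}{\#P}\sum_{h:\,h^{-1}gh\in P}\chi_f(h^{-1}gh)$; writing $g=\exp X$, $h=\exp Y$ and using $\chi_f(h^{-1}gh)=\psi\big(f(\Ad(\exp(-Y))X)\big)$, this becomes
\[
\tr\rho_\Omega(\exp X)=\frac{1}{\#P}\sum_{\substack{Y\in\fkg\\ \Ad(\exp(-Y))X\,\in\,\fkp}}\psi\big(f(\Ad(\exp(-Y))X)\big).
\]
The task is to identify the right-hand side with $\tfrac{1}{\#\Omega^{1/2}}\sum_{\ell\in\Omega}\psi(\ell(X))$, the Fourier transform of the coadjoint orbit at $X=\log g$. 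I expect this to be the main obstacle: the left side records the adjoint orbit of $X$ meeting the polarization while the right side records the coadjoint orbit of $f$, and reconciling them requires the nondegeneracy of $B_f$ on $\fkg/\fkr_f$ together with a delicate count of the fibres of $Y\mapsto\Ad^*(\exp Y)f$. I would establish it by induction on $\dim\fkg$ using Kirillov's descent lemma, which presents a non-abelian nilpotent $\fkg$ as an extension through a Heisenberg-type subquotient: one either passes to a codimension-one ideal on which the orbit datum is already determined, or treats a one-step central extension where the identity reduces to a Gauss-sum evaluation; since induction of representations is compatible with the matching restriction of orbit data, the formula then propagates up the central series. Alternatively one verifies it directly for one-dimensional orbits and for the Heisenberg quotient and bootstraps; either way the computation rests on the symplectic structure of $B_f$.
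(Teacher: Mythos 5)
The paper does not give a proof of this statement: it is quoted verbatim as an external input, namely Theorem~2.6 of \cite{BoyarchenkoS_08}, and is used as a black box in the proof of \Cref{cor:orbit}. So there is no ``paper's own proof'' to compare against; what can be assessed is whether your sketch is a plausible route to the cited result.

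Your outline is the standard Kirillov--Howe orbit method skeleton, and the first two stages are sound: the Lazard/BCH correspondence under either hypothesis ($p>2$ uniform, or class $<p$ finite), the construction of $\rho_\Omega$ by induction from a polarization and the character $\chi_f=\psi\circ f$ on $\exp\fkp$, irreducibility and polarization-independence via Mackey, and the counting argument for bijectivity. One small normalization point: the theorem as stated in the paper writes $f(\log g)$ rather than $\psi(f(\log g))$, because in the surrounding text $(\Lie\Gamma)^*$ is the Pontryagin dual $\Hom_{\rm cont}(\Lie\Gamma,\ov\BQ_\ell^\times)$, so the additive character is already absorbed into $f$. Your $\psi\circ f$ is the same data, just factored differently.

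The genuine gap is exactly where you flag it: the character formula. You leave it as a plan (``induction on $\dim\fkg$ via Kirillov's descent lemma, reducing to a Heisenberg subquotient and a Gauss sum''), which is not yet a proof. Moreover, the descent lemma as you invoke it runs on a central series and presupposes that $\fkg$ is nilpotent; this is fine for a finite $p$-group of class $<p$ but does not apply as stated in the uniform pro-$p$ case, where $\fkg$ is a $\BZ_p$-Lie lattice that is typically not nilpotent (e.g.\ the Lie lattice of a congruence subgroup of $\GL_n(\BZ_p)$). One cannot simply pass to the finite quotients $\Gamma/\Gamma_{p^n}$ and apply the finite result either, since those quotients generally have nilpotence class $\ge p$. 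So the uniform case needs a separate argument (this is part of what \cite{BoyarchenkoS_08} actually supplies), and your proposal as written covers only the finite nilpotent half of the theorem, with the central computation still open.
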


Fix $0+\leq r\leq \infty$. Assume that $T$ is elliptic. Let $\mathbf{g}_{0+:r}$ (resp. $\mathbf{t}_{0+:r}$) denote the Lie algebra of $G_{0+:r}$ (resp. $T_{0+:\infty}$). For a locally profinite group $\Gamma$, write $\Gamma^\ast = \Hom_{\rm cont}(\Gamma,\ov\BQ_\ell^\times)$. Recall from \cite[\S8]{IvanovNie_24}, that (a minor variation of) Deligne--Lusztig induction gives a map
\[
R_{\rm log} \colon (\mathbf{t}_{0+:r}^F)^\ast \stackrel{\log^\ast_{\mathbf{t}}}{\to} (T_{0+:r}^F)^\ast \to \widehat{G_{0+:r}^F},
\]
where the second map is given by $\chi \mapsto (-1)^{s_\chi} H_c^{s_\chi}(Y,\ov\BQ_\ell)[\chi]$, where $Y_r$ is the preimage of $(\ov U \cap FU)_{0+:r}$ under the Lang map $g \mapsto g^{-1}F(g) \colon G_{0+:r} \to G_{0+:r}$. (In \cite{IvanovNie_24}, this map is only defined for $T$ Coxeter, but by \cite[Theorem 1.6]{IvanovNie_25} this extends to all elliptic tori $T$.) On the other hand, whenever the assumptions of Theorem \ref{thm:orbit_method}  apply to $G_{0+:r}^F$, the orbit method induces the map
\[
\rho \circ \delta^\ast \colon (\mathbf{t}_{0+:r}^F)^\ast \stackrel{\delta^\ast}{\hookrightarrow} (\mathbf{g}_{0+:r}^F)^\ast \stackrel{\rho}{\twoheadrightarrow} \widehat{G_{0+:r}^F},
\]
where the first map is the dual of the natural projection $\delta \colon \mathbf{g}_{0+:r}^F \twoheadrightarrow \mathbf{t}_{0+:r}^F$, the second map is the natural projection, and the third map is given by \Cref{thm:orbit_method}. Now we can prove \cite[Conjecture 8.4]{IvanovNie_24}:

\begin{corollary}\label{cor:orbit}
Suppose $p>2$, $q$ is sufficiently large and $G_{0+:r}^F$ is either uniform or has nilpotence class $<p$. Then $R_{\rm log} = \rho \circ \delta^\ast$.
\end{corollary}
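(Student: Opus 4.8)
The plan is to reduce the identity $R_{\rm log} = \rho\circ\delta^\ast$ to an equality of irreducible characters, and then to read that equality off by combining \Cref{cor:Springer} with \Cref{thm:orbit_method}. Write $\Gamma = G_{0+:r}^F$, so that under our hypotheses \Cref{thm:orbit_method} applies to $\Gamma$ and $\exp\colon (\mathbf{g}_{0+:r})^F\to\Gamma$ is a bijection with inverse $\log$. Fix $\chi\in(\mathbf{t}_{0+:r}^F)^\ast$. Both $R_{\rm log}(\chi)$ and $\rho(\delta^\ast\chi)$ are irreducible $\Gamma$-representations, the former by the irreducibility statements of \cite{IvanovNie_24,IvanovNie_25} which underlie the definition of $R_{\rm log}$ (valid for $q$ large). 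Hence it suffices to prove
\[
\tr(\exp(u),R_{\rm log}(\chi)) = \tr(\exp(u),\rho(\delta^\ast\chi)) \qquad\text{for all } u\in(\mathbf{g}_{0+:r})^F.
\]

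For the right-hand side I would apply \Cref{thm:orbit_method} verbatim: with $\Omega\subseteq(\mathbf{g}_{0+:r}^F)^\ast$ the $\Gamma$-orbit of $\delta^\ast\chi$,
\[
\tr(\exp(u),\rho(\delta^\ast\chi)) = (\#\Omega)^{-1/2}\sum_{f\in\Omega} f(\log(\exp(u))) = (\#\Omega)^{-1/2}\sum_{f\in\Omega} f(u).
\]
Using the additive character $\psi$ to identify the Pontryagin dual $(\mathbf{g}_{0+:r}^F)^\ast$ with the quotient $\mathbf{g}^\ast_{\bx,-r}/\mathbf{g}^\ast_{\bx,0}$ of $(\mathbf{g}_r^F)^\ast = \mathbf{g}^\ast_{-r}$ (the kernel being the dual $\mathbf{g}^\ast_{0:0+}$ of the reductive quotient of the parahoric), the element $\delta^\ast\chi$ becomes the image $\bar X$ of the regular element $X\in(\mathbf{t}^\ast_{-r})^F$ attached to an extension of $\chi$ as in \S\ref{sec:Springer}, and $\Omega$ becomes $\Gamma\cdot\bar X$; thus the right-hand side equals $(\#\Omega)^{-1/2}\sum_{Z\in\Gamma\cdot\bar X}\psi(\langle Z,u\rangle)$.

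For the left-hand side, since $q\gg0$ I would pick a character $\phi$ of $T^F_r$ admitting a Howe factorization with $\phi_{-1}$ regular for $G^0$ and with $\phi|_{T_{0+:r}^F}\circ\exp = \chi$; replacing $\phi_{-1}$ by a regular one does not change the restriction to topologically unipotent elements, exactly as in the proof of \Cref{cor:Springer}. By the comparison of the $\log$-variant of deep-level Deligne--Lusztig induction with the usual one established in \cite{IvanovNie_24,IvanovNie_25} (for arbitrary elliptic tori), $R_{\rm log}(\chi)$ is the restriction to $\Gamma$ of $\pm R^{G_r}_{T_r}(\phi)$ for an explicit sign; hence \Cref{cor:Springer} gives
\[
\tr(\exp(u),R_{\rm log}(\chi)) = \pm\, q^{-\frac12 M_\phi}\, {\rm T}_\psi^{\mathbf{g}^\ast_{-r}}(1_{G_r^F\cdot X})(u) = \pm\, q^{-\frac12 M_\phi}\sum_{Z\in G_r^F\cdot X}\psi(\langle Z,u\rangle).
\]
As $u$ runs only over $(\mathbf{g}_{0+:r})^F$, this sum depends only on the image of the coadjoint orbit $G_r^F\cdot X$ in $(\mathbf{g}_{0+:r}^F)^\ast$, so it remains to check that this image coincides with $\Gamma\cdot\bar X = \Omega$, with fibres of a constant cardinality $m$, and that $\pm\, q^{-\frac12 M_\phi}\,m = (\#\Omega)^{-1/2}$.

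This last matching is where I expect the real work to lie. It reduces to a count of orbit cardinalities --- using that $X$ is regular elliptic, so that $\mathrm{Stab}_{G_r}(X) = T_r$, that the depth-zero part $X_{-1}$ of $X$ lies in the kernel $\mathbf{g}^\ast_{0:0+}$ and hence does not affect $\bar X$, and that for $q\gg0$ the relevant homogeneous spaces have the expected number of $\BF_q$-points --- together with the identification of $M_\phi$ with the resulting orbit-dimension data, and a parity bookkeeping reconciling the sign $(-1)^{s_\chi}$ built into $R_{\rm log}$ with the sign $(-1)^{\dim G_r}$ coming through \Cref{mainthm}. The case $r=\infty$ then follows by passing to the inverse limit over finite $r$. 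All of this is elementary once \Cref{cor:Springer} and \Cref{thm:orbit_method} are in hand; the only genuine subtlety is keeping the two truncation conventions $G_r = G_{0:r}$ and $\Gamma = G_{0+:r}$ (and their dual and Lie-algebra analogues) consistently aligned.
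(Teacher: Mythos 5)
Your overall route is the same as the paper's: reduce to a character identity on $\Gamma=G_{0+:r}^F$, identify $R_{\rm log}(\chi)$ with the restriction of a deep-level Deligne--Lusztig character $R^{G_r}_{T_r}(\tilde\phi)$ for a lift $\tilde\phi$ with regular depth-zero part (via the degreewise isomorphisms $H^i_c(X_r)[\tilde\phi]|_\Gamma\cong H^i_c(Y_r)[\phi]$ from the cited results of Ivanov--Nie and the single-degree concentration), feed this into \Cref{cor:Springer}, and compare with \Cref{thm:orbit_method}. Up to that point your proposal is fine.

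The gap is the endgame. You reduce everything to the normalization statement ``the image of $G_r^F\cdot X$ in $(\mathbf{g}_{0+:r}^F)^\ast$ is exactly the single $\Gamma$-orbit $\Omega$ of $\delta^\ast\chi$, with constant fibre cardinality $m$, and $\pm\,q^{-\frac12 M_\phi}m=(\#\Omega)^{-1/2}$,'' call this ``the real work,'' assert it is elementary, and do not prove it. It is not elementary in the naive sense: $\#(G_r^F\cdot X)=\#G_r^F/\#T_r^F$ is not a power of $q$ (the depth-zero reductive quotient contributes factors like $q^a-1$), the $G_r^F$-orbit of $\delta^\ast\chi$ could a priori break into several $\Gamma$-orbits since $\Gamma\subsetneq G_r^F$, and matching $m$, $\#\Omega$, $M_\phi$ and the sign amounts to re-deriving dimension formulas for these representations --- precisely the content you have not supplied. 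Moreover, none of this is needed: you already observed that both $R_{\rm log}(\chi)$ and $\rho(\delta^\ast\chi)$ are irreducible $\Gamma$-representations, and your computation shows their characters are proportional by a nonzero constant on all of $\Gamma$ (every element of $\Gamma$ is $\exp(u)$). Proportional irreducible characters are equal (the constant is positive by evaluating at the identity and then equals $1$ by the norm-one property), which simultaneously fixes your undetermined sign, forces the image to be a single orbit, and yields the constant identity as a corollary rather than an input. This one-line observation is exactly how the paper closes the argument after invoking \Cref{cor:Springer} and \Cref{thm:orbit_method}; as written, your proof is missing its crucial step, though it is repairable by the irreducibility argument you had already set up.
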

\begin{proof}
Let $X \in (\mathbf{t}_{0+:r}^F)^\ast$ with image $\phi = \log_{\mathbf{t}}^\ast(a) \colon T_{0+:r}^F \to \ov\BQ_\ell^\times$. 
As $q\gg 0$, there exists a lift $\tilde\phi \colon T_r^F \to \ov\BQ_\ell^\times$ of $\phi$, such that $\phi_{-1}$ is regular. As $G_{0+:r}^F$-representations we have for any $i\geq 0$,
\[
H_c^i(X_r)[\tilde\phi]|_{G_{0+:r}^F} \simeq H_c^i(X_r \cap T_r G_{0+:r})[\tilde\phi]|_{G_{0+:r}^F} \simeq H_c^i(Y_r)[\phi],
\]
where the first isomorphism follows from \cite[Corollary 1.5]{IvanovNie_25} and the second from \cite[Lemma 4.3]{IvanovNie_24}. As by \cite[Theorem 1.6]{IvanovNie_25}, the cohomology of $Y_r$ is concentrated in the single degree $s_{\phi,r}$, we deduce that for any $Y \in \mathbf{g}_{0+:r}^F$ with $g = \exp(Y) \in G_{0+:r}^F$,
\begin{align*} 
{\rm tr}(g, R_{\rm log}(X)) &= {\rm tr}(g, H^\ast_c(Y_r,\ov\BQ_\ell)[\phi]) \\ 
&= {\rm tr}(g, H^\ast_c(X_r,\ov\BQ_\ell)[\tilde\phi]) \\
&= R_{T_r}^{G_r}(\tilde\phi)(g) \\
&= q^{-\frac{1}{2}M_\phi} {\rm T}_\psi^{\bfg_{-r}^\ast}(1_{G_r^F \cdot X})(Y) \\
&= q^{-\frac{1}{2}M_\phi} \sum_{f \in G_r^F \cdot X} f(Y) \\
&= q^{-\frac{1}{2}M_\phi} \#(G_r^F \cdot X)^{\frac{1}{2}} \cdot {\rm tr}(g, \rho_{G_r^F \cdot X}),
\end{align*}
where the fourth equation is by \Cref{cor:Springer}, the fifth equation follows from the fact that taking Frobenius trace commute with the Fourier transform (cf. \cite[\S9.1.3]{CO25}), and the last equation follows from \Cref{thm:orbit_method}. Now the result follows from  $R_{\rm log}(X)$ and $\rho_{G_r^F \cdot X}$ both being irreducible $G_r^F$-modules.
\end{proof}

\section{Proof of \Cref{one-step}} \label{sec:one-step-proof}
In this section, we prove \Cref{one-step} by following the ideas of \cite{BC24}. Let notation be as in \Cref{sec:pre} and \Cref{sec:one-step}.

\begin{lemma}[{\cite[Lemma 4.7]{BC24}}] \label{restriction}
    The following statements holds:

    (1) $L_\psi$ is a multiplicative local system on $\fkl$;

    (2) Each $\CF \in D_{L_r}^\psi(L_r)$ is $(\fkl, \CL_\psi)$-equivariant. In particular, $i_\fkl^*\CF$ is complex of direct sums of $\CL_\psi$.
\end{lemma}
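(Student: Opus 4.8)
The lemma is \cite[Lemma 4.7]{BC24}; I would reproduce its proof in our notation. First I would unwind $\CL_\psi=\FT_\CL(\d_{X_\psi})$. Since $\fkl=L_{r:r}$ is a finite-dimensional $\ov\BF_q$-vector group with linear dual $\fkl^*$, the pullback $\pr_1^*\d_{X_\psi}$ on $\fkl^*\times\fkl$ is the extension by zero of the constant sheaf on $\{X_\psi\}\times\fkl$, and along this copy of $\fkl$ the canonical pairing restricts to the linear functional $\<X_\psi,-\>\colon\fkl\to\BG_a$. As $\pr_2$ identifies $\{X_\psi\}\times\fkl$ with $\fkl$, the definition of $\FT_\CL$ in \S\ref{sec:pre} yields $\CL_\psi\cong\<X_\psi,-\>^*\CL[\dim\fkl]$, i.e.\ (a shift of) the pullback of the fixed multiplicative sheaf $\CL$ on $\BG_a$ along an additive homomorphism. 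Since $\CL$ is multiplicative on $\BG_a$ and $\<X_\psi,-\>$ respects the group laws, $\<X_\psi,-\>^*\CL$ satisfies $m^*\<X_\psi,-\>^*\CL\cong\<X_\psi,-\>^*\CL\boxtimes\<X_\psi,-\>^*\CL$ for the group law $m$ of $\fkl$; this is the assertion that $\CL_\psi$ is a multiplicative local system on $\fkl$.

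\textbf{Part (2).} Recall from \S\ref{sec:one-step} that $D_{L_r}^\psi(L_r)$ is the essential image of $\CL_{\psi,r}\star_!(-)$, and that $\CL_{\psi,r}=i_{\fkl*}\CL_\psi[\dim\fkl]$ is supported on the central subgroup $\fkl\subseteq L_r$, where it agrees with $\CL_\psi$ up to shift. By (1), $\CL_{\psi,r}$ is $(\fkl,\CL_\psi)$-equivariant for the translation action of $\fkl$ on $L_r$: on the support this is the multiplicativity just established, and extension by zero off $\fkl$ preserves it because $\fkl$-translation preserves $\fkl$. Next I would use that left translation $t_v$ by $v\in\fkl$ is compatible with the convolution map $\mu\colon L_r\times L_r\to L_r$ through translation in the first factor, $\mu\circ(t_v\times\id)=t_v\circ\mu$; feeding this into $\CL_{\psi,r}\star_!\CG=\mu_!(\pr_1^*\CL_{\psi,r}\otimes\pr_2^*\CG)$ and applying the projection formula and proper base change transports the $(\fkl,\CL_\psi)$-equivariance of $\pr_1^*\CL_{\psi,r}$ to the convolution. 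Since $\fkl$ is central, this translation-equivariant structure is compatible with the $L_r$-conjugation-equivariant structure, so every $\CF\in D_{L_r}^\psi(L_r)$ is $(\fkl,\CL_\psi)$-equivariant. For the ``in particular'' part, restrict along $i_\fkl\colon\fkl\hookrightarrow L_r$: the $\fkl$-translation action on $L_r$ restricts to the simply transitive translation action of $\fkl$ on itself, so $i_\fkl^*\CF$ lies in the $(\fkl,\CL_\psi)$-twisted $\fkl$-equivariant derived category of $\fkl$; trivializing the torsor $\fkl\to\mathrm{pt}$ identifies this category with $D^b(\ov\BQ_\ell\text{-}\mathrm{vect})$ via $M\mapsto(\text{constant }M)\otimes\CL_\psi$, and since every such complex is formal, $i_\fkl^*\CF$ is non-canonically a direct sum of shifts of $\CL_\psi$.

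\textbf{Main obstacle.} The only place needing a genuine computation is the transfer of equivariance through the convolution $\CL_{\psi,r}\star_!(-)$ in Part (2) — a base-change and projection-formula bookkeeping — together with checking that the resulting $(\fkl,\CL_\psi)$-equivariant structure is compatible with the $L_r$-equivariant structure defining $D_{L_r}(L_r)$, which is precisely where centrality of $\fkl$ in $L_r$ is used. Keeping track of the shifts (the various $[\dim\fkl]$) is routine but should be done carefully.
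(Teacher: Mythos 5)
The paper itself gives no proof of this lemma: it is quoted directly from \cite[Lemma 4.7]{BC24}, so the only comparison available is with the argument there, which your proposal essentially reconstructs. Your part (1) (unwinding $\FT_\CL(\d_{X_\psi})$ to $\<X_\psi,-\>^*\CL$ up to shift and invoking additivity of $\<X_\psi,-\>$), the transport of multiplicativity through the convolution $\CL_{\psi,r}\star_!(-)$ defining $D^\psi_{L_r}(L_r)$, and the descent along $\fkl\to\mathrm{pt}$ plus splitting of complexes over the field $\ov\BQ_\ell$ for the ``in particular'' are all correct and are the standard route.

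There is, however, one genuine error, and you single it out as the crux of part (2): $\fkl=L_{r:r}$ is \emph{not} central in $L_r=L_{0:r}$. It is a normal abelian subgroup, and $L_r$ acts on it by conjugation through the adjoint action of the reductive quotient $L_{0:0+}$ on $\fkl$, which is nontrivial unless $L$ is a torus (already for $L=\SL_2$, $r=1$, conjugation acts on $\fkl\cong\mathfrak{sl}_2(\ov\BF_q)$ by the adjoint representation). Fortunately the lemma as stated does not need centrality: $(\fkl,\CL_\psi)$-equivariance is a property of the underlying complex, and your base-change/projection-formula computation together with multiplicativity of $\CL_\psi$ already yields it; the ``compatibility with the conjugation-equivariant structure'' you invoke is not part of the assertion. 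If one does want such compatibility, the correct inputs are normality of $\fkl$ in $L_r$ together with $\Ad^*(L_r)$-invariance of $X_\psi$ (implicit in $\CL_{\psi,r}$ being an object of the conjugation-equivariant category $D_{L_r}(L_r)$; in the applications $X_\psi$ comes from the dual of the centre of the Levi), which gives $c_h^*\CL_\psi\cong\CL_\psi$ for $h\in L_r$ --- not centrality of $\fkl$. With that claim removed or repaired, your argument is sound, including the shift bookkeeping in (1).
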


\begin{proposition}\label{ind}
    The induction restrict to funtors: \[\pInd_{P_{s, r} !}^{G_r},\ \pInd_{P_{s, r} *}^{G_r}: D_{L_r}^\psi(L_r) \to D_{G_r}^\psi(G_r).\]
\end{proposition}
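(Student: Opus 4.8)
The plan is to reduce the assertion to the single identity $\pInd^{G_r}_{P_{s,r}!}(\CL_{\psi,r})\cong\CF_{\psi,r}$ up to an explicit shift and Tate twist, in the spirit of \cite[\S 4]{BC24}. Since $D^\psi_{L_r}(L_r)=\CL_{\psi,r}\star_! D_{L_r}(L_r)=\CL_{\psi,r}\star_* D_{L_r}(L_r)$ by definition, every object of $D^\psi_{L_r}(L_r)$ is of the form $\CL_{\psi,r}\star_!\CG$ (resp.\ $\CL_{\psi,r}\star_*\CG$) for some $\CG\in D_{L_r}(L_r)$. Moreover Verdier duality interchanges $\pInd^{G_r}_{P_{s,r}!}$ with $\pInd^{G_r}_{P_{s,r}*}$ and $\star_!$ with $\star_*$, and sends $D^\psi_{H_r}(H_r)$ to $D^{\psi^{-1}}_{H_r}(H_r)$ (up to shift and twist) for $H\in\{L,G\}$; hence the statement for $\pInd^{G_r}_{P_{s,r}*}$ follows from that for $\pInd^{G_r}_{P_{s,r}!}$ applied with $\psi^{-1}$ in place of $\psi$. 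So it suffices to prove $\pInd^{G_r}_{P_{s,r}!}(\CL_{\psi,r}\star_!\CG)\in\CF_{\psi,r}\star_! D_{G_r}(G_r)=D^\psi_{G_r}(G_r)$ for every $\CG\in D_{L_r}(L_r)$.

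I would deduce this from a projection formula exhibiting $\pInd^{G_r}_{P_{s,r}!}$ as a module functor for convolution by the generic sheaves, namely a natural isomorphism
\[
\pInd^{G_r}_{P_{s,r}!}\bigl(\CL_{\psi,r}\star_!\CH\bigr)\ \cong\ \CF_{\psi,r}\star_!\,\pInd^{G_r}_{P_{s,r}!}(\CH)
\]
up to shift and twist, for all $\CH\in D_{L_r}(L_r)$; taking $\CH=\CG$, the right-hand side manifestly lies in $\CF_{\psi,r}\star_! D_{G_r}(G_r)$, which finishes the proof. To establish this isomorphism I would use the geometric description $\pInd^{G_r}_{P_{s,r}!}(\CF)\cong\pi_!\widetilde{\eta^*\CF}[2\dim N_{s,r}]$ of \Cref{otherconstruct}, unwind the two convolutions through the correspondence $L_r\xleftarrow{\eta}\wh G_r\xrightarrow{\alpha}\wt G_r\xrightarrow{\pi}G_r$, and apply proper base change and the projection formula; since $\CL_{\psi,r}$ is supported on $\fkl$ and $\CF_{\psi,r}$ on $\fkg$, this collapses, after bookkeeping of shifts and twists and using that $\CL_{\psi,r}\star_!\CL_{\psi,r}\cong\CL_{\psi,r}$ up to shift and twist (a standard property of the generic sheaf, cf.\ \cite[\S 4]{BC24}), to the single case $\pInd^{G_r}_{P_{s,r}!}(\CL_{\psi,r})\cong\CF_{\psi,r}$.

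It remains to prove $\pInd^{G_r}_{P_{s,r}!}(\CL_{\psi,r})\cong\CF_{\psi,r}$ up to shift and twist, and this is where I expect the genuine work to lie. Here I would transfer the computation to the Lie algebra: one has $\CL_{\psi,r}=i_{\fkl*}\FT_\CL(\d_{X_\psi})[\dim\fkl]$ and $\CF_{\psi,r}=i_{\fkg*}\FT_\CL(\d_{G_r\cdot X_\psi})[\dim\fkg]$, the transform $\FT_\CL$ on $\fkl$ and $\fkg$ agrees up to normalization with the Wittvector Fourier transform $T_\psi$ of \S\ref{sec:Fourier-Witt-alg}, and the $L_r$-equivariant structure on $\CL_{\psi,r}$ amounts, on the dual side, to replacing $\d_{X_\psi}$ by $\d_{L_r\cdot X_\psi}$. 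Because $\fkg=G_{r:r}$ is a normal subgroup of $G_r$ contained in $P_{s,r}$, proper base change identifies the restriction $i_{\fkg}^*\pInd^{G_r}_{P_{s,r}!}$ over $\fkg$ with the Lie-algebra induction $\pInd^{\bfg_r}_{\bfp_{s,r}}$ applied after $i_{\fkl}^*$, as in the proof of \Cref{otherconstruct}; one then chains the identities $\pInd^{\bfg_r}_{\bfp_{s,r}}\circ T_\psi\cong T_\psi\circ\pInd^{\bfg^*_{-r}}_{\bfp^*_{-s,-r}}$ up to shift and twist (\Cref{Lieindu}), $\pInd^{\bfg^*_{-r}}_{\bfp^*_{-s,-r}}(\d_{L_r\cdot X_\psi})\cong\d_{G_r\cdot X_\psi}[2\dim N_{s,r}]$ by the $(L,G)$-genericity of $X_\psi$ (\Cref{ind-delta}), and $T_\psi(\d_{G_r\cdot X_\psi})\cong\CF_\psi$ by definition. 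The main obstacle is to make this group-versus-Lie-algebra comparison precise --- in particular, to match the averaging functor $\Av^{G_r}_{P_{s,r}!}$ with the formation of the coadjoint orbit $G_r\cdot X_\psi$, exactly as in the proofs of \Cref{otherconstruct} and \Cref{ind-delta}; everything else is already in place.
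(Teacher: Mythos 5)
Your overall strategy (every object of $D^\psi_{L_r}(L_r)$ is of the form $\CL_{\psi,r}\star_!\CG$, so it suffices to intertwine convolution by $\CL_{\psi,r}$ with convolution by $\CF_{\psi,r}$) would work \emph{if} the asserted natural isomorphism $\pInd^{G_r}_{P_{s,r}!}(\CL_{\psi,r}\star_!\CH)\cong\CF_{\psi,r}\star_!\pInd^{G_r}_{P_{s,r}!}(\CH)$ held for all $\CH\in D_{L_r}(L_r)$, but your proposed proof of that isomorphism has a genuine gap. The functor $\pInd^{G_r}_{P_{s,r}!}$ is not monoidal, the case $\CH=\d_e$ of your formula does not produce the identity $\pInd^{G_r}_{P_{s,r}!}(\CL_{\psi,r})\cong\CF_{\psi,r}$ (the right-hand side would be $\CF_{\psi,r}\star_!\pInd^{G_r}_{P_{s,r}!}(\d_e)$, not $\CF_{\psi,r}$), and conversely that single identity does not formally imply the statement for general $\CH$. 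If one actually unwinds both sides (using that $\fkg$ is normal in $G_r$, so translating $g$ by $y\in\fkg$ preserves the condition $h\i g h\in P_{s,r}$ and $p(h\i y g h)=p(h\i y h)\,p(h\i g h)$), the comparison comes down to an integration over $\ker(p|_{\fkg})=\fkn_{r:r}\oplus\ov\fkn_{r:r}$, and the step that makes it work is a vanishing statement of the shape: the only points $Z$ of the coadjoint orbit $G_r\cdot X_\psi$ killing $\fkn_{r:r}\oplus\ov\fkn_{r:r}$ are those in $L_r\cdot X_\psi$. This is exactly where ($\mathfrak{ge1}$)--($\mathfrak{ge2}$) must enter, and it is not supplied by proper base change, the projection formula, support considerations, or $\CL_{\psi,r}\star_!\CL_{\psi,r}\cong\CL_{\psi,r}$; without it there is no derivation of your intertwining formula, so the "collapse to the single case" is unjustified. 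For contrast, the paper proves \Cref{ind} directly: it characterizes membership in $D^\psi_{G_r}(G_r)$ by the support of the fiberwise Fourier transform along the bundle $q\colon G_r\to G_r/\fkg$, and verifies this support condition for $M=\pInd^{G_r}_{P_{s,r}!}\CF$ by reducing, via the $(\fkl,\CL_\psi)$-equivariance of objects of $D^\psi_{L_r}(L_r)$ (\Cref{restriction}), to the Artin--Schreier vanishing $R\G_c(\th^*\CL_\psi)=0$ for the nontrivial linear map $\th$ attached to $h\cdot X_\psi+X$ with $X\notin -G_r\cdot X_\psi$; that is precisely the genericity input your reduction is missing.

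There is a second, smaller gap in your last step. Even granting the reduction, identifying $i_\fkg^*\,\pInd^{G_r}_{P_{s,r}!}(\CL_{\psi,r})$ with a Lie-algebra induction only computes the restriction to $\fkg$; to conclude $\pInd^{G_r}_{P_{s,r}!}(\CL_{\psi,r})\cong\CF_{\psi,r}$ (the paper's \Cref{correspond}) one must also show that the induced complex vanishes off $\fkg$, since a priori its support meets all $G_r$-conjugates of $N_{s,r}\fkl$. This is the content of \Cref{vanish}, and it again uses genericity; the real obstacle is this support statement rather than ``matching the averaging functor with the coadjoint orbit.'' (Your use of \Cref{Lieindu} and \Cref{ind-delta} in place of the paper's \Cref{ind-Lie}, which is proved by reduction to depth zero via \cite[Remark 3.9, Lemma 4.5]{BC24}, would additionally require a compatibility between the Wittvector transform $T_\psi$ and $\FT_\CL$, a fixable but unaddressed point.)
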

\begin{proof}
    Let $\CF \in D_{L_r}^\psi(L_r)$. By duality it suffices to show $M := \pInd_{P_{s, r} !}^{G_r} \CF \in D_{G_r}^\psi(G_r)$. 

    We view the natural quotient map $q: G_r \to G_r / \fkg$ as a vector bundle over $G_r / \fkg$. Let $U \subseteq  G_r / \fkg$ be any open subset such that $q\i(U) \cong U \times \fkg$. It suffices to show that $\FT_{\CL_\psi}(M |_{q\i(U)})$ has support in $U \times -G_r \cdot X_\psi$. 

    Consider the following diagram \[\xymatrix{ &
    {\wh Z} \ar[r]^{\a'} \ar[d]_{\wh \pr_1} & {\wt Z} \ar[r]^{\pi'} \ar[d]_{\wt\pr_1} & {G_r \times_U G_r'} \ar[d]_{\pr_1} \ar[r]^{\pr_2} & G_r'\\
    L_r& {\wh G_r} \ar[l]_\eta \ar[r]^\a & {\wt G_r} \ar[r]^\pi & G_r,
    }\] where $G_r'$ denotes the dual vector bundle of $q: G_r \to G_r/\fkg$ and the two squares are Cartesian. We view $U \subseteq G_r$ as the zero section of $q$ and let \[\wt U = \pi\i(U), \quad \wh U = \a\i(\wt U).\] As $r > 0$, we have $\fkg \subseteq P_{s, r}$ and hence \[\wt Z \cong \wt U \times \fkg \times \fkg^*, \quad \wh Z \cong \wh U \times \fkg \times \fkg^*.\]

    By the proper base change theorem and the projection formula we have \begin{align*} \tag{i}
        \FT_\CL(M)|_{q\i(U)} &\cong (\pr_2)_! \pr_1^*(\pInd_{P_{s, r} !}^{G_r} \CF) \otimes  \k^*\CL) \\
        & \cong (\pr_2)_!(\pr_1^* \pi_! \wt{\eta^*M }\otimes \k^*\CL) \\
        &\cong  (\pr_2)_! ((\pi')_! \wt\pr_1^*\wt{\eta^*M } \otimes  \k^*\CL) \\
        &\cong (\pr_2\circ \pi')_!( \wt\pr_1^*\wt{\eta^*M} \otimes (\k \circ \pi')^*\CL).
    \end{align*}

    Consider the Cartesian diagram \[ \tag{ii} \xymatrix{
    \wh Z \ar[r]^\a \ar[d]_{\wh\pr_{13}} & \wt Z \ar[d]^{\wt\pr_{13}} \\
    \wh U \times \fkg^* \ar[r]^{\a'\times \id} & \wt U \times \fkg^*,
    }\] where $\wh\pr_{13}$ and $\wh\pr_{13}$denote the natural projections.

    Note that $\pr_2 \circ \pi' = \g \circ \wt\pr_{1,3}$, where $\g: \wt U \times \fkg^* \to G_r'$ be given by $(z, hP_{s, r}, X) \mapsto (z, X)$. In view of (i) and (ii), to show $\FT_\CL(M|_{q\i(U)})$ vanishes outside $U \times -G_r \cdot X_\psi$, it suffices to show \[(\g \times \id)^* (\wt\pr_{13})_! ( \wt\pr_1^*\wt{\eta^*M} \otimes (\k \circ \pi')^*\CL)\] vanishes outside $\wh U \times -G_r \cdot X_\psi$. By the the proper base change theorem for (ii) we have \begin{align*}
         &\quad\ (\g \times \id)^* (\wt\pr_{13})_! ( \wt\pr_1^*\wt{\eta^*M} \otimes (\k \circ \pi')^*\CL) \\ &\cong (\pr_{13})_! \left( (\wt\pr_1 \circ \a')^*\wt{\eta^*\CF} \otimes (\k \circ \pi' \circ \a')^* \CL \right) \\ &\cong (\pr_{13})_! \left( (\a \circ \wh\pr_1)^*\wt{\eta^*\CF} \otimes (\k \circ \pi' \circ \a')^* \CL \right) \\  &\cong (\pr_{13})_! \left(  \wh\pr_1^* \a^*\wt{\eta^*\CF} \otimes (\k \circ \pi' \circ \a')^* \CL \right) \\  &\cong (\pr_{13})_! \left( \wh\pr_1^* \eta^*\CF \otimes (\k \circ \pi' \circ \a')^* \CL \right) \\  &\cong (\pr_{12})_! \left( (\eta \circ \wh\pr_1)^*\CF \otimes (\k \circ \pi' \circ \a')^* \CL \right).
    \end{align*}

    Now we fix a point $\xi = (z, h, X) \in \wh U \times \fkg^*$ with $X \notin -G_r(X_\psi)$. Identify the fiber $\pr_{12}\i(\xi)$ with $\fkg$.  Put \begin{align*}a := (\eta \circ \wh \pr_1)|_{\pr_{12}\i(\xi)}: &\fkg \to L_r, \quad Y \mapsto p(h\i \cdot Y); \\ b := (\k \circ \pi' \circ \a')|_{\pr_{12}\i(\xi)}: &\fkg \to \BG_a, \quad Y \mapsto \k(Y, X_\psi).\end{align*} It remains to show \[R\G_c(a^*\CF \otimes b^*\CL) = 0.\] By \Cref{reduction} we may assume $\CF|_\fkl = \CL_\psi$. Thus \[a^*\CF \otimes b^*\CL \cong \th^*\CL_\psi,\] where $\th : \fkg \to \BG_a$ is defined by $Y \mapsto \k(Y, h \cdot X_\psi + X)$. As $X \notin - G_r \cdot X_\psi$, the map $\th$ is nontrivial. It follows that $R\G_c(a^*\CF \otimes b^*\CL) \cong R\G_c(\th* \CL_\psi) = 0$ as desired. The proof is finished.
\end{proof}

\subsection{} \label{subsec:ind-Lie}
By abuse of notation let $i: \fkp \hookrightarrow \fkg$ and $p: \fkp \to \fkl$ be the natural inclusion and projection respectively. Define \[\mathfrak{pInd}_{P_{s, r}!}^{G_r}:= \Av_{P_{s, r}!}^{G_r} \circ i_! \circ p^* \circ \Infl_{L_r}^{P_{s, r}}: D_{L_r}(\fkl) \to D_{G_r}(\fkg).\] Similarly, we introduce the varieties
\begin{align*}
    \wt \fkg &:= \{(g, hP_{s,r}) \in \fkg \times G_r/P_{s,r} : h^{-1} g h \in \fkp\}, \\
    \widehat \fkg &:= \{(g, h) \in \fkg \times G_r : h^{-1} g h \in \fkp\},
  \end{align*} together with the morphisms \begin{align*}
    &\eta: \wh \fkg \to \fkl, \quad & (g, h ) &\mapsto p(h\i g h); \\ 
    &\alpha: \wh \fkg \to \wt \fkg, \quad & (g,h) &\mapsto (g,hP_{s, r});\\
    &\pi: \wt \fkg \to \fkg, \quad & (g, h P_{s, r}) &\mapsto g.
\end{align*}
Similar to \Cref{otherconstruct}, for $\CF \in D_{L_r}(\fkl)$ we have \[\mathfrak{pInd}_{P_{s, r} !}^{G_r}(\CF) \cong \pi_! \wt{(\eta^*\CF}[2 \dim(G_r/P_{s, r})] \text{ for } \CF \in D_{L_r}(\fkl).\]

\begin{proposition} \label{ind-Lie}
    We have $\mathfrak{pInd}_{P_{s, r}!}^{G_r} (\CL_\psi[\dim \fkl]) \cong \CF_\psi[\dim \fkg]$.
\end{proposition}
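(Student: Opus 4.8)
The plan is to run the argument through the Fourier--Deligne transform, in close parallel with the passage from \Cref{Lieindu} to \Cref{ind-delta}, but on the graded pieces $\fkg = G_{r:r}$, $\fkl = L_{r:r}$ instead of on the ``large'' Moy--Prasad algebras. Since $\CL_\psi = \FT_\CL(\d_{X_\psi})$ and $\CF_\psi = \FT_\CL(\d_{G_r\cdot X_\psi})$ and $\FT_\CL$ commutes with shifts, the claim (with $\mathfrak{pInd}^{\fkg}_{\fkp}$ denoting $\mathfrak{pInd}_{P_{s,r}!}^{G_r}$) is equivalent to
\[
\mathfrak{pInd}_{P_{s,r}!}^{G_r}\big(\FT_\CL(\d_{X_\psi})[\dim\fkl]\big)\ \cong\ \FT_\CL\big(\d_{G_r\cdot X_\psi}\big)[\dim\fkg].
\]
First I would introduce the dual-side induction $\mathfrak{pInd}^{\fkg^\ast}_{\fkp^\ast}\colon D_{L_r}(\fkl^\ast)\to D_{G_r}(\fkg^\ast)$, built from the diagram dual to the one in \Cref{subsec:ind-Lie} (which is again Cartesian, as in \S\ref{sec:Springer}), and prove a Fourier-intertwining $\mathfrak{pInd}^{\fkg}_{\fkp}\circ\FT_\CL\cong\FT_\CL\circ\mathfrak{pInd}^{\fkg^\ast}_{\fkp^\ast}$ up to an explicit shift and Tate twist. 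This copies the proof of \Cref{Lieindu}: one uses the standard functoriality of $\FT_\CL$ (it exchanges $i_!$ with ${i'}^\ast$ and $p^\ast$ with $p'_!$ along the linear maps of the dual diagram, up to shift --- Laumon's formalism, the Fourier--Deligne analogues of \Cref{commutative}), proper base change for the Cartesian square, and the fact that $\FT_\CL$, being $G_r$-equivariant, commutes with $\For^{G_r}_{P_{s,r}}$ and hence with its adjoint $\Av^{G_r}_{P_{s,r}!}$.

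It then remains to compute $\mathfrak{pInd}^{\fkg^\ast}_{\fkp^\ast}(\d_{X_\psi})$, and I claim it is $\d_{G_r\cdot X_\psi}$ up to shift --- precisely the analogue of \Cref{ind-delta}. By the dual version of \Cref{otherconstruct} this is $\pi_!\,\widetilde{\eta^\ast\d_{X_\psi}}[2\dim(G_r/P_{s,r})]$. Since $X_\psi$ is $(L,G)$-generic --- and the generic elements arising in a Howe factorization may be chosen in the dual of the center of $\fkl$ (cf.\ \S\ref{sec:Springer}), hence fixed by $L_r$ --- we have $L_r\cdot X_\psi = \{X_\psi\}$, so $\d_{X_\psi} = \d_{L_r\cdot X_\psi}$ and $\widetilde{\eta^\ast\d_{X_\psi}}$ is the constant sheaf on the locus where $p'(h^{-1}\!\cdot Y) = X_\psi$. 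Exactly as in \Cref{ind-delta}, it then suffices to show that $\eta^{-1}(X_\psi)\cong P_{s,r}$, i.e.\ that $\pi$ restricts to an isomorphism from this locus onto $G_r\cdot X_\psi$: conditions $(\mathfrak{ge1})$ and $(\mathfrak{ge2})$ give the injectivity and force $G_r\cdot X_\psi$ to absorb the ``nilpotent'' perturbations of $X_\psi$ in $\fkp^\ast$. Feeding this back through the intertwining and collecting the shifts $[2\dim(G_r/P_{s,r})]$, $[\dim\fkl]$, $[\dim\fkg]$ (and checking that the Tate twists cancel) yields the clean shifts in the statement.

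The step I expect to be the main obstacle is this last geometric point --- that $\eta^{-1}(X_\psi)$ is a single $P_{s,r}$-orbit, so that the correspondence variety maps isomorphically onto the coadjoint orbit. This is where the precise form of $(L,G)$-genericity is used, and some care is needed since $G_r$ acts on $\fkg$ and $\fkg^\ast$ only through its reductive quotient $G_{0:0}$, so the ``coadjoint orbits'' live inside a representation of a finite reductive group rather than a genuine Lie algebra; it is nonetheless the same argument as in \Cref{ind-delta} (and \cite{BC24}), transported one Moy--Prasad level down. A shift-free alternative avoiding the dual induction altogether is to identify $\widetilde\fkg$ with an incidence variety over $\fkg\times(G_r\cdot X_\psi)$ via $(g,hP_{s,r})\mapsto(g,\Ad^\ast(h)\widetilde X_\psi)$, where $\widetilde X_\psi\in\fkp^\ast$ extends $X_\psi$ by zero on the nilradical; under this identification $\widetilde{\eta^\ast\CL_\psi}$ becomes the Artin--Schreier sheaf $\kappa^\ast\CL$ and $\pi_!$ becomes $\FT_\CL(\d_{G_r\cdot X_\psi})$ directly --- but checking the identification reduces once more to the same genericity fact.
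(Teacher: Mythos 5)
Your strategy is workable, but it is not the route the paper takes: the paper's proof is a two-line citation, first identifying $\mathfrak{pInd}_{P_{s,r}!}^{G_r}$ with the level-zero induction $\mathfrak{pInd}_{P_{0,0}!}^{G_0}$ (the adjoint action on $\fkg = G_{r:r}$ factors through the reductive quotient; this is \cite[Remark 3.9]{BC24}) and then quoting \cite[Lemma 4.5]{BC24}, which is precisely the statement for the finite reductive group at level zero. What you propose is instead to reprove that content in situ: a Fourier--Deligne intertwining of $\mathfrak{pInd}$ with a dual-side induction (the graded analogue of \Cref{Lieindu}) followed by the orbit computation $\mathfrak{pInd}(\d_{X_\psi}) \cong \d_{G_r\cdot X_\psi}$ up to shift on the dual side (the graded analogue of \Cref{ind-delta}). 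This is self-contained and runs parallel to the arguments of \S\ref{sec:Springer}, but it obliges you to actually carry out the crux that the paper imports wholesale from \cite[Lemma 4.5]{BC24} --- namely that $(\mathfrak{ge1})$--$(\mathfrak{ge2})$ force $\mathrm{Stab}_{G_r}(X_\psi) \subseteq P_{s,r}$ and let the unipotent directions absorb the perturbations of $X_\psi$ in the co-parabolic locus --- together with the shift and Tate-twist bookkeeping you defer. Two caveats on that bookkeeping. First, your reduction $\d_{X_\psi} = \d_{L_r\cdot X_\psi}$ needs $L_r\cdot X_\psi = \{X_\psi\}$; this is not literally part of the paper's definition of $(L,G)$-generic, and the appeal to Howe factorizations is beside the point here (the $X_\psi$ of \S\ref{sec:one-step} is an abstract generic element), but it is implicitly forced by the requirement that $\CL_{\psi,r}$ carry an $L_r$-conjugation-equivariant structure, so it is safe to assume. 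Second, the intertwining you invoke genuinely produces Tate twists (compare the $(-\dim N_{s,r})$ in \Cref{Lieindu}), whereas the statement to be proved has none, so the cancellation must be checked honestly (or twists trivialized by a choice of $q^{1/2}$); the paper avoids this entirely by the citation. Your shift-free alternative via the incidence variety over $\fkg \times (G_r\cdot X_\psi)$ is closer in spirit to the proof of \cite[Lemma 4.5]{BC24} itself and is probably the cleaner way to execute your plan.
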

\begin{proof}
    By \cite[Remark 3.9]{BC24} we have $\mathfrak{pInd}_{P_{s, r}!}^{G_r} \cong \mathfrak{pInd}_{P_{0, 0}!}^{G_0}$. Then statement then follows by \cite[Lemma 4.5]{BC24}.
\end{proof}

\subsection{}
We adopt the notation in \Cref{subsec:aff-root}. For $f, f' \in \tPhi^+$ we write $f \sim_L f'$ if $f'(\bx) = f(\bx)$ and $\a_{f'}-\a_{f} \in \BZ  \Phi_L$. This defines an equivalence relation on $\tPhi^+$, and we denote by $[f]$ the equivalence class of $f$. Note that the linear order $\le$ (attached to the fixed Borel subgroup $T U = B \subseteq P$) induces a linear order on $\tPhi^+ / \sim_L$, which we still denote by $\le$.

Let $\Psi \subseteq \tPhi^+$ be the set of affine roots appearing in $p\i(\fkl) = N_{s, r} \fkl$. For $f \in \Psi$ with $f \le r$ we set $p\i(\fkl)^{\ge [f]} = \prod_{f' \in \Psi, [f] \le [f']} G_r^{f'}$ and define \begin{align*}  \wh G_r^{\ge [f]} &= \{(g, h) \in G_r \times G_r; h\i g h \in p\i(\fkl)^{\ge [f]}\}; \\ \wt G_r^{\ge [f]} &= \{(g, h P_{s, r}) \in G_r \times (G_r / P_{s, r}); h\i g h \in p\i(\fkl)^{\ge [f]}\}; \\ \wt Y_r^{\ge [f]} &= \{(g, h L_r G_r^{\ge [r-f]}) \in G_r \times (G_r / L_r G_r^{\ge [r-f]}); h\i g h \in p\i(\fkl)^{\ge [f]}\}.\end{align*} 
Let $\wh G_r^{\ge [f], *} = \wh G_r^{\ge [f]} \sm \wh G_r^{> [f]}$, $\wt G_r^{\ge [f], *} = \wt G_r^{\ge f} \sm \wt G_r^{> [f]}$ and $\wt Y_r^{\ge f, *} = \wt Y_r^{\ge f} \sm \wt Y_r^{> [f]}$. Note that $\a\i(\wt G_r^{\ge [f]}) = \wh G_r^{\ge [f]}$. Let $\b_f: \wt G_r^{\ge [f], *} \to \wt Y_r^{f, *}$ be the natural projection. Put $\eta_f = \eta|_{\wh G_r^{\ge [f],*}}$ and $\pi_f = \pi|_{\wt G_r^{\ge [f],*}}$

\begin{lemma} \label{vanish}
    Let $f \in \Psi$ such that $f < r$. Then  $(\pi_f)_!\wt{\eta_f^* \CF} = 0$ for $\CF \in D_{L_r}^\psi(L_r)$.
\end{lemma}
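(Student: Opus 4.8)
The plan is to follow the leading‑term stratification argument of \cite{BC24} (itself modelled on \cite[\S4.3]{Lusztig_85I}): one shows that the class $[f]$ contributes nothing to $(\pi_f)_!$. First I would observe that $\pi_f$ factors through $\b_f$, since composing $\b_f\colon\wt G_r^{\ge[f],*}\to\wt Y_r^{[f],*}$ with the projection $\wt Y_r^{[f],*}\to G_r$, $(g,hL_rG_r^{\ge[r-f]})\mapsto g$, recovers $\pi_f$; hence it suffices to prove $(\b_f)_!\wt{\eta_f^*\CF}=0$. Since $h\i gh\in p\i(\fkl)=N_{s,r}\fkl$ forces $p(h\i gh)\in\fkl$, the map $\eta_f$ on $\wh G_r^{\ge[f],*}$ factors through the affine subvariety $\fkl\incl L_r$; so by \Cref{restriction}(2) the complex $\eta_f^*\CF$ is a direct sum of shifted copies of the pull‑back of $\CL_\psi$, and therefore $\wt{\eta_f^*\CF}$ is a direct sum of shifted copies of $\wt{\eta_f^*\CL_\psi}$. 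Recalling that $\CL_\psi=\FT_\CL(\d_{X_\psi})$ is, up to a shift, the pull‑back of $\CL$ along the linear functional $X_\psi\colon\fkl\to\BG_a$, it is enough to show $(\b_f)_!\wt{\eta_f^*\CL_\psi}=0$; and since the stalk of $(\b_f)_!$ at $y$ is the compactly supported cohomology of the fibre, it suffices to check that the restriction of $\wt{\eta_f^*\CL_\psi}$ to every fibre of $\b_f$ has vanishing compactly supported cohomology.

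Fix $y=(g,hL_rG_r^{\ge[r-f]})\in\wt Y_r^{[f],*}$. Since $f\in\Psi$ and $f<r$, the root $\a_f$ appears in $N_{s,r}$, hence $\a_f\in\Phi_N\cup\Phi_{\ov N}=\Phi_G\sm\Phi_L$ and $s\le f(\bx)<r$ (so if $r=0$ there is no such $f$ and the statement is vacuous); in particular $f^\dag:=r-f$ is a positive affine root of level $r-f(\bx)\in(0,s]$, so $G_r^{f^\dag}\cong\BG_a$ is an honest affine line. Analogously to \Cref{fiber} (and using \Cref{product}), the fibre $\b_f\i(y)$ is identified, via $z\mapsto(g,hu_{f^\dag}(z)P_{s,r})$, with $G_r^{f^\dag}\cong\BG_a$; lifting this to $\wh G_r^{\ge[f],*}$ by $z\mapsto(g,hu_{f^\dag}(z))$, the restriction of $\wt{\eta_f^*\CL_\psi}$ to this line becomes $\CL(X_\psi(p(u_{f^\dag}(z)\i(h\i gh)u_{f^\dag}(z))))$ up to shift. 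Expanding the $[f]$‑leading term of $h\i gh$ and conjugating by $u_{f^\dag}(z)$, the dependence on $z$ is governed by the commutators of $u_{f^\dag}(z)$ with that leading term; these lie in root subgroups for roots in $\{0\}\cup\BZ\Phi_L$ at level $r$, i.e.\ in the top layer of $\fkl$, and their component along $\a_{f^\dag}+\a_f=0$ is $\a_f^\vee(1+c\,z\varpi^r)$ modulo higher order, linear in $z$ with $c$ a nonzero scalar times the (nonzero) coordinate of $h\i gh$ in the class $[f]$ — where, if $\Phi_L\neq\emptyset$, one first replaces $f$ by a representative of its $\sim_L$‑class along which this coordinate is nonzero. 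Thus $X_\psi\circ\eta_f|_{\b_f\i(y)}$ is an affine function $z\mapsto a+bz$ whose slope $b$ equals a nonzero scalar times the value of $X_\psi$ on the canonical generator of $\fkt^{\a_f}=T^{\a_f}_{r:r}$. Since $\a_f\in\Phi_G\sm\Phi_L$, the genericity condition $(\mathfrak{ge1})$ gives $X_\psi|_{\fkt^{\a_f}}\neq0$, whence $b\neq0$, so $\CL(a+bz)$ is a non‑trivial Artin--Schreier local system on $\BG_a$ and $R\Gamma_c(\BG_a,\CL(a+bz))=0$. As this holds for every $y$, we conclude $(\b_f)_!\wt{\eta_f^*\CL_\psi}=0$, hence $(\pi_f)_!\wt{\eta_f^*\CF}=0$.

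The main obstacle is the geometric bookkeeping rather than the homological algebra: one must pin down the fibration $\b_f$ precisely and identify its fibres with the root line $G_r^{f^\dag}\cong\BG_a$ (the analogue, in the present Moy--Prasad / Yu‑type setting, of \Cref{fiber}), and then organize the affine‑root combinatorics so that conjugating the leading term of $h\i gh$ by $u_{f^\dag}(z)$ visibly produces the $\fkt^{\a_f}$‑term linearly in $z$ with nonzero coefficient. Once that is in place, the genericity input, the Artin--Schreier vanishing, and the tracking of cohomological shifts are routine.
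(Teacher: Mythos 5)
Your overall route is the same as the paper's: reduce to $\CF=\CL_{\psi,r}$ via \Cref{restriction}, factor $\pi_f$ through $\b_f$, and kill the fibres of $\b_f$ by exhibiting the pulled-back sheaf as a nontrivial Artin--Schreier-type local system, using a leading-term/coroot computation together with the genericity condition $(\mathfrak{ge1})$. However, there is a concrete gap in your identification of the fibres of $\b_f$. By the definitions preceding the lemma, $\b_f$ is the projection $\wt G_r^{\ge[f],*}\to \wt Y_r^{\ge[f],*}$, so its fibre over $\xi=(g,hL_rG_r^{\ge[r-f]})$ is all of $L_rG_r^{\ge[r-f]}/P_{s,r}\cong G_r^{D_f}\cong \BA^{D_f}$, where $D_f$ is the set of affine roots appearing in $L_rG_r^{\ge[r-f]}/P_{s,r}$; besides $f^\dag=r-f$ this contains every root line of class $\ge[r-f]$ not lying in $P_{s,r}$ (for instance $\ov N$-lines of level between $(r-f)(\bx)$ and $s$), so it is in general strictly larger than the single line $G_r^{f^\dag}$. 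Your identification of the fibre with $\BG_a$ is therefore not correct, and the appeals to \Cref{fiber} and \Cref{product} do not supply it: those statements concern the sequence-of-Borels construction of \Cref{sec:copy} (fibres of ${}_ip$, and products of Iwahori-like subgroups $\CI_r^{i-1}\CI_r^i$), not the stratification of $\wt G_r$ by the classes $[f]$ used here. Vanishing of $R\G_c$ along one line inside the fibre does not by itself give vanishing of $R\G_c$ of the whole fibre.

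The gap is repairable, and the repair is exactly what the paper does: on the full fibre $\BA^{D_f}$ the composite $\eta_f\circ j$ is the explicit map $(y_{f'})_{f'\in D_f}\mapsto \sum_{f'\in[f]}\a_{f'}^\vee(1+\varpi^r c_{f'}x_{f'}y_{f'})$ with $c_{f'}\in\CO_\brk^\times$, which is a nontrivial linear map into $\fkt$ because $x_{f'}\neq 0$ for some $f'\in[f]$ (this sum over the whole class $[f]$ replaces your ad hoc choice of a representative with nonzero coordinate); since the $y_{f'}$ occur in separate coordinates and $\a_{f'}\in\Phi\sm\Phi_L$, condition $(\mathfrak{ge1})$ makes $X_\psi\circ\eta_f\circ j$ a nonconstant affine-linear function on $\BA^{D_f}$, and the Artin--Schreier vanishing then applies to the entire fibre at once. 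If you insist on integrating only along the $f^\dag$-direction, you must additionally argue that the slope in that direction is independent of the remaining fibre coordinates (so that each translated line carries a nontrivial local system) — as written, your proof does not address the extra fibre directions at all.
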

\begin{proof}
    By \Cref{restriction} we may assume $\CF = \CL_{\psi, r}$, and it suffices to show $(\b_f)_!\wt{\eta_f^* \CL_{\psi, r}} = 0$. Let $\xi = (g, h L_r G_r^{\ge [r-f]}) \in \wt Y_r^{\ge f, *}$. Then \[\b_f\i(\xi) = \{(g, h L_r G_r^{\ge [r-f]}/P_{s, r})\} \cong  G_r^{D_f},\] where $D_f$ is the set of affine roots appearing in $ L_r G_r^{\ge [r-f]}/P_{s, r}$. Denote by $j: \b_f\i(\xi) \cong  \{(g, h G_r^{D_f})\} \hookrightarrow \wh G$ be the natural embedding. Consider the Cartesian diagram \[\xymatrix{
    \b_f\i(\xi) \ar[r]^{\wt i_\xi} \ar[d] & \wt G_r^{\ge [f], *} \ar[d]_{\b_f} \\ \{\xi\} \ar[r]^{i_\xi} & \wt Y_r^{\ge [f], *},
    }\] where $i_\xi$ and $\wt i_\xi$ are natural embeddings. Note that $\a \circ j = \wt i_\xi$. Thus \[((\b_f)_!\wt{\eta_f^* \CL_{\psi, r}})|_\xi \cong R\G_c(\wt i_\xi^*\wt{\eta_f^* \CL_{\psi, r}}) \cong R\G_c(j^* \a^*\wt{\eta_f^* \CL_{\psi, r}}) \cong R\G_c(j^* \eta_f^* \CL_{\psi, r}).\]

    Assume that $h\i g h = (x_{f'})_{f' \in \Psi}$. By definition $x_{f'} = 0$ if $[f'] < [f]$ and $x_{f'} \neq 0$ for some $f' \in [f]$. Then we have \[\eta_f \circ j: \b_f\i(\xi) \cong \BA^{D_f} \to \fkl, \quad (y_{f'})_{f' \in D_f} \mapsto \sum_{f' \in [f]} \a_{f'}^\vee(1 + \varpi^r c_{f'} x_{f'} y_{f'}),\] where $c_{f'} \in \CO_\brk^\times$ with $f' \in [f]$ are constants. Hence $\eta_f \circ j$ is a non-trivial linear map. Since $\a_{f'} \in \Phi \sm \Phi_L$ and $X_\psi$ is $(L, G)$-generic, we have $R\G_c((\eta_f \circ j)^* \CL_{\psi, r}) = 0$ as desired. The proof is finished.  
\end{proof}

\begin{proposition}\label{correspond}
    We have $\pInd_{P_{s, r}}^{G_r}(\CL_{\psi, r}[\dim \fkl]) \cong \CF_{\psi, r}[\dim \fkg]$.
\end{proposition}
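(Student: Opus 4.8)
The plan is to deduce this from the Lie‑algebra statement \Cref{ind-Lie} by a dévissage along the affine‑root filtration of \S\ref{subsec:aff-root}, following the strategy of \cite[\S4]{BC24}. First I would apply \Cref{otherconstruct} to rewrite $\pInd_{P_{s,r}!}^{G_r}(\CL_{\psi,r})$, after forgetting the equivariant structure and up to the shift $[2\dim N_{s,r}]$ of \Cref{otherconstruct}, as $\pi_!\wt{\eta^*\CL_{\psi,r}}$, with $\eta\colon\wh G_r\to L_r$ and $\pi\colon\wt G_r\to G_r$ as in \S\ref{subsec:parabolic}. Since $\CL_{\psi,r}=i_{\fkl*}\CL_\psi[\dim\fkl]$, the complex $\wt{\eta^*\CL_{\psi,r}}$ is supported on $\{(g,hP_{s,r}):h^{-1}gh\in p^{-1}(\fkl)\}$, which is stratified by the locally closed pieces $\wt G_r^{\ge[f],*}$ over the classes $[f]\in\tPhi^+/\sim_L$ that occur in $p^{-1}(\fkl)$ and satisfy $f\le r$. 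Using the iterated distinguished triangles attached to this stratification and the compatibility of $\pi_!$ with them, it suffices to understand each contribution $(\pi_f)_!\wt{\eta_f^*\CL_{\psi,r}}$, those that vanish being discarded.

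Since $\CL_{\psi,r}\in D_{L_r}^\psi(L_r)$, \Cref{vanish} kills this contribution for every stratum with $f<r$, leaving only the level‑$r$ strata, whose union is $\{(g,hP_{s,r}):h^{-1}gh\in\fkg\}$. I would then continue the dévissage within level $r$. A level‑$r$ class $[f]$ has trivial $\sim_L$‑coset only when it is the ``Levi class'' $[f_\fkl]$ (for which $G_r^{[f_\fkl]}=\fkl$); for every other level‑$r$ class one has $\a_{f'}\in\Phi\sm\Phi_L$ for all $f'\in[f]$, so the exponential‑sum computation at the core of the proof of \Cref{vanish} — which uses only genericity condition $(\mathfrak{ge1})$ — still forces $(\pi_f)_!\wt{\eta_f^*\CL_{\psi,r}}=0$. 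As the level‑$r$ classes $\ge[f_\fkl]$ are precisely $[f_\fkl]$ together with the classes whose finite roots lie in $\Phi_N$, discarding all the remaining (opposite‑unipotent) level‑$r$ strata leaves exactly $\wt G_r^{\ge[f_\fkl]}=\{(g,hP_{s,r}):h^{-1}gh\in\fkp\}=\wt\fkg$.

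Now $\pi_!\wt{\eta^*\CL_{\psi,r}}$ has been identified with the pushforward along $\pi$ of the restriction of $\wt{\eta^*\CL_{\psi,r}}$ to $\wt\fkg$; since $\eta$ takes values in $\fkl$ there, this restriction is exactly the complex $\wt{\eta^*(\CL_\psi[\dim\fkl])}$ of \S\ref{subsec:ind-Lie}, so, up to the shift appearing there, the pushforward equals $\mathfrak{pInd}_{P_{s,r}!}^{G_r}(\CL_\psi[\dim\fkl])$, which by \Cref{ind-Lie} is $\CF_\psi[\dim\fkg]$; applying $i_{\fkg*}$ then gives $\CF_{\psi,r}$ up to shift. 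Tracking the shifts from \Cref{otherconstruct}, from \S\ref{subsec:ind-Lie}, and those built into $\CL_{\psi,r}$ and $\CF_{\psi,r}$ yields the stated isomorphism of underlying complexes, and I would upgrade it to a $G_r$‑equivariant isomorphism by observing that both sides lie in $D_{G_r}^\psi(G_r)$ (the left‑hand side by \Cref{ind}) and are $i_{\fkg*}$ of $\fkg$‑equivariant complexes, so the equivariant enhancement is essentially unique.

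The step I expect to be the main obstacle is the level‑$r$ part of the dévissage: \Cref{vanish} is stated, and its geometric model $\b_f^{-1}(\xi)\cong\BA^{D_f}$ is valid, only for $f<r$ — at $f=r$ one has $P_{s,r}\not\subseteq L_r G_r^{\ge[r-f]}$ and that model degenerates — so one must rerun the vanishing for the level‑$r$ strata using instead the purely linear model furnished by the adjoint action of $G_r$ on $\fkg$, and check carefully that precisely the Levi stratum $\wt\fkg$ survives. Everything else is cohomological bookkeeping.
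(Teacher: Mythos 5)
Your proposal follows the paper's argument in all essentials: rewrite the induction via \Cref{otherconstruct}, use that $\CL_{\psi,r}$ is supported on $\fkl$ to restrict to $\{(g,hP_{s,r}):h^{-1}gh\in p^{-1}(\fkl)\}$, stratify by the $\sim_L$-classes of \S\ref{subsec:aff-root}, kill the strata below the Levi class by \Cref{vanish}, identify the surviving closed piece $\wt G_r^{\ge[r]}$ with $\wt\fkg$, and conclude by \Cref{ind-Lie}. The one place where you diverge --- your announced ``main obstacle'' --- is not actually an obstacle. In \Cref{vanish} the hypothesis $f<r$ is meant in the linear order of \S\ref{subsec:aff-root} (equivalently $[f]<[r]$), so the lemma covers exactly all strata other than $\wt G_r^{\ge[r]}$, including the level-$r$ classes of $\ov N$-type; and the geometric model does not degenerate there: if $f(\bx)=r$ with $\alpha_f\in\Phi_{\ov N}$, then $r-f$ is a positive affine root of level $0$, hence $G_r^{\ge[r-f]}\supseteq G_{0+:r}$ and $P_{s,r}=L_rN_{s:r}\ov N_{s+:r}\subseteq L_rG_r^{\ge[r-f]}$ (recall $s=r/2>0$), contrary to your claim. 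The fibration $\beta_f$ and the computation of $\eta_f\circ j$ apply verbatim, the dual directions $r-f'$, $f'\in[f]$, being level-$0$ roots with finite part in $\Phi_N$, which lie in $D_f$. So no separate ``linear model'' argument is needed.

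One assertion in your sketch is wrong and worth correcting, because it suggests the mechanism behind \Cref{vanish} has not been correctly identified: you claim that for \emph{every} non-Levi level-$r$ class the exponential-sum computation forces $(\pi_f)_!\wt{\eta_f^*\CL_{\psi,r}}=0$, on the grounds that $\alpha_{f'}\in\Phi\sm\Phi_L$ and that only $(\mathfrak{ge1})$ is used. Taken literally this would also kill the level-$r$ classes with finite part in $\Phi_N$, which is false: their contributions are precisely what makes the surviving complex equal to $\mathfrak{pInd}_{P_{s,r}!}^{G_r}(\CL_\psi[\dim\fkl])$ over all of $\wt\fkg$, rather than a pushforward from the locus $h^{-1}gh\in\fkl$, whose stalk vanishes at any $Y\in\fkg$ having no conjugate in $\fkl$ (e.g.\ a regular nilpotent $Y$, when $L\neq G$), whereas $\CF_\psi$ does not. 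The vanishing is not a consequence of genericity alone: it also needs the dual affine root $r-f$ to be a positive root outside $\tPhi(P_{s,r})$, so that conjugation in that direction preserves the stratum and produces the fibration $\beta_f$ over $\wt Y_r^{\ge[f],*}$; this holds exactly for $[f]<[r]$ and fails for the $\Phi_N$-classes at level $r$, where the would-be dual direction is a level-$0$ root of $\ov N$-type and conjugating by it destroys the condition $h^{-1}gh\in\fkp$. Since in the end you keep $\wt G_r^{\ge[f_\fkl]}$ intact and discard only the $\ov N$-type level-$r$ strata, your argument lands on the paper's conclusion; just replace the justification of that step by the correctly read \Cref{vanish}.
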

\begin{proof}
    Let $\wh G_r^\fkl = \wt G_r \cap \eta\i(\fkl)$ and $\wt G_r^\fkl = \a(\wh G_r^\fkl)$. Then $\a\i(\wt G_r^\fkl) = \wh G_r^\fkl$. Since the support of  $\CL_{\psi, r}$ is contained in $\fkl$, we have \[\pInd_{P_{s, r}}^{G_r} \CL_{\psi, r} \cong (\pi|_{\wt G_r^\fkl})_! \wt{(\eta|_{\wt G_r^\fkl})^*\CL_{\psi, r}}.\] Note that \[\wt G_r^\fkl =\wt G_r^{\ge [r]} \sqcup \bigsqcup_{[r] > [f] \in \Psi / \sim_L} \wt G_r^{\ge [f],*}.\] Notice that $p\i(\fkl)^{\ge [r]} = \fkp$, and hence $\wt G_r^{\ge [r]}$ coincides with $\wt\fkg$ defined in \Cref{subsec:ind-Lie}. Thanks to \Cref{vanish} we have $(\pi|_{\wt G_r^{\ge [f], *}})_! \wt{(\eta|_{\wt G_r^{\ge [f], *}})^*\CL_{\psi, r}} = 0$ for $r > f \in \Psi$. Hence \[\pInd_{P_{s, r}}^{G_r} (\CL_{\psi, r}[\dim \fkl]) \cong (\pi|_{\wt G_r^{\ge [r]}})_! \wt{(\eta|_{\wt G_r^{\ge [r]}})^*\CL_{\psi, r}}[\dim \fkl] \cong {\mathfrak{pInd}}_{P_{s, r}}^{G_r} \CL_{\psi, r} [\dim \fkl] \cong \CF_\psi[\dim \fkg],\] where the second isomorphism follows from $\wt G_r^{\ge [r]} = \wt \fkg$, and the last one follows from \Cref{ind-Lie}. The proof is finished.
\end{proof}

\subsection{}
Let $T \subseteq P' = L' N'$ be anathor parabolic subgroup, where $T \subseteq L'$ is a Levi subgroup and $N' \subseteq P'$ is the unipotent radical. One can defined subgroups $P_{s, r}' = L_r' N_{s, r}'$ in a similar way. 

\begin{lemma} \label{dimension}
    Let notation be as above. Then \begin{itemize}
        \item $\dim N_{s, r} = \dim N_r$;

        \item $\dim L_r' = \dim L_r' \cap L_r + \dim L_r' \cap N_{s, r} + \dim L_r' \cap \ov N_{s, r}$;

        \item $\dim N_{s, r}' = \dim N_{s, r}' \cap L_r + \dim N_{s, r}' \cap N_{s, r} + \sharp \{f; f(\bx) = s, \a_f \in \Phi_{N' \cap \ov N}\}$.
    \end{itemize} Here $\ov N$ denotes the opposite of $N$.
\end{lemma}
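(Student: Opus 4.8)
The plan is to translate all three equalities into counts of affine roots. Recall from \Cref{subsec:aff-root} that for a $T$-stable closed subgroup $H$ and levels $s\le r$, the quotient $H_{s:r}$ is, as a variety, a product $\prod_f G_r^f$ over the affine roots $f$ with $\a_f\in\Phi_H$ and $s\le f(\bx)\le r$, the lower endpoint being included if the lower level is written $s$ and excluded if it is written $s+$; each factor with $\a_f\neq 0$ is one-dimensional, while the torus-type factors ($\a_f=0$) occur only inside $L_r$ and $L'_r$ and contribute $\dim T$ to both sides of each identity, so I may disregard them. Hence $\dim H_{s:r}$ is the cardinality of the associated index set of affine roots, and, by the uniqueness of the Moy--Prasad/Iwahori decomposition underlying \eqref{eq:u} (and its two-sided analogue for all of $G_r$), the intersection of two such ``standard'' subgroups is again standard, with index set the intersection of the two index sets. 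I will use this to compute $L'_r\cap L_r$, $L'_r\cap N_{s,r}$, $N'_{s,r}\cap L_r$, $N'_{s,r}\cap N_{s,r}$, and the like.

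The only combinatorial input is the involution $\iota\colon\tPhi\to\tPhi$, $f\mapsto r-f$, which makes sense because $r\in\BZ$: explicitly $\a_{\iota(f)}=-\a_f$ and $\iota(f)(\bx)=r-f(\bx)$. Since $s=r/2$, $\iota$ carries the level range $[s,r]$ onto $[0,s]$ and $(s,r]$ onto $[0,s)$, and it interchanges $\Phi_N\leftrightarrow\Phi_{\ov N}$ and $\Phi_{N'}\leftrightarrow\Phi_{\ov{N'}}$ while preserving $\Phi_L$ and $\Phi_{L'}$. For the first identity, $\dim N_{s,r}=\dim N_{s:r}+\dim\ov N_{s+:r}$; applying $\iota$ to the index set of $N_{s:r}$ (roots $\a_f\in\Phi_N$, level in $[s,r]$) turns it into the roots $\a_f\in\Phi_{\ov N}$ with level in $[0,s]$, and adjoining the index set of $\ov N_{s+:r}$ (roots $\a_f\in\Phi_{\ov N}$, level in $(s,r]$) yields exactly the index set of $\ov N_{0:r}$; a further application of $\iota$ gives $\dim\ov N_{0:r}=\dim N_{0:r}=\dim N_r$.

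For the second identity, decompose $\Phi_{L'}=(\Phi_{L'}\cap\Phi_L)\sqcup\Sigma\sqcup(-\Sigma)$, where $\Sigma:=\Phi_{L'}\cap\Phi_N$ and $-\Sigma=\Phi_{L'}\cap\Phi_{\ov N}$. The first piece is the index set of $L'_r\cap L_r$; and using $\iota$ as above one checks that $L'_r\cap N_{s,r}$ and $L'_r\cap\ov N_{s,r}$ each have dimension $\#\{f:\a_f\in\Sigma,\ 0\le f(\bx)\le r\}$, which equals $\#\{f:\a_f\in-\Sigma,\ 0\le f(\bx)\le r\}$ by $\iota$; summing the three contributions recovers $\dim L'_r$. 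For the third identity, split $\Phi_{N'}$ and $\Phi_{\ov{N'}}$ according to $\Phi_G=\Phi_L\sqcup\Phi_N\sqcup\Phi_{\ov N}$. The index set of $N'_{s,r}$ consists of the roots with $\a_f\in\Phi_{N'}$ at level in $[s,r]$ together with those with $\a_f\in\Phi_{\ov{N'}}$ at level in $(s,r]$; its $\Phi_L$-part is the index set of $N'_{s,r}\cap L_r$, and all of its remaining roots lie in $N'_{s,r}\cap N_{s,r}$ except for the roots with $\a_f\in\Phi_{N'}\cap\Phi_{\ov N}$ at level exactly $s$ --- these lie in $N'_{s:r}$ (closed lower level) but not in $\ov N_{s+:r}$ (open lower level), hence not in $N_{s,r}$. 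Using $\Phi_{N'\cap\ov N}=\Phi_{N'}\cap\Phi_{\ov N}$, this missing set is $\{f:f(\bx)=s,\ \a_f\in\Phi_{N'\cap\ov N}\}$, which gives the correction term.

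I expect the only delicate point to be the endpoint bookkeeping: it is precisely the asymmetry between the closed lower level in $N_{s:r}$ and the open lower level in $\ov N_{s+:r}$ that makes the first two identities hold exactly and produces the extra level-$s$ term in the third. A secondary point to spell out, though entirely standard, is that every intersection appearing above is again a ``standard'' subgroup whose index set is the intersection of the given index sets --- this follows from the uniqueness of the Moy--Prasad/Iwahori decomposition recorded in \Cref{subsec:aff-root} (see also \cite{CI_MPDL}).
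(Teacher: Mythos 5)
The paper states this lemma without proof, treating it as a routine computation, and your affine-root count is exactly the intended argument: the involution $f\mapsto r-f$ (well defined since $r\in\BZ$) exchanges $\Phi_N$- and $\Phi_{\ov N}$-type affine roots while swapping the level ranges $[s,r]$ and $[0,s]$, which gives the first two identities, and the closed endpoint in $N'_{s:r}$ versus the open endpoint in $\ov N_{s+:r}$ isolates precisely the level-$s$ roots with $\a_f\in\Phi_{N'}\cap\Phi_{\ov N}=\Phi_{N'\cap\ov N}$ as the correction term in the third. The one ingredient you rightly flag as needing a line of justification --- that intersections such as $L'_r\cap N_{s,r}$ are again directly spanned by the common affine root groups (and common torus part), so their dimensions are computed by intersecting index sets --- is the same uniqueness of the Iwahori/Moy--Prasad decomposition the paper invokes elsewhere (e.g.\ in \Cref{CI5.5analogy}), so your proof is complete and correct.
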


\begin{corollary} \label{fiber-dim}
    We have \[\dim P_{s, r}'\cap N_{s, r} + \dim P_{s, r}' / (P_{s, r}' \cap P_{s, r}) = \dim L_r' \cap N_{s, r} + \dim N_{s, r}.\]
\end{corollary}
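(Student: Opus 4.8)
The plan is to turn the identity into bookkeeping with the three parts of \Cref{dimension}. The starting point is the Iwahori-type factorization $P'_{s,r}=L'_rN'_{s,r}$, in which $N'_{s,r}$ is normal and meets $L'_r$ trivially, so that the product map is an isomorphism of varieties; hence $\dim P'_{s,r}=\dim L'_r+\dim N'_{s,r}$ and $\dim\bigl(P'_{s,r}/(P'_{s,r}\cap P_{s,r})\bigr)=\dim L'_r+\dim N'_{s,r}-\dim(P'_{s,r}\cap P_{s,r})$.

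The geometric input I would use is that each group in sight — $L_r$, $N_{s,r}$, $L'_r$, $N'_{s,r}$, $P_{s,r}$, $P'_{s,r}$ — is, up to the torus factor $T_r$ (present only for those containing it), an ordered product of root subgroups $G_r^f$ over an explicit set of affine roots, with $L'_r$ supported on $\Phi_{L'}$ and $N'_{s,r}$ on $\Phi_{N'}\sqcup\Phi_{\ov{N'}}$, and these two root sets are disjoint. Using the isomorphism $u$ and the subgroup criterion from \Cref{subsec:aff-root}, the intersection of two such groups is again a product over the intersection of the index sets, and the above disjointness gives
\begin{align*}
\dim(P'_{s,r}\cap N_{s,r})&=\dim(L'_r\cap N_{s,r})+\dim(N'_{s,r}\cap N_{s,r}),\\
\dim(P'_{s,r}\cap P_{s,r})&=\dim(L'_r\cap L_r)+\dim(L'_r\cap N_{s,r})+\dim(N'_{s,r}\cap L_r)+\dim(N'_{s,r}\cap N_{s,r}).
\end{align*}
Substituting these and cancelling the common terms $\dim(L'_r\cap N_{s,r})$ and $\dim(N'_{s,r}\cap N_{s,r})$, the left-hand side of the corollary reduces to $\bigl(\dim L'_r-\dim(L'_r\cap L_r)\bigr)+\bigl(\dim N'_{s,r}-\dim(N'_{s,r}\cap L_r)\bigr)$.

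At this point \Cref{dimension}(2) replaces $\dim L'_r-\dim(L'_r\cap L_r)$ by $\dim(L'_r\cap N_{s,r})+\dim(L'_r\cap\ov N_{s,r})$, and \Cref{dimension}(3) replaces $\dim N'_{s,r}-\dim(N'_{s,r}\cap L_r)$ by $\dim(N'_{s,r}\cap N_{s,r})+\sharp\{f:f(\bx)=s,\ \a_f\in\Phi_{N'\cap\ov N}\}$; while \Cref{dimension}(3) applied with the roles of $P$ and $P'$ exchanged gives $\dim N_{s,r}=\dim(L'_r\cap N_{s,r})+\dim(N'_{s,r}\cap N_{s,r})+\sharp\{f:f(\bx)=s,\ \a_f\in\Phi_{N\cap\ov{N'}}\}$. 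Comparing the two sides of the corollary, the difference collapses to $\dim(L'_r\cap\ov N_{s,r})-\dim(L'_r\cap N_{s,r})-\sharp\{f:f(\bx)=s,\a_f\in\Phi_{N\cap\ov{N'}}\}+\sharp\{f:f(\bx)=s,\a_f\in\Phi_{N'\cap\ov N}\}$, so it suffices to prove the two equalities $\dim(L'_r\cap\ov N_{s,r})=\dim(L'_r\cap N_{s,r})$ and $\sharp\{f:f(\bx)=s,\a_f\in\Phi_{N'\cap\ov N}\}=\sharp\{f:f(\bx)=s,\a_f\in\Phi_{N\cap\ov{N'}}\}$; both I would deduce from the involution $\a\mapsto-\a$ on $\Phi$ together with $2s=r\in\BZ_{\ge 0}$, which makes negation match the relevant affine-root sets and preserves the number of affine roots with a given gradient attaining the value $s$. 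The step I expect to be the only real obstacle is the middle one — checking that $P'_{s,r}\cap N_{s,r}$ and $P'_{s,r}\cap P_{s,r}$ split precisely into these four elementary pieces, which is exactly where the explicit root-subgroup description and the order-independence of \Cref{subsec:aff-root} are needed; the rest is a short linear computation.
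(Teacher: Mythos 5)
Your argument is correct and is essentially the paper's own proof: the paper performs the same telescoping computation, implicitly using the root-subgroup decompositions of $P'_{s,r}\cap N_{s,r}$ and $P'_{s,r}\cap P_{s,r}$, parts (2) and (3) of \Cref{dimension} together with (3) applied with $P$ and $P'$ exchanged, and the two negation symmetries at level $s=r/2$ (namely $\dim(L'_r\cap\ov N_{s,r})=\dim(L'_r\cap N_{s,r})$ and $\sharp\{f: f(\bx)=s,\ \a_f\in\Phi_{N'\cap\ov N}\}=\sharp\{f: f(\bx)=s,\ \a_f\in\Phi_{N\cap\ov N'}\}$), which you have simply made explicit, e.g. via the bijection $(\a,n)\mapsto(-\a,r-n)$.
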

\begin{proof}
    Applying \Cref{dimension} we have \begin{align*}
        &\quad\ \dim P_{s, r}'\cap N_{s, r} + \dim P_{s, r}' / (P_{s, r}' \cap P_{s, r}) - \dim L_r' \cap N_{s, r} - \dim N_{s, r} \\
        &=\dim L_r' + \dim N_{s, r}' - \dim L_r' \cap L_r - \dim N_{s, r}' \cap L_r - \dim L_r' \cap N_{s, r} - \dim N_{s, r} \\
        &= \dim L_r' \cap \ov N_{s, r} + \dim N_{s, r}' - \dim N_{s, r}' \cap L_r - \dim N_{s, r} \\
        &= (\dim N_{s, r}' - \dim N_{s, r}' \cap L_r) - (\dim N_{s, r} - \dim L_r' \cap N_{s, r}) \\
        &= \sharp \{f; f(\bx) = s, \a_f \in \Phi_{N' \cap \ov N} \} - \sharp \{f; f(\bx) = s, \a_f \in \Phi_{N \cap \ov N'} \} \\ 
        &= 0,
    \end{align*} where $\ov N'$ denotes the opposite of $N'$. The proof is finished.
\end{proof}

Let $J_{s, r} = N_{s, r}[\fkl, L_r] \subseteq P_{s, r}$ and denote by $\bar p: P_{s, r} \to P_{s, r} / J_{s,r} = L_r / [\fkl, L_r]$ the natural quotient map.
\begin{lemma} \label{tech-1}
    Let $h \in G_r \sm P_{s, r}' N_{G_r}(T_r) P_{s, r}$. There exists $\a \in \Phi \sm \Phi_L$ and a group embedding $\z: \BG_a \hookrightarrow h\i N'_{s, r} h \cap P_{s, r}$ such that $(\bar p \circ \z) = \fkt^\a$. In particular, $R\G_c(\BG_a, (p \circ \z)^* \CF) = 0$ for any $\CF \in D_{L_r}^\psi(L_r)$.
\end{lemma}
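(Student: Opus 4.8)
The plan is to treat this as a ``relative position'' statement for the pair of half-level groups $P_{s,r}$ and $P'_{s,r}$, following the strategy of the corresponding step in \cite{BC24}. Write $V := h\i N'_{s,r}h$, a conjugate of $N'_{s,r}$; we must produce a one-parameter subgroup of $V \cap P_{s,r}$ on which $\bar p$ restricts to an isomorphism onto $\fkt^\a$ for a suitable $\a \in \Phi \sm \Phi_L$. To find it I would decompose $N'_{s,r}$, $P_{s,r}$ and $h$ into ordered products of affine root group elements $u_f(\cdot)$ as in \Cref{subsec:aff-root}, putting $h$ into an Iwahori--Bruhat normal form (using that a minimal Iwahori-type subgroup is contained in both $P_{s,r}$ and $P'_{s,r}$ up to a few affine root directions, and that Weyl lifts $\dw$ normalize $T_r$), and then translate the hypothesis $h \notin P'_{s,r} N_{G_r}(T_r) P_{s,r}$ into the assertion that this normal form genuinely involves an affine root $g$ with vector part $\a_g \notin \Phi_L$ that can be pushed neither into $P_{s,r}$ on the right nor into $P'_{s,r}$ on the left. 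Equivalently: the relative position of $P'_{s,r}$ and $P_{s,r}$ cut out by $h$ is not one of the ``Weyl type'' positions $P'_{s,r}\dw P_{s,r}$, and this failure is detected by such a root $g$.

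Given $g$, I would take $\a := \a_g$ and $g^\dag := r - g \in \tPhi$, which (by the bookkeeping of \Cref{product}) satisfies $g^\dag(\bx) = s = r/2 = g(\bx)$, and build $\zeta \colon \BG_a \to G_r$ from the level-$s$ root line of $N'_{s,r}$ in the $g$-direction, conjugated by $h$ and corrected by a suitable element of $N_{s,r}$ so that it lands in $V \cap P_{s,r}$ and so that $p \circ \zeta$ is exactly the one-parameter subgroup $z \mapsto \a^\vee(1 + \varpi^{r}[z_0 z])$ of $\fkt^\a$. The content of this step is the $\SL_2$-type identity already used in the proof of \Cref{quadratic}, where a product $u_{g^\dag}(z_0)u_g(z)$ (with $z_0 \neq 0$ fixed) is carried by the projection to $\a^\vee(1 + \varpi^{r}[z_0 z])$; since $\a \notin \Phi_L$ we have $\a^\vee \notin \mathrm{span}(\Phi_L^\vee)$, so $\fkt^\a$ maps isomorphically into $L_r/[\fkl, L_r]$ and $\bar p \circ \zeta$ is an isomorphism onto it. This yields the required $\a$ and $\zeta$.

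The ``in particular'' statement is then formal. By construction $p \circ \zeta$ is an isomorphism of $\BG_a$ onto $\fkt^\a \subseteq \fkl$. For $\CF \in D_{L_r}^\psi(L_r)$, \Cref{restriction}(2) gives that $i_\fkl^*\CF$ is a complex of direct sums of $\CL_\psi$, and $\CL_\psi$ is the multiplicative local system of the character $y \mapsto \psi(\langle X_\psi, y\rangle)$ of $\fkl$; hence $(p\circ\zeta)^*\CF$ is a direct sum of shifts of the pullback of $\CL_\psi$ along the isomorphism $\BG_a \xrightarrow{\sim} \fkt^\a$, i.e.\ the Artin--Schreier sheaf of a nontrivial additive character of $\BG_a$ --- nontrivial precisely because $X_\psi$ is $(L,G)$-generic and $\a \in \Phi_G \sm \Phi_L$, so $X_\psi|_{\fkt^\a} \neq 0$ by ($\mathfrak{ge1}$). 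Therefore $R\G_c(\BG_a, (p\circ\zeta)^*\CF) = 0$ by the vanishing of compactly supported cohomology of a nontrivial Artin--Schreier sheaf on $\BG_a$.

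I expect the main obstacle to be the combinatorial heart of the first step: deducing from the double-coset hypothesis the existence of the affine root $g$ at level $s$ with $\a_g \notin \Phi_L$ that drives the rest of the argument, i.e.\ checking that the relative positions of $P'_{s,r}$ and $P_{s,r}$ which are not of Weyl type are exactly those for which $V \cap P_{s,r}$ picks up a root direction outside $\Phi_L$. This is a purely affine-root-system computation, parallel to the analogous lemma of \cite{BC24}, but made more delicate by the asymmetric level cutoffs ($s$ for the $N$-parts, $s+$ for the $\ov N$-parts) in the definitions of $P_{s,r}$ and $P'_{s,r}$; I anticipate it needs careful case analysis but no new idea.
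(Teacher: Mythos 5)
Your overall strategy is the same in spirit as the paper's: put $h$ into an affine-root normal form, extract from the double-coset hypothesis a witnessing affine root with vector part outside $\Phi_L$, conjugate the complementary affine root line by $h$ so that an $\SL_2$-type commutator produces $\a^\vee(1+\varpi^{r}[\,\cdot\,])$, and finish the ``in particular'' claim by ($\mathfrak{ge1}$) plus Artin--Schreier vanishing (that last step is fine as you wrote it). However, the concrete bookkeeping you give for the heart of the argument is wrong, and the heart itself is exactly what you defer. \Cref{product} does not apply here: it concerns the Yu-type Iwahori-like groups $\CI_r^{i}$ of \S\ref{sec:copy}, where the level $r(\a)/2$ comes from the Howe-factorization depths; it says nothing about the pair $P_{s,r}$, $P'_{s,r}$ of the present lemma. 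Consequently your claim that the witnessing root $g$ and its complement $r-g$ both sit at level $s=r/2$ is false in general. In the paper's proof, after replacing $h$ by an element of $L'_r h L_r$ and writing $h = u'wu\delta$ with $w \in N_{G_r}(T_r)$, $u'$ (resp.\ $u$) a product of depth-zero root elements for roots of $N'$ (resp.\ $N$) not absorbable into $w(\Phi_L)$ (resp.\ $w\i(\Phi_{L'})$), and $\delta \in G_{0+:r}$, the witnessing root has level $0$ when $u \neq 1$ or $u' \neq 1$, and an arbitrary level in $(0,r)$ when $u=u'=1$ and $\delta$ has a coordinate outside $\tPhi(P_{s,r}) \cup w\i(\tPhi(P'_{s,r}))$; the complementary line actually used then sits at level $r$ or $r - f(\bx)$, and one must check case by case that it (or its conjugate by $u'$) lies in $N'_{s,r}$ and that its $h$-conjugate lies in $\fkt^{\a_f}N_{s,r}$, respectively is congruent to $\fkt^{\a}$ modulo $N_{s,r}+[\fkl,L_r]$. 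If you only use level-$s$ root lines of $N'_{s,r}$, the construction already fails for elements such as $h=u'wu$ whose failure to lie in $P'_{s,r}N_{G_r}(T_r)P_{s,r}$ is witnessed entirely at depth zero, where no level-$s$ coordinate of $h$ is involved.

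So the genuine gap is the combinatorial step you flag but do not carry out: deducing the witnessing root from $h \notin P'_{s,r}N_{G_r}(T_r)P_{s,r}$ via the normal form, choosing the correct complementary affine root (whose level is $r$ minus the level of the witnessing root, not $s$), and verifying both containments $u'G_r^{r-w(f)}u'^{-1}\subseteq N'_{s,r}$ (or $G_r^{r-f}\subseteq N'_{s,r}$) and $\bar p(\mathrm{image}\ \zeta)=\fkt^{\a}$. That three-case analysis is the entire content of the lemma, and your proposal both postpones it and anchors it to an incorrect level-$s$ claim borrowed from a lemma about a different family of subgroups.
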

\begin{proof}
    We adopt the notation in \Cref{subsec:aff-root}. Fix a Borel subgroup $T \subseteq B' \subseteq P'$, and let $\ge'$ be an associated linear order on $\tPhi$. Let \begin{align*} D' &= \{f \in \tPhi; f(\bx) = 0; \a_f \in \Phi_{N'} \sm w(\Phi_L)\}; \\ D &= \{f \in \tPhi; f(\bx) = 0, \a_f \in \Phi_N \sm w\i(\Phi_{L'})\} \end{align*} By replacing $h$ with some suitable element in $L_r' h L_r$ we may assume that $h \in u' w u G_{0+:r} $, where $w \in N_{G_r}(T_r)$, $u' \in G_r^{D'}$ and $u \in G_r^D$, see \Cref{subsec:aff-root}.

    First we assume that $u \neq 1$. Let $f \in D$ such that $\pr_f(u) \neq 0$ and $\pr_{f'}(u) = 0$ for all $f > f' \in D$. Note that $-\a_{w(f)} \in \Phi_{N'}$. Define \[\z: \BG_a \to G_r, \quad z \mapsto h\i u' u_{r - w(f)}(z) {u'}\i h.\] Then $u' G_r^{r-w(f)} {u'}\i \subseteq N'_{r:r} \subseteq N'_{s, r}$ and \[h\i u' G_r^{r-w(f)} {u'}\i h = u\i G_r^{r-f} u \equiv \fkt^{\a_f} \mod N_{s, r}.\] Hence $\z$ satisfies our requirements. 

    Second we assume $u = 1$. If $u' \neq 1$. Let $f \in D'$ such that $\pr_f(u') \neq 0$ and $\pr_{f'}(u') = 0$ for all $ f >' f' \in D'$. Define \[\z: \BG_a \to G_r, \quad z \mapsto h\i u_{r - f}(z) h.\] Then $G_r^{r-f} \subseteq N_{s, r}'$ and \[h\i G_r^{r-f} h = w\i {u'}\i G_r^{r-f} u' w \equiv \fkt^{\a_{w\i(f)}} \mod N_{s, r} + [\fkl, L_r].\] Hence $\z$ meets our requirements.

    Finally we assume $u = u' = 1$. Then $h \in w v$ with $v \in G_{0+:r}$. Let $\tPhi(P_{s, r})$ (resp. $\tPhi(P_{s, r}')$) be the set of affine roots in $\tPhi$ which appears in $P_{s, r}$ (resp. $P'_{s, r}$). Again we may assume that there exists $f \in \tPhi \sm (\tPhi(P_{s, r} ) \cup w\i(\tPhi(P_{s, r}'))$ such that $\pr_f(v) \neq 0$ and $\pr_{f'}(v) = 0$ for all $f > f' \in \tPhi \sm (\tPhi(P_{s, r} ) \cup w\i(\tPhi(P_{s, r}'))$. Define \[\z: \BG_a \to G_r, \quad z \mapsto h\i u_{r - w(f)}(z) h.\] Then $G_r^{r - w(f)} \in N'_{s, r}$ (since $w(f) \notin \tPhi(P_{s, r}')$) and \[h\i G_r^{r-w(f)} h = v\i G_r^{r-f} v \equiv \fkt_{w\i(\a_f)} \mod N_{s, r}.\] Hence $\z$ meets our requirements, and the proof is finished. 
\end{proof}

\begin{proposition}\label{Mackey}
    Let notation be as above. For $M \in D_{L_r}^\psi(L_r)$ we have \[\pRes_{P_{s, r}' !}^{G_r} \pInd_{P_{s, r} !}^{G_r} (M) \cong \bigoplus_{w \in W_{L_r'} \backslash W_{G_r} / W_{L_r}} \pInd_{L_r' \cap {}^w P_{s, r} !}^{L_r'} \pRes_{P_{s, r}' \cap {}^w L_r !}^{{}^w L_r}({}^w M),\] where ${}^w M$ denotes the pull-back of $M$ under $\ad(w)\i: {}^w L_r \to L_r$.
\end{proposition}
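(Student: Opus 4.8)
The plan is to imitate the proof of the Mackey formula for generic parabolic induction in \cite{BC24}, the genuinely new ingredients being the genericity lemma \Cref{tech-1} and the dimension identity \Cref{fiber-dim}. First I would pass to the geometric description: by \Cref{otherconstruct} and the definition of $\pRes_{P_{s,r}'!}^{G_r}$, after forgetting the equivariant structures (recovered uniquely at the end) one computes $\pRes_{P_{s,r}'!}^{G_r}\pInd_{P_{s,r}!}^{G_r}(M)$, up to an explicit shift, as $p'_!\, i'^{*}\,\pi_!\,\wt{\eta^*M}$, where $i'\colon P_{s,r}'\hookrightarrow G_r$, $p'\colon P_{s,r}'\twoheadrightarrow L_r'$ and $\pi,\eta,\alpha,\wt G_r,\wh G_r$ are as in \Cref{subsec:parabolic}. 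Forming the fibre product of $\pi$ with $i'$ and applying proper base change for $(-)_!$ (valid along any morphism), this becomes $\rho_!\,\bar\alpha^*\wt{\eta^*M}$ up to a shift, where $\mathcal Z=\{(g,hP_{s,r})\in P_{s,r}'\times G_r/P_{s,r}:h\i gh\in P_{s,r}\}$, $\rho\colon\mathcal Z\to L_r'$, $(g,hP_{s,r})\mapsto p'(g)$, and $\bar\alpha\colon\mathcal Z\to\wt G_r$ is the natural map; pulling back once more to $\wh{\mathcal Z}=\{(g,h):g\in P_{s,r}',\ h\i gh\in P_{s,r}\}$ the relevant sheaf becomes $\wh\eta^*M$ with $\wh\eta(g,h)=p(h\i gh)$.

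Next I would stratify $\mathcal Z$ by the $P_{s,r}'$-orbit of $hP_{s,r}$ in $G_r/P_{s,r}$. Since $W_{G_r}=N_{G_r}(T_r)/T_r$ is finite, the orbits meeting $P_{s,r}'N_{G_r}(T_r)P_{s,r}$ are indexed by $w\in W_{L_r'}\backslash W_{G_r}/W_{L_r}$; let $\mathcal Z_w$ be the corresponding locally closed piece, with a fixed lift $\dw\in N_{G_r}(T_r)$ of $w$, and let $\mathcal Z_{\mathrm{bad}}$ be the union of the remaining strata, on which $h\notin P_{s,r}'N_{G_r}(T_r)P_{s,r}$ (possibly empty, e.g.\ for $r=0$). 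This yields a filtration of $\rho_!\bar\alpha^*\wt{\eta^*M}$. On each $\mathcal Z_w$ I would present $\mathcal Z_w$ as a space induced from $P_{s,r}'\cap{}^wP_{s,r}$, split off its Levi and unipotent directions, evaluate $\wh\eta$ along the fibres of $\rho$, and thus identify the corresponding graded piece with $\pInd_{L_r'\cap{}^wP_{s,r}!}^{L_r'}\pRes_{P_{s,r}'\cap{}^wL_r!}^{{}^wL_r}({}^wM)$; the shifts produced by \Cref{otherconstruct} and by the successive base changes cancel precisely by \Cref{fiber-dim} applied to a $\dw$-conjugate of the pair $(P',P)$. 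To pass from the filtration to the asserted direct sum I would rerun the computation with the $*$-versions of all functors — which agree with the $!$-versions on the generic subcategories by the very definition of $D_{L_r}^\psi(L_r)$ and $D_{G_r}^\psi(G_r)$ — forcing the filtration to split, and then fix the $L_r'$-equivariant structure as the unique one refining the non-equivariant isomorphism.

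The heart of the matter, and what I expect to be the main obstacle, is the vanishing $\rho_!(\bar\alpha^*\wt{\eta^*M}|_{\mathcal Z_{\mathrm{bad}}})=0$. By \Cref{restriction} I may assume $M=\CL_{\psi,r}$, so $M|_{\fkl}=\CL_\psi$. For a fixed bad coset $hP_{s,r}$ the point is that right translation by $h\zeta(z)h\i$, with $\zeta\colon\mathbb{G}_a\hookrightarrow h\i N_{s,r}'h\cap P_{s,r}$ the embedding furnished by \Cref{tech-1}, gives a free $\mathbb{G}_a$-action on $\mathcal Z$ preserving $\mathcal Z_{\mathrm{bad}}$ and the fibres of $\rho$ (indeed $h\zeta(z)h\i\in N_{s,r}'=\ker p'$ and $\zeta(z)\in P_{s,r}$), along which $\wh\eta$ transforms by the cocycle $z\mapsto (p\circ\zeta)(z)$; hence $\wh\eta^*M$ restricted to each such orbit is a translate of $(p\circ\zeta)^*\CL_\psi$, whose compactly supported cohomology vanishes by \Cref{tech-1} since $\bar p\circ\zeta$ surjects onto $\fkt^{\alpha}$ with $\alpha\in\Phi\smallsetminus\Phi_L$ and $\CL_\psi$ is $(L,G)$-generic. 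Integrating first along this $\mathbb{G}_a$-direction then kills the contribution of $\mathcal Z_{\mathrm{bad}}$. The care required is to make this "integrate along $\mathbb{G}_a$ first" step uniform over all bad cosets — e.g.\ by computing the stalks of $\rho_!$ and filtering the fibre of $\rho$ over a point by bad cosets — which becomes routine once the stratification and the shift bookkeeping of the second paragraph are in place.
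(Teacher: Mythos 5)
Your geometric setup is the same as the paper's: you form the correspondence $\{(g,hP_{s,r})\in P_{s,r}'\times G_r/P_{s,r}:\,h\i gh\in P_{s,r}\}$, separate the locus where $h\in P_{s,r}'N_{G_r}(T_r)P_{s,r}$ (stratified by $w\in W_{L_r'}\backslash W_{G_r}/W_{L_r}$) from its complement, kill the complement by integrating along the $\BG_a\subseteq h\i N'_{s,r}h\cap P_{s,r}$ furnished by \Cref{tech-1} after reducing to $M=\CL_{\psi,r}$ via \Cref{restriction} (and you correctly do this only after composing with $p'$, since the translation moves $g$ but preserves the fibres over $L_r'$), and match the shifts on the good strata using \Cref{fiber-dim}. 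This is exactly the paper's proof, which for the remaining details refers to \cite[Proposition 5.9]{BC24}.

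The one genuine gap is your mechanism for upgrading the resulting filtration to the asserted direct sum. You propose to rerun the computation with the $*$-versions of the functors, claiming they agree with the $!$-versions on the generic subcategories ``by the very definition'' of $D_{L_r}^\psi(L_r)$ and $D_{G_r}^\psi(G_r)$. The definition only says that $\CL_{\psi,r}\star_! D_{L_r}(L_r)=\CL_{\psi,r}\star_* D_{L_r}(L_r)$ as subcategories; the statement that $\pInd_{P_{s,r}!}^{G_r}\cong\pInd_{P_{s,r}*}^{G_r}$ and $\CL_{\psi,r}\star_!\pRes_{P_{s,r}!}^{G_r}\cong\CL_{\psi,r}\star_*\pRes_{P_{s,r}*}^{G_r}$ on these subcategories is part of \Cref{one-step-detail}, whose proof uses \Cref{Mackey} itself, so invoking it here is circular. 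Moreover, even granting the $!=*$ identification for the composite functor, this does not formally force the filtration attached to a locally closed stratification to split; one needs an additional input, for instance vanishing of morphisms/extensions between the graded pieces attached to distinct $w$ (plausibly available from condition ($\mathfrak{ge2}$), since the pieces carry ${}^w\CL_\psi$-type equivariance for distinct $w$), or one simply carries out the splitting argument of \cite[Proposition 5.9]{BC24}, which is what the paper does. As written, your argument establishes the statement only up to a filtration whose graded pieces are the Mackey terms.
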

\begin{proof}
    We follow the idea of \cite[Proposition 5.9]{BC24}. Let \[Y = \{(g, h P_{s, r}) \in P_{s, r}' \times G_r / P_{s, r}; h\i g h \in P_{s, r}\} \subseteq \wt G_r.\] Set $\wh Y = \a\i(\wt Y)$, $\eta_Y = \eta|_{\wh Y}$, $\a_Y = \a|_{\wh Y}$ and $\pi_Y = \pi|_{\wt Y}$. Then we have \[\pRes_{P_{s, r}' !}^{G_r} \pInd_{P_{s, r} !}^{G_r} (M) \cong \pi_{Y!} \wt{\eta_Y^* M}[2 \dim N_{s, r}].\] Note that \[\wt Y = \wt Y' \sqcup \wt Y'',\] where $\wt Y'' = \pr_2\i(P'_{s, r} N_{G_r}(T_r) P_{s, r})$ and $\pr_2$ denote the projection to the second item. Let $\wh Y' = \a\i(\wt Y')$, $\wh Y'' = \a\i(\wt Y'')$, $\eta_Y' = \eta_Y|_{\wh Y'}$,  $\eta_Y'' = \eta_Y|_{\wh Y''}$ and $\pi_Y' = \pi_Y |_{\wt Y'}$ and $\pi_Y'' = \pi_Y |_{\wt Y''}$. The first step is to show that $\pi'_{Y!} \wt{(\eta_Y')^* M} = 0$. This follows from the arguments of loc. cit. by using \Cref{tech-1}. Then it remains to show \[\pi''_{Y!} \wt{(\eta_Y'')^* M}[2 \dim N_{s, r}] \cong \bigoplus_{w \in W_{L_r'} \backslash W_{G_r} / W_{L_r}} \pInd_{L_r' \cap {}^w P_{s, r} !}^{L_r'} \pRes_{P_{s, r}' \cap {}^w L_r !}^{{}^w L_r}({}^w M).\] This follows in the same way as in loc. cit. by using \Cref{fiber-dim}.
\end{proof}

\subsection{}
Let $\varphi: G_r \to G_r / N_{s, r}$ be the natural quotient map. Consider the following two functors \[\varphi_! \circ \mathrm{For}^{G_r}_{P_{s,r}}, \ \varphi_* \circ \mathrm{For}^{G_r}_{P_{s,r}}: D_{G_r}(G_r) \to D_{P_{s,r}}(G_r / N_{s, r}),\] which we denote by $\varphi_!$ and $\varphi_*$ for simplicity.

Similar as in \cite[\S 5.1]{BC24}, we have the following two general lemmas.
\begin{lemma}\label{phimonoidal}
    The functors $\varphi_!$ (resp. $\varphi_*$) is monoidal with respect to $\star_!$ (resp. $\star_*$).
\end{lemma}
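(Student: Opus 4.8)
The plan is to follow the pattern of \cite[\S5.1]{BC24}. Recall that $D_{G_r}(G_r)$ is monoidal for the convolution $\star_!$ (resp.\ $\star_*$) associated with the multiplication map $m\colon G_r\times G_r\to G_r$, which is equivariant for the diagonal conjugation action of $G_r$; and that $D_{P_{s,r}}(G_r/N_{s,r})$ carries the corresponding convolution defined, as in loc.\ cit., through the descent of the multiplication correspondence along $\varphi$. With these structures understood, the task is to produce natural isomorphisms $\varphi_!(\CF\star_!\CG)\cong\varphi_!\CF\star_!\varphi_!\CG$ for $\CF,\CG\in D_{G_r}(G_r)$, to identify the images of the units, and to check compatibility with the associativity constraints — and likewise with all occurrences of $!$ replaced by $*$.

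I would proceed in three steps. \emph{Step 1:} reduce to the underlying non-equivariant functors. The forgetful functor $\For^{G_r}_{P_{s,r}}$ commutes with $m_!$, $m_*$ and with external tensor products and preserves units, hence is monoidal; so it suffices to build the coherence data for $\varphi_!$ and $\varphi_*$ viewed as plain push-forwards and then observe that it is automatically $P_{s,r}$-equivariant. \emph{Step 2:} run the standard diagram chase. I would write out the square expressing the compatibility of $\varphi$ with the multiplication correspondences on $G_r$ and on $G_r/N_{s,r}$, check that it is Cartesian (or at least admits base change), and then combine the base change isomorphism for $R(-)_!$ (which moves $\varphi_!$ across the convolution push-forward), the projection formula (which absorbs the pullback factors), and the K\"unneth isomorphism (which identifies the external product with $\varphi_!\CF\boxtimes\varphi_!\CG$) to obtain $\varphi_!(\CF\star_!\CG)\cong\varphi_!\CF\star_!\varphi_!\CG$. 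For $\varphi_*$ the same argument should apply verbatim, using smooth base change in place of $R(-)_!$-base change — available because $\varphi$ is smooth, being an $N_{s,r}$-torsor. \emph{Step 3:} verify the coherences. The unit axiom is immediate, since $\varphi$ is an isomorphism near the identity and $\varphi(e)$ is the base point, so $\varphi_!\delta_e$ (resp.\ $\varphi_*\delta_e$) is the monoidal unit of the target; the pentagon and hexagon axioms reduce to the uniqueness of iterated base-change data over the triple product $G_r\times G_r\times G_r$.

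The hard part will be bookkeeping rather than any single deep input. Since $N_{s,r}$ is normal in $P_{s,r}$ but not in $G_r$, the quotient $G_r/N_{s,r}$ is only a variety, not a group, so the monoidal structure on $D_{P_{s,r}}(G_r/N_{s,r})$ must be handled through correspondences exactly as in \cite[\S5.1]{BC24}; I would need to check carefully that the squares relating these correspondences to $m\colon G_r\times G_r\to G_r$ are of a type to which base change applies, and that all the $P_{s,r}$-equivariant structures — in particular the one transported along $\varphi$, which relies on conjugation by $P_{s,r}$ preserving $N_{s,r}$-cosets — match throughout. Granting this, the lemma follows as in loc.\ cit.
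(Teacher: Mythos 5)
The paper itself offers no argument for this lemma — it is asserted with a pointer to \cite[\S 5.1]{BC24} — and the push--pull argument you outline (factor $\varphi\circ\mu$ through the quotient, then base change, projection formula and K\"unneth, with smoothness of $\varphi$ handling the $*$-variant) is exactly the standard argument behind that citation, so your overall route is the intended one.

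One point, however, needs to be fixed rather than deferred to bookkeeping: the isomorphism you set out to prove, $\varphi_!(\CF\star_!\CG)\cong\varphi_!\CF\star_!\varphi_!\CG$ for arbitrary $\CF,\CG\in D_{G_r}(G_r)$ with a convolution on $D_{P_{s,r}}(G_r/N_{s,r})$ obtained by ``descending the multiplication correspondence along $\varphi$,'' does not literally exist: since $N_{s,r}$ is not normal in $G_r$, the map $(g,h)\mapsto ghN_{s,r}$ does not factor through $\varphi\times\varphi$, so there is no multiplication correspondence on $G_r/N_{s,r}\times G_r/N_{s,r}$ and the square you propose to ``check is Cartesian'' cannot even be written down in that two-sided form. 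The statement that is provable, and the only one used later (\Cref{phi-star}, \Cref{descent}, \Cref{Mackey2}), is the module-functor property: $\varphi_!(M\star_!N)\cong M\star_!\varphi_!N$ using the left $G_r$-action on $G_r/N_{s,r}$ (here the square with vertical maps $\mathrm{id}\times\varphi$ and $\varphi$ and horizontal maps $\mu$, $\bar\mu$ is indeed Cartesian), together with its right-handed analogue $\varphi_!(N\star_!B)\cong\varphi_!N\star_!\varphi_!B$ when $B$ is supported on $P_{s,r}$, where $\varphi_!B$ lives on $L_r=P_{s,r}/N_{s,r}$ and one convolves via right $L_r$-translation (legitimate because $L_r$ normalizes $N_{s,r}$; the relevant square is only commutative, but functoriality of pushforward plus K\"unneth suffices there). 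With these formulations your toolkit goes through verbatim, including smooth base change for the $*$-case. Two minor inaccuracies: $\varphi$ is an $N_{s,r}$-bundle, hence not an isomorphism near the identity (though $\varphi_!\delta_e$ is still a skyscraper, so your unit claim survives), and the pentagon/hexagon coherence discussion is beside the point, since the target category carries no monoidal structure in the ordinary sense — the content of the lemma is precisely the module compatibilities above.
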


\begin{lemma} \label{phi-star}
    Let $M \in D_{L_r}(L_r)$ and $N \in D_{G_r}(G_r)$ such that $M \star_!\varphi_!N$ is supported on $L_r=P_{s,r}/N_{s,r}$. we have \[\pInd_{P_{s, r} !}^{G_r}(M \star_! \varphi_! N) \cong \pInd_{P_{s, r} !}^{G_r}(M) \star_! N.\]
\end{lemma}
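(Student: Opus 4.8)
The plan is to reduce the identity to the concrete presentation of $\pInd_{P_{s,r}!}^{G_r}$ via the correspondence $G_r \xhookleftarrow{i} P_{s,r} \xrightarrow{p} L_r$ together with the averaging functor, and then to push the convolution $M \star_! \varphi_! N$ through each stage. First I would unwind the definitions: $\pInd_{P_{s,r}!}^{G_r}(-) = \Av_{P_{s,r}!}^{G_r}\circ i_!\circ p^\ast\circ\Infl_{L_r}^{P_{s,r}}(-)$, and recall from \Cref{phimonoidal} that $\varphi_!$ is monoidal for $\star_!$, so that $\varphi_!(\pInd_{P_{s,r}!}^{G_r}(M)\star_! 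N)$ can be compared with $\varphi_!\pInd_{P_{s,r}!}^{G_r}(M)\star_!\varphi_! N$. The key observation is that on the group $G_r/N_{s,r}\cong L_r$ (after restriction to $P_{s,r}$-equivariant sheaves), $p^\ast\circ\Infl_{L_r}^{P_{s,r}}$ is essentially the identity composed with $\varphi^\ast$, so the convolution $M\star_!\varphi_! N$ on $L_r$ is exactly what the induction functor sees.

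The key steps, in order, would be: (1) Using the support hypothesis that $M\star_!\varphi_! N$ is supported on $L_r = P_{s,r}/N_{s,r}$, rewrite $M\star_!\varphi_! N$ as a sheaf on $L_r$ pulled back along $\varphi$ and restricted to $P_{s,r}$, i.e. identify it with $\Infl_{L_r}^{P_{s,r}}$ applied to an honest convolution on $L_r$. (2) Apply $i_!$ and use the projection formula, together with the base-change square relating the multiplication map $G_r\times G_r\to G_r$ to the analogous map on $P_{s,r}$; here the point is that convolving with $N\in D_{G_r}(G_r)$ on the $G_r$-side is compatible with first inducing up and then convolving, because $i$ is a subgroup inclusion and $N$ is $G_r$-equivariant. (3) Commute $\Av_{P_{s,r}!}^{G_r}$ past the convolution with $N$: since $\Av_{P_{s,r}!}^{G_r}$ is built from $\pr_{2!}$ on $G_r/P_{s,r}\times G_r$ and an equivalence, and since $\star_! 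N$ acts on the second factor, the two operations commute by the projection formula and proper base change. Assembling (1)--(3) gives $\pInd_{P_{s,r}!}^{G_r}(M\star_!\varphi_! N)\cong\pInd_{P_{s,r}!}^{G_r}(M)\star_! N$.

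The main obstacle I anticipate is bookkeeping the equivariance structures and making the base-change diagrams genuinely Cartesian at each stage — in particular, carefully handling the interaction between the $P_{s,r}$-action used in the averaging functor and the right $G_r$-translation action used in the convolution $\star_! N$, so that the relevant squares (built from the multiplication maps on $G_r$ and on $G_r/P_{s,r}$) commute and proper base change applies. The support hypothesis on $M\star_!\varphi_! N$ is what makes step (1) legitimate, so I would make sure to use it precisely there and nowhere that it is not needed. This lemma is the exact analogue of \cite[\S5.1]{BC24}, so I expect the argument there to transfer essentially verbatim after replacing their groups $(U_r, B_r)$ by $(N_{s,r}, P_{s,r})$, and I would cite that parallel explicitly.
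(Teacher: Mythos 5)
Your plan is correct and follows essentially the same route as the paper, which states this lemma without proof as a "general lemma" carried over from \cite[\S 5.1]{BC24}: use the support hypothesis to regard $M\star_!\varphi_! N$ as an object over $L_r=P_{s,r}/N_{s,r}$, push it through $i_!\circ p^\ast\circ\Infl$ via proper base change and the projection formula, and commute $\Av^{G_r}_{P_{s,r}!}$ with $-\star_! N$ using the $G_r$-equivariance of $N$. Only cosmetic slips: the relevant identification is $P_{s,r}/N_{s,r}\cong L_r$ (not $G_r/N_{s,r}\cong L_r$), and the pair being replaced from \cite{BC24} is $(N_r,P_r)$ rather than $(U_r,B_r)$.
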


Let $J_{s, r} = N_{s, r}[\fkl, L_r] \subseteq P_{s, r}$. 
\begin{lemma} \label{tech-2}
    Let $h \in G_r \sm P_{s, r}$. Then there exists $\a \in \Phi \sm \Phi_L$ and $g \in L_r$ such that \[g \fkt^\a g\i h \subseteq \{J_{s, r} z h z\i J_{s, r}; z \in L_r \}.\]
\end{lemma}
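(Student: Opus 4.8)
The plan is to imitate the proof of \Cref{tech-1} (itself following \cite[\S5]{BC24}), reducing $h$ to a Bruhat normal form and then extracting the required data from the rank-one subgroup attached to a suitable affine root. First, a reformulation: since $J_{s,r}=\ker(\bar p\colon P_{s,r}\to L_r/[\fkl,L_r])$ is normal in $P_{s,r}$, hence normalized by $L_r$, we have $z\i(J_{s,r}zhz\i J_{s,r})z=J_{s,r}hJ_{s,r}$ for all $z\in L_r$; thus the asserted inclusion $g\fkt^\a g\i h\subseteq\{J_{s,r}zhz\i J_{s,r}:z\in L_r\}$ is equivalent to the statement that for every $t\in\fkt^\a$ there exists $z=z(t)\in L_r$ with $z\i(gtg\i h)z\in J_{s,r}hJ_{s,r}$. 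In this form the claim is insensitive to replacing $h$ by an $L_r$-conjugate (with $g$ and the $z(t)$ adjusted by the conjugating element), so I would start by normalizing $h$.

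For the normalization I would use the decomposition of $G_r$ into $L_r$-conjugation cells indexed by $W_{L_r}\backslash W_{G_r}/W_{L_r}$, and, inside a cell with representative $w\in N_{G_r}(T_r)$, the normal form $h=uwv$ with $u$ a product of level-zero affine root subgroups $G_r^f$ ($\a_f\in\Phi_N\sm w\i(\Phi_L)$) and $v\in G_{0+:r}$, exactly as set up in the opening of the proof of \Cref{tech-1} via the decompositions of \Cref{subsec:aff-root}. Because $h\notin P_{s,r}=L_rN_{s:r}\ov N_{s+:r}$, the data $(u,w,v)$ must carry a nonzero coordinate at some affine root $f$ with $\a_f\in\Phi\sm\Phi_L$ sitting at a level not permitted in $P_{s,r}$; as in \Cref{tech-1} this splits into the cases: $u\neq1$; $u=1$ with $w$ representing a nontrivial double coset; and $u=1$, $w$ inessential, with $v\in G_{0+:r}$ carrying a ``wrong-level'' coordinate in $\Phi\sm\Phi_L$. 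In each case I would choose, with respect to the linear order $\le$ attached to $B\subseteq P$, the smallest such $f$, set $\a:=\a_f$ (or $w(f)$, or $f^\dag=r-f$, exactly as in \Cref{product} and \Cref{tech-1}), and then build the family $\{z(t)\}_{t\in\fkt^\a}$ from the rank-one group of $\a$: the commutator pairing $G_r^{f'}\times G_r^{r-f'}\to\fkt^\a=T^\a_{r:r}$ (with levels summing to $r$, so valued in the top Moy--Prasad layer) realizes, modulo $J_{s,r}$ on both sides, a copy of $\fkt^\a$ inside $h\i G_r^{r-f'}h$ for an appropriate root subgroup lying in $N_{s,r}$; and the bijectivity of squaring on $T^\a_{r:r}\cong\BG_a$ (where $p\neq2$ enters) lets one convert the resulting ``square'' data into conjugations by $z(t)\in T^\a_{r:r}\subseteq L_r$, just as for $G=\SL_2$, $L=T$, where $t\dot w=z(t)\dot w z(t)\i$ with $z(t)$ the square root of $t$ in $T^\a_{r:r}$, and where for a lower-triangular $h$ one absorbs $t$ into an $\a^\vee$-conjugation together with elements of $N_{s,r}$. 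The numerology $s=r/2$ together with $f(\bx)\le s$ is exactly what keeps the auxiliary $N_{s:r}$-coordinates of value $\ge s$ and the $\ov N_{s+:r}$-coordinates of value $>s$, as in the proofs of \Cref{tech-1} and \Cref{product}.

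I expect the hard part to be the third case (and the boundary sub-case $f(\bx)=s$ of the first): there the root $\a\in\Phi\sm\Phi_L$ is not literally carried by a single affine root group of $h$ but has to be teased out of the interaction of $v$ with the splitting $N_{s,r}=N_{s:r}\cdot\ov N_{s+:r}$, and the conjugating element $z(t)$ must bring $z\i(gtg\i h)z$ back into $J_{s,r}hJ_{s,r}$ exactly, not merely into $P_{s,r}hP_{s,r}$. This forces careful attention to the ordering of affine root groups in the decompositions of \Cref{subsec:aff-root} and to the rank-one identities modulo higher Moy--Prasad layers --- precisely the verifications that the proof of \Cref{tech-1} condenses into phrases like ``$\z$ meets our requirements''. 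Once the correct $f$ and conjugating family are written down, those same abbreviated checks (using $p\neq2$ and $p\nmid|G_\der|$) carry over, yielding the desired $\a\in\Phi\sm\Phi_L$ and $g\in L_r$; the role of $\a\notin\Phi_L$ appears only when the lemma is subsequently applied, through nontriviality of $\CL_\psi$ restricted to $\fkt^\a$.
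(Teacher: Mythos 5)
Your opening reformulation is fine: since $L_r$ normalizes $J_{s,r}$, the asserted containment is equivalent to ``for each $t\in\fkt^\a$ there is $z\in L_r$ with $z\i(gtg\i h)z\in J_{s,r}hJ_{s,r}$'', and the statement is indeed invariant under $L_r$-conjugation of $h$; you also correctly locate the numerology ($f\notin\tPhi(P_{s,r})$ forces $G_r^{r-f}\subseteq N_{s,r}\subseteq J_{s,r}$ because $s=r/2$). But the plan has concrete gaps. First, the normalization you build on is not available: the single-sided form $h=uwv$ (and the indexing of cells by $W_{L_r}\backslash W_{G_r}/W_{L_r}$) comes in \Cref{tech-1} from \emph{two-sided} multiplication by $L'_r\times L_r$, and the present statement is only invariant under conjugation, not under two-sided $L_r$-multiplication; $L_r$-conjugation orbits are not parametrized this way and cannot remove the unipotent factor on the other side of $w$. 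The paper needs no such reduction: it simply writes $h=u'wu\d$ with $u,u'\in U_r$, $\d\in G_{0+:r}$, because in the case $w\notin L_r$ it conjugates by top-layer torus elements $z\in\fkt$, which changes $h$ only by commutator factors lying in $\fku\subseteq J_{s,r}$, so $u',u,\d$ are irrelevant. Second, for that Weyl-part case your square-root mechanism ($t\dot w=z(t)\dot w z(t)\i$ in the $\SL_2$ picture) is only the rank-one instance of what is needed: for a general $w\notin L_r$ one must produce some $\a\in\Phi\sm\Phi_L$ with $\fkt^\a\subseteq\{z\dot wz\i\dot w\i;z\in\fkt\}$, and the paper gets this by choosing a root subsystem $\Psi\not\subseteq\Phi_L$ in which $w$ is elliptic, so that $1-w$ is invertible on the corresponding part of $\fkt$; this idea is absent from your sketch.

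More seriously, the case you yourself flag as the hard one --- $w\in L_r$ with the obstruction sitting in the positive-level part $v$ (or in level-zero coordinates of $N$, $\ov N$ below the allowed depth) --- is the actual new content of the lemma, and your proposal both defers it and aims at the wrong mechanism: no conjugating family $z(t)\in T^\a_{r:r}$ is needed there at all. The paper writes $h=mv$ with $m\in L_r$ and $v$ supported on the affine roots of positive level or of level zero outside $\Phi_L$, takes the $\le$-minimal such $f$ with $\pr_f(v)\neq0$ and $f\notin\tPhi(N_{s,r})$, and for $y\in G_r^{r-f}\subseteq N_{s,r}\subseteq J_{s,r}$ uses the identity $hy=my[y\i,v]v$ together with $[y\i,v]\in J_{s,r}\,\fkt^{\a_f}$ (a top-layer commutator, linear in the coordinates, with the minimality of $f$ killing all other contributions) to conclude $\fkt^{\a_f}h\subseteq J_{s,r}hJ_{s,r}$, i.e.\ the containment with $z=1$ and $g=1$. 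Your sentence about the ``commutator pairing $G_r^{f'}\times G_r^{r-f'}\to\fkt^\a$'' gestures at this, but without the choice of minimal $f$, the verification that $f\notin\tPhi(N_{s,r})$ pairs it into $N_{s,r}$, and the displayed right-multiplication identity, the decisive computation is missing; as written the proposal is a plan whose crucial steps are either absent or pointed at a mechanism (square roots and conjugation by $T^\a_{r:r}$) that does not carry the argument.
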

\begin{proof}
   Write $h = u' w u \d$, where $w \in N_{G_r}(T_r)$, $u, u' \in U_r$ and $\d \in G_{0+:r}$. Note that there exists root subsystem $\Psi \subseteq \Phi(G_0, T_0)$ such that (the natural image of) $w$ is an elliptic element of the Weyl group of $\Psi$. In particular, $\fkt^\a \dot w \subseteq \{z \dot w z\i; z \in \fkt\}$ for all $\a \in \Psi$. 

   Assume $w \notin L_r$. Then there exists $\a \in \Psi \sm \Phi_L$. Let $z \in \fkt$ such that $z w z\i = x w$ with $x \in \fkt^\a$. Then \[z h z\i = [z, u'] u' z w z\i u [u\i, z] \d  \subseteq \fku u' x w u \d \fku = \fku x h \fku.\] Hence $x h \subseteq \fku z h z\i \fku$ and $\fkt^\a h \subseteq \{\fku z h z\i \fku; z \in \fkt\}$ as desired.

   Assume that $w \in L_r$. Let \[D = \{f \in \tPhi ; \text{ either } f(\bx) >  0 \text{ or } f(\bx) = 0 \text{ and } \a_f \in \Phi^+ \sm \Phi_L\}.\] Then we can write $h = m v$, where $m \in L_r$ and $v \in G_r^D$. Let $f \in D$ such that $\pr_f(u) \neq 0$ and $\pr_{f'}(u) = 0$ for all $f > f' \in D$. Note that $f \notin \tPhi(N_{s, r})$ since $h \notin P_{s, r}$. Then $G_r^{r - f} \in N_{s, r}$. Let $y \in G_r^{r - f}$. Then $[v, y] = J_{s, r} x$ with $x \in \fkt^{\a_f}$. Then \[ h y  = m y [y\i, v] v \subseteq N_{s, r} m J_{s, r} x v \subseteq J_{s, r} x h.\] Hence $x h \in J_{s, r} h N_{s, r}$ and $\fkt^{\a_f} h \in J_{s, r} \fkt^{\a_f} h N_{s,r}$ as desired.   
\end{proof}

\begin{proposition}\label{descent}
    Let $N \in D_{G_r}^\psi(G_r)$. Then \[\CL_{\psi, r} \star_! \varphi_! N \cong \CL_{\psi, r} \star_! \pRes_{P_{s, r} !}^{G_r} N \] is supported on $P_{s, r} / N_{s, r} \cong L_r$ and is direct summand of $\varphi_! N$. Similar statement also holds by replacing $\varphi_!$, $\star_!$ and $\pRes_{P_{s, r} !}^{G_r} $ with $\varphi_*$ and $\star_*$, $\pRes_{P_{s, r} *}^{G_r}$  respectively.   
\end{proposition}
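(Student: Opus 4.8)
The plan is to follow \cite[\S5.1]{BC24}. First, by Verdier duality---which interchanges $\star_!$ with $\star_*$, $\varphi_!$ with $\varphi_*$, and $\pRes_{P_{s,r}!}^{G_r}$ with $\pRes_{P_{s,r}*}^{G_r}$, and fixes $\CL_{\psi,r}$ up to the appropriate twist---it suffices to establish the $\star_!$-statement, the $\star_*$-statement following formally.

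The heart of the matter, and the step I expect to be the main obstacle, is to show that $\CL_{\psi,r}\star_!\varphi_!N$ is supported on the closed subvariety $L_r = P_{s,r}/N_{s,r}$ of $G_r/N_{s,r}$. Writing $N = \CF_{\psi,r}\star_!N_0$ (legitimate since $N\in D^\psi_{G_r}(G_r)$), one reduces --- along the lines of \Cref{restriction} --- to controlling the stalk of $\CL_{\psi,r}\star_!\varphi_!N$ at a point $\bar h$ with $h\in G_r\sm P_{s,r}$. I would express this stalk as the compactly supported cohomology of an explicit variety and then use the $G_r$-conjugation equivariance of $N$ to replace $h$ by a representative of its $L_r$-conjugacy class modulo $J_{s,r}=N_{s,r}[\fkl,L_r]$ on both sides. \Cref{tech-2} supplies, for such an $h$, a root $\a\in\Phi\sm\Phi_L$ and an element $g\in L_r$ together with an embedded line $g\fkt^\a g^{-1}h$ lying inside that class; restricting the integrand to this line produces a nontrivial Artin--Schreier local system --- nontrivial precisely by condition $(\mathfrak{ge1})$, i.e.\ $X_\psi|_{\fkt^\a}\neq 0$, exactly as in the ``in particular'' clause of \Cref{tech-2} --- so the cohomology vanishes and the stalk is zero. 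This is where essentially all of the geometric input (beyond \Cref{tech-2}) is concentrated.

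Granting the support statement, the isomorphism $\CL_{\psi,r}\star_!\varphi_!N\cong\CL_{\psi,r}\star_!\pRes_{P_{s,r}!}^{G_r}N$ is formal. The square with vertical maps $p\colon P_{s,r}\to L_r$ and $\varphi\colon G_r\to G_r/N_{s,r}$ and horizontal closed immersions $i\colon P_{s,r}\hookrightarrow G_r$, $\iota\colon L_r\hookrightarrow G_r/N_{s,r}$ is Cartesian, since $\varphi^{-1}(L_r)=P_{s,r}N_{s,r}=P_{s,r}$; hence proper base change gives $\iota^*\varphi_!N\cong p_!\,i^*\,\For_{P_{s,r}}^{G_r}N=\pRes_{P_{s,r}!}^{G_r}N$. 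As $L_r$ is stable under left $L_r$-translation in $G_r/N_{s,r}$, the functor $\CL_{\psi,r}\star_!(-)$ commutes with $\iota^*$ (base change for the action map, whose restriction over $L_r$ is $L_r\times L_r\to L_r$); combined with the support statement --- which lets one recover $\CL_{\psi,r}\star_!\varphi_!N$ from its restriction to $L_r$ via $\iota_*$ --- this yields the claimed identification.

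For the direct-summand assertion I would use \Cref{phimonoidal}: from $N=\CF_{\psi,r}\star_!N_0$ one gets $\varphi_!N\cong\CF_{\psi,r}\star_!\varphi_!N_0$, and by \Cref{correspond} this reads $\varphi_!N\cong\pInd_{P_{s,r}!}^{G_r}(\CL_{\psi,r})\star_!\varphi_!N_0$ up to shift. Unwinding the definition of $\pInd_{P_{s,r}!}^{G_r}$ and stratifying $G_r/P_{s,r}$ by $N_{G_r}(T_r)$-orbits exactly as in the proof of \Cref{Mackey} decomposes $\varphi_!N$ as a direct sum indexed by $W_{L_r}\backslash W_{G_r}/W_{L_r}$; by \Cref{phi-star} (applicable thanks to the support statement) together with the support statement itself, the contribution of the trivial double coset is precisely $\CL_{\psi,r}\star_!\varphi_!N$, which thereby appears as a direct summand of $\varphi_!N$. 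The $\star_*$-version is obtained by dualizing every step.
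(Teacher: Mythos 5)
Your proposal matches the paper's proof in all essentials: the support statement is obtained from the conjugation/translation equivariance of $\CL_{\psi,r}\star_!\varphi_!N$ combined with \Cref{tech-2} and the genericity condition $(\mathfrak{ge1})$ to force vanishing of the stalks off $P_{s,r}/N_{s,r}$, and the identification with $\CL_{\psi,r}\star_!\pRes_{P_{s,r}!}^{G_r}N$ follows from proper base change for the same Cartesian square $P_{s,r}\to G_r$, $L_r\to G_r/N_{s,r}$. Your treatment of the direct-summand claim via the Mackey-type decomposition of $\varphi_!\CF_{\psi,r}$ is also exactly how the paper handles it (deferring to \cite[Proposition 5.10]{BC24}, cf.\ \Cref{Mackey2}), so the argument is essentially the same.
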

\begin{proof}
    The proof follows the same argument as in \cite[Proposition 5.10]{BC24}. For completeness, we include the details here and clarify why Lemma~\ref{tech-2} is essential in our setting. Observe that $\varphi_!N$ and $\CL_{\psi, r}$ are $P_{s,r}$-equivariant under conjugation. Hence so is $\CL_{\psi, r} \star_! \varphi_! N$. Since $\CL_{\psi, r} \star_! \varphi_! N$ is $(\mathfrak{l},\CL_{\psi, r})$-equivariant and $X_{\psi}$ is $(L,G)$-generic, this implies $\CL_{\psi, r}$ is equivariant under the right multiplication action of $\CM := [L_r,\fkl]$. Consider the following map
    $$f: \CM \times P_{s,r}\times \CM \times G_r/N_{s,r}\to G_r/N_{s,r}$$
    $$(m,p,m',hN_{s,r})\mapsto (m p h p\i N_{s,r} m').$$
    The equivariant properties yield an isomorphism of sheaves:
    \begin{equation}\label{descenteq1}
    f^\ast(\CL_{\psi, r} \star_! \varphi_! N)|_{\CM \times  P_{s,r}\times \CM \times \{hN_{s,r}\}}\cong \delta_{\CM \times P_{s,r} \times \CM}\boxtimes \CL_{\psi, r} \star_! \varphi_! N|_{hN_{s,r}}
    \end{equation}
    For any $h\in G_r\sm P_{s, r}$. Lemma~\ref{tech-2} shows that \[f(\CM \times P_{s,r}\times \CM \times \{gN_{s,r}\} )=\{mnzhz^{-1}n'm';m,m'\in \CM ,n,n'\in N_{s,r},z\in L_r\}/N_{s,r}\] contains $g \fkt^\a g^{-1}h$ for some $\Phi \sm \Phi_L$ and $g\in L_r$. On the other hand, $\CL_{\psi, r} \star_! \varphi_! N$ is $(\fkl,\CL_{\psi, r})$-equivariant and the generic condition ensures that the restriction $\CL_{\psi, r}|_{g \fkt^\a g\i h}$ remains nontrivial. Combining this with the constancy condition from \Cref{descenteq1}, we necessarily obtain $\CL_{\psi, r} \star_! \varphi_! N|_{hN_{s,r}}=0$.

    Since $\CL_{\psi,r} \star_! \varphi_! N$ is supported on $P_{s,r}/N_{s,r} \cong L_r$, we obtain the natural identification $\mathcal{L}_{\psi,r} \star_! \varphi_! N \cong \mathcal{L}_{\psi,r} \star_! i^* \varphi_! N$ via the inclusion $i: L_r \hookrightarrow G_r/N_{s,r}$. On the other hand, by the proper base change theorem for the Cartesian square
    \begin{equation*}
  \xymatrix{
    P_{s,r} \ar[r] \ar[d] & G_r \ar[d]^{\varphi} \\
    L_r \ar[r]_-{i} & G_r/N_{s,r}
  }
\end{equation*}
  we have $\pRes_{P_{s,r}!}^{G_r}(N) \cong i^* \varphi_! N$. it follows that $\CL_{\psi,r} \star_! \pRes_{P_{s,r}!}^{G_r}(N) \cong \CL_{\psi,r} \star_! \varphi_! N$. 
\end{proof}

\begin{corollary}\label{resmonoidal}
The functor $\mathcal{L}_{\psi, r} \star_! \pRes_{P_{s, r}!}^{G_r}$ on $D_{G_r}^\psi(G_r)$ is monoidal with respect to the $\star_!$-convolution product. An analogous statement holds for $\ast$.
\end{corollary}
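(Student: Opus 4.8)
The plan is to deduce this from Proposition~\ref{descent} together with the monoidality of $\varphi_!$ (Lemma~\ref{phimonoidal}), in the spirit of \cite[\S5.1]{BC24}. Fix $N_1,N_2 \in D_{G_r}^\psi(G_r)$. By Proposition~\ref{descent} there are natural isomorphisms $\mathcal{L}_{\psi,r} \star_! \pRes_{P_{s,r}!}^{G_r}(N_j) \cong \mathcal{L}_{\psi,r} \star_! \varphi_! N_j$, and both sides are supported on $L_r = P_{s,r}/N_{s,r}$, so they may be viewed as objects of $D_{L_r}^\psi(L_r)$. Thus it suffices to establish that $\mathcal{L}_{\psi,r} \star_! \varphi_! \colon D_{G_r}^\psi(G_r) \to D_{L_r}^\psi(L_r)$ is monoidal for $\star_!$; that is, to exhibit natural isomorphisms $(\mathcal{L}_{\psi,r} \star_! \varphi_! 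N_1) \star_! (\mathcal{L}_{\psi,r} \star_! \varphi_! N_2) \cong \mathcal{L}_{\psi,r} \star_! \varphi_!(N_1 \star_! N_2)$ compatible with units.

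The proof has two ingredients. First, $\varphi_!$ is monoidal for $\star_!$ by Lemma~\ref{phimonoidal}, which handles the outer convolution. Second, $\mathcal{L}_{\psi,r} \star_! (-)$ is a monoidal idempotent projection onto $D_{L_r}^\psi(L_r)$: since $r \ge 1$ the space $\fkl = L_{r:r}$ is central in $L_r$, so a sheaf supported on $\fkl$ is central for $\star_!$, and since $\mathcal{L}_\psi$ is (a shift of) a multiplicative local system on the vector group $\fkl$, a short computation with the proper base change theorem and the projection formula gives $\mathcal{L}_{\psi,r} \star_! \mathcal{L}_{\psi,r} \cong \mathcal{L}_{\psi,r}$ up to an explicit shift and Tate twist (cf.\ \cite[\S4]{BC24}). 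Combining these, write $\varphi_! N_j = \mathcal{F}_j$ and compute, inside $D_{L_r}(L_r)$,
\begin{align*}
(\mathcal{L}_{\psi,r} \star_! \mathcal{F}_1) \star_! (\mathcal{L}_{\psi,r} \star_! \mathcal{F}_2)
&\cong \mathcal{L}_{\psi,r} \star_! \mathcal{L}_{\psi,r} \star_! \mathcal{F}_1 \star_! \mathcal{F}_2
\cong \mathcal{L}_{\psi,r} \star_! \mathcal{F}_1 \star_! \mathcal{F}_2 \\
&\cong \mathcal{L}_{\psi,r} \star_! \varphi_!(N_1 \star_! N_2),
\end{align*}
where the first isomorphism uses associativity of $\star_!$ together with centrality of $\mathcal{L}_{\psi,r}$, the second uses the idempotency up to shift, and the third is Lemma~\ref{phimonoidal}; here the fact that $\mathcal{F}_1 \star_! \mathcal{F}_2$ has the correct support so that everything stays in the subcategory is guaranteed by Proposition~\ref{descent} applied to $N_1 \star_! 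N_2 \in D_{G_r}^\psi(G_r)$. Compatibility with units is checked the same way, identifying the image of the unit object via Propositions~\ref{ind-Lie} and \ref{correspond} (equivalently, via $\pInd_{P_{s,r}!}^{G_r}$ being an equivalence, Theorem~\ref{one-step}).

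The $\star_*$-assertion follows by the identical argument, replacing $\varphi_!$, $\star_!$, $\pRes_{P_{s,r}!}^{G_r}$ throughout by $\varphi_*$, $\star_*$, $\pRes_{P_{s,r}*}^{G_r}$, and invoking the $*$-versions of Lemma~\ref{phimonoidal} and Proposition~\ref{descent}. The only genuine subtlety — the point where $(L,G)$-genericity of $X_\psi$ is essential — is the assertion that $\mathcal{L}_{\psi,r} \star_! (-)$ is a \emph{monoidal} projection onto $D_{L_r}^\psi(L_r)$ rather than merely a projection; this is what legitimizes inserting the extra copy of $\mathcal{L}_{\psi,r}$ between the two tensor factors, and it ultimately rests on the same nonvanishing-of-Artin--Schreier-pullbacks-along-generic-directions mechanism already used in Propositions~\ref{ind} and \ref{descent}.
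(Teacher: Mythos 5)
Your overall route is the paper's own: the printed proof consists exactly of the observation that $\varphi_!$ is monoidal (\Cref{phimonoidal}) together with the identification $\CL_{\psi,r}\star_!\pRes^{G_r}_{P_{s,r}!}=\CL_{\psi,r}\star_!\varphi_!$ on $D^\psi_{G_r}(G_r)$ coming from \Cref{descent}, the absorption of the extra copy of $\CL_{\psi,r}$ being left implicit (as in \cite[\S 5.1]{BC24}). However, two of the justifications you supply for that absorption step are problematic. First, the claim that $\fkl=L_{r:r}$ is central in $L_r$ is false: $\fkl$ is a normal subgroup of $L_r$ on which $L_r$ acts by conjugation through the adjoint action of the reductive quotient $L_{0:0+}$, which is nontrivial unless $L$ is a torus (already for $L=\SL_2$ at a hyperspecial point the commutator $[L_{0:r},L_{r:r}]$ is not killed in $L_{0:r}$). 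The commutation $\CF\star_!\CL_{\psi,r}\cong\CL_{\psi,r}\star_!\CF$ that your computation needs is nevertheless true, but for a different reason: $\CL_{\psi,r}$ is conjugation-equivariant (this uses that $X_\psi$ is fixed by the coadjoint action, as it is in the applications, where it comes from the dual of the centre of $\fkl$), and for a conjugation-equivariant factor the twisted swap $(a,f)\mapsto(f,f^{-1}af)$, which commutes with the multiplication map, identifies the two convolutions. So the centrality-of-$\fkl$ argument must be replaced by this equivariance argument.

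Second, with the paper's normalization $\CL_{\psi,r}=i_{\fkl*}\CL_\psi[\dim\fkl]$ one computes $\CL_{\psi,r}\star_!\CL_{\psi,r}\cong\CL_{\psi,r}[-\dim\fkl](-\dim\fkl)$, so idempotency holds only up to this shift and Tate twist; you acknowledge this but then drop it in the displayed chain, so as written your argument yields monoidality only up to $[-\dim\fkl](-\dim\fkl)$. To get the statement on the nose one should either work with the normalized idempotent $i_{\fkl*}\CL_\psi[2\dim\fkl](\dim\fkl)$ as in \cite{BC24}, or carry the twist through explicitly (the paper is itself silent about this normalization, so this is as much a gap in the source as in your write-up, but it should be addressed). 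Finally, a small logical point: do not invoke \Cref{one-step} for the unit compatibility, since \Cref{resmonoidal} is an input to \Cref{one-step-detail} and hence to \Cref{one-step}; your alternative justification via \Cref{ind-Lie} and \Cref{correspond} avoids this circularity and should be the one retained.
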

\begin{proof}
    By \Cref{phimonoidal}, $\varphi_!$ is monoidal hence so is $\CL_{\psi, r} \star_! \varphi_!=\CL_{\psi, r} \star_! \pRes_{P_{s, r} !}^{G_r}$
\end{proof}


\begin{lemma}\label{Mackey2}
    For $N \in D_{G_r}^\psi(G_r)$ we have \[\varphi_! N \cong \bigoplus_{w \in W_L \backslash W / W_L} \varphi_! N \star_! \pInd_{L_r \cap {}^w P_{s, r} !}^{L_r} \pRes_{P_{s, r} \cap {}^w L_r !}^{{}^w L_r}({}^w \CL_{\psi, r})  \]
\end{lemma}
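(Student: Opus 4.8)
The plan is to rewrite the right-hand side by the Mackey formula, to recognise it as $\varphi_!$ of the generic idempotent $\CF_{\psi,r}$ restricted to $L_r$, and then to conclude from the monoidality of $\varphi_!$ together with $N$ being generic.

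First I would apply \Cref{Mackey} with $P' = P$ (so $L' = L$) and $M = \CL_{\psi,r}$; this is legitimate since $\CL_{\psi,r} = \CL_{\psi,r}\star_!\delta_e$ lies in $D_{L_r}^\psi(L_r)$. It identifies the big direct sum on the right-hand side with $\pRes_{P_{s,r}!}^{G_r}\pInd_{P_{s,r}!}^{G_r}(\CL_{\psi,r})$. By \Cref{correspond} we have $\pInd_{P_{s,r}!}^{G_r}(\CL_{\psi,r})\cong\CF_{\psi,r}[c]$ for the explicit shift $c = \dim\fkg - \dim\fkl$, and the proper base change along the Cartesian square appearing in the proof of \Cref{descent} gives $\pRes_{P_{s,r}!}^{G_r}(\CF_{\psi,r})\cong i^*\varphi_!\CF_{\psi,r}$, where $i\colon L_r = P_{s,r}/N_{s,r}\hookrightarrow G_r/N_{s,r}$ is the inclusion. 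Thus the right-hand side of the lemma is $\varphi_! N\star_!(i^*\varphi_!\CF_{\psi,r})[c]$.

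It then remains to see that $\varphi_! N\star_!(i^*\varphi_!\CF_{\psi,r})\cong\varphi_! N[-c]$. First note $\varphi_!\CF_{\psi,r}$ is already supported on the closed subvariety $L_r\subseteq G_r/N_{s,r}$: indeed $\CF_{\psi,r}$ is supported on $\fkg = G_{r:r}$, and since $s = r/2 < r$ we have $N_{r:r}\ov N_{r:r}\subseteq N_{s,r}$, so $\varphi(G_{r:r}) = L_{r:r}N_{s,r}/N_{s,r}\subseteq L_r$. Hence $i^*\varphi_!\CF_{\psi,r}$ loses no information, and the monoidality of $\varphi_!$ over $D_{G_r}(G_r)$ (\Cref{phimonoidal}, together with the compatibility provided by \Cref{phi-star}) identifies $\varphi_! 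N\star_!(i^*\varphi_!\CF_{\psi,r})$ with $\varphi_!(N\star_!\CF_{\psi,r})$. Finally, $N\in D_{G_r}^\psi(G_r) = \CF_{\psi,r}\star_! D_{G_r}(G_r)$, and by \cite{BC24} convolution with the generic idempotent $\CF_{\psi,r}$ is, up to the shift $[-c]$, the identity on this subcategory; so $\varphi_!(N\star_!\CF_{\psi,r})[c]\cong\varphi_! N$, which is the assertion.

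The step that I expect to require the most care is the third one: setting up precisely, in the style of \cite[\S5]{BC24}, the monoidal/module formalism that makes the convolution ``$\varphi_! N\star_!(-)$ with objects of $D_{L_r}(L_r)$'' appearing in the statement compatible with the $D_{G_r}(G_r)$-module structure on the target of $\varphi_!$, and tracking the cohomological shifts so that no net shift survives. All the substantive, genericity-dependent input --- the vanishing behind the Mackey decomposition and the coincidence $\pInd_{P_{s,r}!}^{G_r}(\CL_{\psi,r})\cong\CF_{\psi,r}[c]$ --- has already been absorbed into \Cref{Mackey}, \Cref{correspond} and \Cref{descent}.
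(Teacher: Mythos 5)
Your argument is essentially the paper's own proof, just run in the opposite direction: the paper starts from the support observation $\varphi_!\CF_\psi \cong \pRes_{P_{s,r}!}^{G_r}(\CF_\psi)$, then uses \Cref{correspond} and \Cref{Mackey} (with $P'=P$) to identify $\varphi_!\CF_\psi$ with the direct sum, and concludes via $N \cong N \star_! \CF_\psi$ and the monoidality of $\varphi_!$ (\Cref{phimonoidal}) --- exactly the ingredients you invoke. The only caveat is your explicit claim that the shift $[c]=[\dim\fkg-\dim\fkl]$ from \Cref{correspond} is cancelled by an equal shift in the idempotent property of $\CF_{\psi,r}$ (whose natural normalization defect depends only on $\fkg$, not on $\fkl$); this bookkeeping is not literally right, but it is no looser than the paper's own treatment, which suppresses these shifts and Tate twists altogether.
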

\begin{proof}
The argument in \Cref{descent} establishes that for any $N \in D_{G_r}(G_r)$ such that $\varphi_! N$ is supported on $P_{s,r}/N_{s,r} \cong L_r$, we have $\varphi_! N \cong \pRes^{G_r}_{P_{s,r}!}(N)$. As $\CF_\psi$ is supported on $\fkg \subseteq P_{s, r}$, this implies that
\[
\varphi_! \mathcal{F}_{\psi} \cong \pRes^{G_r}_{P_{s,r}!}(\mathcal{F}_\psi).
\]
Applying \Cref{correspond} and \Cref{Mackey}, we deduce the decomposition:
\[
\varphi_! \mathcal{F}_{\psi} \cong \bigoplus_{w \in W_L \backslash W / W_L} \pInd_{L_r \cap {}^w P_{s, r} !}^{L_r} \pRes_{P_{s, r} \cap {}^w L_r !}^{{}^w L_r}({}^w \mathcal{L}_{\psi, r}).
\]

Now, for any $N \in D^\psi_{G_r}(G_r)$, the isomorphism $N \cong N \star_! \mathcal{F}_\psi$ holds. By the monoidal property in \Cref{phimonoidal}, we obtain:
\[
\varphi_!(N \star_! \mathcal{F}_\psi) \cong \varphi_! N \star_! \varphi_! \mathcal{F}_{\psi}.
\]
The desired result follows from this identification.
\end{proof}
\begin{theorem} \label{one-step-detail}
    The following statements hold:
    \begin{itemize}
        \item The two functors $\CL_{\psi, r} \star_! \pRes_{P_{s, r} !}^{G_r}$ and $\pInd_{P_{s, r} !}^{G_r}$ are inverse equivalences between $D_{L_r}^\psi(L_r)$ and $D_{G_r}^\psi(G_r)$, and similar for $\CL_{\psi, r} \star_* \pRes_{P_{s, r} *}^{G_r}$ and $\pInd_{P_{s, r} !}^{G_r}$;

        \item $\CL_{\psi, r} \star_! \pRes_{P_{s, r} !}^{G_r} \cong \CL_{\psi, r} \star_* \pRes_{P_{s, r} *}^{G_r}$ on $D_{G_r}^\psi(G_r)$ and $\pInd_{P_{s, r} !}^{G_r} \cong \pInd_{P_{s, r} *}^{G_r}$ on $D_{L_r}^\psi(L_r)$;

        \item $\pInd_{P_{s, r} !}^{G_r}$ is $t$-exact on $D_{L_r}^\psi(L_r)$. In particularly, $\pInd_{P_{s,r}!}^{G_r}$ sends simple perverse sheaves lying in $D_{L_r}^\psi(L_r)$ to simple perverse sheaves lying in $D_{G_r}^\psi(G_r)$.
    \end{itemize}
\end{theorem}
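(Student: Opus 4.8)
The plan is to follow \cite[\S5]{BC24} and assemble the lemmas already proved in this section. Write $R_{!}:=\CL_{\psi,r}\star_!\pRes_{P_{s,r}!}^{G_r}$ and $R_{*}:=\CL_{\psi,r}\star_*\pRes_{P_{s,r}*}^{G_r}$. By \Cref{ind} both $\pInd_{P_{s,r}!}^{G_r}$ and $\pInd_{P_{s,r}*}^{G_r}$ carry $D_{L_r}^\psi(L_r)$ into $D_{G_r}^\psi(G_r)$, and $R_!,R_*$ land in $D_{L_r}^\psi(L_r)$ simply because anything of the form $\CL_{\psi,r}\star_!(-)$ or $\CL_{\psi,r}\star_*(-)$ does; so all four functors have the stated sources and targets. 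The first thing I would prove is $\pInd_{P_{s,r}!}^{G_r}\circ R_!\cong\id$ on $D_{G_r}^\psi(G_r)$: for $N\in D_{G_r}^\psi(G_r)$ one has $N\cong\CF_{\psi,r}\star_! N$ since $\CF_{\psi,r}$ is the $\star_!$-unit of $D_{G_r}^\psi(G_r)$; by \Cref{correspond}, $\CF_{\psi,r}\cong\pInd_{P_{s,r}!}^{G_r}(\CL_{\psi,r})$ up to the normalizing shift; by \Cref{descent}, $\CL_{\psi,r}\star_!\varphi_!N$ is supported on $L_r\cong P_{s,r}/N_{s,r}$ and coincides with $R_!(N)$, so \Cref{phi-star} turns $\pInd_{P_{s,r}!}^{G_r}(\CL_{\psi,r})\star_! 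N$ into $\pInd_{P_{s,r}!}^{G_r}(R_!(N))$; combining these yields $N\cong\pInd_{P_{s,r}!}^{G_r}(R_!(N))$. The $*$-analogue follows from the dual forms of the same lemmas.

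Next I would establish $R_!\circ\pInd_{P_{s,r}!}^{G_r}\cong\id$ on $D_{L_r}^\psi(L_r)$ via the Mackey formula. \Cref{Mackey} with $P'=P$ gives $\pRes_{P_{s,r}!}^{G_r}\pInd_{P_{s,r}!}^{G_r}(M)\cong\bigoplus_{w\in W_{L_r}\backslash W_{G_r}/W_{L_r}}\pInd_{L_r\cap{}^wP_{s,r}!}^{L_r}\pRes_{P_{s,r}\cap{}^wL_r!}^{{}^wL_r}({}^wM)$. Convolving on the left with $\CL_{\psi,r}$ and using $L_r\cap P_{s,r}=L_r$, the double coset of $w=e$ contributes exactly $\CL_{\psi,r}\star_! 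M\cong M$. For any other double coset one rewrites $M\cong\CL_{\psi,r}\star_! M$ and compares with the decomposition of $\varphi_!(-)$ from \Cref{Mackey2}; the corresponding summand is then governed by the twisted local system ${}^w\CL_{\psi,r}$, and since $\mathrm{Stab}_{W_{G_r}}(X_\psi|_\fkt)=W_{L_r}$ by condition $(\mathfrak{ge2})$, convolution of the $\psi$-generic $\CL_{\psi,r}$ against anything built from ${}^w\CL_{\psi,r}$ with $w\notin W_{L_r}$ vanishes. Hence $R_!\circ\pInd_{P_{s,r}!}^{G_r}\cong\id$, and together with the previous paragraph (and its $*$-analogue) we conclude that $\pInd_{P_{s,r}!}^{G_r}$ and $R_!$, and likewise $\pInd_{P_{s,r}*}^{G_r}$ and $R_*$, are mutually inverse equivalences between $D_{L_r}^\psi(L_r)$ and $D_{G_r}^\psi(G_r)$; this is the first bullet.

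For the second bullet, I would invoke the adjunction $\pInd_{P_{s,r}!}^{G_r}\dashv\pRes_{P_{s,r}*}^{G_r}$ of \cite[Lemma 3.6]{BC24}. Restricted to the generic subcategories, the right adjoint of the equivalence $\pInd_{P_{s,r}!}^{G_r}$ is $\pRes_{P_{s,r}*}^{G_r}$ followed by the projection onto $D_{L_r}^\psi(L_r)$, i.e. $R_*$; but the right adjoint of an equivalence is its inverse, so $R_*\cong R_!$ on $D_{G_r}^\psi(G_r)$, and passing to inverses gives $\pInd_{P_{s,r}!}^{G_r}\cong\pInd_{P_{s,r}*}^{G_r}$ on $D_{L_r}^\psi(L_r)$. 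For the third bullet, $\pInd_{P_{s,r}!}^{G_r}$ is right $t$-exact and $\pInd_{P_{s,r}*}^{G_r}$ is left $t$-exact: in the description of \Cref{otherconstruct} the map $\pi\colon\wt G_r\to G_r$ is affine (for $r\geq 1$ the coset space $G_r/P_{s,r}$ is even an affine space; for $r=0$ this is classical parabolic induction and the statement is due to Lusztig), so $\pi_!$ is right $t$-exact and $\pi_*$ left $t$-exact by Artin--Grothendieck vanishing, while the remaining operations are, up to the chosen shift, $t$-exact. By the second bullet these two functors agree on $D_{L_r}^\psi(L_r)$, so $\pInd_{P_{s,r}!}^{G_r}$ is $t$-exact there; a $t$-exact equivalence sends perverse sheaves to perverse sheaves and simple objects to simple objects, hence carries simple perverse sheaves in $D_{L_r}^\psi(L_r)$ to simple perverse sheaves in $D_{G_r}^\psi(G_r)$.

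The hard part will be the vanishing of the non-identity Mackey summands after convolution with $\CL_{\psi,r}$. This is the single place where genericity condition $(\mathfrak{ge2})$ is indispensable, and making it rigorous means tracking how $\CL_{\psi,r}\star_!(-)$ interacts, through the induction and restriction functors of the proper Levi-type subgroups $L_r\cap{}^wL_r$, with the $(\fkl,\CL_\psi)$-equivariance of \Cref{restriction} — essentially the same mechanism underlying \Cref{tech-1}, \Cref{tech-2}, \Cref{vanish} and \Cref{descent}, now applied once more inside $L_r$. A secondary nuisance is keeping all shifts and Tate twists in order so that the equivalence, and hence the $t$-exactness, holds on the nose rather than merely up to shift; this is routine bookkeeping, exactly as in \cite{BC24}.
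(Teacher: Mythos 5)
Your proposal is correct and follows essentially the same route as the paper, whose proof of \Cref{one-step-detail} simply invokes \cite[Proposition 5.15]{BC24} together with \Cref{ind}, \Cref{correspond}, \Cref{Mackey}, \Cref{phi-star}, \Cref{descent}, \Cref{resmonoidal}, \Cref{Mackey2} and Artin's theorem --- exactly the ingredients you assemble (unit property of $\CF_{\psi,r}$, Mackey formula with cross-terms killed by genericity, the adjunctions for the $!$/$*$ comparison, and right/left $t$-exactness). One minor correction: for $r\geq 1$ the quotient $G_r/P_{s,r}$ is an affine variety (it fibers in affine spaces over $G_0/L_0$, which is affine since $L_0$ is reductive) rather than an affine space in general, but this does not affect your argument, since affineness of $\pi$ is all that Artin vanishing requires.
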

\begin{proof}
 The statement follows in the same way as \cite[Proposition 5.15]{BC24} by using \Cref{ind}, \Cref{correspond}, \Cref{Mackey}, \Cref{phi-star}, \Cref{descent}, \Cref{resmonoidal}, \Cref{Mackey2} and Artin's theorem.
\end{proof}

\bibliography{bib_ADLV}{}
\bibliographystyle{amsalpha}

\end{document}